\DeclareMathOperator{\NE}{\mathrm{NE}}
\newcommand{\power}[1]{[\![#1]\!]}
\newcommand{\hooklongrightarrow}{\lhook\joinrel\longrightarrow}
\newcommand{\hooklongleftarrow}{\longleftarrow\joinrel\rhook}
\newcommand{\Rbar}{\overline{\EuR}}
\newcommand{\Sbar}{\overline{\EuS}}
\newcommand{\Mbar}{\overline{\mathcal{M}}}
\newcommand{\CP}{\mathbb{CP}}
\def\fm{\mathfrak{m}}
\def\bc{\mathsf{bc}}
\def\snc{{$\mathsf{snc}$}}
\def\cM{\mathcal{M}}
\def\bY{\mathbf{Y}}
\def\ff{\mathfrak{f}}
\def\vv{\mathsf{q}}
\def\tang{\mathsf{t}}
\def\Nef{\mathsf{N}}
\def\fuk{\EuF}
\def\sph{\mathsf{sph}}
\def\disc{\mathsf{disc}}
\def\Sym{\mathrm{Sym}}
\def\bL{\mathbf{L}}
\def\by{\mathbf{y}}
\def\num{\mathrm{num}}
\def\tang{\mathsf{t}}
\def\nov{T}
\def\bK{\mathbf{K}}
\def\bP{\mathbf{P}}
\def\bi{\mathbf{i}}
\def\pun{\mathsf{pun}}
\def\for{\mathsf{for}}
\def\can{\mathsf{can}}
\def\Ymax{\EuY^{\mathsf{max}}}
\def\bN{\mathbf{N}}
\begin{document}

\title{Constructing the relative Fukaya category}

\author[Perutz and Sheridan]{Timothy Perutz and Nick Sheridan}

\begin{abstract}
We give a definition of Seidel's `relative Fukaya category', for a smooth complex projective variety relative to a simple normal crossings divisor, under a semipositivity assumption.
We use the Cieliebak--Mohnke approach to transversality via stabilizing divisors.  Two features of our construction are noteworthy: that we work relative to a normal crossings divisor which supports an effective ample divisor but need not have ample components;  and that our relative Fukaya category is linear over a certain ring of multivariate power series with \emph{integer} coefficients. 
\end{abstract}

\maketitle

\section{Introduction}

The main construction of this paper is the relative Fukaya category of a smooth complex projective variety relative to a simple normal crossings divisor, under a semipositivity assumption. 
We introduce it in Section \ref{sec:alg_geom}. 
It is a special case of a more general, purely symplectic construction, which we introduce first in Section \ref{sec:symp_rel_fuk}.

\subsection{Symplectic relative Fukaya category}\label{sec:symp_rel_fuk}

Let $X$ be a compact $2n$-dimensional manifold, and $W \subset X$ a subdomain (by which we mean a compact codimension-$0$ submanifold with boundary). 
Let $\omega$ be a symplectic form on $X$, and $\theta$ a primitive for $\omega|_W$, making $W$ into a Liouville domain. 

\begin{defn}\label{def:sod}
A \emph{system of divisors} $V$ for $W \subset X$ is a finite set $\{V_q\}_{q \in Q}$ of codimension-2 closed submanifolds of $X \setminus W$, such that: 
\begin{itemize}
\item the intersection $\cap_{q \in I} V_q$ is transverse for each $I \subset Q$;
\item the classes $PD([V_q])$ span $H^2(X,W;\Q)$;
\item for each $q \in Q$, the set of $r \in Q$ such that $PD([V_r]) = PD([V_{q}])$ in $H^2(X,W)$ has size $\ge n+1$.
\end{itemize}
\end{defn}

Given the data $(W \subset X, \omega,\theta,V)$ described above, we define $\Nef(V) \subset H^2(X,W;\R)$ to be the convex cone spanned by the classes $PD([V_q])$. 
We denote by $\kappa \in H^2(X,W;\R)$ the de Rham cohomology class represented by $[\omega;\theta]$.
We assume that $(W \subset X,\omega,\theta,V)$ satisfies the following conditions:
\begin{enumerate}
\item [] \textbf{(Ample$'$)} The class $\kappa$ lies in the interior of $\Nef(V)$; 
\item []\textbf{(Semipositive$'$)} There exists a class $\tilde{c}_1 \in \Nef(V)$ which is a lift of $c_1(TX) \in H^2(X)$ along $H^2(X,W) \to H^2(X)$. 
\end{enumerate}

Let $J_0$ be an $\omega$-compatible almost-complex structure on $X$, which makes each $V_q$ into an almost complex submanifold, and satisfies a certain convexity condition near the boundary of $W$. (Precisely, there should exist a `convex collar' for $W$, see Definition \ref{def:conv_coll}.)

All holomorphic curves $u: \Sigma \to X$ we consider will have boundary in $W$, and non-negative intersection number with the divisors $V_q$ (Lemma \ref{lem:posint}), so the class $[u]$ will lie in the monoid $\NE(V) \subset H_2(X,W)$ dual to $\Nef(V)$. 

We consider the monoid ring $\tilde{R}(V) := \Z[\NE(V)]$. 
Our assumption that the classes $PD([V_q])$ span $H^2(X,W;\Q)$ implies that $\Nef(V)$ is full-dimensional, and hence $\NE(V)$ is strictly convex.\footnote{Recall that this means $\NE(V) \cap -\NE(V) = \{0\}$. Note that $\NE(V)$ need not be full-dimensional, although it will be in the algebro-geometric context of Section \ref{sec:alg_geom}.}  
This implies that the submodule $\tilde{\fm} \subset \tilde{R}(V)$, spanned by $\nov^\beta$ for $\beta \in \NE(V) \setminus \{0\}$, is an ideal. 
Because $\kappa$ lies in the interior of $\Nef(V)$ by the \textbf{(Ample$'$)} assumption, and $\Nef(V)$, hence $\NE(V)$, are finitely-generated, the $\tilde{\fm}$-adic filtration on $\tilde{R}(V)$ is equivalent to the filtration $F_{\ge \lambda} \tilde{R}(V) = \langle T^\beta\rangle_{\kappa(\beta) \ge \lambda}$; and the completion with respect to either is the ring of infinite series
$$R(V) := \Z\power{\NE(V)} = \left\{\sum_{\beta \in \NE(V)} n_\beta \cdot T^\beta: n_\beta \in \Z\right\}.$$
We denote the completion of $\tilde{\fm}$ by $\fm \subset R(V)$; it is an ideal, and the $\fm$-adic filtration on $R(V)$ is complete. 
We think of $R(V)$ as roughly analogous to the filtered ring of non-negative Novikov series used in \cite{FO3}. 
If $V$ is understood, we will drop it from the notation and write $R:=R(V)$. 

We define the \emph{symplectic relative Fukaya category} $\fuk(W \subset X,\omega,\theta,V,J_0)$ in Section \ref{sec:def}.
It is a $\Z/2$-graded curved filtered $R$-linear $A_\infty$ category (this notion is defined precisely in Section \ref{subsec:curvfilt}), whose objects are exact closed Lagrangian branes in $W$. 
It is independent of all auxiliary choices made in its construction, up to curved filtered quasi-equivalence (this notion is defined precisely in Section \ref{subsec:bc}). 
It is also invariant in the same sense under deformations of the tuple $(W,\omega,\theta)$:

\begin{prop}[cf. Proposition \ref{prop:indep2}]\label{prop:indep_def}
Suppose we have a smoothly-varying family of data $(W_t \subset X,\omega_t,\theta_t,V,J_0)_{t \in [0,1]}$ as above: in particular, $W_t$ remains disjoint from $V$, $J_0$ remains $\omega_t$-compatible, and there exists a smoothly-varying family of convex collars for $(W_t \subset X,J_0)$.
Then there is a curved filtered quasi-equivalence
$$\fuk(W_0 \subset X,\omega_0,\theta_0,V,J_0) \simeq \fuk(W_1 \subset X,\omega_1,\theta_1,V,J_0).$$
\end{prop} 

The following result concerns the dependence of the symplectic relative Fukaya category on the system of divisors $V$:

\begin{prop}[cf. Proposition \ref{prop:indep}]\label{prop:indepV0V1}
If $V_0 \subset V_1$ is a sub-system of divisors, then $\Nef(V_0) \subset \Nef(V_1)$, and there is a curved filtered quasi-equivalence 
\begin{equation}
\label{eq:V0V1}
\fuk(W \subset X,\omega,\theta,V_1,J_0) \otimes_{R(V_1)} R(V_0) \simeq \fuk(W \subset X,\omega,\theta,V_0,J_0).
\end{equation}
\end{prop}

We have not addressed the dependence of the symplectic relative Fukaya category on $J_0$, although we would expect it to be invariant under deformations of $J_0$.

\begin{rmk}
We have not mentioned the \textbf{(Semipositive$'$)} condition yet. 
It ensures that all closed holomorphic curves $u$ satisfy $c_1(u) \ge 0$, which is used to rule out sphere bubbling; this allows us to work over $\Z$, see Section \ref{subsec:whyZ}.
\end{rmk}

\subsection{Algebro-geometric relative Fukaya category}\label{sec:alg_geom}

Let $X$ be a smooth complex projective variety of dimension $n$, and $D \subset X$ a simple normal crossings (\snc) divisor, i.e., a normal crossings divisor whose irreducible components $\{D_p\}_{p \in P}$ are smooth. 
We consider $\mathrm{Div}(X,D) \simeq \Z^P$, the free abelian group of divisors supported on $D$. 
Let $\Nef \subset \mathrm{Div}(X,D)_\Q :=\mathrm{Div}(X,D) \otimes_\Z \Q$ be a rational polyhedral cone---so $\Nef = \{ \sum_{i=1}^s \lambda_i \delta_i  : \lambda_i \geq 0\}$ for a finite set of generators $\delta_i\in \mathrm{Div}(X,D)$---satisfying the following conditions:
\begin{enumerate}
\item [] \textbf{(Ample)} $\Nef$ contains an ample class in its interior; 

\item [] \textbf{(Semiample)} $\Nef$ is contained in the cone of effective semiample divisors supported on $D$;

\item []\textbf{(Semipositive)} $\Nef$ contains a `homological anticanonical divisor', i.e. one homologous to the anticanonical divisor modulo torsion. 
\end{enumerate}

We recall that a $\Q$-divisor $L$ is called semiample if there is a natural number $m$ for which $|m L|$ is a basepoint-free linear system, and that semiample implies nef.  
Declare $(X,D)$ to satisfy the \emph{non-negativity condition} if such a cone $\Nef$ exists.

\begin{rmk}
If $X$ is simply connected, the non-negativity condition implies that $-K_X$ is semiample. Hence $-K_X$ is nef; by \cite{CH}, $X$ is therefore a product $Y\times F$, where $Y$ has trivial canonical bundle and $F$ is rationally connected.
\end{rmk}

We define $\NE \subset \Z^P$ to be the monoid dual to $\Nef$. 
Because $\Nef$ is full-dimensional by the \textbf{(Ample)} condition, we have the ideal $\tilde{\fm} \subset \tilde{R}(\Nef) := \Z[\NE]$ defined as before; and because $\Nef$ is furthermore finitely generated, the $\tilde{\fm}$-adic completion of $\tilde{R}(\Nef)$ is $R(\Nef) = \Z\power{\NE}$ as before.\footnote {The \textbf{(Semiample)} assumption implies that $\Nef$ is strictly convex, and hence that $\NE$ is full-dimensional.}

\begin{defn}
Given $(X,D,\Nef)$ as above, we define \emph{auxiliary symplectic data} to consist of:
\begin{itemize}
\item a subdomain $W \subset X \setminus D$, which is a deformation retract of $X \setminus D$;
\item a K\"ahler form $\omega$ on $X$, and a primitive $\theta$ for $\omega|_W$ which makes $W$ into a Liouville domain, such that $[\omega;\theta] \in H^2(X,W;\R)$ lies in the interior of $\Nef$, under the identification 
$$H^2(X,W;\R) \simeq H^2(X,X \setminus D;\R) \simeq \mathrm{Div}(X,D)_\R.$$
(The first isomorphism exists because $W \subset X \setminus D$ is a deformation retract, the second identifies the class of a divisor $D_p$ in $\mathrm{Div}(X,D)$ with its Poincar\'e dual in $H^2(X,X \setminus D;\R)$.)
\end{itemize}
\end{defn}

We equip $X$ with its integrable complex structure $J_0$.

Using the \textbf{(Ample)} condition, we construct a non-empty path-connected space of auxiliary symplectic data in Section \ref{sec:aux_data}, which come equipped with convex collars. 

Using the \textbf{(Semiample)} assumption, there exists a system of divisors $V \subset X \setminus W$ such that $\Nef(V) = \Nef$ under the above identification, see Section \ref{sec:sod}. 
(The idea is to perturb normal-crossings divisors supported on $D$ to smooth ones, using Bertini's theorem.)

Thus we have the data $(W \subset X,\omega,\theta,V,J_0)$ needed to define the relative Fukaya category. 
The \textbf{(Ample$'$)} assumption is satisfied by construction, and the \textbf{(Semipositive$'$)} assumption is an immediate consequence of the \textbf{(Semipositive)} assumption.

We then define the algebro-geometric relative Fukaya category, 
$$\fuk(X,D,\Nef):=\fuk(W \subset X,\omega,\theta,V,J_0).$$
It is independent of all auxiliary choices made in its construction, up to curved filtered quasi-equivalence. 
In particular, it is independent of the choices of $\omega,\theta,W$, by the invariance of $\fuk(W \subset X,\omega,\theta,V,J_0)$ under deformations of these structures and the path-connectedness of the space of choices. 
To show that it is independent of the choice of the system of divisors $V$, we first observe that if $V$ and $V'$ are different systems of divisors, whose union $V \cup V'$ is transverse and hence defines a system of divisors, we have curved filtered quasi-equivalences
$$\fuk(W \subset X,\omega,\theta,V,J_0) \simeq \fuk(W \subset X,\omega,\theta,V \cup V',J_0) \simeq \fuk(W \subset X,\omega,\theta,V',J_0)$$
by \eqref{eq:V0V1}. 
To treat the general case, it suffices to observe that there exists a perturbation $V''$ of $V$ which is transverse to both $V$ and $V'$, by another application of Bertini's theorem.

More generally, if $\Nef_0 \subset \Nef_1$, a similar argument shows that there is a curved filtered quasi-equivalence 
\begin{equation}
\label{eq:N0N1}
\fuk(X,D,\Nef_1) \otimes_{R(\Nef_1)} R(\Nef_0) \simeq \fuk(X,D,\Nef_0).
\end{equation}

\begin{rmk}
We do not know of a purely symplectic criterion for the existence of a system of divisors analogous to the \textbf{(Semiample)} condition (which comes from algebraic geometry and uses Bertini's theorem). 
\end{rmk}

\begin{rmk}
As pointed out to us by Marco Castronovo, it should be possible to implement the above strategy to define `algebro-geometric' relative Fukaya categories in more general contexts, e.g. when $D$ is not simple normal crossings. One just needs to argue for the existence of auxiliary symplectic data and systems of divisors in the relevant context.
\end{rmk}

\subsection{Properties}

The symplectic relative Fukaya category $\fuk(W \subset X,\omega,\theta,V,J_0)$ is a deformation of the exact Fukaya category of the Liouville domain $W$, as defined in \cite{Seidel:FCPLT}. 
In order to state this result clearly, we briefly outline our conventions concerning the exact Fukaya category $\fuk(W,\theta)$.  
It is a $\Z/2$-graded, $\Z$-linear $A_\infty$ category. 
Its objects are closed, exact Lagrangian submanifolds $L \subset W$, equipped with Pin structures.  
The morphism spaces $hom^*(L_0,L_1)$ are free, finite-rank, $\Z/2$-graded $\Z$-modules generated by the intersection points betwen $L_0$ and a transverse Hamiltonian perturbation of $L_1$. 
The $A_\infty$ structure maps count certain pseudoholomorphic discs $u$ in $W$. 

\begin{prop}\label{prop:5.1}
There is a strict isomorphism 
\[ \fuk(W \subset X,\omega,\theta,V,J_0) \otimes_R R/\fm \simeq \fuk(W,\theta).\]
(This implies that $\fuk(X,D,\Nef)$ satisfies \cite[Assumption 5.1]{Sheridan2016}.)
\end{prop}

In particular, the objects of $\fuk(W \subset X,\omega,\theta,V,J_0)$ are identical to those of $\fuk(W,\theta)$. 
The morphism spaces of $\fuk(W \subset X,\omega,\theta,V,J_0)$ are obtained from those of $\fuk(W,\theta)$ by tensoring with $R$: in particular they are free, finite-rank, $\Z/2$-graded $R$-modules. 
The $A_\infty$ structure maps count certain pseudoholomorphic discs $u$ in $X$, weighted by $\nov^{[u]} \in R$. 
The content of Proposition \ref{prop:5.1} is essentially that all the pseudoholomorphic discs $u$, with $[u] = 0$ in $H_2(X,W)$, are contained in $W$ by the maximum principle (Lemma \ref{lem:maxprin}).

We remark that the algebro-geometric relative Fukaya category $\fuk(X,D,\Nef)$ also satisfies \cite[Assumptions 5.2 and 5.3]{Sheridan2016}, see Section \ref{sec:other_ass} for a brief discussion.

\subsection{Bounding cochains}

For any curved filtered $A_\infty$ category $\EuA$, we can form the associated uncurved $A_\infty$ category $\EuA^\bc$ of c-unital bounding cochains in $\EuA$;  and if two curved filtered $A_\infty$ categories are curved filtered quasi-equivalent, then their associated categories of bounding cochains are quasi-equivalent (see Theorem \ref{thm:cqiso}). 

Let $\BbK$ be a commutative ring,\footnote{All our rings, and homomorphisms thereof, are taken to be unital.} equipped with a complete, exhaustive, decreasing filtration, and a filtered ring homomorphism $R(\Nef) \to \BbK$. 
Then $\fuk(X,D,\Nef) \otimes_{R(\Nef)} \BbK$ is a curved filtered $\BbK$-linear $A_\infty$ category, which is independent of all choices made in its construction up to curved filtered quasi-equivalence. 
In particular, the corresponding category of c-unital bounding cochains is independent of choices up to quasi-equivalence. 

\begin{rmk}
We define an (uncurved) $A_\infty$ functor to be a quasi-equivalence if it induces an equivalence on the level of cohomology, and say that two $A_\infty$ categories are quasi-equivalent if they are connected by a zig-zag of quasi-equivalences (Definition \ref{defn:qeq}). 
This is a `good' equivalence relation if the morphism cochain complexes are cofibrant (see \cite[Section 3.1]{GPS2017}); this is true for example if the coefficient ring $\BbK$ is a field, but we have not checked it in general.  (Note that the argument of \cite[Remark 3.5]{GPS2017} does not apply here, as the action filtration is not a filtration by $R$-submodules.)
\end{rmk}

Using this construction, we can define a category which is comparable with the `usual' Fukaya category of the symplectic manifold $(X,\omega)$. 
Let $(X,D)$ be a pair such that there exists an effective homological anticanonical divisor supported on $D$, and $\kappa \in (\R_{>0})^P$ such that $\sum_p \kappa_p \cdot D_p$ is ample. 
There exists a rational polyhedral cone $\Nef$ satisfying the \textbf{(Ample)}, \textbf{(Semiample)}, and \textbf{(Semipositive)} conditions, and containing $\kappa$ in its interior (because $\kappa$ lies in the interior of the convex cone of effective semiample classes supported on $D$---using the fact that ampleness is an open condition, and ample implies semiample---and the homological anticanonical divisor lies in the same cone).

Let $\Bbbk$ be a commutative ring and $K \subset \R$ a submonoid, such that $\kappa(\NE) \subset K$. 
We define the corresponding Novikov-type ring $\Lambda_{\Bbbk,K}$ to be the completion of the group ring $\Bbbk[K]$ with respect to the filtration $F^a$, for any $a>0$, where $F^a_{\ge \lambda}$ is spanned by the monomials $\nov^{\lambda'}$ with $\lambda' \ge a \cdot \lambda$, for any $\lambda \in \Z_{\ge 0}$.
Then we have a filtered ring homomorphism 
\begin{align*}
R(\Nef) & \to \Lambda_{\Bbbk,K}, \qquad \text{sending}\\
\nov^\beta & \mapsto \nov^{\kappa(\beta)}.
\end{align*}
(We choose $a \le \min_{\beta \in \fm} \kappa(\beta)$, so that the homomorphism is filtered.)

\begin{defn}
We define 
$$
\fuk(X,D,\kappa;\Lambda_{\Bbbk,K}) := \fuk(X,D,\Nef) \otimes_{R(\Nef)} \Lambda_{\Bbbk,K}.
$$
\end{defn}

By our previous remark, $\fuk(X,D,\kappa;\Lambda_{\Bbbk,K})$ is independent of all auxiliary choices made in the definition of $\fuk(X,D,\Nef)$, up to curved filtered quasi-equivalence. It is also independent of $\Nef$ up to curved filtered quasi-equivalence, using equation \eqref{eq:N0N1}, and the fact that the relevant class of cones $\Nef$ is closed under intersection; that is why we omit $\Nef$ from the notation. 
It follows that $\fuk(X,D,\kappa;\Lambda_{\Bbbk,K})^\bc$ is independent of these choices up to quasi-equivalence. 
Note that this $A_\infty$ category is $G$-gapped, in the sense of \cite[Definition 3.2.26]{FO3}, where $G = \kappa(\NE) \subset K$ is discrete because $\NE$ is finitely generated.

\begin{conj}\label{conj:usual_fuk}
Suppose that $K \subset \R$ is a sub\emph{group}, and $(W \subset X,\omega,\theta)$ are auxiliary symplectic data. 
Then there should be an embedding 
$$\fuk(X,D,\kappa;\Lambda_{\Bbbk,K})^\bc \hookrightarrow \fuk(X,\omega;\Lambda_{\Bbbk,K})^\bc,$$
for any reasonable definition of the RHS (see, e.g., \cite{FO3,Fukaya2017} in the case $\Bbbk = \Q$ or $\R$, $K = \R$).
\end{conj}

\begin{rmk}
Of course, we can make the analogous construction and Conjecture in the context of the symplectic relative Fukaya category; but we have not established independence, e.g., of the system of divisors $V$ in that context.
\end{rmk}

\subsection{Why we can work over $\Z$}\label{subsec:whyZ}

In general, the Fukaya category can only be defined over a ring containing $\Q$, because the moduli spaces of holomorphic discs may be orbifolds. The orbifold points arise when the holomorphic disc admits a nontrivial finite symmetry group. A smooth holomorphic disc won't have such a symmetry group; however in general the definition of the Fukaya category involves virtual counts over moduli spaces of stable discs. A stable disc may admit such a symmetry group, arising from symmetries of a spherical component. In order to define the Fukaya category over $\Z$, one must rule out the appearance of such spherical components in the moduli spaces. This can be done when the symplectic manifold is exact \cite{Seidel:FCPLT}, or more generally, spherically positive  \cite{FO3:integers} (i.e., any $J$-holomorphic sphere $u$ with $c_1(u) \le 0$ is constant). 

In this paper we show that the relative Fukaya category can be defined over $\Z\power{\NE}$ when $(X,D)$ satisfies the non-negativity condition. This encompasses the important Calabi--Yau case ($c_1=0$), which is not spherically positive unless the only $J$-holomorphic spheres are constant. The \textbf{(Semipositive$'$)} condition ensures that all $J$-holomorphic spheres $u$ satisfy $c_1(u) \ge 0$. The technique of ruling out such spheres in this case goes back to \cite{Hofer1995}: all relevant moduli spaces of holomorphic discs have dimension $\le 1$. Bubbling off a sphere with $c_1>0$ subtracts at least two from the dimension of the disc moduli space, leaving it with negative dimension and hence empty; on the other hand spheres with $c_1=0$  sweep out a space of codimension $4$, and hence generically avoid all one-dimensional moduli spaces of discs as these sweep out a space of dimension $3$.

\begin{rmk}
A number of authors have studied Lagrangian Floer theory and homological mirror symmetry in finite characteristic in recent years, see e.g.  \cite{Perutz2012,Lekili2015,Lekili2017,Lekili2020}. 
It has not been possible to carry on this study for compact Calabi--Yau varieties of dimension $\ge 3$, because the available constructions of the Fukaya category relied on a characteristic-zero assumption (cf. \cite{FO3,Sheridan:CY,Fukaya2017,CharestWoodward,VWX}). Our construction in this paper removes this obstacle.
\end{rmk}

\begin{rmk}
Until recently, $\Z$-valued Gromov--Witten invariants were only defined under a semipositivity assumption analogous to ours here; the general definition was $\Q$-valued. Recently however, building on work of Abouzaid--McLean--Smith \cite{AMS21}, Bai--Xu \cite{BaiXu} have defined integer-valued genus-zero Gromov--Witten invariants of arbitrary compact symplectic manifolds.
\end{rmk}

\subsection{Applications}
  
Homological mirror symmetry was proved for `generalized Greene--Plesser mirrors' in \cite[Theorem D]{SS:GGP}, contingent on a definition of the Fukaya category satisfying certain properties (e.g., Abouzaid's split-generation criterion). 
This paper provides such a definition, namely $\fuk(X,D,\kappa;\Lambda_{\C,\R})^\bc$, and establishes some of the necessary properties; the remaining ones will be established in future work.\footnote{More precisely, because we only consider embedded Lagrangians in this paper, our construction is only adequate to the purposes of \cite[Theorem D]{SS:GGP} under the `embeddedness condition' [\emph{op. cit.}, Definition 1.3]. Furthermore, the application in \cite{SS:GGP} uses certain gradings on the relative Fukaya category (in particular, a $\Z$-grading on $\fuk(X,D,\kappa;\Lambda_{\C,\R})^\bc$), whereas here we have worked exclusively with $\Z/2$-gradings for simplicity of exposition; however it is routine to endow the relative Fukaya category as defined here with the necessary grading structures, see \cite{Sheridan2016}.}

The proof of homological mirror symmetry for generalized Greene--Plesser mirrors goes through almost verbatim over a coefficient ring $\Lambda_{\Bbbk,K}$ where $\Bbbk$ is a field of characteristic zero and $K \subset \R$ is a subgroup containing the image of $\kappa(\NE)$. The obstruction to extending the proof to the case of finite characteristic is that the deformation theory techniques used in \cite{Sheridan2016}, on which \cite{SS:GGP} depends, use the characteristic-zero assumption in a fundamental way; however this could possibly be circumvented by using a different approach to the deformation theory (for example, \cite[Lemma 3.9]{Seidel:HMSquartic} works for a field of arbitrary characteristic). 
The characteristic-zero assumption is also used in \cite{SS:GGP} in the verification of smoothness of the mirror, but this can presumably be remedied by modifying the `MPCP condition' depending on the characteristic.

It was shown in \cite{Ganatra2015} that homological mirror symmetry for Calabi--Yau varieties implied closed-string (Hodge-theoretic) mirror symmetry. 
The proof required a definition of the relative Fukaya category satisfying certain properties (e.g., the existence of a cyclic open--closed map respecting connections). 
Again, this paper provides the necessary definition, namely $\fuk(X,D,\kappa;\Lambda_{\C,\Z})^\bc$, but the relevant properties are relegated to future work.

\subsection{Relation to other work on relative Fukaya categories}
The idea of constructing a Fukaya category for a smooth projective variety relative to an ample divisor is due to Seidel \cite{Seidel:FukDef}, who writes ``The intended role of [the relative Fukaya category] $\EuF(M \subset X)$ is to interpolate between [the exact category] $\EuF(M)$ ... and the Fukaya category $\EuF(X)$ of the closed symplectic manifold''.  This idea was fully worked out for the case of quartic K3 surfaces in \cite{Seidel:HMSquartic}, which appeared in 2003. In that setting, sphere-bubbling  is essentially a non-issue.  The second-named author constructed higher-dimensional examples in \cite{Sheridan:CY, Sheridan:Fano}, using the geometry of the divisor to control sphere-bubbling. The arguments that exclude sphere bubbles from Gromov closures of low-dimensional  moduli spaces of polygons  depends on the fact that the normal crossings divisor is not merely ample, but has ample components. Relative Fukaya categories, typically with a smooth divisor, also figure in symplectic Picard--Lefschetz theory, as in \cite{Seidel:LefV}. 

Homological mirror symmetry for generalized Greene--Plesser mirrors \cite{SS:GGP} presents a situation where one has a natural choice of normal crossings divisor which supports an ample divisor, but which does not have ample components, and in which the coefficient rings are naturally multivariate.  The present paper allows for such situations. 

We have already mentioned that our use of power series with integer coefficients is one distinctive feature of our approach.  Another, in which we follow a suggestion of the referee, is that our construction is formulated under symplectic hypotheses, not only algebro-geometric ones. A third is the use of auxiliary `systems of divisors'. 

A slightly different use of stabilizing divisors is to construct symplectic invariants by means of Donaldson hypersurfaces. Cieliebak--Monhke \cite{Cieliebak2007}, in work which we draw on for a number of technical points, showed that high-degree Donaldson hypersurfaces could be used to set up Gromov--Witten invariants in a rational symplectic manifold.  Charest--Woodward \cite{CW:floer,CharestWoodward} use a similar mechanism to define the $A_\infty$-algebra of Floer cochains $CF(L,L)$ for a rational Lagrangian $L$.  Their construction works for a large class of symplectic manifolds and a large class of Lagrangians therein---considered one at a time---but does not appear to allow for situations involving the geometry of specific divisors such as those that appear in HMS for generalized Greene--Plesser mirror pairs (\emph{op. cit.}).

\subsection{Outline}

Section \ref{sec:bc} defines the relevant notion of `curved filtered quasi-equivalence' of curved filtered $A_\infty$ categories, gives the construction of the uncurved $A_\infty$ category $\EuA^\bc$ of c-unital bounding cochains on a curved filtered $A_\infty$ category $\EuA$, and proves that if $\EuA$ and $\EuB$ are curved filtered quasi-equivalent then $\EuA^\bc$ and $\EuB^\bc$ are quasi-equivalent. 
Section \ref{sec:targ} introduces the class of almost-complex structures we will work with (namely, those $J$ which preserve a chosen system of divisors, and are sufficiently close to $J_0$). Section \ref{sec:spheres} establishes basic results about the moduli spaces of stable $J$-holomorphic spheres, which will allow us to rule out their bubbling off a moduli space of pseudoholomorphic discs by the argument sketched in Section \ref{subsec:whyZ} above. Section \ref{sec:discs} introduces the relevant moduli spaces of pseudoholomorphic discs. Section \ref{sec:def} gives the definition of the symplectic relative Fukaya category, and proves Proposition \ref{prop:5.1}. Section \ref{sec:indep} proves Proposition \ref{prop:indepV0V1} (independence of the symplectic relative Fukaya category of the auxiliary choices made in its construction, and system of divisors).  
Section \ref{sec:dep2} proves Proposition \ref{prop:indep_def} (independence of deformation of auxiliary symplectic data). 
Section \ref{sec:a_geom} gives the definition of the algebro-geometric relative Fukaya category, and proves its basic properties.

\textbf{Acknowledgments:} T.P. is partially supported by NSF CAREER grant 1455265. 
N.S. is supported by a Royal Society University Research Fellowship, ERC Starting Grant 850713 -- HMS, the Simons Collaboration on Homological Mirror Symmetry, the Leverhulme Prize, and a Simons Investigator award. 
N.S. is grateful to Paul Seidel for very helpful conversations, including suggesting the crucial notion of a system of divisors.  We thank the referee for a close reading, and for encouraging us to formulate our construction in purely symplectic terms.

\section{Curved filtered $A_\infty$ categories} \label{sec:bc}

\subsection{Definition}\label{subsec:curvfilt}

All of our rings will be unital. In this section we will work over $\BbK$, which will be a commutative ring equipped with an exhaustive, complete, decreasing filtration $F_{\ge \bullet} \BbK$ indexed by the integers. 

One key example of such a ring $\BbK$ will be $\Z\power{\NE}$, equipped with the $\fm$-adic filtration. The other key class of examples will be the Novikov-type rings $\Lambda_{\Bbbk,K}$ defined in the Introduction.

We will regularly pass from $\BbK$ to its associated graded ring $\mathrm{Gr}_* \BbK$; for instance, from $\Z\power{\NE}$ to $\mathrm{Gr}_* \Z\power{\NE} \cong \Z [\NE]$.

\begin{defn}
A \emph{curved filtered $\BbK$-linear $A_\infty$ algebra} is a $\Z/2$-graded $\BbK$-module $A$, equipped with an exhaustive, complete, decreasing filtration $F_{\ge \bullet}$, compatible with the module structure in the sense that
$$ F_{\ge \lambda_1} \BbK \cdot F_{\ge \lambda_2} A \subset F_{\ge \lambda_1 + \lambda_2} A.$$
This comes with $\BbK$-linear $A_\infty$ operations 
$$\mu^k: A^{\otimes_\BbK k} \to A$$
for all $k \ge 0$, of degree $2-k$. 
These are required to respect the filtration in the sense that
$$ \mu^k(F_{\ge \lambda_1} A,\ldots,F_{\ge \lambda_k}A) \subset F_{\ge \lambda_1 + \ldots + \lambda_k} A,$$
furthermore have $\mu^0 \in F_{\ge 1} A$, and satisfy the curved $A_\infty$ equations. 
\end{defn}

Associated to a curved filtered $\BbK$-linear $A_\infty$ algebra $A$, there is an uncurved $\Z/2 \oplus \Z$-graded $\mathrm{Gr}_* \BbK$-linear $A_\infty$ algebra $\mathrm{Gr}_*A$ whose underlying $\mathrm{Gr}_* \BbK$-module is the associated graded $\mathrm{Gr}_* A$, with the structure maps induced by those for $A$. 
The structure map $\mu^k$ has degree $(2-k,0)$. 
The notion of c(ohomological)-unitality \cite[Section 2a]{Seidel:FCPLT} makes sense for $\mathrm{Gr}_*A$, as it is uncurved. 
If a c-unit exists, it necessarily has degree $(0,0)$. 

\begin{defn}
We say that $A$ is c-unital if $\mathrm{Gr}_*A$ is.
\end{defn}

We similarly define the notion of a curved filtered $A_\infty$ category, and of c-unitality of such a category. Henceforth we will implicitly assume that our curved filtered $A_\infty$ algebras and categories are c-unital.

\begin{example}
The relative Fukaya category $\fuk(X,D,\Nef)$, equipped with the $\fm$-adic filtration, is a curved filtered $\Z\power{\NE}$-linear $A_\infty$ category. 
The fact that it is c-unital follows from the fact that $\mathrm{Gr}_* \fuk(X,D,\Nef) \simeq \fuk(X \setminus D;\Z) \otimes_\Z \mathrm{Gr}_* \Z\power{\NE}$, and the argument for c-unitality of $\fuk(X \setminus D)$ (see, e.g., \cite[Section 2.4]{Sheridan:Fano}) works with coefficients in an arbitrary commutative ring.
\end{example}

\begin{example}
Let $\Z\power{\NE} \to \BbK$ be a filtered ring homomorphism; then $\fuk(X,D,\Nef) \otimes_{\Z\power{\NE}} \BbK$ is naturally a curved filtered $\BbK$-linear $A_\infty$ category. The argument for c-unitality is the same as before.
\end{example}

\subsection{Bounding cochains and c-unitality}\label{subsec:bc}

\begin{defn}
Let $A$ be a curved filtered $A_\infty$ algebra; we say that $a \in A$ is a \emph{Maurer--Cartan element} if $a \in F_{\ge 1} A$ has degree $1$ and satisfies the Maurer--Cartan equation:
\[ \sum_{k\ge 0} \mu^k(a,\ldots,a) = 0.\]
(Note that the infinite sum converges by completeness.)
\end{defn}

Given a Maurer--Cartan element $a \in A$, we can use it to deform the $A_\infty$ algebra $A$ to an uncurved, not-necessarily-c-unital $A_\infty$ algebra $A^a = (A,\mu^*_a)$, in a standard way (see \cite[Section 2.7]{Sheridan2016}). 

\begin{defn}
We say that a Maurer--Cartan element $a \in A$ is c-unital if $A^a$ is c-unital and furthermore, there exists a cocycle $e^a \in F_{\ge 0}A^a$ representing the c-unit, whose class in $\mathrm{Gr}_0A$ represents the c-unit of $\mathrm{Gr}_*A$.
\end{defn}

\begin{defn}\label{def:cunbc}
Let $\EuA$ be a curved filtered $A_\infty$ category. 
We define an uncurved $A_\infty$ category $\EuA^\bc$ whose objects are pairs $(A,a)$, where $a\in hom^1(A,A)$ is a c-unital Maurer--Cartan element.
The morphism spaces are 
\[ hom_{\EuA^\bc} ((A,a),(B,b)) := hom_\EuA(A,B).\]
The formula for the $A_\infty$ structure maps, which get deformed by the Maurer--Cartan elements, can be found in \cite[Equation (3.20)]{Seidel:FCPLT}; it converges by completeness.
\end{defn}

Since our convention is that all $A_\infty$ categories should be c-unital, the following Lemma is required to show that Definition \ref{def:cunbc} is valid:

\begin{lem}
The $A_\infty$ category $\EuA^\bc$ is c-unital.
 \end{lem}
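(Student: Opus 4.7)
The plan is to show that the associated graded $A_\infty$ category $\mathrm{Gr}_* \EuA^\bc$ already coincides with $\mathrm{Gr}_* \EuA$ on morphism spaces and their $A_\infty$ structure, so that c-unitality is inherited from the (assumed) c-unitality of $\EuA$ itself.

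First I would unpack the formula for $\mu^k_{\EuA^\bc}$ from \cite[Equation (3.20)]{Seidel:FCPLT}. Applied to inputs $x_1,\dots,x_k$ in the morphism spaces, it has the schematic shape
\[ \mu^k_{\EuA^\bc}(x_k,\dots,x_1) \;=\; \sum_{i_0,\dots,i_k\geq 0} \mu^{k+i_0+\cdots+i_k}_\EuA(a_k^{i_k},x_k,a_{k-1}^{i_{k-1}},\dots,x_1,a_0^{i_0}),\]
where the $a_j$'s are the Maurer--Cartan elements labeling the objects. Because each $a_j$ lies in $F_{\ge 1}$, any term with $\sum_j i_j > 0$ strictly raises the filtration degree relative to the $(i_0,\dots,i_k) = 0$ term. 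Hence, after passing to associated graded, only the $i_j = 0$ term survives at a given filtration level, giving the identity
\[ \mathrm{Gr}_* \mu^k_{\EuA^\bc} \;=\; \mathrm{Gr}_* \mu^k_\EuA\]
on the corresponding morphism spaces $\mathrm{Gr}_* hom_{\EuA^\bc}((A,a),(B,b)) = \mathrm{Gr}_* hom_\EuA(A,B)$. In particular the underlying chain complexes (i.e.\ with respect to $\mathrm{Gr}_* \mu^1$) and the compositions $\mathrm{Gr}_* \mu^2$ agree with those of $\mathrm{Gr}_* \EuA$.

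Next I would use the c-unitality of the Maurer--Cartan elements built into the definition of objects of $\EuA^\bc$: for each object $(A,a)$ the definition provides a cocycle $e^a \in F_{\ge 0} hom_{\EuA^\bc}((A,a),(A,a))$ whose image in $\mathrm{Gr}_0$ equals the c-unit $e_A$ of $\mathrm{Gr}_* \EuA$ at $A$. Combined with the previous step, this element represents the cohomology class $[e_A]$ in the morphism space of $\mathrm{Gr}_* \EuA^\bc$. Since $\mathrm{Gr}_* \mu^2$ is identical on both sides, the c-unit identities for $e_A$ in $\mathrm{Gr}_* \EuA$ imply the corresponding c-unit identities for $[e_A]$ viewed as an element of $\mathrm{Gr}_* \EuA^\bc$. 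Thus $\mathrm{Gr}_* \EuA^\bc$ is c-unital, which by definition means $\EuA^\bc$ is c-unital.

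There is not really a substantive obstacle; the work consists in verifying that the defining condition of c-unital Maurer--Cartan element was set up precisely to make the images of the $e^a$ under the projection to associated graded fit the pre-existing c-units of $\mathrm{Gr}_* \EuA$. The only technical care needed is in the bookkeeping that all the extra terms in the deformed $A_\infty$ operations involve at least one Maurer--Cartan insertion and so land in strictly higher filtration, which is immediate from $a \in F_{\ge 1}$ and the compatibility of $\mu^k_\EuA$ with the filtration.
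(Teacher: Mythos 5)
Your argument proves the wrong statement. What you establish is c-unitality of the associated graded category $\mathrm{Gr}_*\EuA^\bc$, which is essentially immediate (your computation that every Maurer--Cartan insertion lands in strictly higher filtration, so that $\mathrm{Gr}_*$ of the deformed operations agrees with $\mathrm{Gr}_*\EuA$, is correct and is exactly the paper's $E_1$-page identification). But $\EuA^\bc$ is an \emph{uncurved} $A_\infty$ category, and the lemma asserts c-unitality in the ordinary cohomological sense: the classes $[e^a]$ must act as identities on $hom_{H^*(\EuA^\bc)}((A,a),(B,b))$ over the complete filtered ring $\BbK$. The convention ``c-unital iff $\mathrm{Gr}_*$ is c-unital'' is the paper's definition for \emph{curved filtered} categories, where honest cohomology is unavailable because of the curvature; it is not the notion being verified here, and the later results (e.g.\ Theorem \ref{thm:cqiso}, where quasi-isomorphism of objects of $\EuB^\bc$ is deduced from equations like $[g]\bullet[f]=[e^y]$) genuinely need units in $H^*(\EuA^\bc)$ itself, not in its associated graded.

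The missing content is the passage from the associated graded level to actual cohomology, and this is where the real work lies. A filtered map inducing the identity on the $E_1$-page need not be an isomorphism on $H^*$ without convergence hypotheses; the paper invokes the Eilenberg--Moore comparison theorem, using that the filtration is exhaustive and complete, to conclude that $[e^a]\bullet(-)$ is an isomorphism on $hom_{H^*(\EuA^\bc)}((A,a),(B,b))$ for possibly \emph{different} objects $(A,a)$ and $(B,b)$. Even then one only knows it is an isomorphism, not the identity; the final step uses $[e^a]\bullet[e^a]=[e^a]$, which holds because $e^a$ is by definition a c-unit for the deformed endomorphism algebra $A^a$, so that injectivity of $[e^a]\bullet(-)$ forces $[e^a]\bullet f=f$ for all $f$ (and symmetrically for $(-)\bullet[e^b]$). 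Your proposal contains neither the spectral sequence comparison nor the idempotent argument, so it does not reach the statement that is actually needed.
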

 \begin{proof}
By definition, each object $(A,a)$ comes equipped with $e^a \in F_{\ge 0}hom_{\EuA^\bc}((A,a),(A,a))$ which represents a c-unit for that algebra, and whose class in $\mathrm{Gr}_0 hom_{\EuA}(A,A)$ represents a c-unit for $hom_{\mathrm{Gr}_*\EuA}(A,A)$. 
It suffices to show that the maps
\[ hom_{H^*(\EuA^\bc)}((A,a),(B,b)) \xrightarrow{[e^a] \bullet (-)} hom_{H^*(\EuA^\bc)}((A,a),(B,b)) \xleftarrow{(-)\bullet [e^b]}  hom_{H^*(\EuA^\bc)}((A,a),(B,b))
\]
are equal to the identity.
 
We will focus on the leftmost map, as the argument for the rightmost one is identical. 
We start by showing that the map is an isomorphism. 
This follows from a spectral sequence comparison argument, for the spectral sequence induced by the filtration. 
Indeed, the map induced on the $E_1$ page is
\[ hom_{H^*(\mathrm{Gr}_*\EuA)}(A,B) \xrightarrow{[e^a] \bullet (-)}  hom_{H^*(\mathrm{Gr}_*\EuA)}(A,B).\] 
This map is an isomorphism, in fact the identity, because $[e^a]$ represents the c-unit in $\mathrm{Gr}_*\EuA$. 
Since the filtration is exhaustive and complete, the Eilenberg--Moore comparison theorem \cite[Theorem 5.5.11]{Weibel1994} implies that the original map $[e^a]\bullet(-)$ is an isomorphism.

Now observe that $[e^a]\bullet[e^a] = [e^a]$, because $e^a$ is a c-unit for $H^*(hom_{\EuA^\bc}((A,a),(A,a)))$ by definition. 
For any $f \in  hom_{H^*(\EuA^\bc)}((A,a),(B,b))$, we have
\[
[e^a] \bullet \left([e^a] \bullet f - f\right) = [e^a] \bullet f - [e^a]\bullet f = 0,\]
and therefore $[e^a] \bullet f - f = 0$, because $[e^a]\bullet(-)$ is an isomorphism by our previous argument, and in particular injective. 
This completes the proof.
 \end{proof}

\subsection{Filtered curved quasi-equivalence}

\begin{defn}
A curved filtered $A_\infty$ functor between two curved filtered $A_\infty$ categories is a curved $A_\infty$ functor $G$, respecting the filtration in the obvious way, having curvature $G^0 \in F_{\ge 1}$, such that the uncurved $A_\infty$ functor $\mathrm{Gr}_*G$ is c-unital. 
(Note that the curved $A_\infty$ functor equations converge by completeness.)
\end{defn}

\begin{defn}\label{defn:qeq}
We say that an uncurved $\Z/2 \oplus \Z$-graded $A_\infty$ functor is a quasi-equivalence if it induces an equivalence on the level of $\Z/2 \oplus \Z$-graded cohomological categories.\footnote{Note that the grading plays a role in the notion of isomorphism of objects, and hence in the notion of essential surjectivity. Specifically, a quasi-isomorphism is required to have degree $(0,0)$.}   
We say that two uncurved $\Z/2\oplus \Z$-graded $A_\infty$ categories are quasi-equivalent if they are connected by a zig-zag of quasi-equivalences.
\end{defn}

\begin{defn}
A curved filtered $A_\infty$ functor $G$ is called a curved filtered quasi-equivalence if $\mathrm{Gr}_*G$ is a $\Z/2\oplus \Z$-graded quasi-equivalence. 
We say that two curved filtered $A_\infty$ categories are curved filtered quasi-equivalent if they are connected by a zigzag of curved filtered quasi-equivalences.
\end{defn}

A curved filtered $A_\infty$ functor $G:\EuA \to \EuB$ induces a not-necessarily-c-unital uncurved $A_\infty$ functor $G^\bc$, which maps from $\EuA^\bc$ to the not-necessarily-c-unital category of bounding cochains on $\EuB$, cf. \cite[Section 2.7]{Sheridan2016}. 

\begin{thm}\label{thm:cqiso}
If $G:\EuA \to \EuB$ is a curved filtered quasi-equivalence, then $G^\bc$ defines an uncurved $A_\infty$ functor $\EuA^\bc \to \EuB^\bc$ (in particular, the image of a c-unital bounding cochain under $G^\bc$ is c-unital). 
Furthermore, this functor is a quasi-equivalence.
\end{thm}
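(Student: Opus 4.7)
The proof splits into showing that $G^\bc$ preserves c-unital Maurer--Cartan elements (so it descends to a functor $\EuA^\bc \to \EuB^\bc$), that it is cohomologically full and faithful, and that it is essentially surjective. For the first piece, let $a$ be a c-unital Maurer--Cartan element with chain-level c-unit $e^a$, and set $e := G^1_a(e^a) \in F_{\ge 0} hom_\EuB(GA,GA)$, where $G^1_a(x) := \sum_{k,\ell \ge 0} G^{k+1+\ell}(a^{\otimes k}, x, a^{\otimes \ell})$ is the linear component of the deformed $A_\infty$-morphism $G_a: (hom_\EuA(A,A), \mu^*_a) \to (hom_\EuB(GA,GA), \mu^*_{G^\bc a})$ induced by $G$. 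The curved $A_\infty$-functor equations for $G$ translate, after evaluation at $a$, into $G^1_a$ being a chain map between these deformed complexes, so $e$ is a cocycle; its $\mathrm{Gr}_0$ class is $\mathrm{Gr}_* G^1(e^a)$, which represents the c-unit of $\mathrm{Gr}_* hom_\EuB(GA,GA)$ because $\mathrm{Gr}_* G$ is c-unital. The arity-two $A_\infty$-functor equation for $G_a$, applied to the cocycles $e^a$ and $f$, gives $[e] \bullet [G^1_a(f)] = [G^1_a(f)]$ at the cohomology level; combined with the symmetric identity and the cohomological surjectivity of $G^1_a$ (established below), this shows $[e]$ is a two-sided c-unit, hence $G^\bc a$ is c-unital.

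Cohomological fullness and faithfulness is a direct variant of the Eilenberg--Moore argument used in the previous lemma. Because $a, a' \in F_{\ge 1}$, the deformed differential $\mu^1_{a,a'}$ agrees with $\mu^1$ modulo higher filtration, and similarly $G^1_{a,a'}$ agrees with $G^1$ modulo higher filtration; therefore $\mathrm{Gr}_* G^1_{a,a'} = \mathrm{Gr}_* G^1$, which is a quasi-isomorphism by the definition of curved filtered quasi-equivalence. The Eilenberg--Moore comparison theorem then upgrades this to a filtered quasi-isomorphism $G^1_{a,a'}$ itself.

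Essential surjectivity is the main work, and is where I expect the principal technical difficulty to lie. Given $(B,b) \in \EuB^\bc$, essential surjectivity of $\mathrm{Gr}_* G$ furnishes $A \in \EuA$ together with a cocycle $f_0 \in hom_{\mathrm{Gr}_*\EuB}(GA,B)$ whose class is invertible in $H^*(\mathrm{Gr}_*\EuB)$. I would then proceed in two stages. First, by induction on the filtration, construct a c-unital Maurer--Cartan element $b' \in F_{\ge 1} hom_\EuB(GA,GA)$ together with a degree-zero lift $\tilde f \in F_{\ge 0} hom_\EuB(GA,B)$ of $f_0$ that is $\mu^1_{b',b}$-closed; at the $k$-th stage, the obstruction to extending modulo $F_{\ge k+1}$ is a $\mathrm{Gr}_* \mu^1$-cocycle class in $\mathrm{Gr}_k$ that can be cancelled by adjusting $b'$ and $\tilde f$ using the cohomological invertibility of $[f_0]$. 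A spectral-sequence comparison starting from $[f_0]$ then gives that $[\tilde f]$ is an isomorphism $(GA,b') \simeq (B,b)$ in $\EuB^\bc$. Second, applying the fact from the previous paragraph that $G^1$ is a filtered quasi-isomorphism, a parallel obstruction-theoretic induction lifts $b'$ to a Maurer--Cartan element $a$ on $A$ with $G^\bc a$ gauge-equivalent to $b'$; c-unitality of $a$ follows from the first paragraph, and composing gives $G^\bc(A,a) \simeq (GA,b') \simeq (B,b)$ in $\EuB^\bc$. Both inductive arguments are standard but careful variants of the folk theorem that a filtered $A_\infty$-quasi-isomorphism of c-unital algebras induces a bijection on c-unital gauge equivalence classes of Maurer--Cartan elements, and they are the only place where the complete, exhaustive nature of the filtration plays an essential role.
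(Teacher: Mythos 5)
Your overall structure (preservation of c-unitality, fully faithful via a spectral-sequence comparison, then essential surjectivity as the main work) matches the paper, and the fully-faithful step is argued identically; but your essential-surjectivity argument takes a genuinely different route. You propose a two-stage induction: first transfer the bounding cochain $b$ across the quasi-isomorphism $f_0$ to get a c-unital Maurer--Cartan element $b'$ on $GA$ inside $\EuB$, then lift $b'$ through $G$ to an element $a$ on $A$ using the folk theorem that a filtered quasi-isomorphism induces a bijection on gauge-equivalence classes of Maurer--Cartan elements, finishing with ``gauge-equivalent implies isomorphic in $\EuB^\bc$''. The paper instead performs a \emph{single} obstruction-theoretic induction in the triangle algebra $\mathcal{T}(M)$ of Definition \ref{defn:tm}, taking $A=hom_\EuA(X,X)$, $B=hom_\EuB(Y,Y)$ and, crucially, the \emph{pulled-back} bimodule $M=(G\otimes\id)^*hom_\EuB(GX,Y)$: Lemma \ref{lem:mcdef} then produces the Maurer--Cartan element $a$ on $X$ in $\EuA$ and the closed morphism $m$ lifting $f_0$ simultaneously, and transfers c-unitality through the strict projections $\pi_{A^a},\pi_{B^b}$. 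What the paper's route buys is that gauge equivalence, and the Maurer--Cartan bijection theorem, never enter; your route is more modular but requires you to actually develop those two ingredients (neither is set up in the paper's framework), including the statement that gauge-equivalent c-unital bounding cochains on $GA$ give quasi-isomorphic objects of $\EuB^\bc$. One small correction: in your second stage you say c-unitality of $a$ ``follows from the first paragraph'', but your first paragraph proves the forward direction (the image of a c-unital element is c-unital), whereas here you need the reverse implication, that c-unitality descends along the filtered quasi-isomorphism $G^1_a$ from $G^\bc a$ (gauge-equivalent to $b'$) back to $a$; this does hold, by pulling back a representative of the c-unit through the cohomology isomorphism and comparing classes in $\mathrm{Gr}_0$ using c-unitality and full faithfulness of $\mathrm{Gr}_*G$, but it is a separate argument that you should supply.
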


The proof of Theorem \ref{thm:cqiso} will be given in Section \ref{subsec:cqiso}, but we record here an immediate corollary:

\begin{cor}
If $\EuA$ and $\EuB$ are curved filtered quasi-equivalent, then $\EuA^\bc$ and $\EuB^\bc$ are quasi-equivalent.
\end{cor}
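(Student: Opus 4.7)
The plan is to apply Theorem \ref{thm:cqiso} termwise along a zigzag. Unpacking the hypothesis using the definition of curved filtered quasi-equivalence preceding Theorem \ref{thm:cqiso}, the assumption that $\EuA$ and $\EuB$ are curved filtered quasi-equivalent produces a finite chain of curved filtered $A_\infty$ categories $\EuA = \EuC_0, \EuC_1, \dots, \EuC_n = \EuB$ linked by curved filtered quasi-equivalences $G_i$ running between consecutive entries, in alternating directions.

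Next, I would apply Theorem \ref{thm:cqiso} to each $G_i$: the theorem produces an uncurved $A_\infty$ functor $G_i^\bc$ between the corresponding categories of c-unital bounding cochains, in the same direction as $G_i$, and asserts that $G_i^\bc$ is itself a quasi-equivalence. Concatenating these functors yields a zigzag of uncurved quasi-equivalences connecting $\EuA^\bc = \EuC_0^\bc$ to $\EuB^\bc = \EuC_n^\bc$, which by Definition \ref{defn:qeq} is precisely the statement that $\EuA^\bc$ and $\EuB^\bc$ are quasi-equivalent.

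There is no real obstacle here; all the substance lies in Theorem \ref{thm:cqiso}, which promotes a single curved filtered quasi-equivalence to an ordinary quasi-equivalence of the associated bounding-cochain categories. The corollary merely records that this promotion respects zigzags, because `quasi-equivalent' is defined in both the curved filtered and uncurved settings as the zigzag closure of the corresponding notion of quasi-equivalence.
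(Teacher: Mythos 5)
Correct, and this is exactly the paper's intended argument: the paper labels the corollary "immediate" after Theorem \ref{thm:cqiso}, and your unpacking of the zigzag definitions on both the curved filtered and uncurved sides, applying the theorem term by term, is the only reasonable way to make that "immediate" explicit.
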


\subsection{The triangle algebra}

In this section we introduce a well-known construction (see \cite[Section 4.5]{Keller:DIH}) which will be used in the proof of Theorem \ref{thm:cqiso}. 

\begin{defn}\label{defn:tm} 
Given $A_\infty$ algebras $A$, $B$, and an $A$-$B$ bimodule $M$, we can form a new $A_\infty$ algebra 
\[ \mathcal{T}(M) := A \oplus M[-1] \oplus B\]
by defining
\[ \mu^k(c_1,\ldots,c_k) := \left\{\begin{array}{ll}
									\mu^k_A(c_1,\ldots,c_k) & \text{if all $c_i \in A$;} \\
									\mu^k_B(c_1,\ldots,c_k) & \text{if all $c_i \in B$;} \\
									\mu^{i|1|j}_M(c_1,\ldots,c_{i-1};c_i;c_{i+1},\ldots,c_k) & \text{whenever the expression makes sense;} \\
									0 & \text{otherwise}.
									\end{array}\right.\]
We can think of elements of $\mathcal{T}(M)$ as arranged in a lower-triangular two-by-two matrix:
\[ \mathcal{T}(M) = \left(\begin{array}{cc}
						A & 0 \\ M & B \end{array} \right).\]
\end{defn}

\begin{defn}\label{defn:tmf}
Let $M$ be an $A$-$B$ bimodule, and $f \in M[-1] \subset \mathcal{T}(M)$ have degree $1$. 
Observe that the Maurer--Cartan equation for $f$ reduces to $\mu^{0|1|0}(f)=0$, i.e., the equation for $f$ to be closed. 
Thus we can form the $A_\infty$ algebra $\mathcal{T}(M)^f$, by deforming $\mathcal{T}(M)$ with the Maurer--Cartan element $f$. 
It is clear that there is no issue with convergence, as $f$ can appear at most once in any $A_\infty$ product. 
It is clear that the projections to $A$ and $B$ define strict $A_\infty$ homomorphisms 
\begin{equation}
\label{eq:projAB} A \xleftarrow{\pi_A} \mathcal{T}(M)^f \xrightarrow{\pi_B} B.
\end{equation}
\end{defn}

\begin{defn}
If $M$ is an $A$-$B$ bimodule, we say that $f \in M$ is a \emph{quasi-isomorphism} if $\mu^{0|1|0}(f)=0$, and the chain maps
\[ A \xrightarrow{\mu^{1|1|0}(-;f;)} M \xleftarrow{\mu^{0|1|1}(;f;-)} B \]
are quasi-isomorphisms of cochain complexes. 
\end{defn}

\begin{lem}\label{lem:projqiso}
If $f \in M$ is a quasi-isomorphism, then the projections \eqref{eq:projAB} both induce isomorphisms on cohomology; in particular, $A$ is quasi-isomorphic to $B$.
\end{lem}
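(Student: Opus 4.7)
The plan is to identify the underlying cochain complex of $\mathcal{T}(M)^f$ with (a version of) a double mapping cone, and then to recognize the kernels of the two projections as the mapping cones of the bimodule multiplication-by-$f$ maps, which are acyclic by hypothesis.

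First I would compute $\mu^1_f$ on $\mathcal{T}(M)^f$ explicitly. Since $f \in M[-1]$, the defining ``at most one $M$-input'' property of $\mathcal{T}(M)$ means at most one $f$ can ever be inserted into any $A_\infty$ product, so convergence is automatic and the calculation is entirely combinatorial. The upshot is that, with respect to the decomposition $A \oplus M[-1] \oplus B$, the differential $\mu^1_f$ takes the block form
\[
\mu^1_f \;=\; \begin{pmatrix} \mu^1_A & 0 & 0 \\ \alpha & \mu^{0|1|0}_M & \beta \\ 0 & 0 & \mu^1_B \end{pmatrix},
\]
where $\alpha(a) := \mu^{1|1|0}_M(a;f;)$ and $\beta(b) := \mu^{0|1|1}_M(;f;b)$ are precisely the two chain maps appearing in the definition of ``$f$ is a quasi-isomorphism''. (Signs and the degree shift on $M[-1]$ adjust the entries but not the block structure.)

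Next, I would observe that $\pi_A$ fits into a short exact sequence of cochain complexes
\[
0 \to \bigl(M[-1] \oplus B, \mu^1_f|_{\cdot}\bigr) \to \mathcal{T}(M)^f \xrightarrow{\pi_A} A \to 0,
\]
and that the subcomplex on the left is, up to a shift, exactly the mapping cone of $\beta \colon B \to M$. By hypothesis $\beta$ is a quasi-isomorphism, so this cone is acyclic; the long exact sequence then gives that $\pi_A$ induces an isomorphism on cohomology. An identical argument with $\pi_B$ and the subcomplex $A \oplus M[-1]$ (the mapping cone of $\alpha$) shows that $\pi_B$ is a quasi-isomorphism. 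Composing one with the inverse of the other on cohomology gives the quasi-isomorphism $A \simeq B$.

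The main thing to be careful about is the first step: one has to verify that no ``higher'' insertions of $f$ appear in $\mu^1_f$, which relies crucially on the fact that $\mathcal{T}(M)$ annihilates any input sequence containing two or more elements of $M$. Beyond that, tracking the degree shift on $M[-1]$ and the sign $-\mu^{0|1|0}_M$ versus $\mu^{0|1|0}_M$ induced by the $[-1]$ shift is a bookkeeping exercise; it does not affect the cone interpretation, since a cone of a quasi-isomorphism is acyclic regardless of the overall sign of its differential. No further subtlety is expected.
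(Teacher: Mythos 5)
Your proposal is correct and follows essentially the same route as the paper: the paper's proof likewise identifies the kernel of each projection as the mapping cone of the corresponding multiplication-by-$f$ map ($\ker\pi_B = \mathrm{Cone}(A \xrightarrow{\mu^{1|1|0}(-;f;)} M)$, and symmetrically for $\pi_A$), which is acyclic since $f$ is a quasi-isomorphism. Your explicit block-form computation of $\mu^1_f$ and the short/long exact sequence step are just the details the paper leaves implicit.
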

\begin{proof}
The kernel of $\pi_B$ is the cochain complex
\[ Cone \left(A \xrightarrow{\mu^{1|1|0}(-;f;)} M\right)\]
which is acyclic, because $f$ is a quasi-isomorphism. Similarly for $\pi_A$.
\end{proof}
		
\begin{cor}
Suppose that $X$ and $Y$ are quasi-isomorphic objects in an $A_\infty$ category (i.e., they become isomorphic objects in the cohomological category). 
Then the $A_\infty$ algebras $hom(X,X)$ and $hom(Y,Y)$ are quasi-isomorphic.
\end{cor}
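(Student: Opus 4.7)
The plan is to apply Lemma \ref{lem:projqiso} with $A := hom(X,X)$, $B := hom(Y,Y)$, and $M$ the hom-space between $X$ and $Y$, with the bimodule orientation fixed so that the partial $A_\infty$ operations of the ambient category restrict to give $M$ an $A$-$B$ bimodule structure for which the triangle construction of Definition \ref{defn:tm} yields an honest $A_\infty$ algebra. Using the hypothesis that $X$ and $Y$ are quasi-isomorphic, I would pick a closed morphism $f$ between $X$ and $Y$ whose cohomology class is an isomorphism in the cohomological category, together with a closed morphism $g$ in the opposite direction representing its inverse. Viewed as an element of $M[-1]$, the morphism $f$ acquires degree $1$ and, by closedness, satisfies the reduced Maurer--Cartan equation $\mu^{0|1|0}(f)=0$, so the deformed triangle algebra $\mathcal{T}(M)^f$ of Definition \ref{defn:tmf} is defined.

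Next I would verify that $f$ is a quasi-isomorphism in the sense of the definition preceding Lemma \ref{lem:projqiso}, i.e., that the chain maps
\[ A \xrightarrow{\mu^{1|1|0}(-;f;)} M \xleftarrow{\mu^{0|1|1}(;f;-)} B \]
induce isomorphisms on cohomology. Both maps are one-sided compositions with $f$, and they descend on cohomology to multiplication by $[f]$ in the cohomological category. Since $[f]$ is invertible with inverse $[g]$, the cohomological composition $[g] \bullet (-)$ (resp.\ $(-) \bullet [g]$) provides a two-sided inverse, using strict associativity of the cohomological product. Hence both are quasi-isomorphisms of cochain complexes, and $f$ is a quasi-isomorphism in the required sense.

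With these hypotheses in place, Lemma \ref{lem:projqiso} applies directly and produces a zig-zag of strict $A_\infty$ quasi-isomorphisms
\[ hom(X,X) \xleftarrow{\pi_A} \mathcal{T}(M)^f \xrightarrow{\pi_B} hom(Y,Y), \]
which is exactly the claim. The only non-formal step is the bookkeeping check at the beginning — fixing whether $M$ is $hom(X,Y)$ or $hom(Y,X)$ and checking that the resulting bimodule structure is the one matching Definition \ref{defn:tm} — but no genuine obstacle arises: once conventions are consistently chosen, the corollary follows by a direct invocation of Lemma \ref{lem:projqiso}.
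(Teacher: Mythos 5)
Your proposal is correct and takes essentially the same route as the paper: the paper's proof also invokes Lemma \ref{lem:projqiso} with $A=hom(X,X)$, $B=hom(Y,Y)$, $M=hom(X,Y)$, and $f$ a cochain-level representative of the cohomological isomorphism. The extra verification you include — that one-sided composition with $f$ is an isomorphism on cohomology because $[f]$ is invertible — is exactly the implicit content of the paper's one-line argument, so there is nothing further to add.
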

\begin{proof}
This follows from Lemma \ref{lem:projqiso}, where we take $A = hom(X,X)$, $B=hom(Y,Y)$, $M=hom(X,Y)$, and $f \in M$ a cochain-level representative of the isomorphism between $X$ and $Y$ in the cohomological category.
\end{proof}

\subsection{Proof of Theorem \ref{thm:cqiso}}
\label{subsec:cqiso}

Let $A$ and $B$ be curved filtered $A_\infty$ algebras and $M$ a filtered $A$-$B$ bimodule. 
If $a \in A$ and $b \in B$ are solutions of the respective Maurer--Cartan equations, we denote by $A^a$ and $B^b$ the corresponding uncurved $A_\infty$ algebras; we can also define an $A^a$-$B^b$ bimodule $^a M ^b$ in a similar way. 

Note that $\mathrm{Gr}_*M$ is a $\Z/2\oplus\Z$-graded $\mathrm{Gr}_* A$-$\mathrm{Gr}_*B$ bimodule. 

\begin{lem}\label{lem:mcdef}
Suppose that $m_0 \in \mathrm{Gr}_0 M$ is a quasi-isomorphism, and $b \in B$ a solution of the Maurer--Cartan equation. 
Then there exists $a \in A$ a solution of the Maurer--Cartan equation, and a closed element $m \in F_{\ge 0}{}^a M^b$ representing $m_0$. 
If $b$ is c-unital, then so is $a$.
\end{lem}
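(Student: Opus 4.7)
The plan is to construct $a$ and $m$ together by a simultaneous obstruction-theoretic induction along the filtration, using the triangle algebra of Definition \ref{defn:tm} to package the two obstructions into a single class that, crucially, lies in an acyclic subcomplex thanks to the quasi-isomorphism property of $m_0$.

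First, fix a lift $\tilde m_0 \in F_{\ge 0} M$ of $m_0$ and set $a^{(0)} := 0$, $m^{(0)} := \tilde m_0$. I will inductively produce $a^{(n)} \in F_{\ge 1} A$ and $m^{(n)} \in F_{\ge 0} M$, agreeing with $a^{(n-1)}, m^{(n-1)}$ modulo $F_{\ge n}$, such that
\[
\mathcal{O}^A_n := \sum_k \mu^k_A\bigl(a^{(n)}, \ldots, a^{(n)}\bigr) \in F_{\ge n+1} A, \qquad
\mathcal{O}^M_n := \sum_{i,j} \mu^{i|1|j}_M\bigl(a^{(n)}, \ldots;\, m^{(n)};\, b, \ldots\bigr) \in F_{\ge n+1} M.
\]
The base case $n = 0$ holds because $\mu^0 \in F_{\ge 1} A$, $b$ satisfies the Maurer--Cartan equation, and $m_0$ is closed in $\mathrm{Gr}_* M$; by completeness, the limits $(a, m)$ will satisfy the desired equations exactly.

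For the inductive step I would pass to the associated-graded uncurved triangle algebra $\mathcal{T} := \mathcal{T}(\mathrm{Gr}_* M)^{m_0}$, the deformation by the Maurer--Cartan element $m_0 \in \mathrm{Gr}_0 M$ (a valid MC element, since $m_0$ is closed). The standard consequence of the curved $A_\infty$ relations applied to the near-Maurer--Cartan triple $(a^{(n)}, m^{(n)}, b)$ in the curved filtered $\mathcal{T}(M)$ shows that the combined obstruction $\mathcal{O}_n := ([\mathcal{O}^A_n], [\mathcal{O}^M_n], 0) \in \mathrm{Gr}_{n+1} \mathcal{T}$ is $\mu^1_\mathcal{T}$-closed. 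Its $B$-component vanishes, so it lies in the subcomplex $K := \ker \pi_B$; by Lemma \ref{lem:projqiso}, applied with $m_0$ as the chosen quasi-isomorphism, $\pi_B$ is a quasi-isomorphism, so $K$ is acyclic. Choose $(\alpha, \nu, 0) \in \mathrm{Gr}_{n+1} K$ with $\mu^1_K(\alpha, \nu, 0) = -\mathcal{O}_n$, lift to $F_{\ge n+1}$, and add to $(a^{(n)}, m^{(n)})$ to obtain $(a^{(n+1)}, m^{(n+1)})$.

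For the c-unitality claim, the triple $(a, m, b)$ assembles into an uncurved $A_\infty$ algebra $\mathcal{T}(M)^{(a,m,b)}$ (convergence is automatic since $m$ appears at most once in any $A_\infty$ product), whose associated graded is $\mathcal{T}$, on which both strict projections $\pi_A, \pi_B$ are quasi-isomorphisms. The Eilenberg--Moore-type comparison already used in this paper then upgrades $\pi_A, \pi_B$ to filtered quasi-isomorphisms, showing $A^a$ and $B^b$ are quasi-isomorphic; hence c-unitality of $B^b$ implies that of $A^a$ at the cohomological level. To produce the specific cocycle representative $e^a \in F_{\ge 0} A^a$ with the correct class in $\mathrm{Gr}_0 A$ required by the definition, I would transport the cocycle $e^b \in F_{\ge 0} B^b$ through the zig-zag and correct its $F_{\ge n}$-defects by a second filtered induction whose obstructions again lie in the acyclic $K$. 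The main obstacle I anticipate is precisely this transfer: assembling a zig-zag quasi-inverse that preserves the filtration $F_{\ge 0}$ and yields a genuine cocycle (rather than a cochain with controlled defect) requires careful bookkeeping beyond the obstruction argument proper, whereas the purely algebraic construction of $(a, m)$ follows a well-trodden path once the correct acyclic subcomplex $K$ has been identified.
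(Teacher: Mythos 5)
Your proposal is correct and follows essentially the same route as the paper: package $(a,m,b)$ as a lower-triangular Maurer--Cartan problem in the triangle algebra $\mathcal{T}(M)$, kill the order-by-order obstruction (which is $\mu^1$-closed and has vanishing $B$-component) inside the acyclic complex $\ker \pi_B$ furnished by Lemma \ref{lem:projqiso}, and deduce c-unitality from the strict projections $A^a \leftarrow \mathcal{T}(M)^{(a,m,b)} \rightarrow B^b$ being filtered quasi-isomorphisms. The difficulty you flag at the end is not really there: no zig-zag quasi-inverse is needed, since $\pi_{B^b}$ is surjective on cochains with filtered-acyclic kernel, so $e^b$ lifts directly to a closed element of $F_{\ge 0}\mathcal{T}(M)^{(a,m,b)}$ (exactly the second filtered induction you describe), and its projection to $F_{\ge 0}A^a$ is the required representative $e^a$.
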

\begin{proof}
Observe that the conditions required of $a$ and $m$ are equivalent to 
\[t := \left(\begin{array}{cc} a & 0 \\ m &b\end{array}\right) \in \left( \begin{array}{cc} F_{\ge 1} A & 0 \\ F_{\ge 0}M & F_{\ge 1} B \end{array} \right) \subset \mathcal{T}(M)\]
 satisfying the Maurer--Cartan equation. 
This is not quite the same as being a Maurer--Cartan element for $\mathcal{T}(M)$, as $m \notin F_{\ge 1} M$. 
Nevertheless, the Maurer--Cartan equation still converges, because $m$ is lower-triangular. 

We solve for $a$ and $m$, inductively in the filtration. 
Suppose we have solved to order $k-1$; i.e., the Maurer--Cartan equation for $t$ holds modulo $F_{\ge k}$. 
Let 
\[ \mathbb{T}_k = \left(\begin{array}{cc} \mathbb{A}_k & 0 \\ \mathbb{M}_k & \mathbb{B}_k \end{array}\right) := \sum_s \mu^s(t,\ldots,t) \in \mathrm{Gr}_k \mathcal{T}(M).\]
The fact that $b$ is a bounding cochain implies that $\mathbb{B}_k = 0$. 
The $A_\infty$ equations 
\[ \sum \mu^*(t,\ldots,t,\mu^*(t,\ldots,t),t,\ldots,t)=0\]
imply that
\[ \left(\begin{array}{cc}\mu^1\left(\mathbb{A}_k\right) & 0 \\ \mu^{0|1|0}\left(\mathbb{M}_k\right) + \mu^2_0(m_0,\mathbb{A}_k) & 0 \end{array} \right) = 0,\]
which is equivalent to
\[ \mu^1_{\mathrm{Gr}_*\mathcal{T}(M)^{m_0}}(\mathbb{T}_k) = 0.\]
Similarly, adding $a_k \in F_{\ge k} A$ to $a$ and $m_k \in F_{\ge k} M$ to $m$ changes $\mathbb{T}_k$ by
\[ \mathbb{T}_k \mapsto \mathbb{T}_k + \mu^1_{\mathrm{Gr}_k\mathcal{T}(M)^{m_0}} \left(\begin{array}{cc} a_k & 0 \\ m_k & 0 \end{array}\right).\]

We now observe that because $\mathbb{B}_k  = 0$, $\mathbb{T}_k$ lies in $\ker(\pi_{\mathrm{Gr}_k B})$. 
This complex is acyclic, because $m_0$ is a quasi-isomorphism; we have shown that $\mathbb{T}_k$ is closed, and that we are free to modify it by adding exact terms. 
Therefore we can choose $a_k$ and $m_k$ so that $\mathbb{T}_k$ vanishes. 
The inductive construction of $a$ and $m$ converges, by completeness.

It remains to check that if $b$ is c-unital, then so is $a$. For this we consider the strict $A_\infty$ homomorphisms
$$ A^a \xleftarrow{\pi_{A^a}} \mathcal{T}(M)^m \xrightarrow{\pi_{B^b}} B^b.$$
These are filtered quasi-isomorphisms, by a spectral sequence comparison argument.  Therefore the unit $e^b \in F_{\ge 0} B^b$ admits a lift to a closed element in $F_{\ge 0}\mathcal{T}(M)^m$, whose projection to $F_{\ge 0}A^a$ we denote by $e^a$. This represents a c-unit for $A^a$, using the fact that $\pi_{A^a}$ and $\pi_{B^b}$ induce algebra isomorphisms on the level of cohomology.
\end{proof}

\begin{proof}[Proof of Theorem \ref{thm:cqiso}]
Because $\mathrm{Gr}_*G$ is fully faithful, so is $G^\bc$, by a comparison argument for the spectral sequence induced by the filtration. 
It follows immediately that $G^\bc$ is c-unital. 
It remains to check that $G^\bc$ is essentially surjective.
 
In other words, given $(Y,y) \in \ob \EuB^\bc$, we must find $(X,x) \in \ob \EuA^\bc$ such that $G^\bc(X,x) \simeq (Y,y)$. 
Because $\mathrm{Gr}_* G$ is a $\Z/2 \oplus \Z$-graded quasi-equivalence, and in particular essentially surjective, there exists $X \in \ob \EuA$ such that $GX$ is quasi-isomorphic to $Y$ in $\mathrm{Gr}_*\EuB$. 
Let $f_0 \in \mathrm{Gr}_0 hom^*_{\EuB}(X,Y)$ be a quasi-isomorphism. 

We now apply Lemma \ref{lem:mcdef}, with $A=hom_{\EuA}(X,X)$, $B=hom_\EuB(Y,Y)$, $M=(G \otimes \id)^*(hom_\EuB(X,Y))$, $b=y$, $m_0=f_0$. 
The outcome is a c-unital Maurer--Cartan element $a \in A$, together with a closed element $m \in F_{\ge 0}{}^aM^b$ representing $m_0$. 
Setting $x=a$, and $f=m \in F_{\ge 0} hom_{\EuB^\bc}(G^\bc(X,x),(Y,y))$, we find that $f$ is closed and represents $f_0$. 

We now consider the maps
\begin{align*} 
hom((Y,y),G^\bc(X,x)) &\xrightarrow{(-) \bullet [f]} hom((Y,y),(Y,y)) \qquad \text{and}\\ 
hom((Y,y),G^\bc(X,x)) &\xrightarrow{[f] \bullet (-)} hom(G^\bc(X,x),G^\bc(X,x)),
\end{align*}
where all morphism spaces are taken in the cohomological category $H^*(\EuB^\bc)$. 
Using the fact that $f_0$ is a quasi-isomorphism, a comparison argument for the spectral sequence induced by the filtration shows that these maps are isomorphisms. 
In particular we can solve $[g] \bullet [f]=[e^y]$ and $[f] \bullet [h]=[G^\bc(e^x)]$, from which one easily deduces that $[h]=[g]$, so that $f$ defines a quasi-isomorphism between $G^\bc(X,x)$ and $(Y,y)$.
\end{proof}

\section{Target geometry}\label{sec:targ}

Let $(X,\omega)$ be a compact symplectic manifold. 
We denote the space of $\omega$-tame almost complex structures on $X$ by $\EuJ(X)$. 

\subsection{Maximum principle}

Let $X$ be a symplectic manifold, $J \in \EuJ(X)$, and $W \subset X$ a Liouville subdomain. 

\begin{defn}\label{def:conv_coll}
A \emph{convex collar} for $(W \subset X,J)$ is a neighbourhood $U$ of $\partial W$ in $X \setminus \mathring{W}$, together with a fibration $h:U \to [c,d)$ such that $h^{-1}(c) = \partial W$, and $\theta_U := -dh \circ J$ is a primitive for $\omega|_U$ which patches together smoothly with the primitive $\theta$ for $\omega|_W$ to endow $W \cup U$ with the structure of a Liouville subdomain.
\end{defn}

The name `convex collar' is justified by the following lemma:

\begin{lem}[cf. Lemma 7.2 of \cite{Abouzaid2007}]
\label{lem:maxprin}
Suppose that $(W \subset X,J)$ admits a convex collar, and 
\[ u:(\Sigma,\partial \Sigma) \to (X,W) \]
is a $J$-holomorphic curve such that $[u] = 0$ in $H_2(X,W)$. 
Then $u$ is contained in $W$.
\end{lem}
\begin{proof}
We choose a convex collar $h:U \to [c,d)$ for $(W \subset X,J)$. 
Find a sequence $(c_k)$ of numbers in $[c,d)$, converging to $c$, such that for each $k$, $u$ intersects $h^{-1}(c_k)$ transversely. 
Set $W_k = W \cup h^{-1}([c,c_k])$, and let $u_k:(\Sigma_k,\partial \Sigma_k) \to (X \setminus \mathring{W}_k,\partial W_k)$ be the part of $u$ mapping `outside' $\mathring{W}_k$.

Observe that $\theta(\partial u_k) \le 0$ because if $\xi$ is a positively-oriented tangent vector of $\partial \Sigma_k$, then $j(\xi)$ does not point out of $\Sigma_k$, hence $dh \circ du \circ j(\xi) \ge 0$ (as $dh$ evaluates non-negatively on tangent vectors which do not point `into' $W_k$); so
$$\theta(du(\xi)) = -dh \circ J \circ du(\xi) = -dh \circ du \circ j(\xi) \le 0.$$
On the other hand, we have $[u] = 0$ in $H_2(X,W)$, by assumption; hence, $[u_k] = 0$ and we have $[\omega;\theta](u_k) = 0$. 
It follows by definition that
\[ \omega(u_k) = \theta(\partial u_k) \le 0.\]
But now, $\omega(u_k) \ge 0$ because $J$ is $\omega$-tame, so we must have $\omega(u_k) = 0$, hence $u_k$ is constant, so $u_k$ is contained in $W_k$ for all $k$; hence $u$ is contained in $W$.
\end{proof}

\subsection{Positivity of intersection}

Let $V$ be a system of divisors for $W \subset X$.

\begin{defn}[cf. Definition 4.1 of \cite{Sheridan2016}]
We denote the space of $\omega$-tame almost complex structures which make each $V_q$ into an almost complex submanifold by $\EuJ(X,V) \subset \EuJ(X)$.
\end{defn}

\begin{lem}[Lemma 4.2 of \cite{Sheridan2016}]\label{lem:posint}
If $J \in \EuJ(X,V)$, then any $J$-holomorphic curve $u:(\Sigma,\partial \Sigma) \to (X,X \setminus D)$ satisfies $[u] \cdot V_q  \ge 0$ for all $q \in Q$. 
In particular, $[u] \in \NE(V)$.
\end{lem}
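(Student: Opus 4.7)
The plan is to replace each divisor $\EuV_w \subset D$ by a smooth component $V_{q_0}$ of the chosen system of divisors which is homologous to $\EuV_w$ and does not contain the image of $u$, and then to invoke positivity of intersections against this $J$-almost complex hypersurface. The key combinatorial input will be the size condition $|w^{-1}(w)| \ge n+1$ on the fibres of $w\colon Q\to W$; beyond that, the argument assembles only standard facts, so there is no single hard step once the setup is in place. We may assume $u$ is nonconstant, as otherwise $[u]=0$ and the assertion is trivial.

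The first step is homological: because $V_q \subset \{h>c\}$ and the inclusion $D\hookrightarrow \{h>c\}$ is a deformation retract, the equality $[V_q]=[\EuV_{w(q)}] \in H_{2n-2}(D;\Z)$ lifts through Poincar\'e--Lefschetz duality to an equality of classes in $H^2(X,X\setminus D;\Z)$. Pairing with $[u] \in H_2(X,X\setminus D;\Z)$ then gives $[u]\cdot \EuV_{w(q)} = [u]\cdot V_q$ for every $q \in Q$.

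The second step is combinatorial: since $V$ is simple normal crossings, the intersection of any $k$ of its components is either empty or has complex codimension $k$ in $X$, so the intersection of $n+1$ components is empty, and any nonconstant $J$-holomorphic curve lies in the intersection of at most $n-1$ components (a complex curve requires at least one complex dimension of ambient room). Since $|w^{-1}(w)| \ge n+1$, there exist indices $q_0$ in this fibre (in fact at least two) for which $u(\Sigma)\not\subset V_{q_0}$; pick one.

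The final step applies positivity of intersections: since $J \in \EuJ(X,V)$, the divisor $V_{q_0}$ is a $J$-almost complex submanifold of real codimension two, so each interior intersection of the $J$-holomorphic curve $u$ with $V_{q_0}$ is isolated and contributes positively to the topological pairing $[u]\cdot V_{q_0}$, which is consequently non-negative. The boundary does not interfere: $\partial u \subset X\setminus D$, and in the Fukaya-category applications the Lagrangians are taken inside the Liouville domain $\{h\le c\}$ whereas $V_{q_0}\subset \{h>c\}$, so $\partial u \cap V_{q_0}=\emptyset$; a small perturbation of $V_{q_0}$ within its homology class suffices for the general statement. Combined with the homological equality above, this yields $[u]\cdot \EuV_w \ge 0$, and the final assertion follows from the definition of $\NE$ as the cone in $\Z^P$ dual to $\Nef=\sum_w \R_{\ge 0}\EuV_w$: pairing $[u]\cdot D$ with each generator $\EuV_w$ yields a non-negative number.
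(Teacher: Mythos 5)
Your proof is correct and follows essentially the same route as the paper's: identify $[\EuV_w]$ with a smooth component $V_q$ of the system of divisors, use the fibre-size condition $|w^{-1}(w)|\ge n+1$ together with the SNC property to find at least one such $V_{q_0}$ not containing $u$, and then apply positivity of intersections with the $J$-complex hypersurface $V_{q_0}$. Your remark about the boundary not interfering is a helpful clarification that the paper leaves implicit; just note that the fallback of "a small perturbation of $V_{q_0}$" is not immediately available, since a generic perturbation would no longer be $J$-complex --- the honest way to handle a boundary touching $V$ would be to observe that in all uses of the lemma $\partial u$ lies in $\{h\le c\}$, disjoint from $V$, or (for spheres) there is no boundary at all.
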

\begin{proof}
If $u$ is not contained inside $V_q$, then $u \cdot V_q \ge 0$ by positivity of intersection (cf. \cite[Exercise 2.6.1]{mcduffsalamon} or \cite[Prop. 7.1]{Cieliebak2007}). 
There are $\ge n+1$ divisors $V_r$ with $[V_r] = [V_q]$, and $u$ can't be contained in all of them because their common intersection is empty by transversality.
\end{proof}

We recall that $J \in \EuJ(X)$ is called \emph{semipositive} if every $J$-holomorphic sphere has nonnegative first Chern number \cite[Definition 6.4.5]{mcduffsalamon}. 

\begin{cor}
\label{cor:semipos}
Any $J \in \EuJ(X,V)$ is semipositive.
\end{cor}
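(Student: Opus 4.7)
The plan is to leverage the \textbf{(Semipositive)} hypothesis on $\Nef$ to express the anticanonical class as a nonnegative combination of the generators $\EuV_w$, and then apply essentially the same positivity-of-intersection argument as in Lemma \ref{lem:posint}, but to spheres rather than discs.

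First I would invoke the \textbf{(Semipositive)} condition to pick a divisor $A \in \Nef \cap \mathrm{Div}(X,D)$ with $[A] = -K_X$ modulo torsion. Because the cone $\Nef$ is generated by $\{\EuV_w\}_{w \in W}$, I can write $A = \sum_w b_w \EuV_w$ with $b_w \in \Q_{\ge 0}$. Now let $u:S^2 \to X$ be a $J$-holomorphic sphere. Since torsion classes pair trivially with $[u] \in H_2(X;\Z)$, I get
\[
c_1(u) = -K_X \cdot [u] = [A] \cdot [u] = \sum_w b_w \, (\EuV_w \cdot u).
\]
The claim $c_1(u) \ge 0$ will follow once I show $\EuV_w \cdot u \ge 0$ for each $w$.

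The second step is precisely the argument in the proof of Lemma \ref{lem:posint}, which I would repeat in the closed case: for each $q$ in the fibre $w^{-1}(w)$, either $u(S^2) \subset V_q$, or $u \cdot V_q \ge 0$ by positivity of intersection (this holds just as well for closed $J$-holomorphic curves, cf.\ \cite[Exercise 2.6.1]{mcduffsalamon}). Since $|w^{-1}(w)| \ge n+1$ and the common intersection of any $n+1$ of the transverse components $V_q$ is empty, the sphere $u$ cannot be contained in every $V_q$ with $w(q) = w$. Hence at least one representative gives $u \cdot V_q \ge 0$, and since all such $V_q$ are homologous to $\EuV_w$, we conclude $\EuV_w \cdot u \ge 0$ for every $w$. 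Combining with the previous display yields $c_1(u) \ge 0$, as required.

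There isn't really a hard step here — the corollary is essentially a bookkeeping exercise combining the \textbf{(Semipositive)} condition with Lemma \ref{lem:posint}. The only point worth flagging is the use of \emph{homological} anticanonicity rather than strict equality of divisor classes, which is harmless because torsion contributes nothing to the pairing with $[u]$.
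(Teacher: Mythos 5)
Your proof is correct and follows essentially the same route as the paper: invoke \textbf{(Semipositive)} to write $c_1(TX)(u)$ as a non-negative combination of the $[u]\cdot\EuV_w$, then apply the positivity-of-intersection argument of Lemma~\ref{lem:posint}. The only cosmetic difference is that you re-derive that lemma in the closed case rather than noting that a sphere is a curve with $\partial\Sigma = \emptyset$, hence already covered by the lemma as stated; your remark that torsion pairs trivially with $[u]$ is a sound (and slightly more careful) elaboration of what the paper leaves implicit.
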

\begin{proof}
Let $u: \CP^1 \to X$ be a $J$-holomorphic sphere. 
By the \textbf{(Semipositive$'$)} condition on $V$, we have
\[ c_1(TX)(u) = \tilde{c}_1(u) \ge 0,\]
because $\tilde{c}_1 \in \Nef(V)$ by assumption and $[u] \in \NE(V)$ by Lemma \ref{lem:posint}.
\end{proof}
 
We introduce notation, for any subset $K \subset Q$: 
\begin{equation} \label{eqn:VK}
V_K := \bigcap_{q \in K} V_q, \qquad V^K := \bigcup_{q \notin K} V_q.
\end{equation}
(We set  $V_\emptyset := X$.) 

\begin{lem}[Adjunction formula]\label{lem:adj}
If $u \in H_2(V_K)$, then
\begin{equation}
\label{eq:adj}
 c_1(TV_K)(u) = c_1(TX)(u) - \sum_{q \in K}u \cdot V_q.
 \end{equation}
\end{lem}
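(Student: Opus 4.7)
The plan is to derive the formula from the standard normal bundle short exact sequence associated to a transverse intersection of smooth divisors, together with the identification of the normal bundle of a divisor with its associated line bundle restricted to the divisor.

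First I would note that because $V = \bigcup_q V_q$ is simple normal crossings, for any $K \subset Q$ the intersection $V_K = \bigcap_{q \in K} V_q$ is smooth (of codimension $|K|$ where nonempty), and moreover the intersection is transverse, so the conormal directions coming from the different $V_q$ are linearly independent at every point of $V_K$. This gives a short exact sequence of holomorphic (equivalently, smooth complex) vector bundles on $V_K$,
\[ 0 \longrightarrow TV_K \longrightarrow TX|_{V_K} \longrightarrow \bigoplus_{q \in K} N_{V_q/X}\bigl|_{V_K} \longrightarrow 0, \]
where $N_{V_q/X}$ denotes the normal bundle of $V_q$ in $X$.

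Next I would apply the Whitney sum formula to this short exact sequence, obtaining
\[ c_1(TX)|_{V_K} \;=\; c_1(TV_K) \;+\; \sum_{q \in K} c_1\bigl(N_{V_q/X}|_{V_K}\bigr) \]
in $H^2(V_K;\Z)$. For each $q \in K$, the standard identification $N_{V_q/X} \cong \mathcal{O}_X(V_q)|_{V_q}$ gives $c_1(N_{V_q/X}|_{V_K}) = [V_q]|_{V_K}$, the restriction of the Poincar\'e dual of $V_q$ in $X$.

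Finally I would evaluate both sides on $u \in H_2(V_K)$. Pushing $u$ forward to $X$ under the inclusion $\iota: V_K \hookrightarrow X$, naturality gives $c_1(TX)|_{V_K}(u) = c_1(TX)(\iota_* u)$, which is what we are writing as $c_1(TX)(u)$ on the left-hand side of \eqref{eq:adj}; likewise $[V_q]|_{V_K}(u) = \iota_* u \cdot V_q$, which is $u \cdot V_q$. Rearranging yields the claimed formula. There is no real obstacle here beyond making sure that the snc hypothesis is invoked to guarantee smoothness and transversality of $V_K$, so that the normal bundle sequence above actually splits off as an honest direct sum rather than only a filtration; once that is in place the computation is a one-line Whitney sum calculation.
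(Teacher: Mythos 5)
Your proof is correct and is essentially the paper's own argument, just written out in full: the paper likewise appeals to the transversality splitting $NV_K \simeq \bigoplus_{q \in K} NV_q|_{V_K}$ and the adjunction/Whitney-sum identity, which you unpack via the normal bundle exact sequence and the identification $c_1(N_{V_q/X}) = [V_q]|_{V_q}$. No gaps.
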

\begin{proof}
Follows from the adjunction formula, together with the isomorphism
\[ NV_K \simeq \bigoplus_{q \in K} NV_q|_{V_K}\]
which holds by transversality. 
\end{proof}
 
The following lemma will be used to show that holomorphic curves contained inside $V$ do not have `excess dimension':

\begin{lem}[Lemma 4.5 of \cite{Sheridan2016}]
\label{lem:c1loss}
If $J \in \EuJ(X,V)$, then any $J$-holomorphic curve $u$ contained inside $V_K$ satisfies $ c_1(TV_K)(u) \le c_1(TX)(u)$.
\end{lem}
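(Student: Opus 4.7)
The plan is to deduce this from the adjunction formula (Lemma \ref{lem:adj}), which rewrites the desired inequality as a positivity statement for a sum of intersection numbers. Concretely, Lemma \ref{lem:adj} gives
\[ c_1(TV_K)(u) = c_1(TX)(u) - \sum_{q \in K} u \cdot V_q, \]
so it suffices to prove $u \cdot V_q \ge 0$ for every $q \in K$, where $u \cdot V_q$ is interpreted homologically as $\langle \mathrm{PD}[V_q],[u]\rangle$ with $[u]$ pushed forward to $H_2(X;\Z)$.

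The issue is that $u \subset V_K \subset V_q$ for each $q \in K$, so the positivity of intersection argument used in Lemma \ref{lem:posint} cannot be applied to $V_q$ itself. The fix uses the system of divisors condition exactly as in Lemma \ref{lem:posint}: fix $q \in K$ and look at the fibre $w^{-1}(w(q)) \subset Q$, which has cardinality $\ge n+1$. Because $V$ is \snc, no point of $X$ can lie on more than $n$ of its components simultaneously, so $\bigcap_{q' \in w^{-1}(w(q))} V_{q'} = \emptyset$. Hence there exists $q' \in w^{-1}(w(q))$ with $u \not\subset V_{q'}$.

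For this $q'$, positivity of intersection applied to the $J$-holomorphic curve $u$ and the $J$-complex submanifold $V_{q'}$ (noting $J \in \EuJ(X,V)$) gives $u \cdot V_{q'} \ge 0$. But $[V_{q'}] = [\EuV_{w(q)}] = [V_q]$ in $H_{2n-2}(X;\Z)$ by the definition of a system of divisors, and the intersection number depends only on this homology class, so $u \cdot V_q = u \cdot V_{q'} \ge 0$. Summing over $q \in K$ and substituting back into the adjunction formula yields the claim.

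There is no serious obstacle here; the only subtlety is to be careful that $u \cdot V_q$ denotes the homological intersection number (which continues to equal $u \cdot V_{q'}$ even when $u$ is contained in $V_q$), so that the cohomological equality $[V_q] = [V_{q'}]$ can be invoked. The essential geometric input is the redundancy built into the system of divisors, which converts the \textbf{(Semipositive)}/\snc{} package into a genuine pointwise positivity of each self-intersection $u \cdot V_q$.
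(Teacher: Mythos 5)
Your proposal is correct and is essentially the paper's argument: the paper simply deduces the lemma from the adjunction formula (Lemma \ref{lem:adj}) together with Lemma \ref{lem:posint}, which already contains the system-of-divisors argument (each fibre of $w$ has $\ge n+1$ components whose common intersection is empty, so $u$ escapes one of them and positivity of intersection plus homological invariance gives $u \cdot V_q \ge 0$). You have merely unpacked the proof of Lemma \ref{lem:posint} inline rather than citing it, so there is no substantive difference.
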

\begin{proof}
Follows from Lemmas \ref{lem:adj} and \ref{lem:posint}.
 \end{proof}

\begin{lem}[Lemma 4.4 of \cite{Sheridan2016}]\label{lem:intersectv}
Suppose that $J \in \EuJ(X,V)$, $W \subset X$ is a Liouville subdomain such that $(W \subset X,J)$ admits a convex collar, and $u:(\Sigma,\partial \Sigma) \to (X,L)$ is a non-constant $J$-holomorphic curve with boundary on a closed, exact Lagrangian submanifold $L \subset \mathring{W}$.
Then $u$ intersects $V$, and in particular $u \cdot V_q > 0$ for some $q$ by positivity of intersection.
\end{lem}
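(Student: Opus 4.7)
The plan is to argue by contradiction: assume $u(\Sigma) \cap V = \emptyset$, and derive a contradiction by computing the pairing $[\omega;\alpha] \cdot u \in \R$ in two different ways.

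First I would compute this pairing symplectically. The class $[\omega;\alpha] \in H^2(X,X\setminus D;\R)$ restricts, via $L \subset \{h\le c\} \subset X\setminus D$, to a class in $H^2(X,L;\R)$ represented by the pair $(\omega,\alpha|_L)$; the evaluation on $u \in H_2(X,L)$ is therefore $\int_\Sigma u^*\omega - \int_{\partial \Sigma} u^*\alpha$. Because $L$ is exact, $\alpha|_L = df$ for some primitive $f$, so the boundary term vanishes since $\partial\Sigma$ is a cycle in $L$. Because $J$ is $\omega$-tame and $u$ is non-constant, $\int_\Sigma u^*\omega > 0$. Hence $[\omega;\alpha]\cdot u > 0$.

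Next I would compute the same pairing divisor-theoretically. By construction $[\omega;\alpha] = \sum_w \lambda_w [\EuV_w]$ with $\lambda_w \ge 0$, so $[\omega;\alpha]\cdot u = \sum_w \lambda_w (\EuV_w \cdot u)$. For each generator $w \in W$ I pick any $q \in Q$ with $w(q)=w$ (which exists since $|w^{-1}(w)|\ge n+1$); since $[V_q] = [\EuV_w]$ as classes in $H^2(X;\Z)$ pulled back to $H^2(X,L)$, we get $\EuV_w \cdot u = V_q \cdot u$. Now $V_q$ is a $J$-holomorphic submanifold disjoint from $L$, and $u$ is not contained in $V_q$ (since $\partial u \subset L$ is disjoint from $V_q$), so positivity of intersection \cite[Exercise 2.6.1]{mcduffsalamon} gives $V_q \cdot u \ge 0$ with equality forced by the hypothesis $u(\Sigma) \cap V_q = \emptyset$. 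Hence $\EuV_w \cdot u = 0$ for every $w$, so $[\omega;\alpha]\cdot u = 0$, contradicting the previous paragraph.

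This contradiction shows that $u$ meets $V$, i.e.\ $u(\Sigma)\cap V_q \neq \emptyset$ for some $q$. For that $q$, $u$ is still not contained in $V_q$ by the boundary argument above, so a second application of positivity of intersection upgrades the geometric intersection to the strict inequality $u \cdot V_q > 0$, giving the final statement of the lemma. The only subtlety I expect is bookkeeping the identification of $\EuV_w \cdot u$ with $V_q \cdot u$ correctly as an intersection pairing of an absolute class with a relative class; once the relative/absolute cohomology dictionary is in place, the rest of the argument is mechanical.
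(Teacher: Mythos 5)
Your proof is correct, but it takes a genuinely different route from the paper. The paper argues: if $u$ misses $V$ then $u \cdot V_q = 0$ for all $q$; since the classes $[V_q]=[\EuV_{w(q)}]$ span $H_{2n-2}(D;\Q)$ (by the \textbf{(Ample)} condition), this forces $[u]=0$ in $H_2(X,X\setminus D)$; then the integrated maximum principle (Lemma \ref{lem:maxprin}) traps $u$ in the Liouville domain $\{h \le c\}$, where exactness of $L$ gives zero energy, so $u$ is constant. You instead evaluate the single class $[\omega;\alpha]$ on $u$: positivity from tameness plus vanishing of the boundary term by exactness of $L$ on one side, and $\sum_w \lambda_w (\EuV_w\cdot u)=0$ on the other, using $[\EuV_w]\cdot u = [V_q]\cdot u = 0$; the relative/absolute bookkeeping you flag is exactly the identification the paper itself uses in Lemma \ref{lem:posint}, so it is unproblematic. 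Your shortcut buys two things: you never need the $[V_q]$ to span (only that $[\omega;\alpha]$ lies in their span, which holds by construction of the relative K\"ahler form), and you never invoke the maximum principle --- notably, the paper's proof cites Lemma \ref{lem:maxprin}, which is stated for $J \in \Jmax(X,V)$, whereas the lemma's hypothesis is only $J \in \EuJ(X,V)$; your argument needs nothing beyond tameness of $J$ for the contradiction, reserving $J$-invariance of the $V_q$ for the final positivity-of-intersection upgrade. What the paper's route buys in exchange is the stronger intermediate conclusion $[u]=0$ and $u \subset \{h\le c\}$, a fact it reuses elsewhere (e.g.\ in deducing Proposition \ref{prop:5.1}), but for this lemma alone your argument is leaner.
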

\begin{proof}
We prove the contrapositive: suppose that $u$ does not intersect $V$, so $u \cdot V_q = 0$ for all $q$. 
Because the classes $PD([V_q])$ span $H^2(X,W)$ (by definition of a system of divisors), we have $[u] = 0$, and hence $u \subset W$ by Lemma \ref{lem:maxprin}. 
Because $L$ is exact, this implies that $u$ has zero energy and hence is constant.
\end{proof}

\subsection{Space of almost complex structures}

Let $J_0 \in \EuJ(X,V)$ be an $\omega$-compatible almost complex structure. 
We introduce a class of perturbations of $J_0$, which will be sufficiently large to achieve transversality for all moduli spaces of holomorphic curves we consider. 
Set
\begin{align} \label{eqn:perts}
 \EuY= \EuY(X,V) &:= \left\{   Y\in C^\infty \left( \End TX \right) :     YJ_0 + J_0 Y = 0;\, Y(TV_q) \subset TV_q \text{ for all $q \in Q$}\right\}, \\
\EuY_*=  \EuY_*(X,V) &:= \left\{Y \in \EuY: \| Y\|_{C^0} < \log \frac{3}{2}\right\}. 
 \end{align}
Here $ \| Y\|_{C^0}$ means $\sup_{x \in X} \| Y_x\|$, where $\| Y_x\|$ is the operator norm, taken with respect to the almost K\"ahler metric on $TX$. We will usually fix $V$ and drop it from the notation.
Note that $\EuY_*$ is a convex (hence contractible) subset of the vector space $\EuY$, and is open with respect to the $C^\infty$ topology.

Any $Y \in \EuY_*$ determines an almost complex structure $J_Y := J_0 \exp(Y)$, where $\exp (Y)$ denotes the pointwise exponential of an endomorphism. The two conditions in the definition of $\EuY$ imply respectively that $J_Y$ is an almost complex structure on $X$; and that it makes each component $V_q$ into an almost complex submanifold. 
The bound on $\|Y\|_{C^0}$ implies that $J_Y$ tames $\omega$ (to see this, write $e^Y = I + (e^Y-I)$ and observe that $\| e^Y-I\| \leq e^{\|Y\|}-1 \leq 1/2$). 
Therefore $J_Y \in \EuJ(X,V)$.

For positive integers $\ell$, we write $ \EuY^\ell$ and $\EuY_*^\ell$ for the counterparts of these spaces in which $Y$ is $C^\ell$ rather than $C^\infty$.

\subsection{Extending almost complex structures}

Observe that $V^K\cap V_K$ (notation from (\ref{eqn:VK})), the points of $V_K$ which lie in $V_q$ for some $q \notin K$, is a \snc{} divisor in $V_K$. 
We define $\EuY_{K}  = \EuY(V_K, V^K\cap V_K)$, and observe that there are restriction maps $r_K\colon \EuY \to \EuY_K$; similarly define $\EuY_K^\ell$ and the restriction map $r_K\colon \EuY^\ell\to \EuY_K^\ell$.

\begin{lem}
\label{lem:extendJlocally}
Take $Y\in \EuY_{*,K}^\ell$ (where $1\leq \ell \leq \infty$), and $x \in V_K$. Then there exists $\widetilde{Y} \in \EuY_*^{\ell}$ such that $r_K(\widetilde{Y}) = Y$ on a neighborhood of $x$ in $V_K$.
 \end{lem}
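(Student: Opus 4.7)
The plan is to trivialize in local holomorphic coordinates at $x$, define $\widetilde Y$ by pulling $Y$ back along a coordinate projection, and cut off by a smooth function supported in a small neighborhood.

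First, since $V$ is simple normal crossings, I would choose centered holomorphic coordinates $(z_1,\dots,z_n)$ near $x$ so that $V_{q_i} = \{z_i = 0\}$ for $i=1,\dots,m$, where $V_{q_1},\dots,V_{q_m}$ are exactly the components of $V$ meeting $x$; after reindexing, $K = \{q_1,\dots,q_k\}$ and $V_K = \{z_1=\cdots=z_k=0\}$ locally. A further linear change makes the K\"ahler metric equal to the standard Hermitian form at the origin. In this frame, $Y$ is determined by $C^\ell$ functions $Y^i_j$ (for $i,j\in\{k+1,\dots,n\}$) on a neighborhood of $0$ in $V_K$ via $Y(\partial/\partial z_j) = \sum_i Y^i_j\,\partial/\partial\bar z_i$; the invariance condition $Y(T(V_K\cap V_{q_l}))\subset T(V_K\cap V_{q_l})$ for $l>k$ translates to $Y^l_j$ vanishing on $\{z_l=0\}$ (inside $V_K$) for all $j\neq l$.

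Next, on a small coordinate ball $U$ around $x$, define $\widetilde Y^i_j(z_1,\dots,z_n) := Y^i_j(0,\dots,0,z_{k+1},\dots,z_n)$ when $i,j>k$ and $\widetilde Y^i_j := 0$ otherwise, and assemble these into an endomorphism of $TX|_U$ anticommuting with $J_0$. By construction, $r_K(\widetilde Y) = Y$ on $V_K \cap U$. A direct check in coordinates --- using the zero blocks for indices $\le k$ together with the vanishing of $Y^l_j$ along $\{z_l=0\}$ for $l>k$ --- shows that $\widetilde Y$ preserves every $TV_{q_l}$ on $U$ for $l=1,\dots,m$. Because the coordinates are orthonormal at $x$, $\|\widetilde Y_x\| = \|Y_x\|$, so by continuity we can shrink $U$ to force $\|\widetilde Y\|_{C^0(U)} < \log\frac{3}{2}$.

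To pass to a global section, I would shrink $U$ further so that $U$ meets no component of $V$ other than $V_{q_1},\dots,V_{q_m}$, then pick a smooth cutoff $\chi\colon X \to [0,1]$ with $\chi\equiv 1$ on a neighborhood $U'$ of $x$ with $\overline{U'}\subset U$ and $\mathrm{supp}(\chi)\subset U$, and extend $\chi\widetilde Y$ by zero to $X$. Multiplication by the scalar $\chi$ commutes with $J_0$, preserves every $TV_q$, and is non-increasing on pointwise operator norms, so $\chi\widetilde Y \in \EuY_*^\ell$; on $U'\cap V_K$ we have $r_K(\chi\widetilde Y) = Y$, as required.

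The main point requiring care is keeping the invariance conditions $Y(TV_q) \subset TV_q$ in force throughout. In coordinates they become vanishing conditions for matrix entries along coordinate hyperplanes $\{z_l=0\}$: the pullback respects them because it is constant in the directions transverse to these hyperplanes, and scalar multiplication by $\chi$ respects them trivially. The norm bound is the other moving part, but the strict inequality built into the definition of $\EuY_*^\ell$ leaves room to absorb the discrepancy between the coordinate norm and the K\"ahler norm on a sufficiently small neighborhood.
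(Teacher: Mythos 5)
Your construction is essentially the paper's: choose holomorphic coordinates adapted to the branches of $V$ through $x$, extend $Y$ by pulling it back along the coordinate projection and setting it to zero on the normal summand, verify preservation of each $TV_{q_l}$ (in coordinates this is exactly the vanishing of the entries $Y^l_j$ along $\{z_l=0\}$ that you check), and cut off by a bump function supported in the chart; the anticommutation with $J_0$, the divisor-preservation computation, and the cutoff step all match the paper's argument. The one step I would query is your claim that a further linear change of coordinates makes the K\"ahler metric standard at the origin: this is not achievable in general while keeping every branch of $V$ through $x$ as a coordinate hyperplane. For instance, if $x$ also lies on a component $V_{q_l}$ with $q_l \notin K$, then $\partial_{z_1},\dots,\partial_{z_k}$ at $x$ are forced to be tangent to $V_{q_l}$, so the coordinate complement of $T_xV_K$ is pinned down by the geometry of the crossing and need not be $h$-orthogonal to $T_xV_K$ (already for $n=2$, $K=\{q_1\}$, $x\in V_{q_1}\cap V_{q_2}$ the complement must be $T_xV_{q_2}$); a similar obstruction can occur when $|K|\ge 2$. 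Consequently the extension $Y\oplus 0$ is taken with respect to a possibly skew splitting and can have pointwise operator norm strictly larger than $\|Y_x\|$, so your justification of the strict $C^0$ bound defining $\EuY_*^\ell$ does not go through at such points, and shrinking the support of $\chi$ does not help since $\chi\equiv 1$ near $x$. To be fair, this affects only the norm bound, and the paper's own proof treats the same point with the bare assertion that the $C^0$ norm of the extension is bounded by $\|Y\|_{C^0}$; so this is a shared looseness in handling the constant $\log\tfrac{3}{2}$ rather than a divergence in method, but as written the orthonormalization step is not available and the norm estimate needs a different argument at crossing points where the branches are not orthogonal.
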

\begin{proof}
We may choose local complex coordinates $(z_1,\ldots,z_n)$ in a neighborhood $U$ of $x$, such that $U \cap V$ is identified with $\{z_1\cdots z_{i+j} = 0\}$, $V_K$ with $\{z_1= \ldots = z_i = 0\}$, and $x$ with $0$. The complex coordinates give a trivialization $(TX|_U,J_0) \simeq (\C^n \times U,J_{\C^n})$. 
The complex coordinates also define a splitting $TU \simeq TV_K \oplus N V_K \simeq \underline{\C^{n-i}}  \oplus \underline{\C^{i}}$.   On $U$, we define the infinitesimal almost complex structure $\widehat{Y}$ to equal $Y$ on the $TV_K$ summand and zero on the $NV_K$ summand.  Notice that $\widehat{Y}(T_y V_q) \subset T_yV_q$ for $y\in U \cap V_q$.  The $C^0$ norm of $\widehat{Y}$ (over $U$) is bounded by $\| Y \|_{C^0}$. Define $\widetilde{Y} = \chi \cdot \widehat{Y}$ on $U$, where $\chi \colon X\to [0,1]$ is a smooth function that is compactly supported in $U$ and equal to $1$ near $x$; extend $\widetilde{Y}$ by zero to $X$. Since the constraints on $\tilde{Y}$ are fibrewise linear, this extension has the desired properties.
\end{proof}

\begin{lem}
\label{lem:extendJ}
The restriction maps $r_K\colon \EuY_*^\ell \to \EuY_{*,K}^\ell$ are surjective for $1\leq \ell \leq \infty$.
\end{lem}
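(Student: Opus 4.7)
My plan is to combine the local extensions provided by Lemma~\ref{lem:extendJlocally} by means of a smooth partition of unity. First I would observe that $V_K = \bigcap_{q\in K} V_q$ is a closed algebraic subset of the projective variety $X$, hence compact. By Lemma~\ref{lem:extendJlocally}, for every $x \in V_K$ there exist an open neighbourhood $U_x \subset X$ and an element $\widetilde{Y}_x \in \EuY_*^\ell$ such that $r_K(\widetilde{Y}_x) = Y$ on $U_x \cap V_K$. Compactness of $V_K$ then lets me extract a finite subcover $U_1,\dots,U_N$ of $V_K$ with corresponding local extensions $\widetilde{Y}_1,\dots,\widetilde{Y}_N \in \EuY_*^\ell$.

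Next I would pick a smooth partition of unity $\{\chi_0,\chi_1,\dots,\chi_N\}$ on $X$ subordinate to the open cover $\{X \setminus V_K,\, U_1,\dots,U_N\}$, and set
\[ \widetilde{Y} := \sum_{i=1}^N \chi_i \widetilde{Y}_i. \]
Note that $\chi_0$ is only there to soak up mass away from $V_K$ and does not enter the definition of $\widetilde{Y}$.

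The verification that $\widetilde{Y}$ has the required properties then splits into three checks. (i)~The defining conditions $YJ_0 + J_0 Y = 0$ and $Y(TV_q) \subset TV_q$ of $\EuY$ are fibrewise $\R$-linear, hence preserved under arbitrary nonnegative combinations; since each $\widetilde{Y}_i$ satisfies them, so does $\widetilde{Y}$, and $\widetilde{Y}$ is of class $C^\ell$ since each $\chi_i$ is smooth and each $\widetilde{Y}_i$ is $C^\ell$. (ii)~For the norm bound, set $c := \max_{1 \le i \le N} \|\widetilde{Y}_i\|_{C^0}$; since the maximum is over finitely many quantities each strictly less than $\log(3/2)$, we have $c < \log(3/2)$, and at any $x \in X$,
\[ \|\widetilde{Y}(x)\| \le \sum_{i=1}^N \chi_i(x)\,\|\widetilde{Y}_i(x)\| \le c \sum_{i=1}^N \chi_i(x) \le c, \]
so $\widetilde{Y} \in \EuY_*^\ell$. (iii)~For the restriction, at $x \in V_K$ we have $\chi_0(x) = 0$ and hence $\sum_{i=1}^N \chi_i(x) = 1$; whenever $\chi_i(x) > 0$ we have $x \in U_i$, and therefore $\widetilde{Y}_i(x)|_{T_x V_K} = Y(x)$. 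Thus $r_K(\widetilde{Y})(x) = \bigl(\sum_{i=1}^N \chi_i(x)\bigr) Y(x) = Y(x)$, which proves $r_K(\widetilde{Y}) = Y$.

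I do not anticipate any real obstacle. The only point deserving attention is that the bound defining $\EuY_*$ is strict, which is compatible with a partition-of-unity gluing only because the cover is finite: the maximum $c$ of finitely many constants $\|\widetilde{Y}_i\|_{C^0}$, each strictly below $\log(3/2)$, is itself strictly below $\log(3/2)$.
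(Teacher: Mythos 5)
Your proof is correct and follows essentially the same route as the paper: cover the compact set $V_K$ by finitely many of the local extensions supplied by Lemma \ref{lem:extendJlocally} and glue them with a partition of unity, using the fibrewise-linear nature of the defining conditions of $\EuY$ and the strict $C^0$ bound. The only (harmless) difference is that you take the partition of unity on $X$ subordinate to $\{X\setminus V_K, U_1,\dots,U_N\}$, whereas the paper takes one on $V_K$ and then extends the cutoff functions to $X$; your variant has the small advantage that $\sum_i \chi_i \le 1$ on all of $X$ is automatic, which makes the verification of the norm bound immediate.
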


\begin{proof}
Given $Y_K\in  \EuY_{*,K}^\ell$, and given $x\in V_K$, there exists, by Lemma \ref{lem:extendJlocally}, $\widetilde{Y}_x \in \EuY_*^\ell$ such that $r_K(\widetilde{Y}_x)$ agrees with $Y_K$ over $U_x$, for some neighborhood $U_x$ of $x$ in $V_K$. Choose a finite set of points $x_\alpha$ such that $\bigcup_\alpha U_{x_\alpha} = V_K$. Let $\{ \rho_\alpha:V_K \to [0,1]\}$ be a smooth partition of unity subordinate to the open cover $\{ U_\alpha \}$, and let $\widetilde{\rho}_\alpha:X \to [0,1]$ be a smooth extension of $\rho_\alpha$ to $X$, for each $\alpha$. If we then set $Y = \sum_\alpha \widetilde{\rho}_\alpha \widetilde{Y}_{x_\alpha}$, then $Y\in \EuY_*^\ell$ and $r_K(Y)= Y_K$ as required.
\end{proof}

\section{Holomorphic spheres}\label{sec:spheres}

\subsection{Bubble trees}\label{sec:jcons}
For $J \in \EuJ(X,V)$, let us consider the topological space 
\begin{equation} \label{eqn:stable spheres}
\Mbar_k(A;J) 
\end{equation}
of stable $J$-holomorphic spheres with $k$ marked points, with homology class $A \in H_2(X;\Z)$, in Gromov's topology.
We follow the notation of \cite[Sections 5 and 6]{mcduffsalamon} with minor changes.

The topological type of the domain of a stable curve in $\Mbar_k(A;J)$ is prescribed by a $k$-labelled tree $(T,E,\Lambda)$: $T$ is the set of vertices $\alpha$ corresponding to components $\Sigma_\alpha$ of the domain, $E \subset T \times T$ the set of edges $\alpha E \beta$ corresponding to points $z_{\alpha \beta} \in \Sigma_\alpha$ at which $\Sigma_\alpha$ is attached to $z_{\beta \alpha} \in \Sigma_\beta$, and $\Lambda: \{1,\ldots,k\} \to T$ is a function which prescribes the distribution of marked points among the components. 
We associate a function
\[ T \to \mathsf{subsets}(Q): \quad \alpha \mapsto K_\alpha\]
to each stable curve, where $K_\alpha \subset Q$ is defined to be the set of divisors $V_q$ in which the image of the component $u_\alpha: \Sigma_\alpha \to X$ of the stable curve is contained. 
(Because $J \in \EuJ(X,V)$, every smooth $J$-holomorphic curve is either contained in $V_q$ or intersects it positively in finitely many points, for all $q \in Q$.) 
The homology class of the components of the stable curve is the collection of spherical homology classes
\[ A_\alpha \in \im \left( \pi_2(V_{K_\alpha}) \to H_2(V_{K_\alpha};\Z) \right). \]
The images of these classes in $H_2(X;\Z)$ sum to $A$, and they satisfy a stability condition: if $A_\alpha = 0$ then $\Sigma_\alpha$ has at least three special points. 

We define the \emph{combinatorial type} of an element of $\Mbar_k(A;J)$ to be the five-tuple $\Gamma = (T,E,\Lambda,\{K_\alpha\},\{A_\alpha\})$. 
We decompose the moduli space into a disjoint union according to combinatorial type:
\begin{equation}
\label{eqn:Mbar stratification}
\Mbar_k(A;J) = \coprod_\Gamma \cM_\Gamma(J).
\end{equation}
We can describe $\cM_\Gamma(J)$ as the quotient of a space of stable \emph{maps} by a reparametrization group. 
Explicitly, let 
\[
\cM(\Gamma,J) = \{ \mathbf{u} = \{u_\alpha\}_{\alpha \in T} \colon  u_\alpha:\CP^1_\alpha \to V_{K_\alpha} \; J\text{-holomorphic, }[u_\alpha] = A_\alpha\} .
\]
Let 
\begin{equation}\label{eq:ZGamma}
Z(\Gamma) \subset (\CP^1)^E \times (\CP^1)^k
\end{equation}
denote the set of tuples $\mathbf{z}:=\left(\{z_{\alpha \beta} \in \CP^1_\alpha\}_{\alpha E \beta},\{z_i \in \CP^1_{\alpha_i}\}_{1 \le i \le k}\right)$ of marked points, where the marked points on each sphere $\CP^1_\alpha$ are distinct. 
We denote
\begin{equation} \label{eqn:def product over edges}
(X,V)^\Gamma := \prod_{\alpha E \beta} V_{K_\alpha},
\end{equation}
so that there is an evaluation map
\begin{eqnarray}
 \mathrm{ev}^E: \mathcal{M}\left(\Gamma,J\right) \times Z(\Gamma) & \to & (X,V)^\Gamma, \label{eqn:def ev E} \\
(\mathbf{u},\mathbf{z}) & \mapsto & \{u_\alpha(z_{\alpha \beta})\}. \notag 
\end{eqnarray}
We denote
\begin{equation} 
\label{def Delta K}
\Delta^{\Gamma} := \left\{ (x_{\alpha \beta}) \in (X,V)^\Gamma: x_{\alpha \beta} = x_{\beta \alpha}  \in X  \right \}  \subset (X,V)^\Gamma.
\end{equation}
We then denote
\[ \widetilde{\mathcal{M}}_\Gamma\left(J\right) := \left(\mathrm{ev}^E\right)^{-1}\left(\Delta^\Gamma\right).\]
The evaluation map $\mathrm{ev}^E$ is invariant with respect to the left action of the reparametrization group $G_\Gamma  := \prod_\alpha \mathrm{PSL}(2,\C)$ on the domain, and we put $\cM_\Gamma(J) := \widetilde{\cM}_\Gamma(J)/G_\Gamma$. 

We denote by 
\begin{equation}\label{eqn: simple stable curves}
 \cM^*_k(A;J) \subset \Mbar_k(A;J)
 \end{equation} 
 the subset of \emph{simple} stable curves (see \cite[Definition 6.1.1]{mcduffsalamon}), and similarly $\cM^*_\Gamma(J) \subset \cM_\Gamma(J)$, $\widetilde{\cM}^*_\Gamma(J)\subset \widetilde{\cM}_\Gamma(J)$, and $\cM^*(\Gamma,J) \subset \cM(\Gamma,J)$.  
Explicitly, the latter is the subset of tuples $\mathbf{u}$ such that $u_\alpha(\CP^1) \neq u_\beta(\CP^1)$ for $\alpha \neq \beta$ such that $u_\alpha$ and $u_\beta$ are non-constant, and non-constant $u_\alpha$ are simple.

\begin{defn}
\label{defn:Jreg}
An element $u \in \widetilde{\mathcal{M}}^*_\Gamma(J)$ is called \emph{regular} if
\begin{itemize}
\item $u_\alpha:\CP^1 \to V_{K_\alpha}$ is regular, as a $J$-holomorphic curve inside $V_{K_\alpha}$. 
\item The evaluation map $\mathrm{ev}^E$ is transverse to $\Delta^\Gamma$.
\end{itemize}
The moduli space $\widetilde{\cM}^*_\Gamma(J)$ is called regular if every element in it is regular. 
An element $Y \in \EuY_*$ is called regular if $\widetilde{\cM}^*_\Gamma(J_Y)$ is regular for every $\Gamma$. 
The set of regular $Y$  is denoted $\EuY^{\mathrm{reg}}_* \subset \EuY_*$.
\end{defn}

\begin{lem}
\label{lem:dimifreg}
If $Y \in \EuY^{\mathrm{reg}}_*$, then $\mathcal{M}^*_\Gamma(J_Y)$ is a smooth manifold of dimension $\mathrm{dim}(\Gamma)$, where
\begin{align}
\half \mathrm{dim}(\Gamma) 
&= k  + \sum_{\alpha \in T} \left(n -|K_\alpha| - 3\right) + c_1(TV_{K_\alpha})(A_\alpha)+ \sum_{\alpha E \beta} \left(|K_\alpha \cap K_\beta| - n + 2\right)   \label{eq:dimifreg1} \\
& =  k + n - 2 +  \sum_{\alpha \in T} \left(c_1(TV_{K_\alpha})(A_\alpha)  - |K_\alpha| - 1\right) +   \sum_{\alpha E \beta}|K_\alpha \cap K_\beta|. \notag
\end{align}
In the sums $\sum_{\alpha E \beta}$, we take $E$ to be the set of \emph{undirected} edges. 
\end{lem}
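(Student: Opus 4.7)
The plan is to realize $\cM_\Gamma^*(J_Y)$ as a free quotient of a transversely cut-out fiber product, then add up the dimensions of each piece. Writing $J = J_Y$, first observe that $\cM_\Gamma^*(J) = \widetilde{\cM}_\Gamma^*(J)/G_\Gamma$, with $\dim_\R G_\Gamma = 6|T|$. Stability of $\Gamma$ (nonconstant components are simple, hence have no automorphisms; constant components carry at least three special points) ensures the $G_\Gamma$-action on $\widetilde{\cM}_\Gamma^*(J)$ is free, so the quotient is smooth of real dimension $\dim_\R \widetilde{\cM}_\Gamma^*(J) - 6|T|$.

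Next, I will compute the three ingredients of $\widetilde{\cM}_\Gamma^*(J) = (\mathrm{ev}^E)^{-1}(\Delta^\Gamma)$. By the regularity hypothesis and Riemann--Roch applied to each parametrized $u_\alpha\colon \CP^1 \to V_{K_\alpha}$, using $\dim_\C V_{K_\alpha} = n - |K_\alpha|$ (which holds by transversality of $V$), the simple-curve moduli space has
$$\dim_\R \cM^*(\Gamma, J) = \sum_{\alpha \in T} \bigl(2(n-|K_\alpha|) + 2 c_1(TV_{K_\alpha})(A_\alpha)\bigr).$$
The configuration space $Z(\Gamma)$ has real dimension $4|E| + 2k$: each undirected edge contributes two nodal points (one on each side) while the $k$ labelled points contribute one each. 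For each undirected edge $\alpha E \beta$, the constraint $x_{\alpha\beta} = x_{\beta\alpha}$ cuts out a copy of $V_{K_\alpha \cup K_\beta}$ inside $V_{K_\alpha} \times V_{K_\beta}$ via the diagonal, contributing real codimension
$$2(n-|K_\alpha|) + 2(n-|K_\beta|) - 2\bigl(n - |K_\alpha \cup K_\beta|\bigr) = 2n - 2|K_\alpha \cap K_\beta|$$
to $\Delta^\Gamma \subset (X,V)^\Gamma$. Because the combinatorial type is a tree and hence acyclic, these edge constraints are independent and the total codimension of $\Delta^\Gamma$ is additive over undirected edges.

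Regularity now guarantees that $\mathrm{ev}^E$ is transverse to $\Delta^\Gamma$, so
$$\dim_\R \widetilde{\cM}_\Gamma^*(J) = \sum_\alpha \bigl(2(n-|K_\alpha|) + 2 c_1(TV_{K_\alpha})(A_\alpha)\bigr) + 4|E| + 2k - \sum_{\alpha E \beta}\bigl(2n - 2|K_\alpha \cap K_\beta|\bigr).$$
Subtracting $6|T|$ for the reparametrization group and dividing by $2$ yields formula \eqref{eq:dimifreg1}: the $-3|T|$ absorbs into the per-vertex summand as $-3$, and the $+2|E|$ absorbs into the per-edge summand as $+2$. The second expression follows from the first by the tree identity $|E| = |T| - 1$ and routine rearrangement.

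The main obstacle here is purely bookkeeping: distinguishing directed from undirected edge sums in the definitions of $(X,V)^\Gamma$ and $\Delta^\Gamma$; verifying the stability conditions needed for $G_\Gamma$ to act freely on the simple stratum; and invoking the tree structure to conclude additivity of codimensions. There is no substantive analytic content beyond Riemann--Roch and the transversality built into the definition of $\EuY^{\mathrm{reg}}_*$.
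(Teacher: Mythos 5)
Your proof is correct and follows essentially the same route as the paper, which simply cites the standard dimension count of \cite[Theorem 6.2.6 (i)]{mcduffsalamon} (Riemann--Roch for each parametrized component, plus the marked/nodal point configurations, minus the transverse diagonal constraint and the free $G_\Gamma$-action) and uses $|E|=|T|-1$ for the second expression. Your write-up just makes that standard count explicit, and the arithmetic checks out.
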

\begin{proof}
Follows from standard dimension counting, cf. \cite[Theorem 6.2.6 (i)]{mcduffsalamon}. The second equality uses the Euler characteristic of the tree.
\end{proof}

\begin{lem}
\label{lem:dimineq}
The dimension \eqref{eq:dimifreg1} is bounded above by
\begin{eqnarray*}
 \mathrm{dim}(\Gamma) &\le & 2 \left( k + n + c_1(TX)(A) - 3 - |E| -  \max_\alpha |K_\alpha|\right)  \\
& = & \mathrm{dim}(\mathcal{M}_{k}(A)) - 2|E| - 2\max_\alpha |K_\alpha|
\end{eqnarray*}
(again, we recall that $|E|$ is the number of undirected edges).
\end{lem}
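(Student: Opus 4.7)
The plan is to start from the explicit formula \eqref{eq:dimifreg1} for $\half \dim(\Gamma)$, bound $\sum_\alpha c_1(TV_{K_\alpha})(A_\alpha)$ in terms of $c_1(TX)(A)$ using the earlier results on semipositivity, and then bound the edge contribution combinatorially using the tree structure.

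First I would apply Lemma \ref{lem:c1loss} to each vertex: for every $\alpha$, $c_1(TV_{K_\alpha})(A_\alpha) \leq c_1(TX)(A_\alpha)$. Summing over $\alpha$ and using additivity of $c_1(TX)$ on the sum of spherical classes,
\[ \sum_\alpha c_1(TV_{K_\alpha})(A_\alpha) \;\leq\; c_1(TX)(A). \]
Next I would use the tree identity $|T| = |E|+1$ to convert the $\sum_\alpha(\,\cdot\,-1)$ term in \eqref{eq:dimifreg1} into an $|E|$-term, matching the form of the desired upper bound. After these two substitutions, the inequality to prove reduces to the purely combinatorial statement
\[ \sum_{\alpha E\beta} |K_\alpha \cap K_\beta| \;+\; \max_\alpha |K_\alpha| \;\leq\; \sum_\alpha |K_\alpha|. \]

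The main (and really only) obstacle is this combinatorial inequality; the rest is just rearrangement. I would handle it by choosing a vertex $\alpha_0$ attaining $\max_\alpha|K_\alpha|$ and rooting the tree $(T,E)$ at $\alpha_0$, so that every non-root vertex $\alpha$ has a unique parent $p(\alpha)$ and the undirected edges biject with $T \setminus \{\alpha_0\}$. Using the trivial bound $|K_{p(\alpha)} \cap K_\alpha| \leq |K_\alpha|$ for each non-root $\alpha$, one obtains
\[ \sum_{\alpha E\beta} |K_\alpha \cap K_\beta| \;=\; \sum_{\alpha \neq \alpha_0} |K_{p(\alpha)} \cap K_\alpha| \;\leq\; \sum_{\alpha \neq \alpha_0} |K_\alpha| \;=\; \sum_\alpha |K_\alpha| - \max_\alpha |K_\alpha|, \]
which is the required inequality.

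Putting the three ingredients together yields the first displayed bound for $\dim(\Gamma)$. The second equality in the statement is then immediate, since $\dim(\cM_k(A)) = 2(k + n + c_1(TX)(A) - 3)$ in the standard dimension formula for the moduli space of stable $k$-pointed genus-zero curves of class $A$ in $X$.
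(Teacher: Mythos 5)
Your proof is correct and follows essentially the same route as the paper: apply Lemma \ref{lem:c1loss} vertexwise, use $|T|=|E|+1$, and establish the combinatorial inequality $\sum_\alpha |K_\alpha| - \sum_{\alpha E\beta}|K_\alpha\cap K_\beta| \ge \max_\alpha|K_\alpha|$ by rooting the tree at a vertex of maximal $|K_\alpha|$ and pairing each remaining vertex with its parent edge — which is exactly the paper's argument.
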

\begin{proof}
The inequality follows by combining two inequalities. 
First,
\[ c_1(TV_{K_\alpha})(A_\alpha) \le c_1(TX)(A_\alpha)\]
by Lemma \ref{lem:c1loss}. 
Second,
\[ \sum_\alpha |K_\alpha| - \sum_{\alpha E \beta} |K_\alpha \cap K_\beta| \ge \max_\alpha |K_\alpha|.\]
To see this, we first choose a vertex $\alpha'$ such that $|K_{\alpha'}|$ is maximal; then we pair each vertex $\alpha$ with the edge $\alpha E \beta$ that points towards the vertex $\alpha'$. 
The term $|K_{\alpha'}|$ in the sum does not have a partner, and the other terms pair off as $|K_\alpha| - |K_{\alpha} \cap K_\beta| \ge  0$, giving the result.
\end{proof}

\subsection{Parametrized moduli spaces}
\label{sec:param}

We will need to consider a parametric generalization of the framework of the previous section; cf. \cite[Section 9h]{Seidel:FCPLT}, \cite[Section 6.7]{mcduffsalamon}.  Suppose that $\EuS$ is a smooth manifold and $\bY: \EuS \to \EuY_*$ a smooth function. 
We denote
\[ \cM_\Gamma(J_\bY) := \coprod_{s \in \EuS} \cM_\Gamma(J_{\bY(s)}).\]
It comes with a map
\[ \pi_\EuS: \cM_\Gamma(J_\bY) \to \EuS\]
recording the parameter. In slightly more detail, we let $\mathcal{B}(V_K)$ be the Fr\'echet manifold of smooth maps $\CP^1\to V_K$, and define $\widetilde{\cM}(\Gamma, J_\bY) \subset \EuS \times  \prod_{\alpha\in T} \mathcal{B} (V_{K_\alpha})$ to be the subspace of tuples $\left(s, \{ u_\alpha\} \right)$ defined by the Cauchy--Riemann equation $\bar{\partial}_{J_{\mathbf{Y} }}  =0$ and the discrete condition $(u_\alpha)_*[\mathbb{CP}^1] = A_\alpha$.  Proceed to construct $\widetilde{\cM}_\Gamma(J_\bY)$, its quotient $\cM_\Gamma(J_\bY)$, and $\cM^*_\Gamma(J_\bY)\subset \cM_\Gamma(J_\bY)$, by analogy with their non-parametric counterparts. 

A first use of parametric moduli spaces (albeit in an infinite-dimensional context) is in proving that there are plenty of regular almost complex structures:

\begin{lem}
\label{lem:regJexists}
The subset $\EuY^{\mathrm{reg}}_* \subset \EuY_*$ is comeagre.
\end{lem}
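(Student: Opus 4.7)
The approach is the standard Sard--Smale transversality argument applied stratum-by-stratum, followed by Taubes' trick to upgrade the conclusion from $C^\ell$ to $C^\infty$. Fix a combinatorial type $\Gamma = (T, E, \Lambda, \{K_\alpha\}, \{A_\alpha\})$ and an integer $\ell$ large enough that standard Fredholm theory applies. The plan is to form the parametric universal moduli space
\[
\widetilde{\cM}^*_\Gamma(\EuY_*^\ell) := \bigl\{(Y, \mathbf{u}, \mathbf{z}) : Y \in \EuY_*^\ell,\ (\mathbf{u}, \mathbf{z}) \in \widetilde{\cM}^*_\Gamma(J_Y)\bigr\}
\]
as the zero locus of a smooth section of a Banach bundle over $\EuY_*^\ell \times \prod_\alpha \mathcal{B}^{1,p}(\CP^1, V_{K_\alpha}; A_\alpha) \times Z(\Gamma)$, whose components encode the $\bar\partial_{J_Y}$-equation on each $V_{K_\alpha}$ together with the matching condition $\mathrm{ev}^E \in \Delta^\Gamma$. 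Once I verify that this is a Banach manifold, the forgetful projection $\pi: \widetilde{\cM}^*_\Gamma(\EuY_*^\ell) \to \EuY_*^\ell$ is Fredholm, and Sard--Smale identifies the set of $Y$ for which $\widetilde{\cM}^*_\Gamma(J_Y)$ is regular (in the sense of Definition \ref{defn:Jreg}) with the set of regular values of $\pi$, hence a comeagre subset of $\EuY_*^\ell$.

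The essential technical point is surjectivity of the linearization of this section at a simple stable map. This splits into two pieces. First, for each component $u_\alpha$, the linearized $\bar\partial$-operator on $V_{K_\alpha}$, augmented by variations of $J_Y$ along $V_{K_\alpha}$, must be surjective; the standard somewhere-injective argument of \cite[Proposition 3.2.1, Theorem 6.2.6]{mcduffsalamon} carries out inside $V_{K_\alpha}$ provided the allowable perturbations cover all of $\EuY_{*,K_\alpha}^\ell$, and this is precisely the content of Lemma \ref{lem:extendJ}. Second, the joint evaluation map $\mathrm{ev}^E$ must be made transverse to $\Delta^\Gamma$; this follows by choosing pointwise-supported perturbations away from the non-injective loci and away from the intersection loci of the various $u_\alpha$, again realized as ambient perturbations on $X$ via Lemma \ref{lem:extendJ}. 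The hypothesis of simplicity (non-constant components have distinct images, and constant components carry at least three marked points) is what makes such componentwise independent perturbations possible.

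To pass from $C^\ell$ to $C^\infty$ I would use Taubes' trick. For each $E_0 > 0$, only finitely many combinatorial types $\Gamma$ support non-empty moduli spaces with total energy $\sum_\alpha \omega(A_\alpha) \le E_0$ (by Gromov compactness and the uniform lower bound on energy of any non-constant sphere). The set of $Y \in \EuY_*^\infty$ for which $\widetilde{\cM}^*_\Gamma(J_Y)$ is regular for all such $\Gamma$ is open in $\EuY_*^\infty$, by openness of regularity within a compact family of maps, and dense: starting from an arbitrary $Y \in \EuY_*^\infty$, one first perturbs in the $C^\ell$-topology to a $C^\ell$-regular element using the previous paragraph, then mollifies back into $\EuY_*^\infty$, remaining in the open regular set. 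Taking the intersection over an exhausting sequence $E_0 \to \infty$ gives a comeagre subset $\EuY^{\mathrm{reg}}_* \subset \EuY_*$, as the latter is a Baire space. The main obstacle throughout is that one must transversely cut out $J$-holomorphic spheres living in submanifolds $V_{K_\alpha}$ of varying codimension by perturbing $Y$ only on the ambient $X$; Lemma \ref{lem:extendJ} is precisely what is needed to reduce this to the standard somewhere-injective argument inside each $V_{K_\alpha}$.
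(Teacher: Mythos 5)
Your proof is essentially correct and follows the same strategy as the paper: reduce to $C^\ell$-perturbations, run the parametric Sard--Smale argument following \cite[Theorem 6.2.6 (ii)]{mcduffsalamon}, and invoke Lemma \ref{lem:extendJ} as the key mechanism for producing enough perturbations of the ambient almost complex structure to achieve transversality for components contained in the strata $V_K$; finish with Taubes' trick. The one organizational difference is that the paper handles componentwise regularity by first noting that the set $\EuY_{*,K}^{\ell,\mathrm{reg}} \subset \EuY_{*,K}^\ell$ of intrinsically regular almost complex structures on $V_K$ (rel $V^K \cap V_K$) is comeagre by the standard argument, then pulling this back along the restriction map $r_K$ (which Lemma \ref{lem:extendJ} shows is a continuous linear surjection, so preimages of comeagre sets remain comeagre), and intersecting over all $K$; whereas you set up a single universal moduli space over $\EuY^\ell_*$ with components living in the various $V_{K_\alpha}$ and prove surjectivity of the combined linearization directly. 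Both routes work and both lean on the same lemma; the paper's factoring through the intrinsic theory on each $V_K$ is a bit cleaner since it avoids building the larger universal Banach manifold, but the underlying content is the same. You might also note, as the paper does, that the evaluation-map transversality step uses the variant of \cite[Prop.\ 3.4.3]{mcduffsalamon} indicated in \cite[Remark 3.4.8]{mcduffsalamon}, which is the precise form needed for simple stable maps.
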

\begin{proof}
The proof is that of \cite[Theorem 6.2.6 (ii)]{mcduffsalamon}, with adaptations.  The main adaptation is that Lemma \ref{lem:extendJ} shows we have enough perturbations along $V_K$ to achieve transversality for curves contained entirely inside $V_K$.

By the argument of \cite[proof of Theorem 6.2.6 (ii)]{mcduffsalamon}, it suffices to prove that the subset $\EuY_{*}^{\ell,\mathrm{reg}} \subset \EuY^\ell_*$ of regular elements is comeagre for each $\ell \geq 2$.

Fix $\ell$ and a set of indices $K$. The space $\EuY_K^\ell$ has a comeagre subset $\EuY_{K,*}^{\ell,\mathrm{reg}}\subset \EuY_{*,K}^\ell$ whose elements $Y$ are defined by the condition that all simple $J_Y$-spheres $u\colon \C P^1\to V_K$, not contained in $V^K\cap V_K$, are regular (standard adaptation of the argument of \cite[Theorem 3.1.5]{mcduffsalamon}). 

Define 
\[ \EuY_*^{\ell, \mathrm{reg\, for\, } K} = \{ Y \in \EuY_*^\ell: \, r_K(Y) \in \EuY_{*,K}^{\ell, \mathsf{reg}}\}.\]  
The restriction map $r_K \colon \EuY_*^\ell \to   \EuY_{*,K}^{\ell}$ is surjective by Lemma \ref{lem:extendJ}.  The preimage of a comeagre subset under a continuous linear surjection is again comeagre; hence $\EuY_*^{\ell, \mathrm{reg\, for \, } K}$ is comeager in $\EuY_*^\ell$. Thus $\bigcap_K \EuY_*^{\ell, \mathrm{reg\, for \, } K}$ is also comeager. This deals with the first bullet point in the definition of regularity (Definition \ref{defn:Jreg}). 

One next needs to prove that the evaluation map
\[  \ev^E \colon \cM^*(\Gamma,\EuJ_\ell ) \times Z(\Gamma)  \to (X,V)^\Gamma  \] 
(cf. (\ref{eqn:def ev E})) is transverse to the diagonal $\Delta^\Gamma$; here $\EuJ^\ell =\{ J_Y: Y \in \EuY^\ell_* \}$ is the family of almost complex structures parametrized by $\EuY^\ell_*$, and $\cM^*(\Gamma,\EuJ_\ell ) = \bigcup_{J\in \EuJ^\ell} \cM^*(\Gamma,J)$. 
This can be accomplished by an adaptation of \cite[Prop. 6.2.8]{mcduffsalamon}. The proof in \emph{op. cit.} hinges on \cite[Prop. 3.4.3]{mcduffsalamon}, and it suffices to use the tweak to the latter proposition indicated in \cite[Remark 3.4.8]{mcduffsalamon}. With that point in place, the proof of \cite[Theorem 6.2.6 (ii)]{mcduffsalamon} goes through.
\end{proof}

We return to the discussion of parametric moduli spaces. An element $u \in \widetilde{\cM}^*_\Gamma(J_\bY)$ is called regular if it is cut out transversely by its defining equations. That is, each $u_\alpha$ is parametrized-regular, i.e., a transverse zero of the Cauchy--Riemann operator on $\mathcal{B}(V_{K_\alpha})\times \EuS$, and the evaluation map 
\begin{eqnarray*}
 (\pi_\EuS,\mathrm{ev}^E): \mathcal{M}\left(\Gamma,J_\bY \right) \times Z(\Gamma) & \to & \EuS \times (X,V)^\Gamma
\end{eqnarray*}
is transverse to $\EuS \times \Delta^\Gamma$.

The moduli space $\widetilde{\cM}^*_\Gamma(J_\bY)$ is called regular if every element in it is regular; and $\bY$ is called regular if $\widetilde{\cM}^*_\Gamma(J_\bY)$ is regular for every $\Gamma$. 
The proof of Lemma \ref{lem:regJexists} generalizes easily to show that the set of regular $\bY$ is comeagre in $C^\infty(\EuS,\EuY_*)$. In this situation, $\cM^*_\Gamma(J_\bY)$ is a smooth manifold of dimension $\dim(\Gamma) + \dim(\EuS)$.

\section{Holomorphic discs}\label{sec:discs}

\subsection{Deligne--Mumford moduli space}

If $\ell \ge 3$, we consider the Deligne--Mumford moduli space $\Rbar_\ell$ of stable curves of genus zero (that  is, spheres) with  marked points labelled $z_1,\ldots,z_\ell$. 
It is a complex manifold, and carries a universal family $\Sbar_\ell \to \Rbar_\ell$ with sections $\{z_i\}_{i =1,\ldots,\ell}$ given by the marked points.
We denote the restriction of the universal family to the smooth locus by $\EuS_{\ell} \to \EuR_{\ell}$. 

If $k+2\ell \ge 2$, we consider the Deligne--Mumford moduli space $\Rbar_{k,\ell}$ of stable discs equipped with boundary marked points labelled $\zeta_0,\ldots,\zeta_k$ (in order) and interior marked points labelled $z_1,\ldots,z_\ell$. 
It is a smooth manifold with corners and carries a universal family $\Sbar_{k,\ell} \to \Rbar_{k,\ell}$ with sections $\{z_i\}_{i=1,\ldots,\ell}$ and $\{\zeta_i\}_{i=0,\ldots,k}$ given by the marked points.  
We denote the restriction of the universal family to the smooth locus by $\EuS_{k,\ell} \to \EuR_{k,\ell}$.

The strata of the Deligne--Mumford compactification 
\[ \Rbar_{k,\ell} = \coprod_\Gamma \EuR^\Gamma_{k,\ell}\]
are indexed by certain decorated trees $\Gamma$. 
Specifically, each $\Gamma$ consists of a tree with vertex set $V= V_\sph \sqcup V_\disc$ decomposed into `sphere' and `disc' vertices; a set of `finite' edges $E = E_\sph \sqcup E_\disc \subset V \times V$ decomposed into `disc' edges (those connecting disc vertices) and `sphere' edges (the rest); a set of `semi-infinite sphere edges' indexed by $\{1,\ldots,\ell\}$, which is equivalent to an $\ell$-marking of $(V,E)$; a set of `semi-infinite disc edges' indexed by $\{0,\ldots,k\}$ and attached to disc vertices, which is equivalent to a $k+1$-marking of $(V_\disc,E_\disc)$; the graph $(V_\disc,E_\disc)$ together with the semi-infinite disc edges is required to be a tree, and must come equipped with an isotopy class of embeddings in the plane so that the semi-infinite edges are labelled in order from $0$ to $k$ around the boundary.  
For each vertex $\alpha$, we denote by $\ell_\alpha$ the number of sphere edges incident to $\alpha$; and for each disc vertex $\alpha$, we denote by $k_\alpha$ the number of disc edges incident to $\alpha$, minus one. 
$\Gamma$ is required to be \emph{stable}, which means that $k_\alpha+2\ell_\alpha \ge 2$ when $\alpha$ is a disc vertex, and $\ell_\alpha\ge 3$ when $\alpha$ is a a sphere vertex.

The stratum indexed by such a $\Gamma$ has the form
\[ \EuR^\Gamma_{k,\ell} \simeq \prod_{\alpha \in V_\sph} \EuR_{\ell_\alpha} \times \prod_{\alpha \in V_\disc} \EuR_{k_\alpha,\ell_\alpha},\]
where the diffeomorphism is determined by a choice of ordering of the sphere edges incident to each vertex. 
Specifically, given such an ordering, each finite sphere edge $\alpha E \beta$ determines a section $z_{\alpha \beta}$ of the universal family $\EuS_\alpha \to \EuR_\alpha$ of spheres or discs; each finite disc edge determines a section $\zeta_{\alpha \beta}$ of the universal family $\EuS_\alpha \to \EuR_\alpha$ of discs. 
Letting $\mathrm{pr}_\alpha: \EuR^\Gamma \to \EuR_\alpha$ denote the projection, the restriction of the universal family to such a stratum has the form
\[ \EuS^\Gamma_{k,\ell} \simeq \left(\coprod_{\alpha \in V_\sph} \mathrm{pr}_\alpha^* \EuS_{\ell_\alpha} \sqcup \coprod_{\alpha \in V_\disc} \mathrm{pr}_\alpha^* \EuS_{k_\alpha,\ell_\alpha}\right)/\sim\]
where `$\sim$' identifies $\mathrm{pr}_\alpha^* z_{\alpha \beta} \sim \mathrm{pr}_\beta^* z_{\beta \alpha}$ for all finite spherical edges $\alpha E \beta$ and $\mathrm{pr}_\alpha^*\zeta_{\alpha \beta} \sim \mathrm{pr}_\beta^*\zeta_{\beta \alpha}$ for all finite disc edges $\alpha E \beta$. 

We now observe that the group $\Sym(\ell)$ acts on $\Sbar_{k,\ell} \to \Rbar_{k,\ell}$ by relabelling the interior marked points, and the action preserves the smooth locus $\EuS_{k,\ell} \to \EuR_{k,\ell}$. 

\begin{lem}\label{lem:free}
The action of $\Sym(\ell)$ on $\EuR_{k,\ell}$ is free, and therefore properly discontinuous.  
\end{lem}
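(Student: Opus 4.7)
The plan is to reduce proper discontinuity to freeness of the action, which is automatic once we know freeness because $\Sym(\ell)$ is a finite group acting on a Hausdorff manifold. So the task is to show the action is free. Suppose $\sigma \in \Sym(\ell)$ fixes a point $[D;\zeta_\bullet;z_\bullet] \in \EuR_{k,\ell}$, represented by a smooth stable marked disc. Unwinding the definition of the moduli space, this produces an automorphism $\phi \in \mathrm{Aut}(D) \cong \mathrm{PSL}(2,\R)$ with $\phi(\zeta_i) = \zeta_i$ for each $0 \le i \le k$ and $\phi(z_j) = z_{\sigma(j)}$ for each $1 \le j \le \ell$. I want to deduce that $\sigma = \mathrm{id}$.

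First I would pass to a power: letting $N$ be the order of $\sigma$, the iterate $\phi^N$ fixes every marked point individually. The stability condition $k+2\ell \ge 2$ (which is precisely the condition defining $\EuR_{k,\ell}$) amounts to saying that the $\mathrm{PSL}(2,\R)$-stabilizer of a smooth configuration of distinct marked points is trivial, so $\phi^N = \mathrm{id}$, and hence $\phi$ has finite order in $\mathrm{PSL}(2,\R)$. Next I would invoke the standard classification of elements of $\mathrm{PSL}(2,\R)$: any non-identity element of finite order is elliptic, and elliptic elements have a unique fixed point which lies in the \emph{interior} of the disc, with no fixed points on $\partial D$. Since $\phi$ fixes the boundary marked point $\zeta_0$, it cannot be elliptic, and therefore $\phi = \mathrm{id}$. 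This forces $z_j = z_{\sigma(j)}$ for every $j$; distinctness of the marked points in the smooth locus then yields $\sigma = \mathrm{id}$.

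I do not expect any genuine obstacle: the argument rests on elementary facts about $\mathrm{PSL}(2,\R)$ together with the stability characterization of $\EuR_{k,\ell}$. The one place to take care is the passage to $\phi^N$, which is necessary because $\phi$ is not a priori assumed to fix each $z_j$ individually, only to permute the set $\{z_j\}$ according to $\sigma$; without this step one cannot invoke stability directly. It is also implicit that $k \ge 0$, so that at least one boundary marked point $\zeta_0$ is present to rule out the elliptic case.
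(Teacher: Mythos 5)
Your proof is correct and follows essentially the same route as the paper's: pass to a power of the automorphism that fixes all marked points individually, kill it using stability of the smooth domain, and conclude that the automorphism itself is trivial because an automorphism of the disc fixing a boundary point admits no nontrivial elements of finite order. The only cosmetic difference is that you justify this last fact by the elliptic/parabolic/hyperbolic classification of $\mathrm{PSL}(2,\R)$ (only elliptics have finite order, and they have no boundary fixed points), whereas the paper's footnote gives an equivalent elementary argument via monotone displacement on the boundary intervals; both are fine.
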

\begin{proof}
Since $\Sym(\ell)$ is finite and $\EuR_{k,\ell}$ is Hausdorff, a free action is properly discontinuous.  To say that the action is free is to say that if $f: \Sigma \to \Sigma$ is an isomorphism which fixes the boundary markings and permutes the interior markings, then $f$ is the identity. Note that $f^{\ell!}$ must act as the identity on all marked points, hence be equal to the identity, since all domains we consider have trivial automorphism group. But the group of automorphisms of the disc fixing a single boundary point has no nontrivial roots of the identity;\footnote{To prove this claim, observe that if the boundary fixed-point set $P=\{ t \in \partial \mathbb{D} : \alpha (t) = t)\}$ of an automorphism  $\alpha$ of the disc $\mathbb{D}$ has cardinality $|P| \ge 3$, then $\alpha$ is the identity; if $|P| \in \{1, 2\}$ then $\alpha$ preserves each connected component $I$ of $\partial \mathbb{D} \setminus P$, because $\alpha$ preserves the boundary orientation. For each component $I$, either $\alpha(t) < t$ for all $t\in I$, or  $\alpha(t) >t$ for all $t\in I$ ($<$ is the order on an oriented interval); either property persists in iterates.} 
 therefore $f$ itself is the identity as required.
\end{proof}

\begin{rmk}
The argument extends to show that, for $\Sigma$ a stable disc, if $f: \Sigma \to \Sigma$ is an isomorphism which fixes the boundary markings and permutes the interior markings, then $f$ acts as the identity on each disc component of $\Sigma$. Thus the action of $\Sym(\ell)$ on strata of $\EuR_{k,\ell}$ without spherical components is again free and properly discontinuous.
\end{rmk}

\begin{rmk}
The action of $\Sym(\ell)$ on $\Rbar_{k,\ell}$ is not free, in general. For example, if there is a spherical component with exactly one node and exactly two marked points, there is an automorphism interchanging those marked points.  There is a quotient orbifold-with-corners $\Rbar_{k,\ell}/\Sym(\ell)$---locally the quotient of a manifold with corners by the action of a finite group---parametrizing stable discs whose interior marked points are unordered. The interior stratum $\EuR_{k,\ell}/\Sym(\ell)$ is both a manifold and a fine moduli space; and the same goes for the codimension-1 strata, since they parametrize stable discs with two disc components but no spherical components.
\end{rmk}

\subsection{Strip-like and cylindrical ends}\label{sec:slends}

For any $r \in \Rbar_\ell$ or $\Rbar_{k,\ell}$, we denote the corresponding fibre of the universal family by $\Sigma_r$. 
We denote by 
\begin{equation}\label{eqn:Sigma punctured}
\Sigma^\circ_r \subset \Sigma_r
\end{equation}
the complement of the boundary marked points and boundary nodes. (Thus $\Sigma^\circ_r = \Sigma_r$, when $r \in \Rbar_\ell$.)

A \emph{parametrized cylindrical end} for the $i$th internal marked point $z_i \in \Sigma_r$ is a holomorphic embedding
$$\epsilon: \mathbb{D} \to \Sigma^\circ_r$$
where $\mathbb{D} \subset \C$ is the closed unit disc, such that $\epsilon(0) = z_i$. 
We have an action of $S^1$ on the set of such cylindrical ends, by $\epsilon \mapsto \epsilon(e^{i\theta} z)$. 
An equivalence class for this action is called an \emph{unparametrized cylindrical end}; these are the ones we care about, so we will drop `unparametrized'.

A \emph{strip-like end} for the $i$th boundary marked point $\zeta_i \in \Sigma_r$ consists of a holomorphic embedding
$$
\epsilon: Z^\pm \to \Sigma^\circ_r
$$
where $Z^\pm := \R_\pm \times [0,1]$ and the sign is $-$ if $i=0$ and $+$ otherwise. The map is required to be proper, send the boundary intervals $\R_\pm \times \{0,1\}$ to $\partial \Sigma^\circ_r$, and limit to $\zeta_i$ as $s \to \pm \infty$ (see \cite[Section 8d]{Seidel:FCPLT}).

A \emph{choice of cylindrical and strip-like ends} for $\Sigma^\circ_r$ consists of a choice of cylindrical (respectively, strip-like) end for each interior (respectively, boundary) marked point, having disjoint images. 

We denote by $\Sbar^\circ_{k,\ell} \subset \Sbar_{k,\ell}$ the union of subsets $\Sigma^\circ_r$ over all $r$. 
Let $\Rbar_{k,\ell}^{(i)}$ denote the real oriented projectivization (space of half-rays) of the complex line bundle over $\Rbar_{k,\ell}$ with fibre $\hom(T_0 \mathbb{D},T_{z_i} \Sigma^\circ_r)$ over $r$. 
A \emph{universal choice of cylindrical end} for the $i$th interior marked point consists of a smooth $S^1$-equivariant map
$$\epsilon: \mathbb{D} \times \Rbar_{k,\ell}^{(i)} \to \Sbar^\circ_{k,\ell},$$
such that for any $\xi \in \Rbar_{k,\ell}^{(i)}$, the (equivalence class under positive real scaling of the) differential of the map $\epsilon(-,\xi)$ at $0 \in \mathbb{D}$ coincides with $\xi$, and which represents a cylindrical end on each fibre of the universal family. (Here $S^1 \subset \C$ acts by multiplication on $\mathbb{D}$ and $\Rbar_{k,\ell}^{(i)}$, and trivially on the target.)

We similarly define the notion of a universal choice of cylindrical end for the $i$th marked point on $\Sbar^\circ_\ell = \Sbar_\ell$.

A \emph{universal choice of strip-like end} for the $i$th boundary marked point consists of a smooth map
\[\epsilon: \Rbar_{k,\ell} \times Z^\pm \to \Sbar^\circ_{k,\ell},\]
which restricts to a strip-like end on each fibre of the universal family. 

A \emph{universal choice of cylindrical and strip-like ends} is a universal choice of cylindrical (respectively, strip-like) end for each interior (respectively, boundary) marked point in $\Rbar_{k,\ell}$ and $\Rbar_\ell$, having disjoint images. 
We require the universal choice to be \emph{consistent} with respect to gluing along cylindrical or strip-like ends (in the sense of \cite[Section 9g]{Seidel:FCPLT}), and furthermore $\Sym(\ell)$-equivariant.

\begin{lem}  \label{lem:cons_ends}
A consistent and equivariant universal choice of cylindrical and strip-like ends exists.
\end{lem}
\begin{pf} 
This is proved by a modification of the argument used to prove existence of consistent strip-like ends for the spaces $\Rbar_{k,0}$, cf. \cite[Lemma 9.3]{Seidel:FCPLT}.
We assume, inductively, that a consistent and equivariant choice has been made for all $\Rbar_{k',\ell'}$ and $\Rbar_{\ell'}$ of dimension $<N$, and show that it can be extended to $\Rbar_{k,\ell}$ and $\Rbar_\ell$ of dimension $N$. 
The base $N=0$ of the induction requires us to choose equivariant cylindrical and strip-like ends for the zero-dimensional moduli spaces $\Rbar_{0,1}$, $\Rbar_{2,0}$, and $\Rbar_3$; the requirement of consistency is empty, and the requirement of equivariance is only non-trivial for $\Rbar_3$, where it is easily arranged.
 
Now let $N>0$, and let us consider a disc moduli space $\Rbar_{k,\ell}$ (the sphere case is analogous). The requirement of consistency dictates the choice over some neighbourhood $U$ of the boundary strata $\Rbar_{k,\ell} \setminus \EuR_{k,\ell}$; because the previous choices were equivariant by construction, and the gluing construction is manifestly equivariant, this choice over $U$ is equivariant. 
Thus it descends to a choice of cylindrical and strip-like ends over $U/\Sym(\ell)$.\footnote{We abuse the terminology `choice of cylindrical ends' here, as there is no globally-defined ordering of the interior marked points, but the meaning should be clear. (Note that the problem of extending choices of cylindrical and strip-like ends is local in the base, and we can alway choose a local ordering of the interior marked points.)} 
We can extend this choice from $(U \cap \EuR_{k,\ell})/\Sym(\ell)$ to $\EuR_{k,\ell}/\Sym(\ell)$ (which is a manifold by Lemma \ref{lem:free}) as in \cite[Lemma 9.3]{Seidel:FCPLT}, then pull back to $\EuR_{k,\ell}$; the result is equivariant and consistent by construction.
\end{pf}

Given a consistent equivariant universal choice of cylindrical and strip-like ends, we may choose a \emph{universal thick-thin decomposition} (cf. \cite[Remark 9.1]{Seidel:FCPLT} -- note that this is an additional choice). 
This consists of an open set 
\begin{equation}\label{eqn:S thin}
\Sbar^{thin}_{k,\ell} \subset \Sbar^\circ_{k,\ell},
\end{equation} 
whose intersection with each fibre is parametrized as the union of strip-like ends for each disc component of the fibre, together with strip-like `gluing regions' $\epsilon_e([0,l_e] \times [0,1])$, which are also required to be $\Sym(\ell)$-equivariant and disjoint from the marked points. (In Seidel's terminology, we may take $\Sbar^{thin}_{k,\ell}$ to be the union of all strip-like ends together with the union of gluing regions lying over subsets where the gluing parameters are sufficiently small.) It is straightforward to make the choices of gluing regions $\Sym(\ell)$-equivariant, as in the proof of Lemma \ref{lem:cons_ends}.  Note that we do \emph{not} include the cylindrical ends and cylindrical gluing regions in the thin region, as we will not require a gluing argument along such a cylindrical region in this paper.

\subsection{Floer data}

Let $(W \subset X,\omega,\theta,V, J_0)$ be data as in Section \ref{sec:symp_rel_fuk}: $(X,\omega)$ is a compact symplectic manifold, $W \subset X$ is a Liouville subdomain with Liouville one-form $\theta$, $V \subset X \setminus W$ is a system of divisors, $J_0 \in \EuJ(X,V)$, and $(W \subset X,J_0)$ admits a convex collar. 
Recall the definition of $\EuY_* \subset \EuY$. 

We set
\[ \Ymax_* := \{Y \in \EuY_*: \mathrm{supp}(Y) \subset W\}.\]
Note that if $(U,h)$ is a convex collar for $(W \subset X,J_0)$, and $Y \in \Ymax_*$, then $(U,h)$ is also a convex collar for $(W \subset X,J_Y)$.

\begin{defn}\label{defn:obj}
A \emph{Lagrangian brane} is a closed, exact Lagrangian submanifold $L \subset \mathring{W}$, equipped with a Pin structure.
\end{defn}

Let $\EuH \subset C^\infty(X;\R)$ denote the set of smooth functions supported in $W$. 
For each pair of Lagrangian branes $(L_0,L_1)$ we choose a smooth function 
\[ H_{01}:[0,1] \to \EuH.\]
We assume that the time-1 flow of the corresponding Hamiltonian vector field $X_{H_{01}(t)}$, when applied to $L_0$, makes it transverse to $L_1$. 
Thus, the set of Hamiltonian chords $y$ from $L_0$ to $L_1$ is finite.

Next, for each pair of Lagrangian branes $(L_0,L_1)$ we consider a smooth function
\begin{align*}
Y_{01}: [0,1]& \to \EuY_*^{\max{}}, \qquad \text{giving rise to} \\
J_{01}:[0,1] & \to \EuJ^{\max{}}(X,V), \quad \text{via $J_{01} := J_{Y_{01}}$.}
\end{align*}
The corresponding equation for Floer trajectories between chords $y_0$ and $y_1$ in the Liouville domain $W$ is
\begin{align*}
u: \R \times [0,1] & \to W \\
\partial_s u + J_{01}(t) (\partial_t u - X_{H_{01}(t)} \circ u) &=0\\
u(s,i) & \in L_i \qquad \text{for $i=0,1$} \\
\lim_{s \to +\infty} u(s,\cdot) &= y_1  \\
\lim_{s \to -\infty} u(s,\cdot) &= y_0.
\end{align*}
For a comeagre subset of the space of choices of $Y_{01}$, the moduli space of such Floer trajectories is regular \cite{FHS:transversality}. 
We choose such a regular $Y_{01}$, for each pair of Lagrangian branes $(L_0,L_1)$ equipped with $H_{01}$.

Taken together, the choice of $(H_{01},Y_{01})$ for each pair $(L_0,L_1)$ is called a choice of \emph{Floer data}.

\subsection{Perturbation data}

A \emph{Lagrangian labelling} $\bL$ is a tuple of Lagrangian branes $L_0,\ldots,L_k$ (with $k \ge 0$). 
Given a family of stable discs $\Rbar_{k,\ell}$, a Lagrangian labelling $\bL$ will be thought of as a labelling of the boundary components of discs in the family by the branes $L_i$. 
Let $r \in \Rbar_{k,\ell}$, $\Sigma_r$ the fibre of the universal family over $r$, $\Sigma^\circ_r$ the complement of all boundary marked points and nodes, and $\tilde{\Sigma}^\circ_r$ the normalization of $\Sigma^\circ_r$. 
Following \cite[Section 8e]{Seidel:FCPLT}, we make the following:

\begin{defn}\label{def:perdat}
A \emph{perturbation datum} for $\Sigma_r^\circ$ equipped with boundary labelling $\bL$ consists of a pair $P=(Y,K)$ where 
\[Y \in C^\infty(\tilde{\Sigma}^\circ_r, \EuY_*),\quad K \in \Omega^1(\tilde{\Sigma}^\circ_r,\EuH),\]
 satisfying
 \begin{itemize}
 \item[]
 \textbf{(Constant on spheres)} $Y$ is constant, and $K$ vanishes, on each spherical component of $\tilde{\Sigma}^\circ_r$; and furthermore, $Y(z_{\alpha \beta}) = Y(z_{\beta \alpha})$ for every sphere edge $\alpha E \beta$;
\item[]
\textbf{(Thin regions)}
 over every thin region (strip-like end or strip-like gluing region) parametrized by $\epsilon$, and having boundary labels $L_\ell,L_r$, we have
\[ Y(\epsilon(s,t)) = Y_{\ell,r}(t)\qquad \epsilon^* K = H_{\ell,r}(t) dt;\]
\item[]
\textbf{(Boundary)} over each component $C$ of the boundary labelled by $L_C$, we have
\[ K(\xi)|_{L_C} =0 \quad \text{for all $\xi \in TC \subset T(\partial \Sigma^\circ_r)$}.\]

\item []
\textbf{(Maximum principle)} If $\ell=0$, then $Y \in \EuY^{\max{}}_*\subset \EuY_*$.   
\end{itemize}
\end{defn}

Let $\Sigma^{\circ \circ}_r \subset \Sigma^\circ_r$ denote the complement of all boundary \emph{and interior} marked points and nodes of $\Sigma_r$.  
Let $\Sbar^{\circ \circ}_{k,\ell} \subset \Sbar^{\circ}_{k,\ell}$ denote the union of subsets $\Sigma^{\circ \circ}_r \subset \Sigma^\circ_r$ over all fibres. 
Note that $\Sbar^{\circ \circ}_{k,\ell} \to \Rbar_{k,\ell}$ is a submersion, so the bundle of fibrewise one-forms $\Omega^1_{\Sbar^{\circ\circ}_{k,\ell}/\Rbar_{k,\ell}}$ makes sense.

\begin{defn}\label{defn:pert}
A \emph{universal choice of perturbation data} consists of pairs $\bP_{k,\ell,\bL} = (\bY_{k,\ell,\bL},\bK_{k,\ell,\bL})$ for all $k+2\ell \ge 2$ and all Lagrangian labellings $\bL$, where
\[ \bY_{k,\ell,\bL} \in C^\infty\left(\Sbar^\circ_{k,\ell}, \EuY_*\right),\qquad \bK_{k,\ell,\bL} \in \Omega^1_{\Sbar^{\circ\circ}_{k,\ell}/\Rbar_{k,\ell}}\left(\EuH\right)\]
(so $\bK_{k,\ell,\bL}$ is an $\EuH$-valued, fiberwise linear map defined on the vertical tangent bundle of the universal curve $\Sbar^{\circ \circ}_{k,\ell} \to \Rbar_{k,\ell}$). 
The restriction of $\bP_{k,\ell,\bL}$ to each fibre $\Sigma_r$ of the universal family should be a perturbation datum for that fibre, in the sense of Definition \ref{def:perdat}.\footnote{More precisely, the pullback of $\bY_{k,\ell,\bL}(r)$ to $\tilde{\Sigma}^\circ_r$ should form one part of the perturbation datum, and the one-form $\bK_{k,\ell,\bL}(r)$ should admit an extension from $\Sigma^{\circ \circ}_r$ to $\tilde{\Sigma}^\circ_r$, forming the other part of a perturbation datum. (Of course the extension is unique, if it exists.)} 
Furthermore, the $\bP_{k,\ell,\bL}$ should satisfy:
\begin{enumerate}
\item []
\textbf{(Equivariant)} $\bP_{k,\ell,\bL}$ is $\Sym(\ell)$-equivariant;

\item []
\textbf{(Consistent on discs)} The restriction of $\bP_{k,\ell,\bL}$ to $\mathrm{pr}_\alpha^* \Sbar^\circ_{k_\alpha,\ell_\alpha}$ is equal to $\mathrm{pr}_\alpha^* \bP_{k_\alpha,\ell_\alpha,\bL_\alpha}$ for any disc vertex $\alpha \in V_\disc$ of a tree $\Gamma$.
\end{enumerate}
\end{defn}

\begin{rmk}
Recall that the embedding $\Sbar^\circ_{k_\alpha,\ell_\alpha} \hookrightarrow \Sbar^\circ_{k,\ell}$  depends on a choice of ordering of the finite spherical edges incident to $\alpha$. By the \textbf{(Equivariant)} condition satisfied by $\bP_{k_\alpha,\ell_\alpha,\bL_\alpha}$, the \textbf{(Consistent on discs)} condition on $\bP_{k,\ell,\bL}$ does not depend on this ordering.
\end{rmk}

\begin{lem}\label{lem:pertinduct}
Suppose $k+2\ell=N$, and that $\bP_{k',\ell',\bL'}$ satisfying the conditions of Definition \ref{defn:pert} have been chosen for all $k'+2\ell' <N$. Then there exist $\bP_{k,\ell,\bL}$ satisfying the conditions of Definition \ref{defn:pert}. 
\end{lem}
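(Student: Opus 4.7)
The plan is to first define $\bP_{k,\ell,\bL}$ on a neighborhood of the boundary $\partial \Rbar_{k,\ell}$ using the inductive hypothesis, then extend to the interior by interpolation, and finally enforce equivariance by averaging.

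For the boundary construction, on each boundary stratum $\EuR^\Gamma_{k,\ell}$ the universal curve decomposes via the projections $\mathrm{pr}_\alpha$ onto the factors indexed by the vertices of $\Gamma$. For each disc vertex $\alpha$ I set $\bP := \mathrm{pr}_\alpha^* \bP_{k_\alpha,\ell_\alpha,\bL_\alpha}$, which is defined by hypothesis since $k_\alpha + 2\ell_\alpha < N = k+2\ell$. For each spherical vertex $\alpha$ I set $\bK \equiv 0$ and take $\bY$ to be the constant value determined by continuity at the attaching node, propagating through any connected subtree of spheres until it reaches a disc vertex where the value is specified inductively. Agreement of these prescriptions on the intersection of two boundary strata $\EuR^\Gamma, \EuR^{\Gamma'}$ within a common degeneration follows by iterating the inductive \textbf{(Consistent on discs)} and \textbf{(Constant on spheres)} conditions.

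Next, use the chosen universal thick-thin decomposition from Section \ref{sec:slends} to extend this boundary datum smoothly to an open neighborhood $U$ of $\partial \Rbar_{k,\ell}$ via the gluing parametrizations; the \textbf{(Thin regions)} condition on $U$ is automatic, inherited from the inductive data. To extend to the interior, pick any smooth $\bY_0 \in C^\infty(\Sbar^\circ_{k,\ell}, \EuY_*)$ and $\bK_0 \in \Omega^1_{\Sbar^\circ_{k,\ell}/\Rbar_{k,\ell}}(\EuH)$ satisfying \textbf{(Thin regions)}, which exist because $\EuY_*$ and $\EuH$ are nonempty and convex. Choose a smooth cutoff $\rho : \Rbar_{k,\ell} \to [0,1]$ equal to $1$ on a smaller boundary neighborhood and supported in $U$, and set
\[ \bY_{k,\ell,\bL} := \rho \bY_U + (1-\rho)\bY_0, \qquad \bK_{k,\ell,\bL} := \rho \bK_U + (1-\rho) \bK_0. \]
Convexity of $\EuY_*$ ensures $\bY_{k,\ell,\bL}$ takes values in $\EuY_*$; when $\ell=0$ we choose $\bY_0$ in the convex subset $\Ymax$, yielding \textbf{(Maximum principle)}.

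Finally, to enforce \textbf{(Equivariant)}, average over $\Sym(\ell)$: the boundary datum is already equivariant, by the inductive equivariance of the $\bP_{k_\alpha,\ell_\alpha,\bL_\alpha}$, the canonical nature of the constant-on-spheres prescription, and the equivariance of the thick-thin decomposition from Section \ref{sec:slends}; and $\rho, \bY_0, \bK_0$ can be made $\Sym(\ell)$-equivariant by averaging, with convexity of $\EuY_*$ (resp.\ $\Ymax$) preserving the target constraint. The main delicate point is the compatibility check among different codimension-one boundary prescriptions on their common higher-codimension intersections, which is precisely what motivates the form of the conditions in Definition \ref{defn:pert} and is resolved entirely by iterating the inductive hypothesis factorwise.
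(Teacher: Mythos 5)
Your proposal is correct and follows essentially the same route as the paper: specify the data on the boundary strata (pullback via $\mathrm{pr}_\alpha$ on disc components, constant on sphere trees with value fixed at the unique attaching point on a disc component), extend inward using cutoff functions and the convexity of $\EuY_*$ and $\EuH$, and enforce equivariance by averaging, relying on the equivariant parametrization of the thin regions. Your additional remarks on the \textbf{(Maximum principle)} case $\ell=0$ and on compatibility of strata prescriptions are consistent with, and slightly more explicit than, the paper's argument.
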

\begin{proof}
The \textbf{(Consistent on discs)} condition uniquely specifies the restriction of $(\bY_{k,\ell,\bL},\bK_{k,\ell,\bL})$ to all boundary strata of the form $\mathrm{pr}_\alpha^*\Sbar^\circ_{k_\alpha,\ell_\alpha}$ for $\alpha \in V_\disc$; the \textbf{(Constant on spheres)} condition then uniquely specifies the restriction of $(\bY_{k,\ell,\bL},\bK_{k,\ell,\bL})$ to the remaining boundary strata of the form $\mathrm{pr}_\alpha^*\Sbar_{\ell_\alpha}$ for $\alpha \in V_\sph$, because each maximal tree of sphere components intersects a disc component in a unique point. 
Thus the restriction of $(\bY_{k,\ell,\bL},\bK_{k,\ell,\bL})$ to all boundary strata is uniquely specified by the preceding choices. 
It is clear that these uniquely-specified restrictions satisfy the \textbf{(Equivariant)} condition (using the fact that a constant map is equivariant) and the \textbf{(Thin regions)} condition. 

Having specified $(\bY_{k,\ell,\bL},\bK_{k,\ell,\bL})$ over the boundary strata, we need to show that it extends over all of $\Sbar^\circ_{k,\ell}$ (cf. \cite[Lemma 9.5]{Seidel:FCPLT}). 
Such an extension can be constructed using appropriate cutoff functions, because $\EuY_*$ and $\EuH$ are convex, and it can easily be arranged to satisfy the \textbf{(Thin regions)} condition, using the fact that our choice of strip-like ends is consistent. 
The \textbf{(Equivariant)} condition can then be achieved by averaging, again using convexity. 
The averaged perturbation data continues to respect the \textbf{(Thin regions)} condition, because we chose the parametrization of the thin regions to be equivariant.
\end{proof}

\subsection{Pseudoholomorphic curves}

Given a Lagrangian labelling $\bL=(L_0,\ldots,L_k)$, a \emph{choice of Hamiltonian chords} $\by=(y_0,\ldots,y_k)$ consists of chords $y_0$ from $L_0$ to $L_k$ and $y_i$ from $L_{i-1}$ to $L_i$ for $i=1,\ldots,k$. 
We will be considering maps $u:\Sigma^\circ \to X$, where $\Sigma^\circ$ is a disc with $k+1$ boundary punctures, each boundary component is constrained to lie on the Lagrangian by which it is labelled, and $u$ limits to the chords $y_i$ at the punctures. 
We denote the set of homotopy classes of such maps by $\pi_2(\by)$. 
To each $A \in \pi_2(\by)$ we can associate certain numerical invariants: a Maslov-type index $i(A)$, equal to the Fredholm index of the linearization of the pseudo-holomorphic curve equation in the form to be discussed shortly,\footnote{In the case that our Floer groups are $\Z$-graded, we have $i(A) = i(y_0) - \sum_{j=1}^k i(y_j)$, with $i(y)$ denoting the degree of the generator $y$.} and topological intersection numbers $A \cdot V_q$ for $q \in Q$. 
We denote $\pi_2^\num(\by):= \pi_2(\by)/\sim$, where the equivalence relation identifies two classes if they have the same numerical invariants. 
Note that given $A \in \pi_2^\num(\by)$ and a spherical homology class $A' \in \im \pi_2(X) \to H_2(X;\Z)$, we can form the connect sum $A + A' \in \pi_2^\num(\by)$: the connect sum has Maslov index $i(A) + 2c_1(A')$ and intersection numbers $A \cdot V_q + A' \cdot V_q$, so the resulting equivalence class is independent of the choices involved in forming the connect sum.

Because we choose our almost complex structures to respect the components $V_q$ of the system of divisors $V$, any holomorphic curve which is not contained in $V$ (such as a disc whose boundary is contained in Lagrangians disjoint from $V$) will intersect each $V_q$ positively in a finite set of points. 
We will record the local intersection number at $z$ of a holomorphic curve $u$ with the components of $V$ as a `tangency vector' $\iota(u,z) \in (\Z_{\ge 0})^Q$.

Now, we will only need to consider classes $A \in \pi_2^\num(\by)$ such that $A \cdot V_q \ge 0$ for all $q$ (because of Lemma \ref{lem:posint}). 
Given such an $A$, a choice of \emph{tangency data} is a number $\ell \ge 0$ and a function $\tang:\{1,\ldots,\ell\} \to \Z_{\ge 0}^Q$ such that $\sum_{i=1}^\ell \tang(i)_q = A \cdot V_q$.

Given a Lagrangian labelling $\bL$, a universal choice of perturbation data associates an almost complex structure $J_z := J_{\bY_{k,\ell,\bL}(z)} \in \EuJ(X,V)$ to every point $z$ on a fibre $\Sigma_r^\circ$ of the universal family $\Sbar^\circ_{k,\ell} \to \Rbar_{k,\ell}$. 
It also determines a one-form $\nu$ on $\Sigma_r^\circ$ with values in (Hamiltonian) vector fields on $X$: $\nu(\xi)$ is the Hamiltonian vector field corresponding to the Hamiltonian function $\bK_{k,\ell,\bL}(\xi)$.
 
Now, suppose we are given a Lagrangian labelling $\bL$, a choice of Hamiltonian chords $\by$, an element $A \in \pi_2^\num(\by)$, a choice of tangency data $\tang$, and a universal choice of perturbation data $\bP$. 
We define a moduli space $\cM(\by,A,\tang,\bP)$ if $k+2\ell \ge 1$; the moduli space is undefined if $k=\ell=0$. 
If $k+2\ell \ge 2$, we define the moduli space of pseudo-holomorphic discs with tangency conditions
\begin{equation}\label{eqn: tang moduli}
\cM(\by,A,\tang,\bP)
\end{equation}
to consist of pairs $(r,u)$ where $r \in \EuR_{k,\ell}$ and $u: \Sigma_r^\circ \to X$ satisfies the resulting pseudoholomorphic curve equation:
\begin{align*}
(Du-\nu)\circ j_z &= J_z \circ (Du-\nu)  \\
u(C_i) & \subset L_i \notag  \\
\lim_{s \to \pm \infty} u(\epsilon_i(s,\cdot)) &= y_i(\cdot)\notag  \\ 
\iota(u,z_i)  & \ge \tang(i) \text{ for $i=1,\ldots,\ell$} \notag \\
[u] &=A. \notag
\end{align*}

Note that $\Sym(\ell)$ acts on $\bigcup_{\tang} \cM(\by,A,\tang,\bP)$, by the \textbf{(Equivariant)} condition on the perturbation data. The action is free by Lemma \ref{lem:free}. 
We remark that the condition $\iota(u,z_i) \ge \tang(i)$ is equivalent to the condition $\iota(u,z_i) = \tang(i)$, by Lemma \ref{lem:posintCM} below; the inequality is equivalent to a jet condition \eqref{eqn:jet vanishing}, which fits within our functional-analytic framework for analyzing the moduli space.

In the remaining case $k+2\ell = 1$, we must have $(k,\ell) = (1,0)$ (so $\EuR_{k,\ell}$ is undefined). 
In this case we have $\by = (y_0,y_1)$, and we define $\cM(\by,A,\emptyset,\bP)$ to be the moduli space of Floer trajectories from $y_0$ to $y_1$, modulo translation. 

\begin{lem}\label{lem:posintCM}
If $u \in \cM(\by,A,\tang,\bP)$ then $u$ intersects $V_q$ at the marked points $z_i$ with $\tang(i)_q \ge 1$, with multiplicity $\tang(i)_q$, and nowhere else.
\end{lem}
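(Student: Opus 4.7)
The plan is to combine positivity of intersection, applied componentwise for each $V_q$, with the arithmetic constraint $\sum_{i=1}^\ell \tang(i)_q = A \cdot V_q$ built into the definition of tangency data. The strategy is a pigeonhole comparison: the lower bound $\iota(u,z_i)_q \ge \tang(i)_q$ already saturates the full algebraic intersection $A \cdot V_q$, so there is no room for either additional intersection points or for strict inequalities at any $z_i$.

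First, I would check that $u$ is not contained in any $V_q$, so that positivity of intersection is applicable. This is immediate from the boundary conditions: every boundary component of $\Sigma^\circ_r$ is mapped into some Lagrangian brane $L_i \subset \{h<c\}$ (Definition \ref{defn:obj}), whereas $V_q \subset V \subset \{h>c\}$; thus $u(\partial \Sigma^\circ_r)$ is disjoint from $V_q$, and $u$ cannot be contained in the proper closed subvariety $V_q$. The same reasoning covers the Floer-strip case $(k,\ell)=(1,0)$.

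Second, I would invoke positivity of intersection. Since $J_z = J_{\bY_{k,\ell,\bL}(z)} \in \EuJ(X,V)$ makes each $V_q$ an almost complex submanifold, and since the inhomogeneous term $\nu$ vanishes identically along $V_q$ (its values are Hamiltonian vector fields of functions in $\EuH$, which are supported in $\{h<c\}$), the standard statement cited in the proof of Lemma \ref{lem:posint} gives that $u^{-1}(V_q)$ is a finite set, each local multiplicity $\iota(u,p)_q$ is a positive integer, and
\[ \sum_{p \in u^{-1}(V_q)} \iota(u,p)_q \;=\; [u]\cdot V_q \;=\; A\cdot V_q.\]
Combining with the tangency bound from the definition of $\cM(\by,A,\tang,\bP)$ and the hypothesis $\sum_i \tang(i)_q = A\cdot V_q$, I obtain the chain
\[ A\cdot V_q \;=\; \sum_{i=1}^\ell \tang(i)_q \;\le\; \sum_{i=1}^\ell \iota(u,z_i)_q \;\le\; \sum_{p\in u^{-1}(V_q)} \iota(u,p)_q \;=\; A\cdot V_q,\]
forcing equality throughout. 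Hence $\iota(u,z_i)_q = \tang(i)_q$ for every $i$, and no point of $\Sigma^\circ_r$ outside $\{z_1,\ldots,z_\ell\}$ maps into $V_q$; in particular $u$ meets $V_q$ at $z_i$ precisely when $\tang(i)_q \ge 1$, with multiplicity exactly $\tang(i)_q$.

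The only step requiring any care is positivity of intersection itself, since strictly speaking that result is a statement about honestly $J$-holomorphic maps rather than about solutions of the perturbed equation $(Du-\nu)^{0,1}_{J_z}=0$. Because $\nu$ vanishes identically on $V_q$, however, the perturbed equation reduces to the homogeneous Cauchy--Riemann equation at every point of $u^{-1}(V_q)$, and the standard local analysis at an isolated intersection goes through without modification. Everything else is a routine arithmetic comparison.
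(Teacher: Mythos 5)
Your proof is correct and follows essentially the same route as the paper: positivity of intersection (via \cite[Proposition 7.1]{Cieliebak2007}) gives that every intersection point contributes at least its local multiplicity to $A \cdot V_q$, and the chain $A\cdot V_q \ge \sum_i \iota(u,z_i)_q \ge \sum_i \tang(i)_q = A\cdot V_q$ forces equality, ruling out extra intersections and excess tangency. The additional checks you spell out (that $u$ is not contained in $V_q$, and that the inhomogeneous term vanishes near $V$ so the local theory applies) are correct and are left implicit in the paper's proof.
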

\begin{proof}
By \cite[Proposition 7.1]{Cieliebak2007}, each intersection point $z$ of $u$ with $V_q$ contributes $\iota(u,z)_q \ge 1$ to $[u] \cdot V_q = A \cdot V_q$, so we have
\[ A \cdot V_q \ge \sum_i \iota(u,z_i)_q \ge \sum_i \tang(i)_q = A \cdot V_q.\]
Thus we have equality, which implies the result. 
\end{proof}

If we set $\ell = A \cdot V$ and let $\vv:\{1,\ldots,\ell\} \to Q$ be a function with $|\vv^{-1}(q)|=A \cdot V_q$, we can define a function $\tang^{\can}:\{1,\ldots,\ell\} \to \Z_{\ge 0}^Q$ by setting $\tang^{\can}(i)_q = 1$ if $\vv(i) = q$ and $0$ otherwise. 
We denote the resulting moduli space by 
\begin{equation}\label{eqn: can tang}
\cM(\by,A,\bP) := \cM(\by,A,\tang^{\can},\bP).
\end{equation}
It is independent of the choice of $\vv$ up to the action of $\Sym(\ell)$.
We denote by $\Sym(\vv) \subset \Sym(\ell)$ the subgroup preserving $\vv$, and observe that it acts freely on $\cM(\by,A,\bP)$.

\subsection{Transversality}\label{sec:transv}

In this section we will define what it means for our perturbation data to be `regular'; in the next we will show how this leads to the desired Gromov-type compactification of our moduli spaces. 

Let $(\by,A_0,\tang,\bP)$ be data as in the previous section. Let $I \subset \{1,\ldots,\ell\}$ be a subset containing all $i$ such that $\tang(i)=0$, and let $\{\Gamma_i\}_{i \in I}$ be combinatorial types of bubble trees with $1$ marked point, in the sense of Section \ref{sec:jcons}. 
We will denote the image of the total homology class of $\Gamma_i$ in $\pi_2^\num(\by)$ by $A_i$, and set
\[ A := A_0 + \sum_{i \in I} A_i.\]

\begin{figure}
\begin{center}
\includegraphics[width=0.5\textwidth]{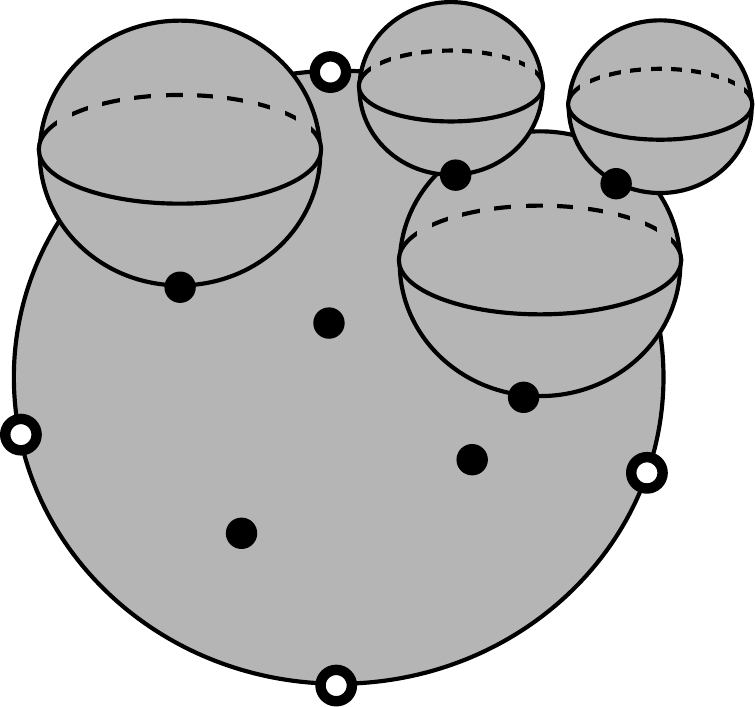}
\end{center}
\caption{\label{fig:disk3}A pseudoholomorphic disc with two holomorphic bubble trees attached. Boundary punctures are denoted by an open point, and interior marked points and nodes are denoted by a solid point. Note that the bubble trees carry no marked points.}
\end{figure}

Associated to this data we have a moduli space of discs with `$J_{z_i}$-holomorphic bubble trees of combinatorial type $\Gamma_i$, attached at their respective unique marked points to the points $z_i$, for $i \in I$' (see Figure \ref{fig:disk3}). 
To be precise, we introduce notation: $K_i \subset Q$ is the set of non-zero coordinates of $\tang(i)$; and $K'_i \subset Q$ is the subset attached to the component of $\Gamma_i$ containing the marked point.
Starting with the moduli spaces (\ref{eqn: tang moduli}), we consider the evaluation maps
\begin{align*}
\ev_i: \cM(\by,A_0,\tang,\bP) & \to \EuS^\circ_{k,\ell} \times V_{K_i} \\
(r,u) & \mapsto (z_i(r),u(z_i))
\end{align*}
and
\begin{align*} 
(\pi_{\EuS^\circ_{k,\ell}},\mathrm{ev}_1):\cM_{\Gamma_i}(\bY_{k,\ell}) & \to \EuS^\circ_{k,\ell} \times V_{K'_i}
\end{align*}
(see Section \ref{sec:param}). Here `$\mathrm{ev}_1$' denotes evaluation at the point $z_1$. Recall (cf. Definition \ref{defn:pert}) that $\bY_{k,\ell}$ is a family of infinitesimal almost complex structures parametrized by $\EuS^\circ_{k,\ell}$; the map $\pi_{\EuS^\circ_{k,\ell}}$ records the parameter.
We put these together to define
\[ \cM(\by,A,\tang,\bP,\{\Gamma_i\}) := \cM(\by,A_0,\tang,\bP) \times \prod_{i \in I} \cM_{\Gamma_i}(\bY_{k,\ell}),\]
with an evaluation map
\[ \ev: \cM(\by,A,\tang,\bP,\{\Gamma_i\}) \to \prod_{i \in I} V_{K_i} \times V_{K'_i} \times \EuS^\circ_{k,\ell} \times \EuS^\circ_{k,\ell}.\] 
We define the following subset of the RHS:
\[ \Delta := \prod_{i \in I} \left\{(x_i,x'_i, s ,s') \in V_{K_i} \times V_{K'_i} \times \EuS^\circ_{k,\ell} \times \EuS^\circ_{k,\ell}: x_i=x'_i, \; s = s' \right\}.\]
Now define the moduli space of pseudo-holomorphic discs with tangencies and unmarked bubble-trees
\begin{equation}\label{eqn:moduli with tangencies and bubbles} 
\cM_{\{\Gamma_i\}}(\by,A,\tang,\bP) := \ev^{-1}(\Delta).
\end{equation}
We denote by 
\begin{equation}\label{eqn:simple moduli with tangencies and bubbles} 
\cM^*_{\{\Gamma_i\}} \subset \cM_{\{\Gamma_i\}}
\end{equation}
the subset consisting of discs with unmarked bubble trees attached, each of which is simple.

\begin{rmk}\label{rmk:bubbtree}
Let us clear up a potential confusion. A disc with unmarked bubble trees attached will typically \emph{not} be the Gromov limit of a family of discs. 
Rather, a Gromov limit of discs will have spherical components contain interior marked points at which tangency conditions to $V$ are imposed; a disc with bubble trees attached is obtained by forgetting these marked points, which is possible by our \textbf{(Constant on spheres)} assumption from Definition \ref{defn:pert} (cf. Lemma \ref{lem:nospheresdim1}).
\end{rmk}

In the notation of \cite[Section 6]{Cieliebak2007}, the tangency constraint on an element $u\in \cM(\by,A_0,\tang,\bP)$ can be formulated as 
\begin{equation}\label{eqn:jet vanishing}   
d^{\tang(i)_q}u({z_i}) \in T_{u(z_i)}V_q 
\end{equation}
for all $q \in Q$.

\begin{defn}\label{defn:regularity}
A disc $u\in \cM(\by,A_0,\tang,\bP)$ is called parametrized-regular if it is cut out transversely by the parametric Cauchy--Riemann equation together with the jet-vanishing condition (\ref{eqn:jet vanishing}).  An element of $\cM^*_{\{\Gamma_i\}}(\by,A,\tang,\bP)$ is called regular if the disc is parametrized-regular, the bubble trees are parametrized-regular (cf. Section \ref{sec:param}), and the evaluation map is transverse to $\Delta$. 
The moduli space is called regular if all of its points are regular. 
A universal choice of perturbation data is called regular if all moduli spaces having negative virtual dimension are regular (i.e., empty); and furthermore, all moduli spaces $\cM(\by,A,\bP)$ of virtual dimension $\le 1$ are regular.
\end{defn}

\begin{lem}\label{lem:genreg}
Let $\by = (y_0,\ldots,y_k)$ be a choice of Hamiltonian chords corresponding to a Lagrangian labelling $\bL$, $A_0 \in \pi_2^\num(\by)$, $\tang:\{1,\ldots,\ell\} \to \Z_{\ge 0}^Q$ a choice of tangency data, and $\{\Gamma_i\}_{i \in I}$ a set of combinatorial types of bubble trees as above. Let $k+2\ell = N$. Suppose we have fixed $\bP_{k',\ell',\bL'}$ for all $2 \le k'+2\ell' < N$ and all Lagrangian labellings $\bL'$;  and suppose that these satisfy the conditions required of a universal choice of perturbation data. 
Then the set of $\bP_{k,\ell,\bL}$ satisfying those conditions, and such that $\cM^*_{\{\Gamma_i\}}(\by,A,\tang,\bP)$ is regular, is comeagre.
\end{lem}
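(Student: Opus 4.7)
The plan is to apply the Sard--Smale theorem to a universal moduli space. First I would replace the Fr\'echet space of $C^\infty$ perturbation data $\bP_{k,\ell,\bL}$ (subject to the constraints of Definition~\ref{defn:pert} and agreeing with the already-fixed data on the boundary strata of $\Rbar_{k,\ell}$) by its $C^\ell$ analogue $\mathcal{P}^\ell$ for some large finite $\ell$. By the proof of Lemma~\ref{lem:pertinduct} combined with Lemma~\ref{lem:extendJ}, this $\mathcal{P}^\ell$ is a non-empty open convex subset of an affine Banach space, so in particular a Banach manifold.

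Next I would form the universal moduli space
\[ \mathcal{U} := \bigcup_{\bP \in \mathcal{P}^\ell} \cM^*_{\{\Gamma_i\}}(\by,A,\tang,\bP) \]
and exhibit it as the zero set of a smooth Fredholm section $\sigma$ of a Banach bundle over a Banach manifold $\mathcal{B} \times \mathcal{P}^\ell$. Here $\mathcal{B}$ parametrizes tuples consisting of $r \in \EuR_{k,\ell}$, a disc $u:\Sigma_r^\circ \to X$ of appropriate Sobolev class satisfying the Lagrangian boundary conditions and asymptotic convergence to the chords $y_i$, and, for $i\in I$, a bubble tree of type $\Gamma_i$ with a marked root attached at $z_i(r)$. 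The section $\sigma$ encodes the perturbed $\bar\partial$ equation for the disc, the parametric Cauchy--Riemann equation for each bubble tree, the jet-vanishing conditions \eqref{eqn:jet vanishing} at each $z_i$, and the evaluation matching conditions that cut out $\Delta$.

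The main step is to verify that the linearization $D\sigma$ is surjective at every solution, so that $\mathcal{U}$ is a Banach manifold and the projection $\pi\colon\mathcal{U}\to\mathcal{P}^\ell$ is Fredholm. I would decompose this surjectivity into three essentially independent pieces. \emph{(i)} Parametrized regularity of the simple bubble trees follows from the parametric analogue of Lemma~\ref{lem:regJexists}, which uses the simpleness hypothesis via the adaptation of \cite[Prop.~3.4.3]{mcduffsalamon} together with Lemma~\ref{lem:extendJ} to produce enough normal variations of $J_Y$ along each stratum $V_{K_\alpha}$. \emph{(ii)} Transversality of the evaluation to the generalized diagonal $\Delta$ is achieved by allowing the marked points $z_i$ to vary in the fibre and by $V_{K_i}$-preserving variations of $\bY$ supported near the image of the bubble-tree root. \emph{(iii)} For the disc together with the jet-vanishing constraints, I would adapt the argument of \cite{FHS:transversality}: perturbations of $\bK$ supported on an open subset of the thick part of $\Sigma_r^\circ$ disjoint from the tangency points $z_i$ and from $u^{-1}(V)$ yield, via unique continuation and the Lagrangian boundary condition, surjectivity onto the cokernel of the unperturbed Cauchy--Riemann linearization; the jet-vanishing equations are then cut out transversely by $V$-preserving variations of $\bY$ supported in small punctured neighborhoods of each $z_i$, which exist by Lemma~\ref{lem:extendJ}.

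Once this is established, Sard--Smale produces a comeagre set of regular values of $\pi$ in $\mathcal{P}^\ell$, which are by the implicit function theorem precisely the $\bP$ for which $\cM^*_{\{\Gamma_i\}}(\by,A,\tang,\bP)$ is regular in the sense of Definition~\ref{defn:regularity}. Finally I would invoke Taubes's trick, in the form used in \cite[proof of Theorem~6.2.6(ii)]{mcduffsalamon}, to pass from a comeagre subset of $\mathcal{P}^\ell$ to a comeagre subset of the Fr\'echet space of $C^\infty$ perturbations. The principal obstacle is step~(iii): because the perturbations $\bY$ are constrained to preserve each $V_q$, they cannot freely move the disc at its intersection points with $V$, and surjectivity hinges on separating the roles of $\bK$ (used in the thick part away from $V$ and from the $z_i$) from $V$-preserving $\bY$-perturbations localized near, but not at, each tangency point.
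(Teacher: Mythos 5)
Your overall strategy (universal moduli space over a Banach manifold of $C^s$ perturbation data, surjectivity of the linearization, Sard--Smale, then Taubes's trick to return to $C^\infty$) is the same as the paper's, and your steps (i)--(ii) and the passage through Sobolev/$C^s$ classes match what is done there. But there is a genuine gap: you never address the \textbf{(Equivariant)} condition of Definition \ref{defn:pert}, which the paper explicitly identifies as the only non-standard point of the whole argument. Your surjectivity argument produces perturbations $\delta\bK$ or $\delta\bY$ supported near a chosen point of a single fibre $\Sigma^\circ_r$; such a perturbation is not $\Sym(\ell)$-equivariant, hence not tangent to your constrained space $\mathcal{P}^\ell$, so it cannot be fed into the linearization of the universal section. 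To repair this one must average over $\Sym(\ell)$, and the averaging only returns the desired restriction to $\Sigma^\circ_r$ if the translates of the support do not interact. The paper arranges this by choosing a neighbourhood $N$ of $r$ in $\EuR_{k,\ell}$ with $\sigma(N)\cap N=\emptyset$ for all nontrivial $\sigma\in\Sym(\ell)$ --- which is exactly where the freeness of the $\Sym(\ell)$-action (Lemma \ref{lem:free}) enters --- cutting off by a bump function $\eta$ supported in $N$, and setting $\delta\bY=\sum_{\sigma}\sigma^*(\eta\cdot\delta Y)$; one then checks that this averaged, compactly supported perturbation still satisfies \textbf{(Thin regions)}, \textbf{(Consistent on discs)} and \textbf{(Constant on spheres)} (compact support in $\EuS^\circ_{k,\ell}$ handles the latter two, equivariance of the thin regions the former). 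Without this step the proof does not go through within the allowed class of perturbation data, and this is precisely the reason the equivariant parametrizations and Lemma \ref{lem:free} were set up earlier.

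Two smaller discrepancies are worth noting. For the disc component the paper does not argue FHS-style with $\bK$-perturbations; it invokes \cite[Lemma 6.5]{Cieliebak2007}, taking $W\subset\Sigma^\circ$ to be the complement of the thin regions and marked points and $U=\{h<c\}$ (automatically disjoint from $V$), using that $W\cap\partial\Sigma^\circ\neq\emptyset$ maps into $U$ to find the required point; your $\bK$-based variant is plausible but must still respect the constraints $K(\xi)|_{L_C}=0$ and, again, equivariance. Also, your proposed mechanism for the jet-vanishing conditions ($V$-preserving $\bY$-variations in punctured neighbourhoods of the $z_i$) is not how the cited arguments work: the standard Cieliebak--Mohnke treatment obtains transversality of the tangency constraints from variations of the map near $z_i$ corrected using surjectivity achieved away from $V$, and the paper simply defers this routine part to \cite{Cieliebak2007,CharestWoodward,CW:floer}. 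Finally, the paper works over a countable cover of $\EuR_{k,\ell}$ by trivializing charts and intersects the resulting comeagre sets; you will need something of this sort to make the Banach-manifold setup precise.
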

\begin{proof}
The argument is essentially standard. 
In the absence of higher tangency conditions (i.e., if $\tang(i) \le 1$ for all $i$), it is a combination of \cite[Section 9k]{Seidel:FCPLT} with \cite[Section 6.7]{mcduffsalamon} (modified as in Section \ref{sec:spheres} to account for the divisor $V$). The necessary techniques to deal with higher tangency conditions were developed in \cite[Section 6]{Cieliebak2007}, and implemented in a setting very similar to ours in \cite[Theorem 4.19]{CharestWoodward} and \cite[Theorem 4.1]{CW:floer}. 
The only slightly non-standard aspect of our setup is our requirement that the perturbation data be chosen to be $\Sym(\ell)$-equivariant.

We start by choosing a countable open cover of $\EuR_{k,\ell}$ by open sets $C_i$, each of which comes equipped with a smooth trivialization $\EuS^\circ_{k,\ell}|_{C_i} \simeq C_i \times \Sigma_i^\circ$ with respect to which the strip-like ends and marked points are constant. 

We fix one of the open sets $C_i$ in our cover, and drop the $i$ from the notation. 
We will prove regularity of $\cM^*_{\{\Gamma_i\}}(\by,A,\tang,\bP)|_C$ (i.e., the set of all $(r,u,\{u_i\}) \in \cM^*_{\{\Gamma_i\}}(\by,A,\tang,\bP)$ such that $r \in C$), for a comeagre subset of the set of perturbation data $\bP$; taking the intersection over the countable set of sets in our cover yields the result. 

We will consider maps of Sobolev class $W^{q,p}$, and perturbation data of class $C^s$, where $q-\frac{2}{p} \ge \max_{i,j}\{\tang(i)_j\}$ and $s \ge q+1$. 
We define $\mathrm{Map}^{q,p}(\Sigma^\circ,X,\bL,V)$ to be the set of maps $u:\Sigma^\circ \to X$ of Sobolev class $W^{q,p}$, satisfying the boundary constraints $\bL$, converging to the Hamiltonian chords $\by$ in $W^{q,p}$-sense along the strip-like ends, and having the prescribed orders of tangency at the marked points.  
We set 
$$
\EuB^{q,p}_C := \mathrm{Map}^{q,p}(\Sigma^\circ,X,\bL,V) \times \prod_{1 \le i \le \ell, \alpha \in T(\Gamma_i)} \mathrm{Map}^{q,p}(\CP^1_\alpha,V_{K_\alpha}).$$
By \cite[Lemma 6.5]{Cieliebak2007}, it carries a natural structure of Banach manifold.
Let $\mathbb{P}^s=\mathbb{Y}^s \times \mathbb{K}^s$ denote the space of all $\bP_{k,\ell,\bL}=(\bY_{k,\ell,\bL},\bK_{k,\ell,\bL})$ satisfying the conditions required of a universal choice of perturbation data, but having class $C^s$ instead of $C^\infty$. 
We have a $C^{s-q}$ Banach vector bundle $\EuE_C \to C \times \EuB_C^{q,p} \times \mathbb{P}^s$, together with a $C^{s-q}$ Fredholm section $\bar{\partial}$. There is also an evaluation map 
$$\mathrm{ev}^E: \EuB_C^{q,p} \times \prod_{1 \le i \le \ell} Z(\Gamma_i) \to \prod_{1 \le i \le \ell} V_{K_i} \times \prod_{1 \le i \le \ell} (X,V)^{\Gamma_i},$$
together with a natural diagonal subset $\Delta$ of the target. 
We define 
$$\widetilde{\cM}_{\{\Gamma_i\}}^{q,p}(\by,A,\tang,\mathbb{P}^s)|_C \subset C \times \EuB^{q,p}_C \times \mathbb{P}^s \times \prod_{1 \le i \le \ell} Z(\Gamma_i)$$
(recalling the notation $Z(\Gamma)$ from \eqref{eq:ZGamma}) 
to be the subset cut out by the equations $\bar{\partial}(r,\mathbf{u},\bP) = 0$ and $\mathrm{ev}^E(\mathbf{u},\mathbf{z}) \in \Delta$, and 
$\cM^{q,p}_{\{\Gamma_i\}}(\by,A,\tang,\mathbb{P}^s)|_C $ to be its quotient by the reparametrization group $\prod_{1 \le i \le \ell} G_{\Gamma_i}$.
The fibre of $\cM_{\{\Gamma_i\}}^{q,p}(\by,A,\tang,\mathbb{P}^s)|_C$ over $\bP \in \mathbb{P}^s$ is precisely $\cM_{\{\Gamma_i\}}^{q,p}(\by,A,\tang,\bP)|_C$. 
By definition, it is regular if and only if the corresponding fibre $\widetilde{\cM}_{\{\Gamma_i\}}^{q,p}(\by,A,\tang,\bP)|_C$ is cut out transversely.

We now argue that the universal moduli space $\widetilde{\cM}_{\{\Gamma_i\}}^{q,p,*}(\by,A,\tang,\mathbb{P}^s)|_C$ (of pseudoholomorphic discs with \emph{simple} bubble trees attached) is cut out transversely, for each $C$. 
We focus on the only slightly non-routine step, which is regularity of the disc component. 

Transversality is a local condition at each point $(r,u,\bY,\bK)$. 
Given such a point, we consider the space $\mathbb{Y}^s_r$ of $C^s$ domain-dependent almost-complex structures on the fibre $\Sigma^\circ_r$, satisfying the conditions required to constitute part of a perturbation datum for that fibre. 
By \cite[Lemma 6.5]{Cieliebak2007}, in order to prove regularity, it suffices to show that there exists a non-empty open set $W \times U \subset \Sigma^\circ \times X \setminus V$, such there exists a point $z \in W$ with $u(z) \in U$, and the image of the linearization of the restriction map $\mathbb{P}^s  \to \mathbb{Y}^s_r$ contains all perturbations $\delta Y$ which are supported in $W \times U$. 

To arrange this, we first choose a neighbourhood $N$ of $r \in \EuR_{k,\ell}$, such that $\sigma(N) \cap N = \emptyset$ for all $\sigma \in \Sym(\ell)$. 
This is possible by Lemma \ref{lem:free}. 
We choose $W \subset \Sigma^\circ$ to be the complement of the thin regions and marked points, and $U$ to be equal to $\{h<c\}$. 
Note that $W \cap \partial \Sigma^\circ$ is non-empty, and gets mapped into $U$; thus there exists $z \in W$ with $u(z) \in U$, as required. 

Now, given a perturbation $\delta Y$ supported in $W \times U$, we explain how to construct a perturbation $\delta \bP = (\delta \bK, \delta \bY)$ such that the restriction of $\delta \bY$ to $\Sigma^\circ_r$ is $\delta Y$. 
First, we choose a bump function $\eta$ with support in $N$, so that $\eta(r) = 1$; then we consider the perturbation $\eta(n) \cdot \delta Y$ with support in $N \times W \times U$; then we set
\[ \delta \bY := \sum_{\sigma \in \Sym(\ell)} \sigma^* \left(\eta(n) \cdot \delta Y\right).\]
We claim that $\bY + \epsilon \cdot \delta \bY \in \mathbb{Y}^s$ for $\epsilon$ sufficiently small: it clearly lies in $\EuY_*$ for $\epsilon$  sufficiently small, because $\delta \bY$ is compactly supported so its $C^0$ norm is bounded; it extends to $\Sbar^\circ_{k,\ell}$ and satisfies the \textbf{(Consistent on discs)} and \textbf{(Constant on spheres)} hypotheses, because it has compact support in $\EuS^\circ_{k,\ell}$ and in particular vanishes on the boundary; it satisfies the \textbf{(Thin regions)} hypothesis because $W$ is disjoint from the thin regions, hence so is its orbit under $\Sym(\ell)$ (by equivariance of the thin regions); it satisfies the \textbf{(Equivariant)} hypothesis by construction. 
The image of $\partial/\partial \epsilon(\bY + \epsilon \cdot \delta \bY)$ under the linearization of the restriction map $\mathbb{Y}^s \to \mathbb{Y}^s_r$ is equal to $\delta Y$, because $\left(\eta(n) \cdot \delta Y\right)|_{\Sigma^\circ_r} = \delta Y$ by construction, but $\sigma^* \left(\eta(n) \cdot \delta Y\right)|_{\Sigma^\circ_r} = 0$ for $\sigma$ not equal to the identity, using the fact that $\sigma(N) \cap N = \emptyset$.

Having established regularity of the universal moduli space, we can apply Sard--Smale to the projection $\widetilde{\cM}^{q,p,*}_{\{\Gamma_i\}}(\by,A,\tang,\mathbb{P}^s)|_C \to \mathbb{P}^s$. 
We conclude that $\widetilde{\cM}^{q,p,*}_{\{\Gamma_i\}}(\by,A,\tang,\bP)|_C$ (and hence, by definition, $\cM^{q,p,*}_{\{\Gamma_i\}}(\by,A,\tang,\bP)|_C$) is regular for a comeagre subset of the set of choices of $C^s$ perturbation data $\mathbb{P}^s$, for each $C$. 
It follows by an argument due to Taubes (as in \cite[Proof of Theorem 3.5.1(ii)]{mcduffsalamon}) that $\cM^*_{\{\Gamma_i\}}(\by,A,\tang,\bP)|_C$ is regular for a comeagre subset of the set of choices of smooth perturbation data. Taking the intersection over all $C$, we conclude that $\cM^*_{\{\Gamma_i\}}(\by,A,\tang,\bP)$ is regular for a comeagre subset of the set of smooth perturbation data, as required.
\end{proof}

\begin{cor}\label{cor:regex}
There exists a regular universal choice of perturbation data.
\end{cor}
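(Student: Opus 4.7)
The plan is to construct the regular universal perturbation data by induction on $N := k + 2\ell$, at each stage combining Lemma \ref{lem:pertinduct} with Lemma \ref{lem:genreg} and the Baire category theorem to arrange countably many transversality conditions simultaneously.

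First I would set up the induction hypothesis: perturbation data $\bP_{k',\ell',\bL'}$ have been chosen for all $k' + 2\ell' < N$ satisfying Definition \ref{defn:pert} and inducing regular moduli spaces of the relevant combinatorial complexity. At stage $N$, and for a fixed $(k,\ell,\bL)$, the \textbf{(Consistent on discs)} and \textbf{(Constant on spheres)} conditions together with the inductive data uniquely determine the restriction of $\bP_{k,\ell,\bL}$ to the boundary of $\Sbar^\circ_{k,\ell}$. The proof of Lemma \ref{lem:pertinduct} then shows that the space $\mathcal{P}$ of smooth extensions satisfying the remaining conditions of Definition \ref{defn:pert} is nonempty and convex, hence a Baire space in the $C^\infty$ topology.

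Next I would enumerate the moduli spaces whose regularity must be arranged at this stage. For the fixed labelling $\bL$ there are only finitely many Hamiltonian chord tuples $\by$; the class $A_0 \in \pi_2^\num(\by)$ is recorded by finitely many intersection numbers and its Maslov index, so it ranges over a countable set; the tangency data $\tang$ and the combinatorial types $\{\Gamma_i\}$ of attached bubble trees also form countable sets. For each such tuple, Lemma \ref{lem:genreg} supplies a comeagre subset of $\mathcal{P}$ on which $\cM^*_{\{\Gamma_i\}}(\by,A,\tang,\bP)$ is regular, and the Baire theorem gives a nonempty (in fact comeagre) intersection, from which I would pick $\bP_{k,\ell,\bL}$.

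Finally I would promote regularity of the \emph{simple} moduli spaces to the regularity required by Definition \ref{defn:regularity}. Any non-simple disc-with-bubble-trees factors through an underlying simple configuration of strictly smaller Maslov index, hence of strictly smaller virtual dimension; by the current choice or the inductive hypothesis this underlying simple moduli space is regular, so in negative virtual dimension the non-simple configuration is empty, and for $\cM(\by,A,\bP)$ of virtual dimension $\leq 1$ the desired transversality follows by the standard reduction. The main obstacle to watch out for is the \textbf{(Equivariant)} constraint, which could a priori shrink the space of admissible perturbations too much to achieve transversality; however this is already built into Lemma \ref{lem:genreg}, whose proof exploits the $\Sym(\ell)$-freeness supplied by Lemma \ref{lem:free}. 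Iterating the construction over $N$ and over the Lagrangian labellings $\bL$ then yields the sought regular universal choice.
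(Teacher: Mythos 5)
Your proposal is correct and follows essentially the same route as the paper: induction on $k+2\ell$, using Lemma \ref{lem:pertinduct} to get a nonempty (convex) space of admissible extensions, Lemma \ref{lem:genreg} to produce a comeagre set of regular choices for each of the countably many $(\by,A,\tang,\{\Gamma_i\})$, and Baire to intersect. The one deviation is your final step, which is superfluous: Definition \ref{defn:regularity} only asks that the \emph{simple} moduli spaces $\cM^*_{\{\Gamma_i\}}(\by,A,\tang,\bP)$ of negative virtual dimension be regular, and $\cM(\by,A,\bP)$ is itself the special case $\tang = \tang^{\can}$, $I = \emptyset$ of those simple moduli spaces, so it is already made regular by the intersection in step~3; no ``standard reduction'' is needed. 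The emptiness of the non-simple $\cM_{\{\Gamma_i\}}$ is a separate consequence (Corollary \ref{cor:nospheres}) deduced \emph{from} regularity, not part of the definition of a regular $\bP$. Your step~4 also asserts that a non-simple configuration factors through a simple one of \emph{strictly} smaller Maslov index; that is not quite right (replacing a multiply-covered sphere by its image need not strictly decrease its Chern number, e.g.\ for $c_1 = 0$ spheres), though the weaker ``cannot increase'' suffices for the emptiness conclusion in Corollary \ref{cor:nospheres}.
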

\begin{proof}
In fact, we can construct a universal choice of perturbation data such that \emph{all} moduli spaces $\cM^*_{\{\Gamma_i\}}(\by,A,\tang,\bP)$ are regular; such a choice is \emph{a fortiori} regular. 
The choice can be constructed inductively in $k+2\ell$. At each step, there is a nonempty space of (possibly non-regular) choices of $\bP_{k,\ell,\bL}$, by Lemma \ref{lem:pertinduct}. 
The subset of regular choices is the intersection of the subsets making $\cM^*_{\{\Gamma_i\}}(\by,A,\tang,\bP)$ regular for all compatible $\by,A,\tang,\{\Gamma_i\}$. 
This is a countable intersection of subsets, each of which is comeagre by Lemma \ref{lem:genreg}; thus it is itself a comeagre subset of a nonempty Baire space, in particular it is nonempty. 
\end{proof}

\begin{lem}\label{lem:bubtreedim}
If $\cM^*_{\{\Gamma_i\}}(\by,A,\tang,\bP)$ is regular, it has dimension
\begin{equation}
\label{eq:dimdisc}
 \mathrm{dim} = i(A_0) + k-2+ 2\ell - 2 \sum_{i =1}^\ell |\tang(i)| + \sum_{i \in I} \mathrm{dim}(\Gamma_i) - 2n + 2|K_i \cap K'_i|.
 \end{equation}
Here $|\tang(i)| = \sum_{q\in Q} \tang(i)_q$.
\end{lem}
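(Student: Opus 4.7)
The plan is to express $\cM^*_{\{\Gamma_i\}}(\by,A,\tang,\bP)$ as a transverse fibre product of the tangency-constrained disc moduli with the parametric bubble-tree moduli, and then assemble the dimension from standard ingredients via the fibre-product formula. Regularity (Definition \ref{defn:regularity}) guarantees that the evaluation map defining \eqref{eqn:moduli with tangencies and bubbles} is transverse to $\Delta$, so $\dim\cM^*_{\{\Gamma_i\}}$ equals the sum of the dimensions of the factors minus the codimension of $\Delta$.

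For the disc factor, the Fredholm index of the parametric Cauchy--Riemann operator on maps of class $A_0$ is $i(A_0)$ by definition, the Deligne--Mumford base $\EuR_{k,\ell}$ contributes $k-2+2\ell$ real parameters, and the jet-vanishing equation \eqref{eqn:jet vanishing} at each interior marked point $z_i$ imposes a real-codimension $2|\tang(i)|$ linear constraint on the disc; so
\[ \dim \cM(\by,A_0,\tang,\bP) = i(A_0) + (k-2+2\ell) - 2\sum_{i=1}^\ell |\tang(i)|. \]
For each bubble-tree factor, the parametric variant of Lemma \ref{lem:dimifreg} recalled at the end of Section \ref{sec:param} gives $\dim \cM_{\Gamma_i}(\bY_{k,\ell}) = \dim(\Gamma_i) + \dim \EuS^\circ_{k,\ell} = \dim(\Gamma_i) + k + 2\ell$. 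For the diagonal, observe that two points $x\in V_{K_i}$ and $x'\in V_{K'_i}$ can coincide only inside $V_{K_i}\cap V_{K'_i} = V_{K_i\cup K'_i}$, whence $\Delta_i \cong V_{K_i\cup K'_i}\times \EuS^\circ_{k,\ell}$ of real dimension $2(n-|K_i\cup K'_i|)+(k+2\ell)$; combined with the identity $|K_i|+|K'_i|-|K_i\cup K'_i| = |K_i\cap K'_i|$, this gives
\[ \mathrm{codim}\,\Delta_i = 2n + (k+2\ell) - 2|K_i\cap K'_i|. \]

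Assembling via $\dim \cM^*_{\{\Gamma_i\}} = \dim \cM(\by,A_0,\tang,\bP) + \sum_{i\in I}\bigl(\dim \cM_{\Gamma_i}(\bY_{k,\ell}) - \mathrm{codim}\,\Delta_i\bigr)$, the $(k+2\ell)$-terms cancel summand-by-summand inside the sum over $i\in I$, leaving exactly \eqref{eq:dimdisc}. The only genuinely nontrivial step is the identification $V_{K_i}\cap V_{K'_i} = V_{K_i\cup K'_i}$ followed by inclusion--exclusion on the index sets $K_i, K'_i$, which correctly accounts for the differing codimensions of $V_{K_i}$ and $V_{K'_i}$ and produces $|K_i\cap K'_i|$ in the final formula; everything else is routine Fredholm bookkeeping already set up earlier in the section.
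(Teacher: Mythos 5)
Your proposal is correct and follows essentially the same route as the paper: both compute the dimension as $\dim \cM(\by,A_0,\tang,\bP)$ plus $\sum_{i\in I}\dim\cM_{\Gamma_i}(\bY_{k,\ell})$ minus $\operatorname{codim}\Delta$, using the transversality of $\ev$ to $\Delta$ guaranteed by regularity, with the same identifications $\dim\cM_{\Gamma_i}(\bY_{k,\ell})=\dim(\Gamma_i)+\dim\EuS^\circ_{k,\ell}$ and $\operatorname{codim}\Delta_i = 2n - 2|K_i\cap K'_i| + \dim\EuS^\circ_{k,\ell}$. Your explicit inclusion--exclusion justification of the diagonal codimension via $V_{K_i}\cap V_{K'_i}=V_{K_i\cup K'_i}$ is a detail the paper leaves implicit, but the argument is the same.
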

\begin{proof}
The dimension count goes as follows:
\[  \underbrace{i(A_0) + k-2+ 2\ell - 2 \sum_{i =1}^\ell |\tang(i)|}_{\dim \cM(\by,A_0,\tang,\bP)}  +\underbrace{  \sum_{i \in I} \left(  \dim(\Gamma_i) + \dim \EuS_{k,l}^\circ \right) }_{\sum_i \dim \cM_{\Gamma_i}(\mathbf{Y}_{k,\ell})}  - \underbrace{\sum_{i\in I} \left( 2n -  2|K_i \cap K'_i| +  \dim \EuS_{k,l}^\circ  \right) }_{\mathrm{codim}\, \Delta} . \]
The formula follows from the assumption that $\ev$ is transverse to $\Delta$.
\end{proof}

\begin{lem}\label{lem:dimboundreg}
If $\tang = \tang^{\can}$ and $I=\emptyset$, then the dimension \eqref{eq:dimdisc} is equal to 
\[ \mathrm{dim}(\cM(\by,A,\bP)) = i(A) +k-2.\] 
In any other case, it is bounded above by $\mathrm{dim}(\cM(\by,A,\bP))-2$.
\end{lem}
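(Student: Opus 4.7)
For the first statement I would simply substitute: when $\tang = \tang^{\can}$ and $I = \emptyset$, we have $A_0 = A$, $\ell = A \cdot V$, and $|\tang(i)| = 1$ for every $i$, so the terms $2\ell$ and $-2\sum_i |\tang(i)|$ in \eqref{eq:dimdisc} cancel and the $I$-sum is empty, leaving $i(A) + k - 2$. This pins down the value of $\dim(\cM(\by,A,\bP))$ used on the right-hand side of the second assertion.

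For the inequality, I would first use the identity $i(A) = i(A_0) + 2 \sum_{i\in I} c_1(TX)(A_i)$, which follows from $A = A_0 + \sum_{i\in I} A_i$ together with the stated additivity of the Maslov-type index under gluing of spherical classes, in order to rewrite the target bound $\dim \le i(A) + k - 4$ as the assertion that
\begin{equation*}
S := \sum_{i \in I^c}\bigl(1 - |\tang(i)|\bigr) + \sum_{i\in I}\Bigl(1 - |\tang(i)| + \tfrac{1}{2}\dim(\Gamma_i) - n + |K_i \cap K'_i| - c_1(TX)(A_i)\Bigr) \le -1,
\end{equation*}
where $I^c = \{1,\ldots,\ell\} \setminus I$. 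Each summand will then be bounded separately. For $i \in I^c$, the definition of $I$ forces $\tang(i) \ne 0$, so $1 - |\tang(i)| \le 0$. For $i \in I$, I would feed $\Gamma_i$ (a one-pointed bubble tree of class $A_i$) into Lemma \ref{lem:dimineq}, and use the evident inequality $|K'_i| \le \max_\alpha |K_\alpha|$ (since $K'_i$ labels one specific vertex of $\Gamma_i$), to obtain $\tfrac{1}{2}\dim(\Gamma_i) - n - c_1(TX)(A_i) \le -2 - |E_i| - |K'_i|$, and hence that the $i$-th summand of $S$ is at most $-1 - |\tang(i)| - |E_i| - |K'_i \setminus K_i|$. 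This is $\le -1$ in every case, and $\le -2$ as soon as $\tang(i) \ne 0$.

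The last step is a case analysis to deduce $S \le -1$. If $I \ne \emptyset$ then one summand is already $\le -1$ and the rest are $\le 0$, so we are done. The slightly more delicate case is $I = \emptyset$ with $\tang \ne \tang^{\can}$: here the point to verify is that $\tang^{\can}$ is characterized, up to the $\Sym(\ell)$ reordering of marked points, as the unique tangency datum for the class $A$ with no zero entries and all $|\tang(i)| = 1$ (granted, because then $\ell = \sum_i |\tang(i)| = \sum_q A \cdot V_q = A \cdot V$ and each $\tang(i)$ is a standard basis vector $e_{\vv(i)}$ with $|\vv^{-1}(q)| = A \cdot V_q$). Given this characterization, $\tang \ne \tang^{\can}$ forces some $|\tang(i)| \ge 2$, which contributes $\le -1$ to $S$ while the remaining terms of $S$ are $\le 0$. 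I expect this last characterization—rather than any analytic input—to be the only step that is not purely mechanical bookkeeping with the dimension formula.
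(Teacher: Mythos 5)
Your proposal is correct and follows essentially the same route as the paper's proof: substitute into the dimension formula of Lemma \ref{lem:bubtreedim} for the equality case, then combine it with Lemma \ref{lem:dimineq} (applied with one marked point, so $\max_\alpha|K_\alpha| \ge |K'_i|$) and the connect-sum additivity $i(A)=i(A_0)+2\sum_{i\in I}c_1(TX)(A_i)$ to bound each summand, forcing a deficit of at least $2$ whenever $I\neq\emptyset$ or some $|\tang(i)|\neq 1$. The only difference is cosmetic: you keep the extra terms $-|E_i|$ and $-|K'_i\setminus K_i|$ and spell out the characterization of $\tang^{\can}$ as the unique (up to $\Sym(\ell)$) tangency datum with all $|\tang(i)|=1$ and no zeros, a step the paper leaves implicit.
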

\begin{proof}
Applying Lemmas \ref{lem:bubtreedim} and \ref{lem:dimineq} (we set $k=1$ in the latter, because by definition all bubble trees carry only a single marked point, namely the one at which they are attached to the disc), we find
\begin{align*}
\dim & \leq  i(A) + k - 2 + \sum_{i=1}^\ell 2(1-|\tang(i)|) + \sum_{i \in I} 2(|K_i \cap K'_i| - \max_{\alpha \in T_i} |K_\alpha| - 2)  \\
& \le \dim(\cM(\by,A,\bP)) + \sum_{i=1}^\ell 2(1-|\tang(i)| - 2I_i),
\end{align*}
where $I_i := 1$ if $i \in I$, $0$ if $i \notin I$. 
Because $I$ contains all $i$ such that $\tang(i) = 0$, we have $1-|\tang(i)| - 2 I_i \le 0$, with equality if and only if $|\tang(i)|=1$ and $I_i=0$. The result follows.
\end{proof}

\begin{cor}\label{cor:nospheres}
If $\bP$ is regular and $\mathrm{dim}(\cM(\by,A,\bP)) \le 1$, then $\cM_{\{\Gamma_i\}}(\by,A,\tang,\bP) = \emptyset$ unless $\tang=\tang^{\can}$ and $I=\emptyset$. 
\end{cor}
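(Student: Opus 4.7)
The approach is to deduce emptiness first for simple configurations via Lemma \ref{lem:dimboundreg} plus the regularity hypothesis on $\bP$, then to extend to general (non-simple) configurations using the standard simple-underlying-curve construction combined with semipositivity.

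Suppose that the exceptional case does not hold, i.e., either $\tang \neq \tang^{\can}$ or $I \neq \emptyset$. Lemma \ref{lem:dimboundreg} then gives
\[ \dim \cM^*_{\{\Gamma_i\}}(\by, A, \tang, \bP) \leq \dim \cM(\by, A, \bP) - 2 \leq -1. \]
By Definition \ref{defn:regularity}, regularity of $\bP$ forces every simple moduli space of negative virtual dimension to be empty; hence $\cM^*_{\{\Gamma_i\}}(\by, A, \tang, \bP) = \emptyset$.

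Now given any element $(u, \{u_{i,\alpha}\}) \in \cM_{\{\Gamma_i\}}(\by, A, \tang, \bP)$, the standard simple-underlying-curve construction (cf. \cite[Proposition 6.1.2]{mcduffsalamon}) applied to each bubble tree produces an element $(u, \{v_i\}) \in \cM^*_{\{\Gamma'_i\}}(\by, A', \tang, \bP)$ for new combinatorial types $\Gamma'_i$ and a reduced total class $A' = A_0 + \sum_i A'_i$. The disc $u$ is unchanged, so the tangency data $\tang$ and the domain-dependent almost complex structure on each attached bubble tree are preserved; each bubble tree was stable and hence contained a non-constant sphere, which survives the reduction, so $I' \neq \emptyset$ whenever $I \neq \emptyset$. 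Because each non-simple sphere factors as $u_{i,\alpha} = v_{i,\alpha} \circ \phi_{i,\alpha}$ with $\deg \phi_{i,\alpha} \geq 1$, and each simple factor satisfies $c_1(TX)(v_{i,\alpha}) \geq 0$ by semipositivity (Corollary \ref{cor:semipos}), we obtain $c_1(TX)(A') \leq c_1(TX)(A)$; similarly $A' \cdot V_q \leq A \cdot V_q$ by positivity of intersection. Hence $\dim \cM(\by, A', \bP) \leq \dim \cM(\by, A, \bP) \leq 1$, so by the preceding paragraph $\cM^*_{\{\Gamma'_i\}}(\by, A', \tang, \bP) = \emptyset$. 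Consequently $\cM_{\{\Gamma_i\}}(\by, A, \tang, \bP) = \emptyset$ as well.

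The main obstacle is the extension from the simple case to the general case, which hinges on two points: the semipositivity furnished by Corollary \ref{cor:semipos}, which guarantees that the simple reduction cannot increase the relevant first Chern number (and hence that the dimension bound of Lemma \ref{lem:dimboundreg} persists for the reduced data); and the verification that the reduced configuration continues to fall outside the exceptional case of Lemma \ref{lem:dimboundreg}, i.e., that the condition $\tang \neq \tang^{\can}$ or $I' \neq \emptyset$ survives the simplification of each stable bubble tree.
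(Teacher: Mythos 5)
Your proof is correct and follows essentially the same route as the paper: first, the simple moduli spaces $\cM^*_{\{\Gamma_i\}}(\by,A,\tang,\bP)$ in the non-exceptional case are empty because Lemma \ref{lem:dimboundreg} makes their virtual dimension negative and regularity of $\bP$ then forces emptiness; second, non-simple bubble trees are replaced by simple ones via \cite[Proposition 6.1.2]{mcduffsalamon}, with Corollary \ref{cor:semipos} guaranteeing that the Chern number, and hence the relevant virtual dimension, does not increase. The only quibble is your assertion that $A' \cdot V_q \le A \cdot V_q$ ``by positivity of intersection'' (a sphere contained in $V_q$ need not meet it non-negatively), but this claim is superfluous, since the bound $\dim \cM(\by,A',\bP) \le \dim\cM(\by,A,\bP)$ depends only on $i(A') \le i(A)$, i.e., on the Chern number estimate.
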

\begin{proof}
Observe that the subset $\cM^*_{\{\Gamma_i\}}(\by,A,\tang,\bP)$ of discs with simple bubble trees attached is empty by Lemma \ref{lem:dimboundreg}, since it is regular and has negative virtual dimension. 
On the other hand, one can always replace a non-simple bubble tree with a simple one (see \cite[Proposition 6.1.2]{mcduffsalamon}). The procedure involves replacing multiply-covered spheres by their image, deleting some spheres, and rearranging the marked points; in particular, it cannot increase the Chern number by Corollary \ref{cor:semipos}. It follows that the moduli spaces of discs with non-simple bubble trees attached are also empty.
\end{proof}

\begin{rmk}
One would expect a similar result to hold for $\cM(\by,A,\tang,\bP)$, if one changed the definition of the bubble trees to incorporate higher tangency conditions (otherwise the analogue of Lemma \ref{lem:dimboundreg} would not hold).
\end{rmk}

\subsection{Compactness}

Because we chose our perturbation data $\bP_{k,\ell}$ to extend smoothly to $\Sbar^\circ_{k,\ell}$, the moduli space $\cM(\by,A,\tang,\bP)$ of discs with tangency conditions admits a Gromov compactification $\Mbar^{\mathrm{tot}}(\by,A,\tang,\bP)$. Let 
\begin{equation}\label{eqn:Gromov closure}
\Mbar(\by,A,\tang,\bP) \subset \Mbar^{\mathrm{tot}}(\by,A,\tang,\bP)
\end{equation}
denote the closure of the subset $\cM(\by,A,\tang,\bP)$.  Write 
\begin{equation}\label{eqn:Gromov closure can tangency}
\Mbar(\by,A,\bP)
\end{equation} 
to mean $\Mbar(\by,A,\tang^{\mathsf{can}}, \bP)$.

\begin{lem}\label{lem:mustintersect}
If $u$ is a disc component of an element of $\Mbar(\by,A,\tang,\bP)$, then any point at which $u$ intersects $V$ is special, i.e., an internal marked point or node. 
\end{lem}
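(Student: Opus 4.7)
The plan is to combine positivity of intersection with a local Gromov-convergence argument. First I would reduce the statement to ruling out interior smooth points. The boundary of any disc component $u$ lies on a Lagrangian brane, which by Definition \ref{defn:obj} is contained in $\{h<c\}$, while $V \subset \{h>c\}$; hence $\partial u$ cannot meet $V$, and $u$ cannot be entirely contained in any component $V_q$ (since otherwise its boundary would be too). So it suffices to rule out interior points of the disc domain that are neither interior marked points nor nodes.

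Next I would assume for contradiction that some such smooth, non-special interior point $p$ of $u$ satisfies $u(p) \in V_q$. Since $u$ is not contained in $V_q$, positivity of intersection \cite[Proposition 7.1]{Cieliebak2007} produces an isolated intersection at $p$ with local multiplicity $m := \iota(u,p)_q \geq 1$. I would then choose a small disc neighbourhood $N$ of $p$ in the disc domain, disjoint from every marked point of the ambient stable curve and from every node, together with a neighbourhood $W$ of $u(p)$ in $X$, arranged so that $u(\partial N) \cap V_q \cap W = \emptyset$ and so that the local intersection number of $u|_N$ with $V_q \cap W$, read off the degree of $u|_{\partial N}$ in $W \setminus V_q$, equals $m$.

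The final step is to transport this intersection back to the approximating sequence. By definition of the Gromov closure \eqref{eqn:Gromov closure}, there is a sequence $(r_n,u_n) \in \cM(\by,A,\tang,\bP)$ converging in Gromov topology to the given stable disc. Since $p$ is a smooth, non-special interior point, Gromov convergence reduces to smooth convergence on $N$ after reparametrization: one obtains disc domains $N_n \subset \Sigma_{r_n}^\circ$ with $u_n|_{N_n} \to u|_N$ in $C^\infty_{\mathrm{loc}}$. Degree-theoretic persistence under $C^0$-small perturbations then implies that for $n$ large, $u_n|_{N_n}$ has intersection number $m \geq 1$ with $V_q \cap W$ inside $N_n$, and in particular meets $V_q$ somewhere in $N_n$. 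But Lemma \ref{lem:posintCM} says that the only intersections of $u_n$ with $V_q$ occur at the interior marked points $z_i(r_n)$ with $\tang(i)_q \geq 1$; under Gromov convergence these marked points converge to the interior marked points of the limit stable disc, all of which lie outside $N$ by our choice. Hence $z_i(r_n) \notin N_n$ for $n$ large, a contradiction.

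The main obstacle is the standard bookkeeping around Gromov convergence near a smooth interior non-special point of a nodal limit: one needs that, away from marked points and nodes, the convergence is smooth after reparametrization, and that local intersection numbers are stable under such $C^0$-small perturbations. Both are part of the standard Gromov compactness and positivity-of-intersection package (cf.\ \cite{mcduffsalamon,Cieliebak2007}), so no new analytic input is required once the combinatorial framework has been set up.
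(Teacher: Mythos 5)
Your argument is correct and is essentially the paper's proof: the paper deduces the lemma from Lemma \ref{lem:posintCM} together with the persistence of isolated intersection points under small perturbations, which is exactly the Gromov-convergence/degree-persistence step you spell out. Your additional observations (boundary on Lagrangians in $\{h<c\}$ rules out containment in $V$, and marked points of the approximating curves stay away from the chosen neighbourhood) are just the details implicit in that one-line argument.
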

\begin{proof}
Follows from Lemma \ref{lem:posintCM}, as isolated intersection points persist under small perturbations.  
\end{proof}

Let us call a disc with $k+1$ boundary marked points and $\ell$ interior marked points `stable' if $k+2\ell \ge 2$, `semistable' if $k+2\ell \ge 1$, and `unstable' otherwise.

\begin{cor}\label{cor:nounstable}
Every disc component of an element of $\Mbar(\by,A,\bP)$ is semistable.
\end{cor}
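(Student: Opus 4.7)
The plan is to argue by contradiction. Suppose some element of $\Mbar(\by,A,\bP)$ has a disc component $D$ which is unstable, i.e.\ $k_D + 2\ell_D = 0$. Unravelling the definitions, this means $D$ carries no interior marked points or nodes at all, and precisely one boundary special point -- a single node attaching $D$ to the rest of the bubble tree. The goal is to derive a contradiction from the constraints already developed in Sections \ref{sec:targ} and \ref{sec:discs}.

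First I would dispose of the possibility that $D$ is constant. A constant disc component with only one boundary special point would have an infinite automorphism group, so it cannot be a component of a limit of stable maps. Thus $D$ is non-constant. Since its boundary lies on one of the Lagrangian branes $L$ appearing in the labelling $\bL$, and $L \subset \{h < c\}$ is closed and exact by Definition \ref{defn:obj}, we are in the setting of Lemma \ref{lem:intersectv}. The almost complex structure governing $D$ is $J_{z_0}$ at the bubbling point $z_0$, which by construction of our perturbation data lies in $\EuJ(X,V)$.

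Applying Lemma \ref{lem:intersectv} to $D$ forces $D$ to intersect $V$ somewhere. On the other hand, Lemma \ref{lem:mustintersect} applied to the element of $\Mbar(\by,A,\bP)$ containing $D$ says that any point at which $D$ meets $V$ must be a special point of $D$ -- an interior marked point or an interior node. But by our unstable hypothesis, $D$ has neither. This contradiction shows no such unstable $D$ exists, proving the corollary.

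The only nontrivial point is making sure the two lemmas really do apply to a disc bubble in the Gromov closure: Lemma \ref{lem:intersectv} needs $D$ to be $J$-holomorphic for some $J \in \EuJ(X,V)$, which is immediate from the \textbf{(Constant on spheres)}/smooth extension properties of the perturbation data and the fact that sphere/disc bubbles pick up a fixed $J \in \EuJ(X,V)$ in the limit; Lemma \ref{lem:mustintersect} applies tautologically since $D$ is by hypothesis a disc component of an element of the Gromov closure $\Mbar(\by,A,\bP)$. No further analytic work is required.
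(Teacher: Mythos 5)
Your proposal is correct and is essentially the paper's own argument: the paper's proof consists precisely of combining Lemma \ref{lem:mustintersect} (intersections with $V$ occur only at interior special points) with Lemma \ref{lem:intersectv} (a non-constant disc with boundary on an exact Lagrangian in $\{h\le c\}$ must meet $V$), with the constant case excluded by stability of the limit map. The only cosmetic quibble is that $J$-holomorphicity of a disc bubble follows from the smooth extension of the perturbation data over $\Sbar^\circ_{k,\ell}$ and the standard rescaling analysis at the bubble point rather than from the \textbf{(Constant on spheres)} condition, which concerns sphere components; this does not affect the argument.
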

\begin{proof}
Follows from Lemmas \ref{lem:mustintersect} and \ref{lem:intersectv}.
\end{proof}

\begin{lem}\label{lem:nospheresdim1}
Suppose $\bP$ is regular, and $\dim(\cM(\by,A,\bP)) \le 1$. 
Then elements of $\Mbar(\by,A,\bP)$ have no (constant or non-constant) sphere bubbles.
\end{lem}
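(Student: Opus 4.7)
The plan is to argue by contradiction: suppose $u \in \Mbar(\by, A, \bP)$ contains a (constant or non-constant) sphere component, and show that the combinatorial stratum of $\Mbar(\by, A, \bP)$ containing $u$ has negative virtual dimension. Since $\bP$ is regular, Definition \ref{defn:regularity} then forces this stratum to be empty, a contradiction.

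The strategy is to reduce $u$ to an element of one of the moduli spaces of discs-with-unmarked-bubble-trees from Section \ref{sec:transv}, and then invoke Corollary \ref{cor:nospheres}. Decompose the domain of $u$ as $\Sigma = \Sigma_d \cup \Sigma_s$, with $\Sigma_d$ the union of disc components and $\Sigma_s$ the union of sphere components, organized into maximal bubble trees $\{\Gamma_i\}_{i \in I}$ attached to $\Sigma_d$ at nodes $p_i$. The key step is to forget the interior marked points lying on $\Sigma_s$. This operation is available because of the \textbf{(Constant on spheres)} clause of Definition \ref{defn:pert}: on each spherical component $\bY_{k,\ell,\bL}$ is constant and $\bK_{k,\ell,\bL}$ vanishes, so forgetting interior marked points on a sphere does not alter the pseudoholomorphic equation, and we may stabilize by contracting any sphere component rendered unstable. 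The result is a configuration of precisely the type studied in Section \ref{sec:transv}: a nodal disc $\Sigma_d$ carrying interior marked points, together with unmarked $J$-holomorphic bubble trees $\{\Gamma_i\}$ attached at the $p_i$; treating each $p_i$ as an interior marked point with tangency $\tang(i) = 0$ (while surviving original marked points retain their $\tang^{\can}$ values), the index set $I$ is nonempty and $\tang \ne \tang^{\can}$.

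If $\Sigma_d$ is irreducible, this reduced configuration is an element of $\cM_{\{\Gamma_i\}}(\by, A, \tang, \bP)$ with $I \ne \emptyset$. Lemma \ref{lem:dimboundreg} bounds its virtual dimension by $\dim \cM(\by, A, \bP) - 2 \le -1$, and Corollary \ref{cor:nospheres} (invoking regularity of $\bP$) forces emptiness, contradicting the existence of $u$. When $\Sigma_d$ is reducible---as can occur if the Gromov limit features disc bubbling or strip-breaking of the Floer strips---a parallel dimension count applies: the irreducible disc component carrying a sphere bubble tree contributes codimension $\ge 2$ via Lemma \ref{lem:dimineq} (using semipositivity via Corollary \ref{cor:semipos} and the adjunction estimate of Lemma \ref{lem:c1loss}), while the additional disc-level degenerations each contribute nonnegative codimension. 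The ambient stratum of $\Mbar(\by, A, \bP)$ thus has virtual dimension $\le -1$, and regularity of $\bP$ forces it to be empty.

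The main technical hurdle will be the reducible-$\Sigma_d$ case, where Corollary \ref{cor:nospheres} does not directly apply. One must write down the natural fibre-product generalization of $\cM_{\{\Gamma_i\}}$ incorporating disc bubbling and strip breaking, and verify that the dimension count of Lemma \ref{lem:dimboundreg} extends---each disc-level diagonal matching and each broken Floer strip contributes nonnegative codimension, so cannot offset the codim-$\ge 2$ contribution of any sphere bubble tree. This is essentially bookkeeping, but requires care, particularly to treat constant sphere bubbles (which arise as ghost bubbles with $\ge 3$ special points): these still force codim-$\ge 2$ diagonal conditions on the disc components through the $|K_i \cap K'_i|$ term in Lemma \ref{lem:bubtreedim}.
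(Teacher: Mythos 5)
Your high-level plan---forget the interior marked points lying on sphere components via \textbf{(Constant on spheres)}, reduce to a disc with unmarked bubble trees as in Section \ref{sec:transv}, and invoke Corollary \ref{cor:nospheres}---is the right one and matches the paper's strategy. However, three of the details are wrong or missing in ways that would break the argument.

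First and most seriously, you assign $\tang(i) = 0$ at the attaching nodes $p_i$. But a non-constant sphere bubble tree necessarily meets $V$ (Lemma \ref{lem:intersectv}), so the disc itself generally meets $V$ at $p_i$ with positive multiplicity. With your assignment, the constraint $\sum_i \tang(i)_q = A_0 \cdot V_q$ defining valid tangency data fails, so the reduced configuration is \emph{not} an element of any $\cM_{\{\Gamma_i\}}(\by, A, \tang, \bP)$, and Lemma \ref{lem:dimboundreg} and Corollary \ref{cor:nospheres} do not apply. The paper sets $\tang(i) := \iota(u_\disc, z_i)$ at each marked point and interior node, which does satisfy the total-tangency constraint. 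One then needs a nontrivial verification in order to land in $\cM_{\{\Gamma_i\}}$: the set $I$ of points carrying non-trivial bubble trees must contain every $i$ with $\tang(i)=0$. This is where \cite[Lemma 7.2]{Cieliebak2007} enters: a bubble tree that becomes trivial after forgetting its marked points was constant, and a constant tree forces $\iota(u_\disc,z_i) \ge \sum_j \tang^\can(j) \neq 0$. You do not address this step.

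Second, your proposed treatment of constant ghost bubbles via the $|K_i \cap K'_i|$ term of Lemma \ref{lem:bubtreedim} does not work: a constant sphere tree collapses to nothing when its marked points are forgotten, so it contributes no index to $I$ and no $|K_i \cap K'_i|$ term at all. After Corollary \ref{cor:nospheres} forces $\tang=\tang^\can$ and $I=\emptyset$, one must still rule out constant sphere trees on $u$ separately. The paper does so with a tangency count: a stable constant tree carries at least two marked points, so by \cite[Lemma 7.2]{Cieliebak2007} the tangency at its root on the disc is $\ge$ a sum of $\tang^\can$ over $\ge 2$ points, incompatible with $\tang(i)=\tang^\can(i)$.

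Third, for reducible $\Sigma_d$ the paper sidesteps the fibre-product generalization you sketch by a clean WLOG reduction: cut $u$ along its boundary nodes to obtain pieces $u_\alpha \in \Mbar(\by_\alpha,A_\alpha,\bP)$ (semistable by Corollary \ref{cor:nounstable}, genuine moduli elements by \textbf{(Consistent on discs)}), and use the additivity $\sum_\alpha \dim \cM(\by_\alpha,A_\alpha,\bP) + \#(\text{boundary nodes}) = \dim \cM(\by,A,\bP)$ from Lemma \ref{lem:dimboundreg}. Regularity makes each summand $\ge 0$, hence each $\le 1$, and the irreducible-component argument applies piecewise. This replaces what you correctly flag as the main technical hurdle with a short dimension count.
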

\begin{proof}
Given an element $u \in \Mbar(\by,A,\bP)$, let us cut it along its boundary nodes. 
By Corollary \ref{cor:nounstable}, each disc component is semistable; therefore, by the \textbf{(Consistent on discs)} condition on $\bP$, each component defines an element $u_\alpha$ of some $\Mbar(\by_\alpha,A_\alpha,\bP)$, without boundary nodes. 
We have
\[ \sum_\alpha \dim(\cM(\by_\alpha,A_\alpha,\bP)) + \#\, (\text{boundary nodes}) = \dim(\cM(\by,A,\bP))  \]
by Lemma \ref{lem:dimboundreg}. It follows from the assumption $\dim(\cM(\by,A,\bP)) \leq 1$, then, that $\dim(\cM(\by_\alpha,A_\alpha,\bP) \le 1$ for each $\alpha$.  Thus we may assume, without loss of generality, that $u$ has no boundary nodes.

Let $u_\disc$ be the disc component of $u$. 
By Lemma \ref{lem:mustintersect}, every intersection point of $u_\disc$ with $V$ is a marked point or node. 
We attach the tangency data $\tang(i) := \iota(u_\disc,z_i)$ to each marked point or node $z_i$, so that $u_\disc$ defines an element of some $\cM(\by,A_0,\tang,\bP)$ by the \textbf{(Consistent on discs)} condition on $\bP$. 
By the \textbf{(Constant on spheres)} condition on $\bP$, forgetting the internal marked points on the sphere bubbles of $u$ and collapsing unstable components gives a set of bubble trees attached to the marked points of $u_\disc$. 
In order to check that it defines an element of some $\cM_{\{\Gamma_{i}\}}(\by,A,\tang,\bP)$, we need to check that the subset $I$ of all points with non-trivial bubble trees attached, contains all $i$ such that $\tang(i)=0$; or equivalently, if the bubble tree attached to the $i$th point is trivial, then $\tang(i) \neq 0$. 
Indeed, if the bubble tree attached to point $i$ becomes trivial after forgetting all marked points, it must originally have been constant; therefore $\tang(i)$ is greater than or equal to the sum of $\tang^{\can}(j)$ over all points $j$ on the bubble tree, by \cite[Lemma 7.2]{Cieliebak2007}. 
Since $\tang^{\can}(j) \neq 0$ for all $j$, this implies that $\tang(i) \neq 0$ as required. 

Thus we obtain an element of some $\cM_{\{\Gamma_{i}\}}(\by,A,\tang,\bP)$, so we must have $\tang=\tang^{\can}$ and $I=\emptyset$ by Corollary \ref{cor:nospheres}. 
Because $I=\emptyset$, we conclude that all sphere bubbles on $u$ become trivial after forgetting the marked points they contain, so they must be constant.  
In order for a constant tree of sphere bubbles to be stable, it must contain at least two marked points in addition to the point at which it is attached to the disc; however then the tangency vector of the point $i$ at which the tree of sphere bubbles is attached to the disc would have to be greater than or equal to a sum of $\tang^{\can}(j)$ over at least two points $j$, so could not possibly be equal to $\tang^{\can}(i)$. 
Therefore $u$ contains no sphere bubbles. 
\end{proof}

\section{The relative Fukaya category}\label{sec:def}

Our definition of the relative Fukaya category depends on the following data: 

\begin{enumerate}
\item \label{it:1}Closed manifold $X$; subdomain $W \subset X$; symplectic form $\omega$ on $X$; Liouville one-form $\theta$ on $W$; $\omega$-compatible almost complex structure $J_0$ such that $(W \subset X,J_0)$ admits a convex collar.
\item \label{it:2}System of divisors $V$; choice of cylindrical and strip-like ends and thick-thin decomposition; choice of regular Floer data; choice of regular perturbation data.
\end{enumerate}

We will address the dependence of the relative Fukaya category on the data \eqref{it:1} in Section \ref{sec:dep2}, and its dependence on the data \eqref{it:2} in Section \ref{sec:indep}. 
Pending those results, we denote the relative Fukaya category by $\fuk(W \subset X,\omega, \theta, V, J_0,\bN)$, where $\bN$ denotes the choice of cylindrical and strip-like ends, thick-thin decomposition, regular Floer data, and regular perturbation data $\bP$. 

The coefficient ring of the relative Fukaya category is $R:= \Z\power{\NE(V)}$, see Section \ref{sec:symp_rel_fuk}. 
The objects are Lagrangian branes in $W$, see Definition \ref{defn:obj}. 

We associate an `orientation line' $o_y$ to each Hamiltonian chord $y$ from $L_0$ to $L_1$ following \cite[Appendix B]{Sheridan2016} (which is based on \cite[Section 12f]{Seidel:FCPLT}). 
An orientation line is a $\Z/2$-graded free $R$-module of rank $1$.

\begin{defn}\label{defn:mor}
The $hom$-spaces in the relative Fukaya category are the direct sums of all orientation lines associated to Hamiltonian chords:
\[ hom^*(L_0,L_1) := \bigoplus_y o_y.\]
They are $\Z/2$-graded free $R$-modules.
\end{defn}

The $A_\infty$ structure maps are defined using the moduli spaces $\cM(\by,A,\bP)$, for a regular choice of perturbation data $\bP$.  
Each zero-dimensional moduli space $\cM(\by,A,\bP)$ is then finite (because compact). 
Each element $u$ of such a moduli space determines an isomorphism
\begin{equation}
\label{eq:signu}
\sigma_u: o_{y_1} \otimes \ldots \otimes o_{y_k} \xrightarrow{\sim} o_{y_0},
\end{equation}
in accordance with the conventions of \cite[Appendix B]{Sheridan2016}. 
Observe that $\Sym(\vv)$ acts on $\cM(\by,A,\bP)$ because of the \textbf{(Equivariant)} condition on our perturbation data; and the action is free by Lemma \ref{lem:free}. 
It is easy to see from the definition that $\sigma_{g \cdot u} = \sigma_u$ for any $g \in \Sym(\vv)$.\footnote{The key point is that the moduli spaces $\EuR_{k,\ell}$ are oriented in such a way that the fibres of the forgetful maps $\EuR_{k,\ell} \to \EuR_k$ carry their natural complex orientation (one can make sense of this even when $k \le 2$); $\Sym(\vv)$ acts trivially on the base and by a complex automorphism of the fibre, hence it preserves orientations.}
This allows us to make the following

\begin{defn}\label{def:ainf}
We define the $A_\infty$ structure map
\begin{align}
\nonumber \mu^k &: hom^*(L_0,L_1) \otimes_R \ldots \otimes_R hom^*(L_{k-1},L_k) \to hom^*(L_0,L_k),\\
\label{eq:mukdef}
 \mu^k &:= \sum_{u \in \cM(\by,A,\bP)/\Sym(\vv)} \nov^{[u]} \cdot \sigma_u,
 \end{align}
where the sum is over all $\Sym(\vv)$-orbits of zero-dimensional moduli spaces $\cM(\by,A,\bP)$.\end{defn}

Observe that $[u] \in \NE(V)$ by Lemma \ref{lem:posint}, so the sum lies in the coefficient ring $R = \Z\power{\NE(V)}$.
We also observe that $\mu^0$ is of order $\fm$ by definition: we at no point consider discs with a single boundary marked point and no interior marked point (we are able to do this by Corollary \ref{cor:nounstable}). 

Finally, $\mu^* \circ \mu^* = 0$ by the standard argument \cite[Section 12g]{Seidel:FCPLT}. 
The only boundary strata appearing in a one-dimensional moduli space $\Mbar(\by,A,\bP)$ consist of stable discs with two disc components and no spheres, by Lemma \ref{lem:nospheresdim1}. 
The moduli space has the structure of a one-dimensional manifold with boundary in a neighbourhood of these points, by a standard gluing argument.   The action of $\Sym(\vv)$ on such boundary strata remains free (cf. the two remarks following Lemma \ref{lem:free}), and so the moduli space of $\Sym(\vv)$-orbits is again a one-dimensional manifold with boundary.  The signed count of boundary points of a compact one-dimensional manifold with boundary is zero; and these boundary points correspond precisely to the terms in the expression $\mu^* \circ \mu^*$.

\begin{proof}[Proof of Proposition \ref{prop:5.1}:]
It is immediate from Lemma \ref{lem:maxprin} and the \textbf{(Maximum principle)} condition on our choice of perturbation data that
$$\fuk(W \subset X,\omega,\theta,V,J_0,\bN) \otimes_R R/\fm \simeq \fuk(W).$$ 
\end{proof}

\section{Dependence on perturbation data and system of divisors}\label{sec:indep}

For the purposes of this section, we fix the data $W \subset X, \omega, \theta, J_0$, and elide it from the notation for the relative Fukaya category.
We consider the dependence of the relative Fukaya category on the system of divisors $V$, and the data $\bN$. 
Note that the choice of $V$ imposes conditions on $\bN$ (but not vice-versa): we denote the set of $V$-compatible choices of $\bN$ by $\EuN(V)$.

Given a system of divisors $\{V_q\}_{q \in Q_1}$, a \emph{sub-system of divisors} is a subset $Q_0 \subset Q_1$ such that $\{V_q\}_{q \in Q_0}$ is also a system of divisors. 
Note that $\NE(V_1) \subset \NE(V_0)$, so there is a natural map $R(V_1) \to R(V_0)$.

\begin{prop}\label{prop:indep}
Suppose that $V_0 \subset V_1$ is a sub-system of divisors. 
Then for any $\bN^1 \in \EuN(V_1)$, and a non-empty subset of $\bN^0 \in \EuN(V_0)$, there exists a curved filtered $R(V_1)$-linear $A_\infty$ category $\EuC$ which admits strict curved filtered quasi-equivalences
\begin{equation}
\label{eq:embeds}
    \fuk(V_1,\bN^1)   \hooklongrightarrow \EuC \hooklongleftarrow \fuk(V_0,\bN^0)\otimes_{R(V_0)}R(V_1).
 \end{equation}
If $V_0=V_1$, then the the subset of the regular $\bN^0$ consists of all of $\EuN(V_0)$.
\end{prop}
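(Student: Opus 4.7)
The plan is to construct $\EuC$ using Seidel's doubling trick (cf. \cite[Section 10a]{Seidel:FCPLT}). Objects of $\EuC$ will be pairs $(L, s)$ with $s \in \{0, 1\}$, so each Lagrangian brane has two copies. Morphism spaces $hom_\EuC((L, s), (L', s'))$ depend only on $L, L'$ as in Definition \ref{defn:mor}, so that the full subcategories on objects with fixed label $s = 1$ (resp.\ $s = 0$) are identified with $\fuk(X, D, V_1, \bP^1)$ (resp.\ $\fuk(X, D, V_0, \bP^0)$) as filtered modules. Choosing the structure maps to restrict to those of the respective Fukaya categories on these subcategories will make both inclusions in (\ref{eq:embeds}) strict.

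The structure maps of $\EuC$ will be defined by counting holomorphic discs with respect to interpolated perturbation data. For each labelled tuple $\bL = ((L_0, s_0), \ldots, (L_k, s_k))$ and each $\Rbar_{k, \ell}$, we choose $\bP_{k, \ell, \bL}$ satisfying the conditions of Definition \ref{defn:pert}, with the additional boundary constraints that on strip-like ends with $(s_{i-1}, s_i) = (1, 1)$ the Floer datum matches $\bP^1$ (with almost complex structure in $\EuY_*(X, V_1)$), and on those with $(s_{i-1}, s_i) = (0, 0)$ it matches $\bP^0$ (in the larger space $\EuY_*(X, V_0)$). On strip-like ends with mixed labels, we fix an interpolating Floer datum valued in $\EuY_*(X, V_0)$; the extension to the interior of $\Rbar_{k, \ell}$ is likewise carried out in $\EuY_*(X, V_0)$, which contains $\EuY_*(X, V_1)$ as a convex subspace. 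The inductive construction of such data follows the scheme of Lemma \ref{lem:pertinduct} using convexity and cutoff functions. The number of interior marked points and the associated tangency data reflect the local choice of divisor system: canonical tangency with $V_1$ on pure-$1$ regions and with $V_0$ elsewhere.

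Since every almost complex structure appearing in the definition of $\EuC$ preserves $V_0$, the positivity lemma (Lemma \ref{lem:posint}) gives $[u] \cdot D \in \NE_0$ for all discs $u$ that are counted. On the full subcategory $s = 1$, every $J$ in fact preserves $V_1$, so there the weights lie in the smaller cone $\NE_1$, matching the $R(\Nef_1)$-linearity of $\EuC$; this explains why the second inclusion in (\ref{eq:embeds}) appears as a scalar extension. The inclusions are strict curved filtered $A_\infty$ functors by construction, and in particular fully faithful, so to check that they are curved filtered quasi-equivalences it suffices to verify essential surjectivity on the associated graded $\mathrm{Gr}_* \EuC$. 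This is a standard continuation argument: for each Lagrangian $L$, one assembles from the moduli spaces of discs with label transitions a closed element of $hom_\EuC^0((L, 0), (L, 1))$ whose class in $\mathrm{Gr}_0$ is a cohomological isomorphism.

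The main obstacle is the Sard--Smale transversality argument, adapting Lemma \ref{lem:genreg}, required to ensure regularity of the mixed-label moduli spaces while holding both boundary data $\bP^1$ and $\bP^0$ fixed. Starting from the given regular $\bP^1$, the inductive extension to a universal family of interpolating perturbation data (as in Corollary \ref{cor:regex}) determines which $\bP^0$'s can appear at the opposite boundary face; projection to the $\bP^0$-factor yields a comeagre, and hence non-empty, subset of admissible regular $\bP^0$'s. In the special case $V_0 = V_1$, the interpolation happens entirely within a single space $\EuY_*(X, V_0) = \EuY_*(X, V_1)$, the tangency regimes at interior marked points coincide, and no restriction beyond regularity is needed on $\bP^0$. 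Compactness of the $1$-dimensional moduli spaces in $\EuC$, needed for the $A_\infty$ relations, reduces to the arguments of Section \ref{sec:discs} (Lemma \ref{lem:nospheresdim1} and its proof) applied to the interpolated setup.
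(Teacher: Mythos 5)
Your overall strategy---a Seidel-style doubled category whose pure-$0$ and pure-$1$ full subcategories are the two given Fukaya categories, with the quasi-equivalence deduced modulo $\fm$ from a continuation element in $hom_\EuC((L,0),(L,1))$ as in \cite[Section 10a]{Seidel:FCPLT}---is the same as the paper's. But the central difficulty of the case $Q_0 \subsetneq Q_1$ is not addressed, and the hybrid stabilization scheme you propose would fail. You mark intersections with $V_1$ (and impose $\bP^1$) on pure-$1$ regions, but only intersections with $V_0$ on mixed and pure-$0$ regions. This is inconsistent at the Deligne--Mumford boundary: a one-dimensional mixed-label moduli space has strata where a component with all boundary labels equal to $1$ breaks off, and the \textbf{(Consistent on discs)} condition with $\bP^1$ requires that component to carry interior marked points at \emph{all} of its intersections with $V_1$, which your mixed-label domains do not record. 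Conversely, if one marks all $V_1$-intersections everywhere (as the paper does), the pure-$0$ subcategory no longer tautologically equals $\fuk(X,D,V_0,\bP^0)$: its moduli spaces carry extra marked points, and the perturbation data there must be the pullback $\ff^*\bP^0$ under the map forgetting them. That pullback violates the usual \textbf{(Thin regions)}, \textbf{(Equivariant)} and \textbf{(Consistent on discs)} conditions, which is precisely why the paper introduces forgettable interior marked points, unpuncturable boundary points and $\ff$-disclike/striplike components, proves the matching of counts in Lemma \ref{lem:forgiso}, and must require that the moduli spaces defined with $\ff^*\bP^0$ be regular. This is also the true origin of the ``non-empty subset of regular $\bP^0$'' clause: $\bP^0$ must be $V_1$-compatible and $\ff$-regular. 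Your explanation (that the interpolation starting from $\bP^1$ ``determines which $\bP^0$ can appear at the opposite boundary face'') does not correspond to any concrete constraint in your construction, and the case $V_0=V_1$ comes out right in your sketch only because you have not identified where the constraint comes from.

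A second, related failure concerns coefficients. If the almost complex structures on mixed and pure-$0$ regions only preserve $V_0$ (your interpolation takes place in $\EuY_*(X,V_0)$), then Lemma \ref{lem:posint} only gives $[u]\cdot D \in \NE_0$, and positivity of intersection with the components $V_q$, $q \in Q_1 \setminus Q_0$, is lost; so your $\EuC$ would at best be $R(\Nef_0)$-linear, not $R(\Nef_1)$-linear as the statement requires, and the left-hand inclusion in \eqref{eq:embeds} could not be strict without base-changing the $V_1$-side. (Your remark that the weights on the pure-$1$ side lying in $\NE_1$ ``explains the scalar extension'' puts the base change on the wrong side: it is the $V_0$-side that is extended.) In the paper all perturbation data entering $\EuC$, including $\bP^0$ itself, are required to take values in $\EuY_*(X,V_1)$; this $V_1$-compatibility is exactly what makes the weights on the $0$-side land in $\NE_1$ and what cuts down the admissible $\bP^0$, and it is vacuous when $V_0=V_1$, recovering the last clause of the Proposition.
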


Note that the case $V_0 = V_1$ of the Proposition shows that $\fuk(V,\bN)$ is independent of $\bN$ up to curved filtered quasi-equivalence; we denote this curved filtered quasi-equivalence class by $\fuk(V)$. 
The full statement of the proposition then yields a curved filtered quasi-equivalence
\[ \fuk(V_1) \simeq \fuk(V_0) \otimes_{R(V_0)} R(V_1).\]

\begin{rmk}
We do not establish any kind of uniqueness of the curved filtered quasi-equivalences involved: we have a `weak system' of (curved filtered quasi-)equivalent categories, in the terminology of \cite[Section 10a]{Seidel:FCPLT}.
\end{rmk} 

The basic idea behind the construction of $\EuC$ is to show that $\fuk(V_1,\bN^1)$ can be defined using the pullback of the data $\bN^0$ via the map forgetting all marked points labelled by $q \in Q_1 \setminus Q_0$. 
The difficulty lies in the fact that the pulled-back data will not be valid: e.g., the pulled-back perturbation data will never satisfy the conditions \textbf{(Thin regions)}, \textbf{(Equivariant)}, \textbf{(Consistent on discs)} required of a universal choice of perturbation data. 
Therefore, we enlarge the allowed class of perturbation data so that it includes the pullback of $\bN^0$, as well as the perturbation data $\bN^1$; then a doubled category construction allows us to interpolate between the two.

\subsection{Construction of the doubled category: domain data}

We give the definition of the doubled category $\EuC$ appearing in Proposition \ref{prop:indep}. 
The objects are Lagrangian branes $L^i$, equipped with a label $i=0$ or $1$. 
On the level of objects, the left embedding in \eqref{eq:embeds} send $L \mapsto L^0$, while the right embedding sends $L\mapsto L^1$.
A \emph{Lagrangian labelling} $\bL$ will be a tuple of objects $(L_0^{i_0},\ldots,L_k^{i_k})$ of $\EuC$. 
Forgetting the Lagrangians $L_j$, we obtain a \emph{0/1 labelling} $\bi(\bL) = (i_0,\ldots,i_k)$. 

\begin{defn}\label{def:forgmods}
Suppose that $2 \le k+2\ell$, $\ell_\for \le \ell$, $K_\pun \subset \{1,\ldots,k\}$ is a subset, and $\bi = (i_0,\ldots,i_k)$ is a 0/1 labelling, having the property that whenever $j \notin K_\pun$, we have $i_{j-1}=i_j = 0$. 
We define $\EuS_{k,K_\pun,\ell,\ell_\for,\bi} \to \EuR_{k,K_\pun,\ell,\ell_\for,\bi}$ to coincide with $\EuS_{k,\ell} \to \EuR_{k,\ell}$. 
We define $K^0_\pun \subset \{0,1,\ldots,k\}$ such that $K^0_\pun \cap \{1,\ldots,k\} = K_\pun$, and $0 \notin K^0_\pun$ if and only if $K_\pun$ is empty and $\ell=\ell_\for$. 
We define $\EuS^\circ_{k,K_\pun,\ell,\ell_\for,\bi}$ to be the complement of all boundary marked points labelled by $j \in K_\pun^0$. 
\end{defn}

We now explain how to `think about' Definition \ref{def:forgmods}, and preview the main construction. 
The moduli spaces $\EuR_{k,K_\pun,\ell,\ell_\for,\bi}$ parametrize discs with $k+1$ boundary marked points and $\ell$ interior marked points. 
We call the last $\ell_\for$ of the internal marked points `forgettable', and the remaining ones `unforgettable'. 
One should think of $\bi$ as a 0/1 labelling of the boundary intervals between the boundary marked points. 

We call the boundary marked points corresponding to $j \in K_\pun^0$ `puncturable', and the remaining ones `unpuncturable'. 
If a boundary point is unpuncturable, then the two boundary intervals on either side of it must be labeled $0$.
The output boundary marked point is unpuncturable if and only if all input boundary marked points are unpuncturable and all interior marked points are forgettable. 

We will consider pseudoholomorphic discs $u\in \cM(\by,A,\tang,\bP$), cf. (\ref{eqn: tang moduli}), whose domain is a fibre $\Sigma^\circ_r$ of the family $\EuS^\circ_{k,K_\pun,\ell,\ell_\for,\bi}$, with the following properties:
\begin{itemize}
\item
The disc $u$ maps unforgettable internal marked points to $V_0$, and forgettable ones to $V_q$ for $q \in Q_1 \setminus Q_0$. 
\item
The disc $u$ limits to Hamiltonian chords at the puncturable boundary marked points, which are removed from $\Sigma_r$ to obtain $\Sigma^\circ_r$. 
\item
We have required that unpuncturable boundary marked points $z_j$ always separate two boundary components carrying the label $0$. These boundary components will furthermore \emph{be labeled by the same Lagrangian}: $L_{j-1}=L_j$. Unpuncturable boundary points $z_j$ are not removed, and $u$ will extend smoothly across $z_j$. 
\item
Our perturbation data will be constructed to be compatible with the map forgetting the forgettable marked points, over certain subsets of the space of domains. It will similarly be compatible with the map forgetting the unpuncturable marked points.
\end{itemize}

\paragraph{The role of unpuncturable marked points.}
The $A_\infty$ structure maps in the doubled category $\EuC$ will be defined via pseudoholomorphic discs with only puncturable boundary marked points. 
Nevertheless, unpuncturable marked points play an unavoidable role in the construction: they are introduced to deal with boundary nodes which are not mapped to a Hamiltonian chord. 
We are forced to allow for such a possibility in order to construct a Gromov compactification of our moduli spaces, although we will subsequently exclude it \emph{a posteriori}, as we now explain. 

The following are two fundamental aspects of the construction of our moduli spaces of pseudoholomorphic curves:
\begin{itemize}
\item the perturbation data $\bP$ on $\EuS^\circ$ should extend smoothly over the boundary $\Sbar^\circ$, so that a Gromov compactification exists;
\item over moduli spaces all of whose boundary components are labelled `$0$', the perturbation data $\bP$ must be pulled back from the perturbation data $\bP^0$ from $\bN^0$, via the map $\ff$ which `forgets' all of the forgettable marked points.
\end{itemize}
Combining these two, we see that the perturbation data over the boundary $\partial \Sbar^\circ$ of a moduli space all of whose boundary components are labelled `$0$' must also be pulled back via the forgetful map. 
Now let us consider a stable disc in $\Rbar$ which has a disc component $\mathbb{D}$ with a single boundary marked point which is a node, and a single interior marked point which is forgettable. 
The forgetful map $\ff$ collapses $\mathbb{D}$ to a smooth boundary point $p$. 
In particular, the pulled back perturbation data $\ff^*\bP^0$ take the form $(\bY,\bK)$ where $\bK = 0$ over $\mathbb{D}$, and $\bY$ is constant equal to $\bY^0(p)$ over $\mathbb{D}$. 
This raises the following issues:
\begin{itemize}
\item The perturbation data over $\mathbb{D}$ cannot satisfy the \textbf{(Thin regions)} condition, as $\bK = 0$.
\item The perturbation data over $\mathbb{D}$ can only satisfy the \textbf{(Consistent on discs)} condition if $\bY^0$ is constant along the corresponding boundary component.
\end{itemize}
In particular, when considering pseudoholomorphic maps from this source, we are forced to consider the disc $\mathbb{D}$ as bubbling off at a node, rather than breaking along a Hamiltonian chord. 

In particular, we are forced to treat certain boundary marked points and components differently from others: the marked point described above is an example of an unpuncturable marked point (whereas in the previous sections we only ever considered puncturable marked points), and the disc $\mathbb{D}$ is an example of what we will call an `$\ff$-disclike' component in the sequel.

Although we are forced to introduce unpuncturable marked points in order to construct moduli spaces admitting Gromov compactifications, \emph{a posteriori}, the corresponding components of the Gromov compactification will actually be empty (Lemma \ref{lem:nounpunc}). For example, the disc $u(\mathbb{D})$ considered above must by non-constant, since it intersects $V_1$ at its interior marked point; but that implies that it must also intersect $V_0$ by Lemma \ref{lem:intersectv}, as $V_0$ is a system of divisors, and in particular must contain an additional, stabilizing marked point mapping to $V_0$. 

\paragraph{Deligne--Mumford compactification.}

Now we describe the structure of the Deligne--Mumford compactification of $\EuR_{k,K_\pun,\ell,\ell_\for,\bi}$. 
Of course we already have a description with strata isomorphic to products of $\EuR_{\ell_\alpha}$ and $\EuR_{k_\alpha,\ell_\alpha}$; the aim is to extend the $0/1$ labelling of boundary components, designation of interior marked points as forgettable/unforgettable, and designation of boundary marked points as puncturable/unpuncturable, to the components of a stable curve. 
This is necessary in order to specify the consistency conditions we require on our choices of perturbation data. We illustrate the following definitions in Figure \ref{fig:disk2}.
  
We define $\Sbar_{k,K_\pun,\ell,\ell_\for,\bi} \to \Rbar_{k,K_\pun,\ell,\ell_\for,\bi}$ to be equal to $\Sbar_{k,\ell} \to \Rbar_{k,\ell}$. 
For the following definitions, let $\Sigma_r$ be a fibre of $\Sbar_{k,K_\pun,\ell,\ell_\for,\bi} \to \Rbar_{k,K_\pun,\ell,\ell_\for,\bi}$.

\begin{defn}\label{def:whatsforg}
Let $z \in \Sigma_r$ be an interior marked point or node.  We define $z$ to be forgettable if it is a forgettable marked point, or if the tree of spheres lying above it contains only forgettable marked points; otherwise define it to be unforgettable.
\end{defn}

\begin{figure}
\begin{center}
\includegraphics[width=\textwidth]{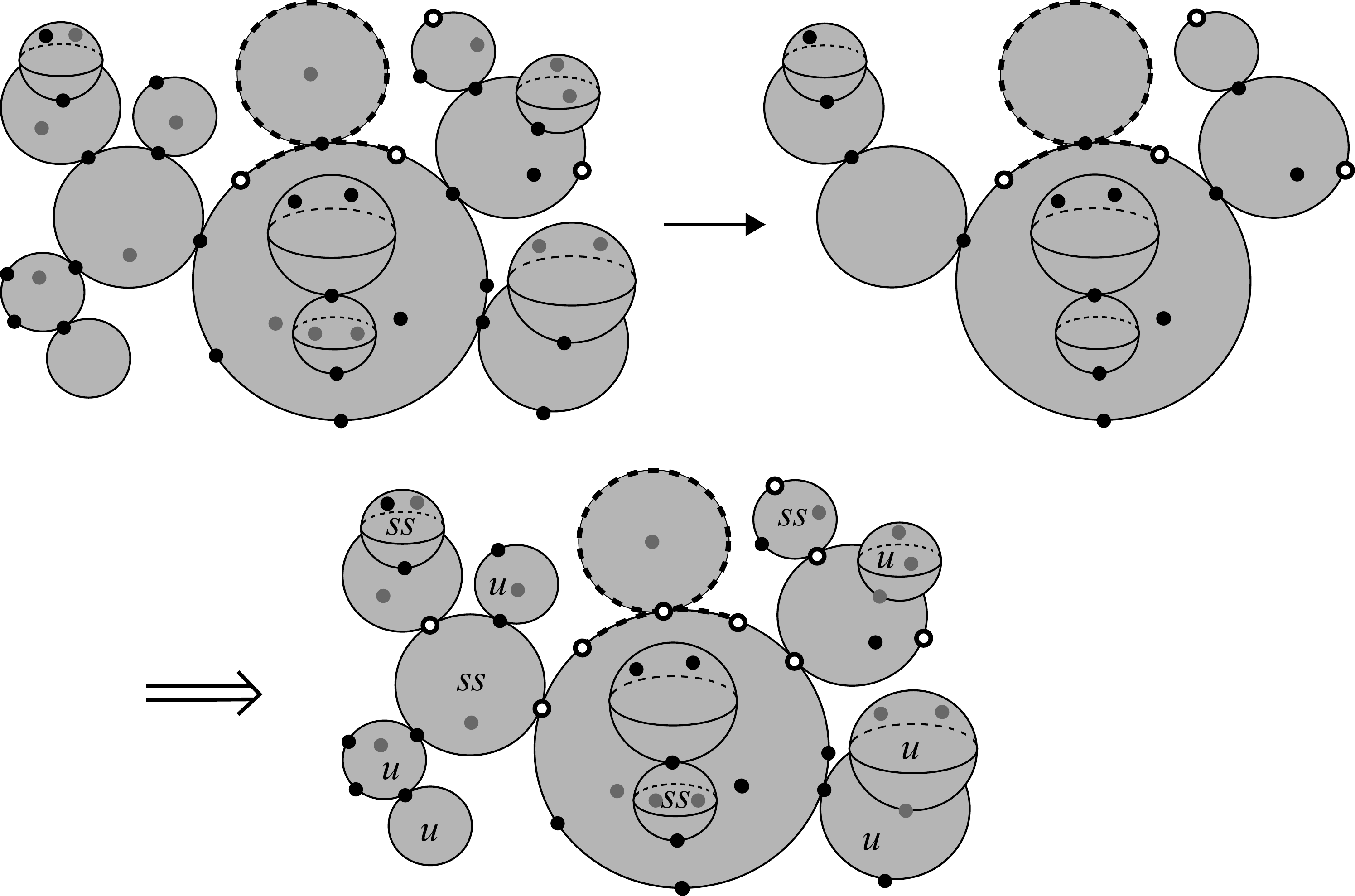}
\end{center}
\caption{\label{fig:disk2}The procedure for designating interior nodes as forgettable or unforgettable, and boundary nodes as puncturable or unpuncturable. On the top left we illustrate an element of $\Rbar_{k,K_\pun,\ell,\ell_\for,\bi}$. All nodes are drawn as a solid point; solid interior marked points are unforgettable; grey interior marked points are forgettable; open boundary marked points are puncturable; solid boundary marked points are unpuncturable; dashed boundary components are labelled `$1$'; all other boundary components are labelled `$0$'. On the top right, we illustrate the outcome of the $\ff$-process. On the bottom, we return to the original element of $\Rbar_{k,K_\pun,\ell,\ell_\for,\bi}$. We draw the interior marked points and nodes attached to a disc as solid if they correspond to an unforgettable marked point, and grey if they correspond to a forgettable marked point, in accordance with Definition \ref{def:whatsforg}. We label the components by `$u$' if they are unstable; `$ss$' if they are $\ff$-semistable; and leave them unlabelled if they are $\ff$-stable, in accordance with Definition \ref{def:fdisclike}. We draw the boundary marked points and nodes as open if they are puncturable, and solid if they are unpuncturable, in accordance with Definition \ref{def:whatspunc}.}
\end{figure}

Let us call the following procedure the `$\ff$-process'. Start with the stable curve $\Sigma_r$, and modify it as follows.

\begin{enumerate}
\item
Delete all forgettable interior marked points.

\item 
Delete all unpuncturable boundary marked points. 

\item
Collapse any discs with no internal special points, one boundary marked point, and whose unique boundary component is labelled `$0$'. The point to which the disc gets collapsed does not get marked.

\item
Collapse any spheres with no marked points and one node, which belong to a tree of spheres attached to a disc all of whose boundary components are labelled `$0$'. The point to which the sphere gets collapsed does not get marked.

\item 
Return to (3) and iterate.
\end{enumerate}

The iteration stops when there are no more spheres or discs to collapse. The output is a curve $\Sigma_r'$ without forgettable or unpuncturable marked points, which comes with a collapsing map $c\colon \Sigma_r \to \Sigma_r'$.

\begin{defn}\label{def:fdisclike}
The components of $\Sigma_r$ are partitioned into three types. 
Components which are collapsed in the $\ff$-process are called \emph{$\ff$-unstable}. Components whose image in the output curve $\Sigma_r'$ is either a disc with no internal marked points but exactly two boundary special points, separated by two boundary components both labelled `$0$', or a sphere with precisely two special points, belonging to a tree of spheres attached to a disc all of whose boundary components are labelled `$0$', are called \emph{$\ff$-semistable}. All other components are called \emph{$\ff$-stable}. 
\end{defn}

\begin{defn}\label{def:whatspunc}
Let $\zeta \in \Sigma_r$ be a boundary marked point or node. 
We define $\zeta$ to be unpuncturable if it is an unpuncturable marked point, or if it is a node lying on an $\ff$-unstable component; otherwise define it to be puncturable.
\end{defn}

As before, strata of $\Rbar_{k,K_\pun,\ell,\ell_\for,\bi}$ are indexed by certain decorated trees $\Gamma$. 
The conventions established in Definitions \ref{def:whatsforg} and \ref{def:whatspunc} (together with the obvious convention for determining boundary labellings $\bi_\alpha$ from $\bi$) determine isomorphisms
\begin{align}
\label{eq:Ridecomp} \EuR^\Gamma_{k,K_\pun,\ell,\ell_\for,\bi} &\simeq \prod_{\alpha \in V_\sph} \EuR_{\ell_\alpha} \times \prod_{\alpha \in V_\disc} \EuR_{k_\alpha,K_{\alpha,\pun},\ell_\alpha,\ell_{\alpha,\for},\bi_\alpha},\\
\label{eq:Sdecomp}
\EuS^\Gamma_{k,K_\pun,\ell,\ell_\for,\bi} & \simeq \left(\coprod_{\alpha \in V_\sph} \mathrm{pr}_\alpha^* \EuS_{\ell_\alpha} \sqcup \coprod_{\alpha \in V_\disc} \mathrm{pr}_\alpha^* \EuS_{k_\alpha,K_{\alpha,\pun},\ell_\alpha,\ell_{\alpha,\for},\bi_\alpha}\right)/\sim
\end{align}
(depending on an appropriate choice of ordering of the interior marked points on each component). 
It is straightforward to check that $(k_\alpha,K_{\alpha,\pun},\ell_\alpha,\ell_{\alpha,\for},\bi_\alpha)$ satisfies the conditions required by Definition \ref{def:forgmods} in order for the corresponding moduli spaces to be defined.

We define $\Sbar^\circ_{k,K_\pun,\ell,\ell_\for,\bi} \subset \Sbar_{k,K_\pun,\ell,\ell_\for,\bi}$ to be the complement of all puncturable points. 
One easily checks that
\[ \Sbar^\circ_{k,K_\pun,\ell,\ell_\for,\bi} \cap \mathrm{pr}_\alpha^* \EuS_{k,K_\pun,\ell,\ell_\for,\bi} = \mathrm{pr}_\alpha^* \EuS^\circ_{k,K_\pun,\ell,\ell_\for,\bi}\]
in \eqref{eq:Sdecomp}.

Now we need to make an appropriate choice of cylindrical and strip-like ends and thick/thin decomposition for our moduli spaces. 

We run into some new issues: firstly, the choice of cylindrical ends for $\Rbar_\ell$ coming from $\bN^0$ and $\bN^1$ need not be compatible. We resolve this by modifying the notion of a universal choice of cylindrical ends. Secondly, the pullback of a choice of cylindrical or strip-like end by a forgetful map may no longer be a cylindrical or strip-like end, as a `forgotten' marked point may lie inside the end. We resolve this by `shrinking' each end by a domain-dependent factor so that it becomes disjoint from all marked points. 

We now introduce the modified notion of a universal choice of cylindrical and strip-like ends. 
This consists of a choice of map $\epsilon_{\Gamma,\alpha,i}$ for each $k,K_\pun,\ell,\ell_\for,\bi$, tree $\Gamma$ indexing a stratum of $\Rbar_{k,K_\pun,\ell,\ell_\for,\bi}$, vertex $\alpha$ of $\Gamma$, and $i \in \{1,\ldots,k_\alpha\} \sqcup \{1,\ldots,\ell_\alpha\}$, where:
\begin{itemize}
\item if $i \in \{1,\ldots,\ell_\alpha\}$ corresponds to an interior point, then $\epsilon_{\Gamma,\alpha,i}$ is a smooth $S^1$-equivariant map
$$\mathbb{D} \times \Rbar^{\Gamma,(i)}_{k,K_\pun,\ell,\ell_\for,\bi} \to \mathrm{pr}_\alpha^* \Sbar_{\ell_\alpha}$$
such that for any $\xi \in \Rbar^{\Gamma,(i)}$, the (equivalence class under positive real scaling of the) differential of the map $\epsilon(-,\xi)$ at $0 \in \mathbb{D}$ coincides with $\xi$, representing a cylindrical end on each fibre of the universal family; 

\item if $i \in \{1,\ldots,k_\alpha\}$ corresponds to a boundary point, then $\epsilon_{\Gamma,\alpha,i}$ is a strip-like end for the $i$th boundary marked point of the component $\mathrm{pr}_\alpha^* \Sbar_{k_\alpha,\ell_\alpha}$ of the universal family;
\end{itemize}
We require that these choices restrict to a choice of cylindrical and strip-like ends on each fibre of $\EuS^\Gamma_{k,K_\pun,\ell,\ell_\for,\bi}$. 

Observe that $\Sym(\ell-\ell_\for) \times \Sym(\ell_\for) \subset \Sym(\ell)$ acts on $\Sbar^\circ_{k,K_\pun,\ell,\ell_\for,\bi} \to \Rbar^\circ_{k,K_\pun,\ell,\ell_\for,\bi}$, preserving the forgettability status of the marked points. 
We will require our universal choice of cylindrical and strip-like ends to be equivariant for this subgroup.

We now need to explain the relevant notion of consistency, which is slightly more elaborate than in Section \ref{sec:slends}. In words, we do \emph{not} make a universal choice of cylindrical ends for the moduli spaces of spheres $\Rbar_\ell$, we only make such a choice for the moduli spaces of discs $\Rbar_{k,\ell}$; we impose the maximal `consistency' of choices of cylindrical and strip-like ends which is well-defined in the absence of such a choice. 
More precisely, the notion of consistency has two parts: 
\begin{itemize} 
\item[]
\textbf{(Consistency with previous choices)} Let $\Gamma' \subset \Gamma$ be a subgraph, so that $\Rbar^{\Gamma'}_{k',K'_\pun,\ell',\ell'_\for,\bi'}$ is a factor of $\Rbar^\Gamma_{k,K_\pun,\ell,\ell_\for,\bi}$ under the product decomposition \eqref{eq:Ridecomp}. Precisely, this happens when $\Gamma'$ is connected, includes a disc vertex, and all edges of $\Gamma$ incident to a vertex of $\Gamma'$ are included in $\Gamma'$. We write $(\Gamma',\alpha,i) \subset (\Gamma,\alpha,i)$, for sets of data indexing cylindrical or strip-like ends, if $\Gamma' \subset \Gamma$ is as above, and $\alpha$ is contained in $\Gamma'$. 
In this situation, let $\mathrm{pr}_{\Gamma'}: \Rbar^\Gamma \to \Rbar^{\Gamma'}$ denote the projection. 
It is clear that $\mathrm{pr}_\alpha \circ \mathrm{pr}_{\Gamma'} = \mathrm{pr}_\alpha$. 
We require that
$$ \epsilon_{\Gamma,\alpha,i} = \left(\mathrm{pr}_{\Gamma'}\right)^* \epsilon_{\Gamma',\alpha,i}$$
in the case of a strip-like end, or
$$ \epsilon_{\Gamma,\alpha,i} = \left(\mathrm{pr}_{\Gamma'}^{(i)}\right)^* \epsilon_{\Gamma',\alpha,i}$$
in the case of a cylindrical end, where $\mathrm{pr}_{\Gamma'}^{(i)} : \Rbar^{\Gamma,(i)} \to \Rbar^{\Gamma',(i)}$ denotes the map induced by $\mathrm{pr}_{\Gamma'}$.

\item[]
\textbf{(Consistency with gluing)} For any $\Gamma$ indexing a stratum of $\Rbar_{k,K_\pun,\ell,\ell_\for,\bi}$, the choice of cylindrical and strip-like ends $\epsilon_{\Gamma,\alpha,i}$ over all $\alpha,i$ determines a gluing map 
$$\EuR^\Gamma_{k,K_\pun,\ell,\ell_\for,\bi} \times Gl \to \EuR_{k,K_\pun,\ell,\ell_\for,\bi},$$
where $Gl$ is an appropriate space of gluing parameters; furthermore, the glued surfaces inherit a choice of cylindrical and strip-like ends from the ends which do not get `glued'. We require that these agree with the choice of cylindrical and strip-like ends $\epsilon_{*,\alpha,i}$ where `$*$' is the graph indexing the top-dimensional stratum of $\Rbar_{k,K_\pun,\ell,\ell_\for,\bi}$ and $\alpha$ is its unique vertex, over a region where the gluing parameters are sufficiently small.
\end{itemize}

Next we need to explain the sense in which we require our choices of cylindrical and strip-like ends to be compatible with our previous choices from $\bN^0$ and $\bN^1$.

\begin{defn}
We say that a universal choice of cylindrical and strip-like ends is \emph{compatible with $\bN^1$} if whenever $K_\pun = \{1,\ldots,k\}$ and $\bi = (1,\ldots,1)$, the ends coincide with those from $\bN^1$, under the isomorphism $\Sbar^\circ_{k,K_\pun,\ell,\ell_\for,\bi} \simeq \Sbar^\circ_{k,\ell}$.
\end{defn}

The notion of compatibility with $\bN^0$ is more involved, and its definition requires some preparation.

\begin{defn}
Suppose that $2 \le |K_\pun|+2(\ell-\ell_\for)$ and $\bi = (0,\ldots,0)$. 
We define the map
\[ \ff: \Sbar_{k,K_\pun,\ell,\ell_\for,\bi} \to \Sbar_{|K_\pun|,\ell-\ell_\for}\]
which forgets all forgettable and unpuncturable marked points and stabilizes. 
That is, one applies the $\ff$-process  to obtain a collapsing map $c\colon \Sigma_r\to \Sigma_r'$, and then also collapses the images in $\Sigma_r'$ of the $\ff$-semistable  components to obtain the stabilized curve $\Sigma_r''$ with a collapsing map $c'\colon \Sigma_r' \to \Sigma_r''$; and $\ff|_{\Sigma_r} = c'\circ c$.
\end{defn}

\begin{lem}\label{lem:ffstrends}
We have
\[ \ff\left(\Sbar^\circ_{k,K_\pun,\ell,\ell_\for,\bi}\right) \subset \Sbar^\circ_{|K_\pun|,\ell-\ell_\for}.\]
On the other hand, let
\[\Sbar^{\ff\circ}_{k,K_\pun,\ell,\ell_\for,\bi} \subset \Sbar^\circ_{k,K_\pun,\ell,\ell_\for,\bi}\]
denote the complement of all $\ff$-semistable disc components and their attached trees of $\ff$-unstable components. Then 
\[ \ff^{-1}\left(\Sbar^\circ_{|K_\pun|,\ell-\ell_\for}\right) = \Sbar^{\ff\circ}_{k,K_\pun,\ell,\ell_\for,\bi}.\]
\end{lem}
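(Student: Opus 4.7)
The plan is to prove both statements simultaneously by a fibrewise case analysis: for each stable curve $\Sigma_r$ in the universal family, classify its components as $\ff$-stable discs, $\ff$-disclike discs, $\ff$-striplike discs, or sphere components, then trace the image under $\ff = c' \circ c$ of every point of $\Sigma_r$ according to the component on which it lies. Both the first inclusion and the equality will fall out of this classification.

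The key bookkeeping is as follows. The $\ff$-process $c$ first deletes forgettable interior markings and unpuncturable boundary markings (leaving the underlying points as smooth points of $\Sigma_r$), then iteratively collapses destabilized spheres and $\ff$-disclike discs, each to the attaching point on the next component outward. Iterating, any tree of $\ff$-disclike and sphere components contracts onto the first $\ff$-stable or $\ff$-striplike component it reaches. The map $c'$ then collapses each $\ff$-striplike disc $D$, together with everything $c$ has already identified with $D$, to the single boundary node of $\Sigma_r''$ that joins the two components on either side of $D$. Puncturable boundary marked points of $\Sigma_r$ survive intact as the boundary marked points of $\Sigma_r''$, and puncturable boundary nodes between two $\ff$-stable components survive as its boundary nodes.

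This gives a complete trichotomy on points $z \in \Sigma_r$: $\ff(z)$ is a boundary marked point of $\Sigma_r''$ iff $z$ is a puncturable boundary marked point; $\ff(z)$ is a boundary node of $\Sigma_r''$ iff $z$ is either a puncturable boundary node between two $\ff$-stable components or else lies on the union of the $\ff$-striplike components with all sphere trees and (iterated) trees of $\ff$-disclike components attached to them; and $\ff(z)$ is otherwise a smooth interior or boundary point of $\Sigma_r''$. Taking complements, the preimage $\ff^{-1}(\Sbar^\circ_{|K_\pun|,\ell-\ell_\for})$ is exactly $\Sbar$ minus the puncturable points minus the $\ff$-striplike subtree, i.e.\ $\Sbar^{\ff\circ}_{k,K_\pun,\ell,\ell_\for,\bi}$, proving the second statement. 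The first inclusion is read off the same trichotomy: any $z \in \Sbar^\circ_{k,K_\pun,\ell,\ell_\for,\bi}$ that lies outside the $\ff$-striplike subtree has image in the smooth locus of $\Sigma_r''$, and the possibility that $z$ lies in the $\ff$-striplike subtree is covered by the second statement since such $z$ are excluded from $\ff^{-1}(\Sbar^\circ_{|K_\pun|,\ell-\ell_\for})$.

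The main obstacle is the iteration built into the $\ff$-process: one must verify that collapsing a chain of $\ff$-disclike discs terminating at an $\ff$-striplike component routes the entire chain into the $\ff$-striplike node under $c' \circ c$, rather than halting at a smooth boundary point on some intermediate host. This is a straightforward induction on the depth of the chain, using that each $\ff$-disclike disc is collapsed onto its unique boundary-node attachment and so the `host type' (stable vs.\ striplike) at the far end of the chain propagates back along it; this is precisely why the definition of $\Sbar^{\ff\circ}$ must remove not only $\ff$-striplike components but also all trees of $\ff$-disclike components feeding into them.
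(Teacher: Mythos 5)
Your overall strategy---a fibrewise chase through the definition of $\ff = c'\circ c$, classifying components as $\ff$-stable, $\ff$-striplike, $\ff$-disclike or spherical---is exactly the definition-unwinding the paper has in mind (its own proof is a one-line ``straightforward from the definition of $\ff$''), and your treatment of the second displayed equality is essentially correct; in particular you correctly identify why $\Sbar^{\ff\circ}_{k,K_\pun,\ell,\ell_\for,\bi}$ must excise not only the $\ff$-striplike components but also the sphere trees and the trees of $\ff$-disclike components feeding into them. One small inaccuracy: when an $\ff$-striplike component carries a puncturable boundary marked point at one of its two special points (e.g.\ $k=2$, $\ell=\ell_\for=1$, $\bi=(0,0,0)$, with one disc carrying $\zeta_0,\zeta_2$ and the node, the other carrying $\zeta_1$, the node and the forgettable interior point), $c'$ collapses that component onto a boundary \emph{marked point} of $\Sigma_r''$ rather than a node, so the first two clauses of your trichotomy are misstated; this does not harm the preimage computation, since marked points and nodes are both excluded from $\Sbar^\circ_{|K_\pun|,\ell-\ell_\for}$.

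The substantive problem is your handling of the first inclusion. Interior points of an $\ff$-striplike component (and of the trees attached to it) are not puncturable, hence lie in $\Sbar^\circ_{k,K_\pun,\ell,\ell_\for,\bi}$, and your own analysis shows that $\ff$ sends them to a boundary special point of $\Sigma_r''$, i.e.\ \emph{outside} $\Sbar^\circ_{|K_\pun|,\ell-\ell_\for}$. Saying this case is ``covered by the second statement since such $z$ are excluded from $\ff^{-1}(\Sbar^\circ_{|K_\pun|,\ell-\ell_\for})$'' is not an argument for the inclusion---it is precisely the observation that the inclusion fails at those points, and indeed the two displayed claims are mutually consistent only on fibres without $\ff$-striplike components. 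What your case analysis actually establishes is $\ff\bigl(\Sbar^{\ff\circ}_{k,K_\pun,\ell,\ell_\for,\bi}\bigr) \subset \Sbar^\circ_{|K_\pun|,\ell-\ell_\for}$ together with the equality of preimages; the first clause as literally written holds only away from $\ff$-striplike configurations (in particular over the open stratum, and over the thin regions relevant to the parenthetical remark in Lemma \ref{lem:ffstrends}). You should either prove this corrected form and explicitly flag the discrepancy with the stated inclusion, or explain why the restricted statement suffices for its intended use; silently blending the two, as your last sentence of the trichotomy paragraph does, is the gap.
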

\begin{proof}
This is straightforward from the definition of $\ff$.
\end{proof}

Now we explain the notion of `shrinking' an end: given $S \in \R_{\ge 0}$ and $z \in \mathbb{D}$ or $Z_\pm$, we define 
\begin{align*}
S \cdot z &= e^{-S} \cdot z \quad \text{if the domain is $z \in \mathbb{D}$};\\
S \cdot z &= (s\pm S,t) \quad \text{if the domain is $z = (s,t) \in Z_\pm$}.
\end{align*}

\begin{defn}\label{defn:N0compat}
We say that a universal choice of cylindrical and strip-like ends is \emph{compatible with $\bN^0$} if whenever $\bi=(0,\ldots,0)$ we have:
\begin{itemize}
\item If $\alpha$ is $\ff$-semistable, then the end $\epsilon_{\Gamma,\alpha,i}$ extends to a parametrization of the whole component $\Sigma_\alpha^\circ$ of $\Sigma^\circ_r$ (by $\R \times [0,1]$ if $\alpha$ is a disc, or $\CP^1$ if it is a sphere).
\item If $\alpha$ is $\ff$-stable and $i$ is unforgettable or puncturable, then we have an end $\epsilon_{\Gamma,\alpha,i}$ for $\Rbar^\Gamma_{k,K_\pun,\ell,\ell_\for,\bi}$ and a corresponding end $\epsilon_{\ff(\Gamma,\alpha,i)}$ for $\Rbar^{\ff(\Gamma)}_{|K_\pun|,\ell-\ell_\for}$, the latter coming from $\bN^0$. 
We require that these be compatible in the sense that
\begin{equation}\label{eq:ffstrip}
 \ff(\epsilon_{\Gamma,\alpha,i}(r,z)) = \epsilon_{\ff(\Gamma,\alpha,i)}(\ff(r),S_{\Gamma,\alpha,i}(r) \cdot z)
 \end{equation}
 for all $r \in \EuR^\Gamma_{k,K_\pun,\ell,\ell_\for,\bi}$, where $S_{\Gamma,\alpha,i}:\EuR^\Gamma_{k,K_\pun,\ell,\ell_\for,\bi}\to\R_{\ge 0}$ is a smooth function. (It follows from the first clause of Lemma \ref{lem:ffstrends} that the image of $\ff \circ \epsilon_{\Gamma,\alpha,i}$ lies in $\Sbar^\circ_{|K_\pun|,\ell-\ell_\for}$.)
\end{itemize}
\end{defn}

\begin{rmk}
Note that the smooth functions $S_{\Gamma,\alpha,i}(r)$ necessarily converge to infinity as $r$ approaches any boundary component of $\Rbar^\Gamma$ where a forgettable interior marked point of the component $\alpha$ bubbles off with the $i$th marked point. 
That is why we only impose the condition \eqref{eq:ffstrip} over $\EuR^\Gamma$, not $\Rbar^\Gamma$.
\end{rmk}

\begin{lem}\label{lem:mixendsex}
There exists a universal choice of cylindrical and strip-like ends for the moduli spaces $\Sbar_{k,K_\pun,\ell,\ell_\for,\bi}$, which is consistent, equivariant, and compatible with $\bN^0$ and $\bN^1$.
\end{lem}
\begin{proof}
Compatibility with $\bN^1$ uniquely determines the choice of ends on the moduli spaces labelled by $\bi = (1,\ldots,1)$, satisfying all the necessary requirements. 
Next we construct a universal choice of cylindrical and strip-like ends on the moduli spaces labelled by $\bi = (0,\ldots,0)$. 
Suppose we have made such a choice for all moduli spaces of dimension $<N$, and consider a moduli space of dimension $N$. 
Compatibility with $\bN^0$, together with a choice of the functions $S_{\Gamma,\alpha,i}$, determines the choice of ends associated to puncturable and unforgettable points on $\ff$-stable components. 
The functions $S_{\Gamma,\alpha,i}(r)$ must be chosen to be sufficiently large that the image of $\epsilon_{\ff(\Gamma,\alpha,i)}(\ff(r),S_{\Gamma,\alpha,i}(r) \cdot z)$ avoids the image under $\ff$ of all forgettable and unpuncturable marked points on $\Sigma_r$; they must be $\Sym(\ell - \ell_\for) \times \Sym(\ell_\for)$-equivariant; they must be chosen so that the corresponding ends satisfy the \textbf{(Consistency with previous choices)} condition; finally they must be chosen so that the corresponding ends satisfy the \textbf{(Consistency with gluing)} condition, which (in the case of gluing a $\ff$-semistable component) relies on our assumption that ends on $\ff$-semistable components extend to a parametrization of the component. 

Next we choose ends associated to puncturable and unforgettable points on $\ff$-semistable components. 
The requirement that they should extend to a parametrization of the whole component determines these up to `shrinking', and the procedure for choosing an appropriate shrinking is analogous to the case above. 
Next we choose ends for the unpuncturable and forgettable marked points; compatibility with $\bN^0$ doesn't impose any conditions on these, so the construction is strictly easier. 
This completes the inductive construction of ends on moduli spaces labelled by $\bi = (1,\ldots,1)$ and $\bi = (0,\ldots,0)$. 
It is then straightforward to extend to a choice of ends on moduli spaces with mixed labels, by a straightforward variation on the proof of Lemma \ref{lem:cons_ends}.
\end{proof}

We now introduce the appropriate notion of a thick-thin decomposition: it is an open set $\Sbar_{k,K_\pun,\ell_0,\ell_\for,\bi}^{thin} \subset \Sbar^\circ_{k,K_\pun,\ell_0,\ell_\for,\bi}$ whose intersection with each fibre consists of the strip-like ends associated to puncturable boundary marked points, together with a \emph{subset} of the gluing regions: namely, those formed by gluing two puncturable boundary points together (gluing regions formed by gluing two unpuncturable boundary points, or two interior points together do not get included in the thin region). 
As with the strip-like ends, the parametrizations of the gluing regions are required to be $\Sym(\ell-\ell_\for) \times \Sym(\ell_\for)$-equivariant. 
We also require that our thick-thin decomposition coincides with that from $\bN^1$, when $\bi=(1,\ldots,1)$, and that the image of the thin region under $\ff$ lies inside the thin region from $\bN^0$, when $\bi=(0,\ldots,0)$. 
By a straightforward extension of Lemma \ref{lem:mixendsex}, such a choice of thick-thin decomposition exists.
(We remark that the parametrizations of the gluing regions will automatically satisfy the analogue of \eqref{eq:ffstrip}, because they are constructed from the strip-like ends which have this property.)
  
\subsection{Construction of the doubled category: perturbation data}

Suppose now that we have made a choice of cylindrical and strip-like ends which is consistent, equivariant, and compatible with $\bN^0$ and $\bN^1$; and a corresponding choice of equivariant thick-thin decomposition.
We now choose Floer data for each pair of objects in $\EuC$, coinciding with those from $\bN^0$ on pairs of objects $(K^0,L^0)$, coinciding with those from $\bN^1$ on pairs of objects $(K^1,L^1)$, and chosen arbitrarily on pairs of objects with different labels. 
This allows us to define the morphism spaces in $\EuC$, precisely as in Definition \ref{defn:mor}. 
We have already defined the embeddings \eqref{eq:embeds} on the level of objects; it is clear from this convention how to define it on the level of morphisms.

Let $(k,K_\pun,\ell,\ell_\for,\bL)$ be such that
 \begin{itemize}
 \item $(k,K_\pun,\ell,\ell_\for,\bi)$ satisfies the conditions of Definition \ref{def:forgmods}, where $\bi = \bi(\bL)$;
 \item $K_\pun \neq \emptyset$;
 \item $L_{j-1} = L_j$ whenever $j \notin K_\pun$.
 \end{itemize}
Let $r \in \Rbar_{k,K_\pun,\ell,\ell_\for,\bi}$, $\Sigma_r$ the corresponding fibre of the universal family, $\Sigma^\circ_r$ the complement of all puncturable boundary marked points and nodes, and $\tilde{\Sigma}^\circ_r$ the normalization of $\Sigma^\circ_r$. 
We define:

\begin{defn}\label{def:mixpert_1}
A perturbation datum for $\Sigma^\circ_r$ equipped with boundary labelling $\bL$ consists of a pair $P = (Y,K)$ where 
$$Y \in C^\infty(\tilde{\Sigma}^\circ_r,\EuY_*(X,V_1)),\qquad K \in \Omega^1(\tilde{\Sigma}^\circ_r,\EuH),$$
satisfying
\begin{itemize}
\item []
\textbf{(Constant on spheres and $\ff$-unstable discs)} 
$Y$ is constant, and $K$ vanishes, on any spherical or $\ff$-unstable disc component of $\Sigma^\circ_r$; and furthermore, $Y(z_{\alpha \beta}) = Y(z_{\beta\alpha})$ for every sphere or unpuncturable disc edge $\alpha E \beta$;

\item [] \textbf{(Thin regions)} 
The restriction to the thin regions, and to $\ff$-semistable disc components (which are isomorphic to $\R \times [0,1]$), is given by the Floer data;\footnote{Note that the condition imposed on $\ff$-semistable disc components in Definition \ref{defn:N0compat} ensures that the requirements imposed over thin regions and $\ff$-semistable disc components are compatible.} 

\item[]
\textbf{(Boundary)} over each component $C$ of the boundary labelled by $L_C$, we have
\[ K(\xi)|_{L_C} =0 \quad \text{for all $\xi \in TC \subset T(\partial \Sigma^\circ_r)$}.\]

\item [] \textbf{(Maximum principle)} 
If $\ell=0$, then $Y \in \EuY^{\max{}}_* \subset \EuY_*$.   
\end{itemize}

\end{defn}

Notice that the perturbation data are defined at the unpuncturable marked points. 

Let $\Sigma^{\circ \circ}_r \subset \Sigma^\circ_r$ denote the complement of all marked points and nodes of $\Sigma_r$.  
Let $\Sbar^{\circ \circ} \subset \Sbar^{\circ}$ denote the union of subsets $\Sigma^{\circ \circ}_r \subset \Sigma^\circ_r$ over all fibres. 
Note that $\Sbar^{\circ \circ} \to \Rbar$ is a submersion, so the bundle of fibrewise one-forms $\Omega^1_{\Sbar^{\circ\circ}/\Rbar}$ makes sense.

\begin{defn}\label{def:mixpert}
A \emph{universal choice of perturbation data} consists of pairs 
\[ \bP_{k,K_\pun,\ell,\ell_\for,\bL} = (\bY_{k,K_\pun,\ell,\ell_\for,\bL},\bK_{k,K_\pun,\ell,\ell_\for,\bL})\]
 for all $(k,K_\pun,\ell,\ell_\for,\bL)$ such that
 \begin{itemize}
 \item $(k,K_\pun,\ell,\ell_\for,\bi)$ satisfies the conditions of Definition \ref{def:forgmods}, where $\bi = \bi(\bL)$;
 \item $K_\pun \neq \emptyset$;
 \item $L_{j-1} = L_j$ whenever $j \notin K_\pun$; 
 \end{itemize}
 where
\begin{align*}
 \bY_{k,K_\pun,\ell,\ell_\for,\bL} &\in C^\infty\left(\Sbar^\circ_{k,K_\pun,\ell,\ell_\for,\bi}, \EuY_*(X,V_1)\right), \\
 \bK_{k,K_\pun,\ell,\ell_\for,\bL} &\in \Omega^1_{\Sbar^{\circ\circ}_{k,K_\pun,\ell,\ell_\for,\bi}/\Rbar_{k,K_\pun,\ell,\ell_\for,\bi}}\left(\EuH\right).
 \end{align*}
The restriction to each fibre of the universal family should be a perturbation data for that fibre, in the sense of Definition \ref{def:mixpert_1}. 
Furthermore, the $\bP_{k,K_\pun,\ell,\ell_\for,\bL}$ should satisfy:
\begin{enumerate}

\item [] \textbf{(Equivariant)} 
$\bP_{k,K_\pun,\ell,\ell_\for,\bL}$ is $\Sym(\ell-\ell_\for) \times \Sym(\ell_\for)$-equivariant;

\item [] \textbf{(Consistent on $\ff$-stable discs)} 
The restriction of $\bP_{k,K_\pun,\ell,\ell_\for,\bL}$ to $\mathrm{pr}_\alpha^* \Sbar^\circ_{k_\alpha,K_{\alpha,\pun},\ell_\alpha,\ell_{\alpha,\for},\bL_\alpha}$ is equal to $\mathrm{pr}_\alpha^* \bP_{k_\alpha,K_{\alpha,\pun},\ell_\alpha,\ell_{\alpha,\for},\bL_\alpha}$, for any $\ff$-stable disc vertex $\alpha \in V_\disc$ of a tree $\Gamma$;

\item[] \textbf{(Compatibility with $\bP^1$)} 
In the case $\bi = (1,\ldots,1)$ (which determines $K_\pun = \{1,\ldots,k\}$ uniquely), we have
\[\bP_{k,K_\pun,\ell,\ell_\for,\bL} = \bP^1_{k,\ell,\bL'},\]
where $\bL'$ is obtained from $\bL$ by forgetting the $0/1$ labels;

\item[] \textbf{(Compatibility with $\bP^0$)} 
Let $\bi = (0,\ldots,0)$; then, under the map $\ff\colon \Sbar^{\ff\circ}_{k,K_\pun,\ell,\ell_\for,\bi}  \to  \Sbar^\circ_{|K_\pun|,\ell-\ell_\for}$ (cf. Lemma \ref{lem:ffstrends}) one has
\[\left.\bP_{k,K_\pun,\ell,\ell_\for,\bL}\right|_{\Sbar^{\ff\circ}_{k,K_\pun,\ell,\ell_\for,\bi}} = \ff^* \bP^0_{|K_\pun|,\ell-\ell_\for,\bL'},\]
where $\bL'$ is obtained from $\bL$ by merging all $0$-labelled Lagrangians separated by an unpuncturable marked point, then forgetting the $0/1$ labels.
\end{enumerate}
\end{defn}

\begin{rmk}
The perturbation scheme we use here is closely related to that developed by Woodward--Xu \cite{WX19} to prove an analogous result (namely, that Gromov--Witten invariants are independent of the choice of stabilizing divisor), but there is a small difference which may be illuminating to explain. 
Similarly to \emph{op. cit.}, we impose two types of requirements on the restriction of the perturbation data to a component of a nodal curve: consistency and constancy (together with a third, namely the \textbf{(Thin regions)} condition, which is specific to the setting of Fukaya categories). 
Our perturbation data would satisfy the analogue of `$\ff$-locality' (\emph{op. cit.}, Definition 2.1(b)) if we imposed consistency on all $\ff$-stable components, the \textbf{(Thin regions)} condition on $\ff$-semistable disc components, and constancy on the remaining components. 
This would be satisfied by any perturbation data pulled back via the forgetful map, so long as the perturbation data getting pulled back were consistent on all components. 
However, our perturbation data are not consistent on all components: rather, they satisfy the \textbf{(Constant on spheres)} condition from Definition \ref{def:perdat}. 
(We are able to impose this condition by our \textbf{(Semipositive$'$)} condition; it is part of how we achieve the \textbf{(Equivariant)} condition, which is what makes our counts of pseudoholomorphic curves be integers, rather than rational numbers.) 
For that reason, rather than the analogue of $\ff$-locality, we impose consistency only on the $\ff$-stable \emph{disc} components, the \textbf{(Thin regions)} condition on $\ff$-semistable disc components, and constancy on the remaining components (including \emph{all} spherical components). 
Lemma \ref{lem:pullbackdata} below establishes the key compatibility between Definitions \ref{defn:pert} and \ref{def:mixpert}.
\end{rmk}

\begin{defn}
We say that $\bP^0$ is \emph{$V_1$-compatible} if the almost-complex structure components $\bY^0$ satisfy
\[ \bY^0_{k,\ell} \in C^\infty(\Sbar^\circ_{k,\ell},\EuY_*(V_1)) \subset C^\infty(\Sbar^\circ_{k,\ell},\EuY_*(V_0)).\]
(Note that $\bP^0$ is always $V_0$-compatible.)
\end{defn}

\begin{lem}\label{lem:pullbackdata}
There exists a universal choice of perturbation data if and only if $\bP^0$ is $V_1$-compatible.
\end{lem}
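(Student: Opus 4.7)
The plan is to prove the two directions separately, with the ``only if'' direction being essentially immediate from the definitions and the ``if'' direction being a standard inductive construction of the type carried out in Lemma \ref{lem:pertinduct}.

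For the \emph{only if} direction, suppose that a universal choice of perturbation data $\bP$ exists. Fix $k', \ell'$ with $2 \le k' + 2\ell'$, and consider the tuple $(k,K_\pun,\ell,\ell_\for,\bi) = (k',\{1,\dots,k'\},\ell',0,(0,\dots,0))$, together with any Lagrangian labelling $\bL$ whose underlying $0/1$-labelling is $\bi$; this tuple satisfies the hypotheses of Definition \ref{def:forgmods}. In this case every disc component of every fibre is $\ff$-stable, so $\Sbar^{\ff\circ}_{k,K_\pun,\ell,\ell_\for,\bi} = \Sbar^\circ_{k,K_\pun,\ell,\ell_\for,\bi}$ and $\ff$ coincides with the identity map to $\Sbar^\circ_{k',\ell'}$. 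The \textbf{(Compatibility with $\bP^0$)} condition therefore forces $\bY^0_{k',\ell',\bL'} = \bY_{k,K_\pun,\ell,\ell_\for,\bL}$ as maps to $\EuY_*(X,V_0)$; but by definition the right hand side takes values in $\EuY_*(X,V_1) \subset \EuY_*(X,V_0)$. Hence $\bY^0$ is $V_1$-compatible.

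For the \emph{if} direction, we build $\bP$ by induction on $N := k+2\ell$, following the scheme of Lemma \ref{lem:pertinduct}. The base case $N=2$ is handled directly: the \textbf{(Compatibility with $\bP^1$)} and \textbf{(Compatibility with $\bP^0$)} conditions pin down $\bP_{k,K_\pun,\ell,\ell_\for,\bL}$ whenever $\bi = (1,\dots,1)$ or $\bi = (0,\dots,0)$; for $\bi$ mixed there is no constraint from other strata and we choose any $\Sym(\ell-\ell_\for)\times\Sym(\ell_\for)$-equivariant extension satisfying \textbf{(Floer data)} and \textbf{(Maximum principle)}, which is possible because $\EuY_*$ and $\EuH$ are convex.

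For the inductive step, assume all $\bP_{k',K'_\pun,\ell',\ell'_\for,\bL'}$ with $k' + 2\ell' < N$ have been chosen. The conditions \textbf{(Consistent on $\ff$-stable discs)}, \textbf{(Constant on spherical and $\ff$-disclike components)}, \textbf{(Compatibility with $\bP^1$)}, and \textbf{(Compatibility with $\bP^0$)} together prescribe $\bP_{k,K_\pun,\ell,\ell_\for,\bL}$ on various closed subsets of $\Sbar^\circ_{k,K_\pun,\ell,\ell_\for,\bi}$; the main verification is that these prescriptions agree on overlaps. The overlaps between \textbf{(Consistent on $\ff$-stable discs)} and the constancy condition are handled as in Lemma \ref{lem:pertinduct}. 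The overlap between \textbf{(Compatibility with $\bP^0$)} and \textbf{(Consistent on $\ff$-stable discs)} holds because $\ff$ factors through the appropriate forgetful maps on the universal families, and the final clause of Lemma \ref{lem:ffstrends} ensures that thin regions and $\ff$-striplike components are matched compatibly under $\ff$. The $V_1$-compatibility hypothesis enters precisely here: it guarantees that the pulled-back data $\ff^*\bP^0$ takes values in $\EuY_*(X,V_1)$, so that the prescription from \textbf{(Compatibility with $\bP^0$)} actually defines an admissible perturbation datum. The \textbf{(Compatibility with $\bP^1$)} condition is only imposed on the locus $\bi = (1,\dots,1)$, which is disjoint from the locus $\bi = (0,\dots,0)$ where \textbf{(Compatibility with $\bP^0$)} applies, so there is no conflict between them.

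Having defined $\bP_{k,K_\pun,\ell,\ell_\for,\bL}$ on the union of these closed subsets, we extend over the interior using cutoff functions and the convexity of $\EuY_*$ and $\EuH$, as in Lemma \ref{lem:pertinduct}; the \textbf{(Maximum principle)} and \textbf{(Floer data)} conditions are arranged by the same cutoff procedure; and finally the \textbf{(Equivariant)} condition is achieved by averaging over the finite group $\Sym(\ell-\ell_\for)\times\Sym(\ell_\for)$, which preserves all the other conditions because the prescribed restrictions are themselves equivariant (the equivariance of the strip-like ends and the $\Sym(\ell-\ell_\for)\times\Sym(\ell_\for)$-equivariance of $\ff$ are used here). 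The main obstacle is simply bookkeeping the overlaps of the various prescriptions on boundary strata; once the $V_1$-compatibility hypothesis is invoked, no further analytical input is required.
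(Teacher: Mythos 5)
Your proposal is correct and follows essentially the same route as the paper: the ``only if'' direction specializes the \textbf{(Compatibility with $\bP^0$)} condition at $\ell_\for=0$ with all boundary points puncturable and all labels $0$, and the ``if'' direction takes the data forced by $\bP^0$ and $\bP^1$ on the all-$0$ and all-$1$ loci and extends to mixed labellings as in Lemma \ref{lem:pertinduct}, with $V_1$-compatibility entering exactly where you say. The one check you fold into ``bookkeeping'' that the paper treats explicitly (calling it the least trivial) is that the forced data on the all-$0$ locus satisfies \textbf{(Constant on spherical and $\ff$-disclike components)}: for spheres mapped to spheres by $\ff$ this follows from \textbf{(Constant on spheres)} for $\bP^0$, while for components contracted by $\ff$ it holds because the pullback of $\bY^0$ by a constant map is constant and the pullback of $\bK^0$ by a constant map vanishes.
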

\begin{proof}
The `only if' follows from the \textbf{(Compatibility with $\bP^0$)} condition, which implies that 
\[\ff^*\bY^0_{k,\ell,\bL} = \bY_{k,\{1,\ldots,k\},\ell,0,\bL'} \in C^\infty(\Sbar^\circ_{k,\ell},\EuY_*(V_1)),\]
 where $\bL'$ is obtained from $\bL$ by adding `$0$' labels to each Lagrangian.
 
To prove `if', we start by constructing the perturbation data over the moduli spaces $\Sbar_{k,K_\pun,\ell,\ell_\for,\bi}$ with $\bi=(0,\ldots,0)$. We claim that it is uniquely determined by $\bP^0$. 
Indeed, the \textbf{(Compatibility with $\bP^0$)} condition uniquely determines the perturbation data over $\Sbar^{\ff \circ}$; the \textbf{(Thin regions)} condition uniquely determines it over $\ff$-semistable disc components; and the \textbf{(Constant on spherical and $\ff$-disclike components)} uniquely determines it over the remaining components. 

We need to verify that the resulting perturbation data satisfy the relevant conditions of Definition \ref{def:mixpert}. We first observe that $\bY$ clearly maps to $\EuY_*(X,V_1)$, by hypothesis; \textbf{(Thin regions)} is satisfied by construction; \textbf{(Equivariant)} follows from the condition that $\bP^0$ be $\Sym(\ell - \ell_\for)$-equivariant, and the forgetful map is $\Sym(\ell_\for)$-equivariant; \textbf{(Consistent on $\ff$-stable discs)} follows from the \textbf{(Consistent on discs)} condition on $\bP^0$, by compatibility of the forgetful maps with inclusions of Deligne--Mumford strata; \textbf{(Maximum principle)} follows from the corresponding condition on $\bP^0$; \textbf{(Compatibility with $\bP^1$)} is vacuous; and \textbf{(Compatibility with $\bP^0$)} follows by construction. 
The least trivial to verify is \textbf{(Constant on spherical and $\ff$-disclike components)}, which requires us to split into two cases: spheres which get mapped to a sphere by the forgetful map, and spheres or discs which get contracted to a point by the forgetful map. In the former case, the condition is a consequence of the \textbf{(Constant on spheres)} condition on $\bP^0$; in the latter, it follows as the pullback of $\bY^0$ by a constant map is constant, and the pullback of $\bK^0$ by a constant map vanishes.

Next we construct the perturbation data over the moduli spaces with $\bi = (1,\ldots,1)$. These are uniquely determined by the \textbf{(Compatibility with $\bP^1$)} condition, and it is easy to verify that they satisfy the relevant conditions of Definition \ref{def:mixpert}. 
Finally we extend the perturbation data to moduli spaces with mixed boundary types, analogously to Lemma \ref{lem:pertinduct}.
\end{proof}

\subsection{Construction of the doubled category: pseudoholomorphic curves}

Recall that the local intersection numbers at $z$ of a holomorphic curve $u$ with the components of $V_1$ are recorded in a \emph{tangency vector} $\iota(u,z) \in (\Z_{\ge 0})^{Q_1}$. 
We define a tangency vector  to be \emph{forgettable} if it lies in $(\Z_{\ge 0})^{Q_1 \setminus Q_0}$, i.e. if $\iota(u,z)(q)=0$ when $q \in Q_0$. 

\begin{defn}
Given $A \in \pi_2^\num(\by)$ such that $A \cdot V_q \ge 0$ for all $q \in Q_1$, and $\ell_\for \le \ell$, a choice of \emph{tangency data} is a function $\tang:\{1,\ldots,\ell\} \to (\Z_{\ge 0})^{Q_1}$ such that $\sum_{i=1}^\ell \tang(i)_q = A \cdot V_q$, and furthermore, $\tang(i)$ is forgettable for all $i \ge \ell-\ell_\for+1$. 
\end{defn}

\begin{defn}
Suppose we are given a Lagrangian labelling $\bL$, a choice of Hamiltonian chords $\by$, an element $A \in \pi_2^\num(\by)$, $\ell_\for \le \ell$, a choice of tangency data $\tang$, and a universal choice of perturbation data $\bP$. 
If $k + 2\ell \ge 2$, we define $\cM(\by,A,\ell_\for,\tang,\bP)$ to be the moduli space of pairs $(r,u)$ where $r \in \EuR_{k,\{1,\ldots,k\},\ell,\ell_\for,\bi}$ ($\bi$ is the $0/1$-labelling corresponding to $\bL$) and $u: \Sigma^\circ_r \to X$ satisfies the pseudoholomorphic curve equation determined by the perturbation data $\bP_{k,\{1,\ldots,k\},\ell,\ell_\for,\bL}$, with tangency conditions at the marked points determined by $\tang$, and $[u]=A$. 
If $k+2\ell = 1$, so $(k,\ell) = (1,0)$ and $\by = (y_0,y_1)$, and we define $\cM(\by,A,0,\emptyset,\bP)$ to be the moduli space of Floer trajectories from $y_0$ to $y_1$, modulo translation.
\end{defn}

\begin{rmk}
Note that we are only defining moduli spaces with $K^0_\pun = \{0,1,\ldots,k\}$: i.e., we have not defined any moduli spaces with unpuncturable boundary marked points at this point.
\end{rmk}

\begin{rmk}
The moduli space $\cM(\by,A,\ell_\for,\tang,\bP)$ depends non-trivially on $\ell_\for$, the number of forgettable marked points. The difference can be seen most easily when thinking about the natural Gromov compactification of the moduli space. For example, suppose that $\ell = 1$ and $\tang = 0$. The difference between the cases $\ell_\for = 0$ and $\ell_\for = 1$ can be seen when the unique interior marked point approaches a boundary component labelled `$0$' and comes close to bubbling off inside a disc. 
If $\ell_\for = 0$, then a thin region develops where the perturbation data are required to be given by the Floer data, and the disc will break along a Hamiltonian chord; if $\ell_\for = 1$, then the marked point will instead bubble off inside a holomorphic disc (with vanishing Hamiltonian term in the Floer equation), connected by a node to the main component. 
\end{rmk}

\begin{lem}\label{lem:forgmod}
Suppose that the Lagrangian labelling $\bL$ has corresponding $0/1$-labelling $\bi = (0,\ldots,0)$, and $k+2(\ell-\ell_\for) \ge 1$. 
Define $\ff(\tang)$ to be the composition
\[ \{1,\ldots,\ell - \ell_\for\} \hookrightarrow \{1,\ldots,\ell\} \xrightarrow{\tang} (\Z_{\ge 0})^{Q_1} \xrightarrow{\mathsf{restriction}} (\Z_{\ge 0})^{Q_0}.\]
Then there is a well-defined map 
\begin{align*}
\ff: \cM(\by,A,\ell_\for,\tang,\bP) &\to \cM(\by,A,\ff(\tang),\bP^0), \qquad \text{which sends} \\
\ff(r,u) & := (\ff(r), u \circ \ff) \qquad\text{if $k+2(\ell-\ell_\for) \ge 2$.}
\end{align*}
In the remaining case $(k,\ell - \ell_\for) = (1,0)$, the map sends a pseudoholomorphic curve $(r,u)$ (or an equivalence class $[u]$ of Floer trajectories modulo translation, in the case $\ell = 0$) to the corresponding equivalence class $[u]$ of Floer trajectories modulo translation. 
The map is $\Sym(\ell - \ell_\for) \times \Sym(\ell_\for)$-equivariant (where $\Sym(\ell_\for)$ acts trivially on the target).
\end{lem}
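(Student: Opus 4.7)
The plan is to verify, for the putative image, the four properties defining an element of $\cM(\by,A,\ff(\tang),\bP^0)$: a compatible domain, the pseudoholomorphic curve equation for $\bP^0$, the tangency conditions $\ff(\tang)$, and homology class $A$. The essential observation is that $\cM(\by,A,\ell_\for,\tang,\bP)$ parametrizes only smooth-domain elements, i.e.\ $r$ lying in the open stratum of $\EuR_{k,\{1,\ldots,k\},\ell,\ell_\for,\bi}$, so $\Sigma_r$ has no nodes. Consequently $\Sigma_r$ contains no $\ff$-disclike or $\ff$-striplike components, except in the degenerate $(1,0)$-case treated below. In the main case $\ff$ therefore acts on $r$ simply by forgetting the last $\ell_\for$ interior marked points while leaving the underlying Riemann surface unchanged, and $u \circ \ff$ is interpreted as the descended map $\bar{u}\colon \Sigma^\circ_{\ff(r)} \to X$ which agrees pointwise with $u$. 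The hypothesis $k + 2(\ell - \ell_\for) \ge 2$ is precisely stability of $\ff(r)$ as an element of $\EuR_{k,\ell-\ell_\for}$.

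Pseudoholomorphicity of $\bar{u}$ follows directly from the \textbf{(Compatibility with $\bP^0$)} clause of Definition~\ref{def:mixpert}. Since $\bi = (0,\ldots,0)$ and the disc component is $\ff$-stable, the fibre of $\Sbar^{\ff\circ}_{k,K_\pun,\ell,\ell_\for,\bi}$ over $r$ is all of $\Sigma^\circ_r$; the clause then yields $\bP|_{\Sigma^\circ_r} = \ff^* \bP^0_{k,\ell-\ell_\for,\bL'}|_{\Sigma^\circ_{\ff(r)}}$, so the perturbed Cauchy--Riemann equations for $u$ and $\bar{u}$ are identified. For the tangency conditions, at an unforgettable marked point $z_i$ ($i \le \ell - \ell_\for$) the source constraint $\iota(u,z_i) \ge \tang(i)$ in $(\Z_{\ge 0})^{Q_1}$ restricts to $\iota(\bar{u},z_i)|_{Q_0} \ge \tang(i)|_{Q_0} = \ff(\tang)(i)$, which is the target constraint. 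At a forgettable marked point ($i > \ell - \ell_\for$), $\tang(i)$ vanishes on $Q_0$ by forgettability, and Lemma~\ref{lem:posintCM} applied to $u$ (with divisor system $V_1$) forbids any intersection of $u$ with a $V_q$ ($q \in Q_0$) either at $z_i$ or away from the unforgettable marked points. Hence $\bar{u}$ meets $V_0$ only at the retained markings, with the prescribed multiplicities. The class $[\bar{u}] = [u] = A$ is preserved tautologically.

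For the boundary case $(k,\ell-\ell_\for) = (1,0)$: if $\ell_\for = 0$, both source and target are Floer trajectories modulo translation and $\ff$ is the identity. If $\ell_\for \ge 1$, the unique disc component of $\Sigma_r$ becomes, after the $\ff$-process, a $2$-pointed disc with no interior markings and both boundary arcs labelled $0$, hence is $\ff$-striplike by Definition~\ref{def:fdisclike}. The \textbf{(Floer data)} clause of Definition~\ref{def:mixpert} then forces $\bP$ to restrict to the Floer datum over the entire disc, and this Floer datum is shared with $\fuk(X,D,V_0,\bP^0)$ by our initial choice; hence $u$ is a Floer trajectory from $y_1$ to $y_0$, and $(r,u) \mapsto [u]$ (modulo translation) is well-defined. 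Finally, equivariance under $\Sym(\ell-\ell_\for) \times \Sym(\ell_\for)$ is immediate: the first factor permutes the retained marked points in step with the corresponding action on the target, while the second factor acts only on markings that $\ff$ discards, hence trivially on the target. The mildly nontrivial step throughout is bookkeeping the $\ff$-stable/$\ff$-striplike dichotomy at open-stratum elements; once this is in hand, each claim reduces to a direct invocation of one of the consistency clauses in Definition~\ref{def:mixpert}.
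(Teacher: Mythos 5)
Your proof is correct and follows essentially the same route as the paper's (much terser) argument: the main case reduces to the \textbf{(Compatibility with $\bP^0$)} condition once one notes the smooth domain is $\ff$-stable, and the $(k,\ell-\ell_\for)=(1,0)$ case reduces to the \textbf{(Floer data)} condition via the $\ff$-striplike observation, with validity of $\ff(\tang)$ coming from forgettability of the tangency vectors at forgettable points. Your extra appeal to Lemma \ref{lem:posintCM} is harmless but not needed, since the target only imposes the inequality constraints at the retained marked points.
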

\begin{proof}
The first thing to check is that $\cM(\by,A,\ff(\tang),\bP^0)$ is defined. 
This follows as $k+2(\ell-\ell_\for) \ge 1$ by hypothesis, and furthermore $\ff(\tang)$ is a valid choice of tangency data, by the condition that forgettable marked points carry forgettable tangency vectors. 

In the case $k+2(\ell-\ell_\for) \ge 2$, the well-definedness of the map is immediate from the \textbf{(Compatible with $\bP^0$)} condition on $\bP$. 
In the remaining case $(k,\ell-\ell_\for) = (1,0)$, the well-definedness follows from the \textbf{(Thin regions)} condition on $\bP$, as the domain is an $\ff$-semistable disc.
\end{proof}

\begin{lem}\label{lem:dimdiff}
The virtual dimensions of $ \cM(\by,A,\ell_\for,\tang,\bP)$ and $\cM(\by,A,\ff(\tang),\bP^0) $ are related by
\[ \dim \cM(\by,A,\tang,\bP) = \dim \cM(\by,A,\ff(\tang),\bP^0) + 2\ell_\for - 2\sum_{i=1}^\ell \sum_{q \in Q_1 \setminus Q_0} \tang(i)_q.\]
\end{lem}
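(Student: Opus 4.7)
The plan is to reduce the lemma to a direct computation using the standard dimension formula recorded in Lemma \ref{lem:bubtreedim} (with no bubble trees, i.e.\ $I=\emptyset$, since here we are just comparing dimensions of moduli spaces of discs with tangency conditions).

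First I would write down the virtual dimensions of both sides using Lemma \ref{lem:bubtreedim}:
\begin{align*}
\dim \cM(\by,A,\ell_\for,\tang,\bP) &= i(A) + k - 2 + 2\ell - 2\sum_{i=1}^{\ell} |\tang(i)|_{Q_1}, \\
\dim \cM(\by,A,\ff(\tang),\bP^0) &= i(A) + k - 2 + 2(\ell-\ell_\for) - 2\sum_{i=1}^{\ell-\ell_\for} |\ff(\tang)(i)|_{Q_0},
\end{align*}
where $|\tang(i)|_{Q_1} := \sum_{q \in Q_1} \tang(i)_q$ and $|\ff(\tang)(i)|_{Q_0} := \sum_{q \in Q_0} \tang(i)_q$. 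Subtracting yields
\[
\dim \cM(\by,A,\ell_\for,\tang,\bP) - \dim \cM(\by,A,\ff(\tang),\bP^0) = 2\ell_\for - 2\sum_{i=1}^\ell \sum_{q \in Q_1} \tang(i)_q + 2\sum_{i=1}^{\ell-\ell_\for} \sum_{q \in Q_0} \tang(i)_q.
\]

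The key observation, which is essentially the only non-bookkeeping step, is that by definition of tangency data a forgettable marked point carries a forgettable tangency vector, i.e.\ for $i \ge \ell - \ell_\for + 1$ one has $\tang(i)_q = 0$ for all $q \in Q_0$. Consequently the sum over $Q_0$ for indices $i > \ell - \ell_\for$ contributes nothing, and we may extend the range of summation in the last term from $\{1,\dots,\ell-\ell_\for\}$ to $\{1,\dots,\ell\}$ without changing its value. Combining the two tangency sums then collapses $Q_1$ minus $Q_0$:
\[
-\sum_{i=1}^\ell \sum_{q \in Q_1} \tang(i)_q + \sum_{i=1}^\ell \sum_{q \in Q_0} \tang(i)_q = -\sum_{i=1}^\ell \sum_{q \in Q_1 \setminus Q_0} \tang(i)_q,
\]
giving exactly the claimed formula.

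There is no real obstacle here beyond care with notation: both moduli spaces are cut out by the parametric Cauchy--Riemann equation plus jet-vanishing conditions of the form \eqref{eqn:jet vanishing}, and the Fredholm index formula from Lemma \ref{lem:bubtreedim} applies uniformly (the only differences being the ambient divisor $V_1$ versus $V_0$, which affects the tangency contribution, and the number of marked points, which affects the $2\ell$ contribution from the Deligne--Mumford factor). The one potentially subtle point---that the two relevant Maslov-type indices $i(A)$ match---is automatic since $i(A)$ is intrinsic to $A$ and independent of which system of divisors we work with.
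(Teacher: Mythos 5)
Your proposal is correct and follows essentially the same route as the paper's proof: write down the two index formulas (the paper's version of your citation of Lemma \ref{lem:bubtreedim} with $I=\emptyset$), and use the forgettability condition $\tang(i)_q=0$ for $i>\ell-\ell_\for$, $q\in Q_0$ to collapse the difference of tangency sums to a sum over $Q_1\setminus Q_0$. The paper merely leaves the final subtraction implicit, which you have spelled out.
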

\begin{proof}
As in (\ref{lem:dimifreg}), we have 
\begin{align*} 
\dim \cM(\by,A,\ell_\for,\tang,\bP) & = i(A) + k-2 + 2\ell - 2 \sum_{i=1}^\ell \sum_{q\in Q_1} \tang(i)_q , \\
\dim \cM(\by,A,\ff(\tang),\bP^0) & = i(A) + k-2 + 2(\ell - \ell_\for) - 2 \sum_{i=1}^{\ell - \ell_\for} \sum_{q\in Q_0} \tang(i)_q,
\end{align*}
while the forgettability condition is that $\tang(i)_q=0$ if $i>\ell-\ell_\for$ and $q\in Q_0$.
\end{proof}

\begin{lem}\label{lem:lift}
Let $(r,u) \in \cM(\by,A,\tang,\bP^0)$. 
Then there exists a number $\ell_\for$, choice of tangency data $\tang'$ satisfying $\ff(\tang') = \tang$, and element $(r',u') \in \cM(\by,A,\ell_\for,\tang',\bP)$, such that 
\begin{itemize}
\item $\ff(r',u') = (r,u)$;
\item $\tang'(i) \neq 0$ for all forgettable marked points $i$.
\end{itemize}
These choices are unique up to the action of $\Sym(\ell_\for)$ on the choice of tangency data.
\end{lem}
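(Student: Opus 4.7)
The plan is to construct $(r',u',\ell_\for,\tang')$ explicitly: each forgettable marked point will record an intersection of $u$ with some $V_q$ for $q\in Q_1\setminus Q_0$ that is not already an original marked point of $r$, and the tangency data will be the pointwise local intersection multiplicity. Write $L$ for the number of interior marked points of $r$ (to avoid clashing with $\ell_\for$). The degenerate case $(k,L)=(1,0)$ deserves separate treatment: $u$ is then a Floer strip, and the \textbf{(Maximum principle)} axiom together with Lemma \ref{lem:maxprin} confines $u$ to $\{h\le c\}$, hence away from $V_1$; so $A\cdot V_q=0$ for all $q\in Q_1$, forcing $\ell_\for=0$ and $\tang'=\emptyset$. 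The Floer equations for $\bP$ and $\bP^0$ coincide by the \textbf{(Floer data)} condition of Definition \ref{def:mixpert} (the strip is itself an $\ff$-striplike component), so one can take $(r',u')=(r,u)$.

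In the main case $k+2L\ge 2$ the crucial input is that $\bP^0$ is $V_1$-compatible, hence its almost-complex structure lies in $\EuJ(X,V_1)$. Since $u$ has boundary on Lagrangians in $\{h<c\}$ and each $V_q\subset\{h>c\}$, the disc $u$ is not contained in any $V_q$; by positivity of intersection each $u\cap V_q$ is then finite with local multiplicities summing to $A\cdot V_q$. I would set
\[
\tilde S_{\mathrm{new}} := \{\,z\in\Sigma_r^\circ : \iota(u,z)_q>0 \text{ for some } q\in Q_1\setminus Q_0\,\} \setminus \{z_1,\dots,z_L\},
\]
define $\ell_\for:=|\tilde S_{\mathrm{new}}|$, choose an ordering $\tilde S_{\mathrm{new}}=\{z_{L+1},\dots,z_{L+\ell_\for}\}$, and let $r'$ be the stable disc obtained from $r$ by promoting these points to interior marked points labelled $L+1,\dots,L+\ell_\for$. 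Setting $u':=u$ and $\tang'(i)_q:=\iota(u,z_i)_q$ for $1\le i\le L+\ell_\for$ and $q\in Q_1$ completes the construction. Applying Lemma \ref{lem:posintCM} to $(r,u)$ gives $\tang(i)_q=\iota(u,z_i)_q$ for $q\in Q_0$ at the original marked points and $\iota(u,z)_q=0$ for $q\in Q_0$ off those points; from this the forgettability of $\tang'(i)$ for $i>L$, the sum condition $\sum_i\tang'(i)_q=A\cdot V_q$, and the identity $\ff(\tang')=\tang$ are all immediate.

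The remaining verifications are routine. Since $r$ is smooth and $r'$ differs only by extra interior marked points, $r'$ has no $\ff$-disclike or $\ff$-striplike components, and the collapsing map $\Sigma_{r'}^\circ\to\Sigma_r^\circ$ is an isomorphism; under this identification the \textbf{(Compatibility with $\bP^0$)} axiom identifies $\bP$ with $\bP^0$, so $u'=u$ automatically solves the $\bP$-perturbed pseudoholomorphic equation. The tangency conditions hold with equality by construction, $\ff(r',u')=(r,u)$ because $\ff$ merely forgets the new marked points, and $\tang'(i)\ne 0$ for $i>L$ because $z_i\in\tilde S_{\mathrm{new}}$.

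For uniqueness, suppose $(r'',u'',\tang'')$ is another lift. Then $\ff(r'',u'')=(r,u)$ forces $u''=u$ as a map and makes $r''$ a smooth disc equal to $r$ with some additional interior marked points. The doubled-category analogue of Lemma \ref{lem:posintCM} applied to $(r'',u'')$ forces $\tang''(i)_q=\iota(u,z_i)_q$ at every marked point, and the constraint $\tang''(i)\ne 0$ for forgettable $i$ then makes the forgettable marked points cover $\tilde S_{\mathrm{new}}$ setwise; distinctness of marked points forces the cardinality to equal $|\tilde S_{\mathrm{new}}|$, leaving only the ordering as ambiguity, which is exactly the $\Sym(\ell_\for)$ action. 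The main obstacle is not analytic but combinatorial: one must handle carefully the case where a point of $u\cap V_q$ for $q\in Q_1\setminus Q_0$ happens to coincide with an original marked point $z_i$ of $r$, since such a point is \emph{not} a new forgettable marked point but instead contributes to the $(Q_1\setminus Q_0)$-coordinates of the unforgettable tangency vector $\tang'(i)$.
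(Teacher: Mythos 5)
Your construction is essentially the paper's own proof: the paper simply adds marked points at the finitely many points where $u$ intersects $V_1 \setminus V_0$ and sets $\tang'(i) := \iota(u,z_i)$. The extra care you take with the degenerate strip case, with intersection points of $u \cap V_q$ ($q \in Q_1 \setminus Q_0$) that coincide with existing marked points, and with the $\Sym(\ell_\for)$-uniqueness fills in details the paper leaves implicit, and it all checks out.
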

\begin{proof}
We add marked points to $u$ in the finite set of points where it intersects $V_1 \setminus V_0$; and we define $\tang'(i) := \iota(u,z_i)$ for each marked point.
\end{proof}

\begin{defn}
Suppose $\ell = A \cdot V$, $\vv: \{1,\ldots,\ell\} \to Q$ is a function with $|\vv^{-1}(q)| = A \cdot V_q$, and $\vv^{-1}(Q_1 \setminus Q_0) = \{\ell-\ell_\for+1,\ldots,\ell\}$. Then we can define a function $\tang^\can: \{1,\ldots,\ell\} \to \Z_{\ge 0}^{Q_1}$ by setting $\tang^\can(i)_q = 1$ if $\vv(i) = q$ and $0$ otherwise. We denote the resulting moduli space by 
\[ \cM(\by,A,\bP) := \cM(\by,A,\ell_\for,\tang^\can,\bP).\]
It is independent of the choice of $\vv$ up to the action of $\Sym(\ell-\ell_\for) \times \Sym(\ell_\for)$. 
We denote by $\Sym(\vv) \subset \Sym(\ell-\ell_\for) \times \Sym(\ell_\for)$ the subgroup preserving $\vv$, and observe that it acts freely on $\cM(\by,A,\bP)$.
\end{defn}

\subsection{Construction of the doubled category: transversality}

We introduce moduli spaces $\cM_{\{\Gamma_i\}}(\by,A,\ell_\for,\tang,\bP)$, consisting of pseudoholomorphic discs with holomorphic bubble trees attached, precisely as in Section \ref{sec:transv}. 
We similarly introduce the subspace $\cM^*_{\{\Gamma_i\}}(\by,A,\ell_\for,\tang,\bP)$ consisting of discs with simple bubble trees attached. 
The following Lemma is straightforward:

\begin{lem}\label{lem:forgmodbt}
Suppose that the Lagrangian labelling $\bL$ has $\bi(\bL) = (0,\ldots,0)$, and $k+2(\ell-\ell_\for) \ge 1$. 
The map $\ff$ from Lemma \ref{lem:forgmod} extends to a well-defined map 
\begin{align*}
\ff: \cM^*_{\{\Gamma_i\}}(\by,A,\ell_\for,\tang,\bP) &\to \cM^*_{\{\Gamma_i\}}(\by,A,\ff(\tang),\bP^0), \\
\ff\left((r,u),\{u_i\}_{i \in I}\right) & := \left(\ff(r,u), \{u_i\}_{i \in I}\right).
\end{align*}
\end{lem}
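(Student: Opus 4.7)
The plan is to verify that the natural formula defines a well-defined map; once the compatibility conditions built into Definition \ref{def:mixpert} are unwound, the statement should be essentially tautological, which explains why the paper labels it ``straightforward''.

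First I would dispose of the disc component: by Lemma \ref{lem:forgmod}, $\ff(r,u)$ already lies in $\cM(\by,A,\ff(\tang),\bP^0)$, so the only remaining work concerns the bubble trees $\{u_i\}_{i \in I}$, which are left unchanged by the formula. For each $i \in I$, the bubble tree $u_i$ in the source is $J$-holomorphic with respect to the parametric almost-complex structure determined by $\bY_{k,\{1,\ldots,k\},\ell,\ell_\for,\bL}$ evaluated on a neighbourhood of $z_i$. To conclude that $u_i$ is a valid bubble tree for the target moduli space, I need the corresponding parametric almost-complex structure there --- namely $\bY^0_{|K_\pun|,\ell-\ell_\for,\bL'}$ evaluated near $\ff(z_i)$ --- to agree with it. This is exactly the \textbf{(Compatibility with $\bP^0$)} clause, which asserts $\bY_{k,\{1,\ldots,k\},\ell,\ell_\for,\bL}|_{\Sbar^{\ff\circ}} = \ff^*\bY^0_{|K_\pun|,\ell-\ell_\for,\bL'}$. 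All unforgettable marked points lie in $\Sbar^{\ff\circ}$ (they are not absorbed by the $\ff$-process), so the identity holds there and the $J$-holomorphicity of each $u_i$ is preserved verbatim.

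Next I would check that the evaluation conditions encoding attachment of each bubble tree $u_i$ to the disc at the marked point $z_i$ pass through the forgetful map. By construction of $\ff$ on curves, $\ff(r,u)$ satisfies $\ff(u)(\ff(z_i)) = u(z_i)$, and the distinguished point on $u_i$ where the tree attaches is fixed; thus the matching constraint $u_i(\infty) = u(z_i)$ is transported to $u_i(\infty) = \ff(u)(\ff(z_i))$, which is precisely the matching constraint in the target. Simplicity of the bubble trees is automatic since $u_i$ is unchanged. Strictly speaking, one should also verify that the enumeration of combinatorial types $\{\Gamma_i\}$ transfers correctly from source to target, which is immediate because neither the trees, their homology classes, nor their sets $K_\alpha, K'_\alpha$ depend on whether one views them as sitting over $\bP$ or $\bP^0$.

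The one place where care is needed --- and where I expect the only conceptual obstacle to lie --- is in interpreting the decorating set $I$ on the right-hand side. A bubble tree attached at a forgettable marked point $z_i$ (with $i > \ell - \ell_\for$) gets carried by $\ff$ to a bubble tree attached at an \emph{unmarked} point of $\ff(r,u)$, which does not fit into the framework of $\cM^*_{\{\Gamma_i\}}$ on the target. The statement should therefore be read as applying to decorations $I \subset \{1,\ldots,\ell-\ell_\for\}$, i.e.\ to bubble trees attached only at unforgettable marked points; this is the case that arises in the transversality and dimension-counting arguments for which the lemma is subsequently needed. With this reading, the forgetful map descends trivially because the forgotten marked points carry no bubble-tree decoration at all.
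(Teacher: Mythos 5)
Your proposal is correct, and since the paper gives no proof (it simply labels the lemma ``straightforward''), your argument fills in exactly the details that matter: the disc component is handled by Lemma~\ref{lem:forgmod}, while the bubble trees survive the forgetful map because the \textbf{(Compatibility with $\bP^0$)} clause identifies the parametric almost-complex structure $\bY_{k,\{1,\ldots,k\},\ell,\ell_\for,\bL}$ near each unforgettable marked point with $\ff^*\bY^0_{|K_\pun|,\ell-\ell_\for,\bL'}$, and these points lie in $\Sbar^{\ff\circ}$ for $r$ in the smooth locus (or else $\Sigma_r$ is $\ff$-striplike, in which case $\ell=\ell_\for$ and $I=\emptyset$).

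The caveat you raise about the index set $I$ is a genuine one, and your reading is the right one: for the formula $\ff\left((r,u),\{u_i\}_{i\in I}\right) = \left(\ff(r,u),\{u_i\}_{i\in I}\right)$ to land in $\cM^*_{\{\Gamma_i\}}(\by,A,\ff(\tang),\bP^0)$, the index set $I$ must be a subset of $\{1,\ldots,\ell-\ell_\for\}$. This is confirmed by how the lemma is actually used: it is only invoked via Lemma~\ref{lem:doublift}, where the decoration $\{\Gamma_i\}_{i\in I}$ is inherited from an element $p\in\cM^*_{\{\Gamma_i\}}(\by,A,\tang,\bP^0)$, for which $I\subset\{1,\ldots,\ell-\ell_\for\}$ automatically, while the added forgettable marked points of the lift $p'$ carry $\tang'(i)\neq 0$ and no bubble trees.

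One small imprecision worth flagging: the $u_i$ are elements of the parametric moduli space $\cM_{\Gamma_i}(\bY_{k,\ell})$ and so are really pairs $(s,v)$ of a parameter $s\in\EuS^\circ_{k,\ell}$ and a holomorphic map. Saying ``$u_i$ is unchanged'' hides the fact that the parameter is transported from $z_i(r)$ to $\ff(z_i)(\ff(r))$; this is exactly where \textbf{(Compatibility with $\bP^0$)} enters, since it guarantees the almost-complex structures at these two parameters agree, so the same underlying holomorphic maps $v$ remain holomorphic. The lemma statement itself elides this in the same notation, so you are consistent with the paper, but it is worth being aware that the map on parametric bubble trees is $(s,v)\mapsto(\ff(s),v)$, not literally the identity.
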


We also have:

\begin{lem}\label{lem:doublift}
Let $p \in \cM^*_{\{\Gamma_i\}}(\by,A,\tang,\bP^0)$. 
Then there exists a number $\ell_\for$, choice of tangency data $\tang'$ satisfying $\ff(\tang') = \tang$, and element $p' \in \cM^*_{\{\Gamma_i\}}(\by,A,\ell_\for,\tang',\ff^*\bP^0)$, such that 
\begin{itemize}
\item $\ff(p') = p$;
\item $\tang'(i) \neq 0$ for all forgettable marked points $i$.
\end{itemize}
Furthermore, these choices are unique up to the action of $\Sym(\ell_\for)$ on the tangency data, and we have
\[ \dim \left(\cM^*_{\{\Gamma_i\}}(\by,A,\ell_\for,\tang',\bP)\right) \le \dim \left( \cM^*_{\{\Gamma_i\}}(\by,A,\tang,\bP^0)\right). \]
\end{lem}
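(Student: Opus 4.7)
The plan is to reduce to Lemma \ref{lem:lift}, which handles the disc component alone, and observe that the bubble trees transfer unchanged to the lifted moduli space. I would first write $p = ((r,u), \{u_i\}_{i \in I})$ with disc $(r,u) \in \cM(\by,A_\disc,\tang,\bP^0)$ and simple bubble trees $u_i$ of type $\Gamma_i$, where $A_\disc = A - \sum_{i \in I} A_i$. Applying Lemma \ref{lem:lift} to $(r,u)$ produces $\ell_\for$, tangency data $\tang'$ with $\ff(\tang')=\tang$ and $\tang'(i) \ne 0$ at each forgettable point, and $(r',u') \in \cM(\by,A_\disc,\ell_\for,\tang',\bP)$ with $\ff(r',u')=(r,u)$. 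Because $\bi(\bL) = (0,\ldots,0)$, the \textbf{(Compatibility with $\bP^0$)} condition on $\bP$ identifies the perturbation data on $\Sbar^{\ff\circ}$ with $\ff^*\bP^0$, so one may equivalently view $(r',u') \in \cM(\by,A_\disc,\ell_\for,\tang',\ff^*\bP^0)$.

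Next I would set $p' := ((r',u'), \{u_i\}_{i \in I})$, attaching the original bubble trees to the non-forgettable marked points of the lifted disc (which correspond under $\ff$ to the original marked points), and verify that $p' \in \cM^*_{\{\Gamma_i\}}(\by,A,\ell_\for,\tang',\ff^*\bP^0)$. The bubble trees remain simple and of type $\Gamma_i$; they are $J$-holomorphic for the correct almost complex structure because $\ff^*\bY^0$ evaluated at $z_i'$ equals $\bY^0$ evaluated at $\ff(z_i') = z_i$; the node-matching at the root and transversality of the evaluation maps to the diagonal are inherited from $p$. The identity $\ff(p') = p$ and uniqueness of $p'$ up to $\Sym(\ell_\for)$ (which permutes only the labels of the added forgettable marked points, not the bubble trees) then follow from the corresponding clauses of Lemma \ref{lem:lift}.

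For the dimension inequality I would apply Lemma \ref{lem:bubtreedim} to both moduli spaces. Crucially, one arranges via Lemma \ref{lem:lift} that the new forgettable marked points sit at intersection points of $u$ with $V_1 \setminus V_0$ disjoint from the original marked points, so for non-forgettable $i$ one has $\tang'(i)_q = \tang(i)_q$ for $q \in Q_0$ and $\tang'(i)_q = 0$ for $q \in Q_1 \setminus Q_0$; hence $K_i$ is unchanged, and since the bubble spheres are the same curves, the sets $K'_i$ and individual $\dim(\Gamma_i)$ are likewise unchanged. Consequently the bubble tree contributions $\sum_{i \in I}(\dim(\Gamma_i) - 2n + 2|K_i \cap K'_i|)$ cancel between the two dimension formulas, and the difference reduces to the disc-only difference computed in Lemma \ref{lem:dimdiff}, namely $2\ell_\for - 2\sum_{\text{forgettable }i}|\tang'(i)| \le 0$, using $|\tang'(i)|\ge 1$ for each forgettable $i$.

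The main obstacle is the bookkeeping of the divisor indices $K_i, K'_i$: one must confirm both that the added forgettable marked points can be placed away from the original marked points (so that $K_i$ is preserved), and that the spheres of the bubble trees do not acquire spurious containment in additional divisors $V_q$ for $q \in Q_1 \setminus Q_0$ when passing from the $Q_0$- to the $Q_1$-indexing. Both are automatic in the regular setting, as the pathological configurations form loci of strictly smaller dimension and hence are avoided generically; away from regularity one would need to argue more carefully (using Lemmas \ref{lem:posint} and \ref{lem:adj} to bound the shift in $c_1(TV_K)$ against the growth of $|K_i\cap K'_i|$), but this extra work is unnecessary here since we only compare virtual dimensions.
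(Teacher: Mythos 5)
Your proposal is correct and follows essentially the same route as the paper: existence and uniqueness of the lift come from Lemma \ref{lem:lift} (with the bubble trees carried along unchanged), and the dimension inequality comes from applying the Lemma \ref{lem:dimdiff} formula to the bubble-tree moduli spaces together with the observation that each forgettable marked point carries a nonzero tangency vector supported in $Q_1 \setminus Q_0$, so the dimension drops by at least $2\ell_\for - 2\ell_\for = 0$. One small remark: your assumption that the new forgettable points can be placed away from the original marked points is not needed (and can fail when the disc meets some $V_q$, $q \in Q_1 \setminus Q_0$, exactly at an original marked point); the paper instead sums the $Q_1 \setminus Q_0$ tangency contributions over \emph{all} marked points, and any such extra tangencies at non-forgettable points only decrease the dimension further (compensating any growth of $|K_i \cap K'_i|$), so the inequality is unaffected.
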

\begin{proof}
The first part follows easily from Lemma \ref{lem:lift}. 
For the second part, we note that Lemma \ref{lem:dimdiff} applies also to the moduli spaces of discs with bubble-trees:
 \[ \dim  \cM^*_{\{\Gamma_i\}}(\by,A,\ell_\for\tang' ,\bP) - \dim  \cM^*_{\{ \Gamma_i\} }(\by,A, \tang,\bP^0) =  2\ell_\for - 2\sum_{i=1}^\ell \sum_{q \in Q_1 \setminus Q_0} \tang(i)_q.\]
We combine this with the observation that
\begin{align*}
0 &\neq \tang'(i) \quad \text{for all $i \ge \ell-\ell_\for +1$} \\
0 &= \tang'(i)_q \quad \text{for all $i \ge \ell-\ell_\for+1$ and $q \in Q_0$} \\
\Rightarrow \ell_\for & \le \sum_{i=\ell-\ell_\for+1}^\ell \sum_{q \in Q_1 \setminus Q_0} \tang'(i)_q .
\end{align*}
\end{proof}

We introduce the notion of regularity of the moduli spaces $\cM^*_{\{\Gamma_i\}}(\by,A,\ell_\for,\tang,\bP)$ as before. 

\begin{lem}\label{lem:doubtransv}
Consider the setting of Lemma \ref{lem:genreg}, with the following modification: let us suppose that the previously-chosen perturbation data $\bP^0_{k',\ell',\bL'}$ are $V_1$-compatible. 
We consider the space of $\bP^0_{k,\ell,\bL}$ which satisfy the conditions required of a universal choice of perturbation data, and are furthermore $V_1$-compatible. 
The following subsets of this space are comeagre:
\begin{enumerate}
\item The set of $\bP^0_{k,\ell,\bL}$ such that $\cM(\by,A,\tang,\bP^0)$ is regular;
\item Given $\ell_\for$ and $\tang'$ such that $\ff(\tang') = \tang$, and a set of combinatorial types of bubbles trees $\{\Gamma_i\}$: the subset such that $\cM^*_{\{\Gamma_i\}}(\by,A,\ell_\for,\tang',\ff^*\bP^0)$ is regular. 
\end{enumerate}
In the case that $V_0=V_1$, the latter subset coincides with the subset such that $\cM^*_{\{\Gamma_i\}}(\by,A,\tang,\bP^0)$ is regular.
\end{lem}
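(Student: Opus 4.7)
The plan is to adapt the proof of Lemma \ref{lem:genreg} in each case. Both parts reduce, via Sard--Smale together with an argument of Taubes, to the assertion that the appropriate universal moduli space over $C^s$-regular $V_1$-compatible perturbation data is cut out transversally. As in the original proof, the heart of the matter is a Cieliebak--Mohnke local perturbation step: given a point of the moduli space, one must produce perturbations of $\bP^0$ whose effect on the Cauchy--Riemann operator can be concentrated in an arbitrarily small open set $W\times U \subset \Sigma^\circ \times X$ with $u(W)\subset U$.

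For part (1), the only substantive change from Lemma \ref{lem:genreg} is that the allowed perturbations $\delta\bY$ must preserve $V_1$ rather than merely $V_0$, so $U$ must lie in $X\setminus V_1$. Since any $u\in \cM(\by,A,\tang,\bP^0)$ has boundary on Lagrangian branes contained in $\{h<c\}$ while $V_1\subset \{h>c\}$, one may take $W$ to be a small neighborhood of a boundary arc of $\Sigma^\circ$ (chosen to avoid thin regions and marked points) and $U=\{h<c\}$; then $u(W)\subset U\subset X\setminus V_1$. The rest of the construction, including $\Sym(\ell)$-equivariant averaging through a bump function on $\EuR_{k,\ell}$, goes through unchanged.

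The main obstacle is part (2): the upstairs perturbation datum is forced to be a pullback $\ff^*\bP^0$, so only variations of the form $\ff^*(\delta\bP^0)$ are accessible, and it is not immediately clear that these suffice for transversality. The crucial observation is that for any smooth $r\in \EuR_{k,\{1,\ldots,k\},\ell,\ell_\for,(0,\ldots,0)}$ the unique disc component of $\Sigma_r$ is always $\ff$-stable: when $k\ge 1$ it retains $k+1\ge 2$ puncturable boundary marked points (the output is puncturable because $0\in K^0_\pun$), and when $k=0$ the inequality $k+2(\ell-\ell_\for)\ge 2$ forces at least one unforgettable interior marked point to survive the $\ff$-process. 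Consequently $\ff$ restricts to the identity on the interior of the main disc away from its forgettable marked points, and any local perturbation of $\bP^0$ near an interior point $\tilde z=\ff(\tilde z)$ pulls back to a local perturbation of $\ff^*\bP^0$ with exactly the same effect on the Cauchy--Riemann operator at $\tilde z$. Choosing $\tilde z$ near the boundary as in part (1) ensures $u(\tilde z)\in \{h<c\}\subset X\setminus V_1$, recovering the full strength of the Cieliebak--Mohnke argument. The required $\Sym(\ell-\ell_\for)\times \Sym(\ell_\for)$-equivariance is obtained by averaging the downstairs perturbation $\delta\bP^0$ over the $\Sym(\ell-\ell_\for)$ factor exactly as in Lemma \ref{lem:genreg}; the $\Sym(\ell_\for)$ factor acts trivially on any $\ff$-pullback, so no further averaging is needed.

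Finally, when $V_0=V_1$ we have $Q_1\setminus Q_0=\emptyset$, no tangency vector admits a nonzero forgettable component, and $\ff$ is the identity on domains. Then $\tang'$ agrees with $\tang$ on the first $\ell-\ell_\for$ indices and must vanish on the remaining $\ell_\for$, while $\ff^*\bP^0=\bP^0$. One checks directly that the two moduli spaces $\cM^*_{\{\Gamma_i\}}(\by,A,\ell_\for,\tang',\ff^*\bP^0)$ and $\cM^*_{\{\Gamma_i\}}(\by,A,\tang,\bP^0)$ coincide, so the regularity conditions on $\bP^0$ agree.
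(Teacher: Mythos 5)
Your overall strategy is the paper's own: its proof of this lemma is a one-line reduction to Lemma \ref{lem:genreg}, and your elaboration of the disc component is the right ``minor modification''. Part (1) is fine (indeed the original proof already takes $U=\{h<c\}$, which is disjoint from $V_1\subset\{h>c\}$, so $V_1$-compatibility costs nothing there), and for part (2) your key observation---that the disc component of a smooth fibre is $\ff$-stable, that $\ff$ identifies it with the downstairs fibre, so pulled-back local perturbations realise every perturbation supported in a suitable $W\times U$, and that $\Sym(\ell_\for)$ acts trivially on $\ff$-pullbacks---is correct and is the essential point for the disc's Cauchy--Riemann operator. (Small repair: for $k=1$ you must also rule out the disc being $\ff$-striplike, which needs at least one unforgettable interior marked point; this follows because the downstairs datum being chosen only exists when the downstairs $(k,\ell)$ satisfies $k+2\ell\ge 2$.)

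The gap is that part (2) is precisely about the moduli spaces \emph{with bubble trees attached}, and your argument never addresses how regularity of the tree components and transversality of the matching evaluation map are achieved when the only available variations are $\ff^*(\delta\bP^0)$ with $\delta\bY^0$ valued in $\EuY(X,V_1)$. Two things need saying. First, the sphere components may lie in $V_K$ for $K\subset Q_1$ meeting $Q_1\setminus Q_0$; transversality for these uses perturbations along $V_K$ supplied by Lemma \ref{lem:extendJ} applied to $(X,V_1)$---this is exactly why the lemma is phrased for the upstairs spaces with $\ff^*\bP^0$ rather than for $\cM^*_{\{\Gamma_i\}}(\by,A,\tang,\bP^0)$ (see the Remark following the lemma). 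Second, and more seriously, perturbing the almost complex structure $J_{\ff^*\bY^0(z_i)}$ governing the tree attached at $z_i$ requires perturbing $\bY^0$ near $\ff(z_i)$ in the \emph{downstairs} universal curve, which the \textbf{(Thin regions)} condition forbids when $\ff(z_i)$ lies in a downstairs thin region. For trees attached at unforgettable marked points this never happens ($\ff(z_i)$ is then a downstairs marked point, disjoint from the downstairs thin regions by construction), and the argument of Lemma \ref{lem:genreg} goes through; but the statement also allows trees attached at forgettable marked points with $\tang'(i)=0$, and such a point can sit over a downstairs strip-like end, where the almost complex structure is frozen to the Floer datum and no admissible perturbation reaches it. Your ``the rest goes through unchanged'' fails there; one must either observe that the instances actually used avoid this case (the lifts of Lemma \ref{lem:doublift} have $\tang'\neq 0$ at every forgettable point, and sphere trees arising in Gromov limits meet $V_0$, since the relative K\"ahler class is a nonnegative combination of the $\Nef_0$ generators, so they attach at unforgettable points), or supply a separate argument. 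Relatedly, your final claim that the two moduli spaces coincide when $V_0=V_1$ is only correct when $\ell_\for=0$; for $\ell_\for>0$ the upstairs space has extra zero-tangency forgettable points carrying bubble trees, so it is not literally the same space.
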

\begin{proof}
The proof is a minor modification of that of Lemma \ref{lem:genreg}. 
\end{proof}

\begin{rmk}
Note that we can \emph{not} prove comeagreness of the set of $\bP^0_{k,\ell,\bL}$ such that $\cM^*_{\{\Gamma_i\}}(\by,A,\tang,\bP^0)$ is regular: the proof breaks down in the presence of bubble trees with components contained inside $V_q$ for some $q \in Q_1 \setminus Q_0$, due to our requirement that $\bP^0_{k,\ell,\bL}$ be $V_1$-compatible.
\end{rmk}

\begin{defn}
We say that a universal choice of perturbation data $\bP^0$ is $\ff$-regular if:
\begin{itemize}
\item Every moduli space $\cM(\by,A,\bP^0)$ of virtual dimension $\le 1$ is regular;
\item $\bP^0$ is $V_1$-compatible; 
\item Every moduli space $\cM(\by,A,\ff^*\bP^0)$ of virtual dimension $1$, with all boundary labels $0$, is regular;
\item Every moduli space $\cM^*_{\{\Gamma_i\}}(\by,A,\tang,\ff^*\bP^0)$ of negative virtual dimension, with all boundary labels $0$, is empty.
\end{itemize}
\end{defn}

\begin{lem}\label{lem:PmeansP0}
If $\bP^0$ is $\ff$-regular, then it is regular. 
\end{lem}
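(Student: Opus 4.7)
The plan is to verify directly the two conditions in Definition \ref{defn:regularity}: (a) every moduli space of negative virtual dimension is empty, and (b) every moduli space $\cM(\by,A,\bP^0)$ of virtual dimension $\le 1$ is regular. Condition (b) is immediate, since it is literally the first bullet in the definition of $\ff$-regularity. So the only real content is condition (a), and the whole argument boils down to a lifting/dimension comparison via Lemma \ref{lem:doublift}.

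For (a), I would argue by contradiction: suppose some moduli space $\cM^*_{\{\Gamma_i\}}(\by,A,\tang,\bP^0)$ has negative virtual dimension and contains a point $p$. View $\bL$ as a Lagrangian labelling in the doubled category $\EuC$ by assigning every object the 0-label, so that $\bi(\bL) = (0,\ldots,0)$; then Lemma \ref{lem:doublift} produces a lift
\[ p' \in \cM^*_{\{\Gamma_i\}}(\by,A,\ell_\for,\tang',\ff^*\bP^0)\]
for some $\ell_\for$ and $\tang'$ with $\ff(\tang')=\tang$. The dimension inequality appearing in the same lemma gives
\[ \dim \cM^*_{\{\Gamma_i\}}(\by,A,\ell_\for,\tang',\ff^*\bP^0) \le \dim \cM^*_{\{\Gamma_i\}}(\by,A,\tang,\bP^0) < 0,\]
so the lifted moduli space is also negative-dimensional. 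Since it has all boundary labels equal to $0$, the fourth clause of $\ff$-regularity forces it to be empty, contradicting the existence of $p'$.

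There is essentially no obstacle here: the only bookkeeping to be careful about is that Lemma \ref{lem:doublift} is applicable precisely because $\bP^0$ is $V_1$-compatible (the second bullet of $\ff$-regularity), which is what makes the pullback $\ff^*\bP^0$ a well-defined perturbation datum in the doubled category over the all-zero-labelled stratum. Once this compatibility is in hand, the lift exists and the dimension inequality does all the work, reducing the regularity of $\bP^0$ to the already-assumed emptiness of the lifted bubble-tree moduli spaces in $\EuC$.
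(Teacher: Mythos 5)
Your proposal is correct and follows essentially the same route as the paper: the $\le 1$-dimensional condition is the first clause of $\ff$-regularity, and the negative-dimension moduli spaces are shown empty by lifting an alleged element via Lemma \ref{lem:doublift} to the all-zero-labelled doubled-category moduli space (where the perturbation data are $\ff^*\bP^0$ by the compatibility condition), then invoking the dimension inequality and the fourth clause of $\ff$-regularity to get a contradiction.
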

\begin{proof}
We must show that the moduli spaces $\cM^*_{\{\Gamma_i\}}(\by,A,\tang,\bP^0)$ having negative virtual dimension are empty. 
Suppose, for a contradiction, that $p$ is an element of such a moduli space. 
Then, in view of the ({\bf Compatibility with $\bP^0$}) condition on $\bP$, there exists $p' \in \cM^*_{\{\Gamma_i\}}(\by,A,\ell_\for,\tang',\bP)$ mapping to $p$ by Lemma \ref{lem:doublift}. 
However the virtual dimension of this moduli space is bounded above by that of $\cM^*_{\{\Gamma_i\}}(\by,A,\tang,\bP^0)$, which is negative by assumption: hence the moduli space is empty by $\ff$-regularity of $\bP^0$, giving a contradiction. 
\end{proof}

Recall that Definition \ref{defn:regularity} decreed that $\bP^0$ is regular if all moduli spaces $\cM^*_{\{\Gamma_i\}}(\by,A,\tang,\bP^0)$  of  negative virtual dimension and all moduli spaces $\cM(\by, A, \bP^0)$ of virtual dimension $\leq 1$ are regular.  Thus the moduli spaces $\cM(\by, A, \bP^0)$  directly involved in defining $\mu^k$ or in proving the $A_\infty$ equations are regular, while unwanted strata $\cM^*_{\{\Gamma_i\}}(\by,A,\tang,\bP^0)$ of the Gromov closure are empty. It was observed in the proof of Corollary \ref{cor:regex} that one can actually arrange that all moduli spaces $\cM^*_{\{\Gamma_i\}}(\by,A,\tang,\bP^0)$ are regular, regardless of dimension; but we did \emph{not} make this part of the definition of regularity for $\bP^0$. 
That is because had we done so, Lemma \ref{lem:PmeansP0} would not have held, for the reason mentioned after Lemma \ref{lem:doubtransv}. 

\begin{lem}\label{lem:fregex}
There exists an $\ff$-regular universal choice of perturbation data $\bP^0$. 
If $V_0 = V_1$, then $\bP^0$ is $\ff$-regular if and only if it is regular.
\end{lem}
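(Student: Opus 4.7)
The plan is to imitate the inductive construction of Corollary \ref{cor:regex}, working within the class of $V_1$-compatible perturbation data and using Lemma \ref{lem:doubtransv} in place of Lemma \ref{lem:genreg}. I would build $\bP^0_{k,\ell,\bL}$ by induction on $N := k + 2\ell$. At each stage, having chosen $V_1$-compatible data satisfying the conditions of Definition \ref{defn:pert} for all $k' + 2\ell' < N$, I first verify the analogue of Lemma \ref{lem:pertinduct} within the $V_1$-compatible class: this is routine, since $\EuY_*(X,V_1)$ is a convex nonempty subset of $\EuY_*(X,V_0)$, and the cutoff-and-averaging construction of Lemma \ref{lem:pertinduct} preserves $V_1$-compatibility fibrewise.

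Within this nonempty Baire space of $V_1$-compatible extensions, I would realize the four conditions defining $\ff$-regularity by intersecting the following comeagre subsets. For the first bullet, over all $(\by, A)$ with $\dim \cM(\by,A,\bP^0) \le 1$: the subset making $\cM(\by, A, \bP^0)$ regular, provided by Lemma \ref{lem:doubtransv}(1). For the third bullet, over all $(\by, A)$ with all-$0$ boundary labels and $\dim \cM(\by,A,\ff^*\bP^0) = 1$: the subset making $\cM(\by, A, \ell_\for, \tang^\can, \ff^*\bP^0)$ regular, provided by Lemma \ref{lem:doubtransv}(2) with $\{\Gamma_i\}=\emptyset$. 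For the fourth bullet, over all negative-virtual-dimension $(\by,A,\tang,\{\Gamma_i\})$ with all-$0$ labels: the subset making $\cM^*_{\{\Gamma_i\}}(\by,A,\ell_\for,\tang,\ff^*\bP^0)$ regular, again by Lemma \ref{lem:doubtransv}(2); since a regular moduli space of negative virtual dimension is empty, this gives the desired vanishing. The second bullet is automatic from the ambient class. Since only countably many combinatorial types of data $(k,\ell,\bL,\by,A,\tang,\{\Gamma_i\})$ intervene, this is a countable intersection of comeagre subsets in a nonempty Baire space, hence nonempty.

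The subtle point, and the reason the definition of $\ff$-regularity is phrased the way it is, is that Lemma \ref{lem:doubtransv}(2) delivers transversality for moduli spaces defined using $\ff^*\bP^0$ (with all-$0$ boundary labels), rather than for the $\cM^*_{\{\Gamma_i\}}(\by,A,\tang,\bP^0)$ one might naively want to control; the deduction that unwanted bubble strata are actually empty (as in Lemma \ref{lem:PmeansP0}) is then made \emph{a posteriori}, via Lemmas \ref{lem:doublift} and \ref{lem:forgmodbt}. The list of comeagre subsets above is calibrated precisely to what this indirect route allows.

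For the second statement, suppose $V_0 = V_1$. Then $Q_1 \setminus Q_0 = \emptyset$, so $V_1$-compatibility is automatic and every choice of tangency data satisfies $\ell_\for = 0$. Consequently $\ff$ acts as the identity on all relevant moduli spaces and perturbation data (so $\ff^*\bP^0 = \bP^0$ and $\ff(\tang)=\tang$), the third and fourth bullets of $\ff$-regularity coincide with the two regularity conditions of Definition \ref{defn:regularity}, and the second bullet is vacuous. Hence $\ff$-regularity is equivalent to regularity.
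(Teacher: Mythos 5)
Your proof is correct and takes essentially the same approach as the paper, which gives the same two-line argument (inductive construction via Lemma \ref{lem:doubtransv}, and direct comparison of definitions when $V_0 = V_1$); you have merely filled in the routine details. One small slip in the second part: when $V_0 = V_1$ it is the \emph{first} and fourth bullets of $\ff$-regularity that reproduce the two conditions of Definition \ref{defn:regularity} (the third bullet, with its virtual-dimension-exactly-$1$ restriction, is subsumed by the first), though this does not affect the conclusion.
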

\begin{proof}
An $\ff$-regular universal choice of perturbation data can be constructed inductively, using Lemma \ref{lem:doubtransv}. 
If $V_0=V_1$, then the definitions of $\ff$-regularity and regularity manifestly coincide.
\end{proof}

We now make a similar definition of regularity for $\bP$:
\begin{defn} 
We define $\bP$ to be regular if all moduli spaces $\cM^*_{\{\Gamma_i\}}(\by,A,\tang,\bP)$ having negative virtual dimension are regular (i.e., empty), and all moduli spaces $\cM(\by,A,\bP)$ having virtual dimension $\le 1$ are regular. 
\end{defn}

\begin{lem}
For any regular universal choice of perturbation data $\bP^1$, and any $\ff$-regular universal choice of perturbation data $\bP^0$, there exists a corresponding regular universal choice of perturbation data $\bP$. 
\end{lem}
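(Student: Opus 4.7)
The plan is to construct $\bP$ by induction on $N = k + 2\ell$, mirroring Lemma \ref{lem:pertinduct} but incorporating both the fixed data $\bP^0$, $\bP^1$ along the pure boundary labelings and the Baire category argument of Lemma \ref{lem:genreg}. At size $N$, fix $(k,K_\pun,\ell,\ell_\for,\bL)$ and observe that the following parts of $\bP_{k,K_\pun,\ell,\ell_\for,\bL}$ are already forced: (i) its restriction to each codimension-$\geq 1$ boundary stratum by \textbf{(Consistent on $\ff$-stable discs)}, \textbf{(Constant on spherical and $\ff$-disclike components)} and the inductive hypothesis; (ii) its restriction to all thin regions and $\ff$-striplike components by \textbf{(Floer data)}; (iii) all data when $\bi=(1,\ldots,1)$ by \textbf{(Compatibility with $\bP^1$)}; (iv) data on $\Sbar^{\ff\circ}_{k,K_\pun,\ell,\ell_\for,\bi}$ when $\bi=(0,\ldots,0)$ by \textbf{(Compatibility with $\bP^0$)}. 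Consistency of these specifications follows from the pre-arranged compatibility of the strip-like ends, thick/thin decomposition, and Floer data with the forgetful map $\ff$, exactly as in the existence lemma for universal perturbation data above. Smooth extension to the rest of $\Sbar^\circ_{k,K_\pun,\ell,\ell_\for,\bi}$ uses cutoffs and the convexity of $\EuY_*$ and $\EuH$, and averaging under $\Sym(\ell-\ell_\for)\times\Sym(\ell_\for)$ enforces the \textbf{(Equivariant)} condition.

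For regularity I would split into three cases according to the $0/1$-labelling $\bi$. When $\bi=(1,\ldots,1)$, the moduli spaces literally coincide with those used to define $\fuk(X,D,V_1,\bP^1)$ under $\Sbar^\circ_{k,K_\pun,\ell,\ell_\for,\bi}\simeq \Sbar^\circ_{k,\ell}$, so regularity is inherited from $\bP^1$. When $\bi=(0,\ldots,0)$, the forgetful map $\ff$ of Lemmas \ref{lem:forgmod} and \ref{lem:forgmodbt} identifies $\cM(\by,A,\bP)$ with the disjoint union, over choices of $\tang'$ with $\ff(\tang')=\tang^{\can}$, of moduli spaces $\cM(\by,A,\ell_\for,\tang',\ff^*\bP^0)$; Lemma \ref{lem:doublift} combined with Lemma \ref{lem:dimdiff} and the $\ff$-regularity of $\bP^0$ then force the virtual-dimension-$\leq 1$ moduli spaces to be regular and the negative-virtual-dimension moduli $\cM^*_{\{\Gamma_i\}}(\by,A,\tang,\bP)$ to be empty, since any point would lift to an element of a moduli space of $\bP^0$ whose virtual dimension is strictly negative.

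For mixed labelings $\bi$, we have genuine freedom to perturb on the interior of $\Sbar^\circ_{k,K_\pun,\ell,\ell_\for,\bi}$, away from the boundary strata and from the $\ff$-stable, $\ff$-striplike, and $\ff$-disclike loci where the data is already fixed. A straightforward adaptation of the universal-moduli-space transversality argument of Lemmas \ref{lem:genreg} and \ref{lem:doubtransv}, combined with Taubes' passage from $C^s$ to $C^\infty$, shows that the subset of admissible perturbation data for which $\cM^*_{\{\Gamma_i\}}(\by,A,\tang,\bP)$ is regular (for fixed $\by,A,\tang,\{\Gamma_i\}$) is comeagre. Taking the countable intersection over all $(\by,A,\tang,\{\Gamma_i\})$ with $k+2\ell=N$ of negative or low virtual dimension yields a nonempty comeagre set; choosing an element of it and iterating on $N$ produces the desired regular $\bP$.

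The main obstacle is the case $\bi=(0,\ldots,0)$: there the perturbation data is rigidly pulled back from $\bP^0$ along $\ff$, so generic perturbations are unavailable. This is exactly the role of $\ff$-regularity as codified in its definition: its last two bullet points were designed precisely to handle the canonical-tangency moduli in virtual dimension $\leq 1$ and to kill the negative-virtual-dimension strata with simple bubble trees attached, so that the dimension comparison in Lemma \ref{lem:doublift} transfers both properties to $\bP$. All remaining work is then handled by the transversality argument in the mixed-labeling case, which is essentially classical.
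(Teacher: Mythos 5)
Your proposal is correct and follows the same route as the paper: let $\bP^0$ and $\bP^1$ dictate the perturbation data on the pure $(0,\ldots,0)$ and $(1,\ldots,1)$ boundary labelings respectively (where regularity is inherited from $\ff$-regularity of $\bP^0$, via Lemmas \ref{lem:forgiso}, \ref{lem:doublift}, and \ref{lem:dimdiff}, and from regularity of $\bP^1$), and use the Sard--Smale/Taubes transversality argument to choose regular data on the mixed-label moduli spaces, inductively in $k+2\ell$. You have filled in the details that the paper compresses into three sentences, and you correctly identify that the whole design of $\ff$-regularity is to compensate for the lack of perturbative freedom over the $(0,\ldots,0)$-labeled moduli spaces.
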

\begin{proof}
The perturbation data $\bP^0$ (respectively $\bP^1$) determine the perturbation data $\bP$ on all moduli spaces with boundary labels $(0,\ldots,0)$ (respectively $(1,\ldots,1)$), and make the required moduli spaces of discs with bubble trees regular. 
We can extend these to perturbation data on moduli spaces with boundary labels of mixed types by Lemma \ref{lem:genreg}.
\end{proof}

Henceforth, we fix such a choice of $\bP^0,\bP^1,\bP$. 

\begin{lem}\label{lem:forgiso}
Suppose that $\cM(\by,A,\bP)$ has dimension $\le 1$. 
Then the map from Lemma \ref{lem:forgmod} defines an isomorphism
\[ \ff: \cM(\by,A,\bP)/\Sym(\ell - \ell_\for) \times \Sym(\ell_\for) \xrightarrow{\sim} \cM(\by,A,\bP^0)/\Sym(\ell-\ell_\for).\]
\end{lem}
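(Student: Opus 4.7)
The plan is to construct an inverse to $\ff$ using Lemma \ref{lem:lift}, and to verify via a dimension count that it lands in $\cM(\by, A, \bP)$ rather than in some moduli space with non-canonical tangency data. By Lemma \ref{lem:forgmod}, $\ff$ is $\Sym(\ell - \ell_\for) \times \Sym(\ell_\for)$-equivariant with $\Sym(\ell_\for)$ acting trivially on the target; moreover $\cM(\by, A, \bP^0)$ carries exactly $\ell - \ell_\for$ interior marked points (recording the canonical tangencies with $V_0$), so its natural symmetric-group action is by $\Sym(\ell - \ell_\for)$. Hence $\ff$ descends to a map $\bar{\ff}$ between the quotients in the statement.

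For the inverse, apply Lemma \ref{lem:lift} to $u \in \cM(\by, A, \bP^0)$ to produce an integer $\ell_{\for}^{\sharp}$, tangency data $\tang^{\sharp}$ with $\ff(\tang^{\sharp}) = \tang^{\can}$ and $\tang^{\sharp}(i) \neq 0$ for every forgettable $i$, and $u^{\sharp} \in \cM(\by, A, \ell_{\for}^{\sharp}, \tang^{\sharp}, \bP)$ with $\ff(u^{\sharp}) = u$; these are unique up to the action of $\Sym(\ell_{\for}^{\sharp})$. I claim $(\ell_{\for}^{\sharp}, \tang^{\sharp}) = (\ell_\for, \tang^{\can})$, so that $u^{\sharp} \in \cM(\by, A, \bP)$. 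Were this to fail, we would have $|\tang^{\sharp}(i)| \geq 2$ at some forgettable index $i$, hence $\ell_{\for}^{\sharp} \leq \ell_\for - 1$. Using $\sum_{i} \sum_{q \in Q_1 \setminus Q_0} \tang^{\sharp}(i)_q = \ell_\for$, Lemma \ref{lem:dimdiff} yields
\[
\dim \cM(\by, A, \ell_{\for}^{\sharp}, \tang^{\sharp}, \bP) = \dim \cM(\by, A, \bP^0) + 2(\ell_{\for}^{\sharp} - \ell_\for) \leq \dim \cM(\by, A, \bP) - 2 \leq -1.
\]

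The \textbf{(Compatibility with $\bP^0$)} condition of Definition \ref{def:mixpert} guarantees that on a smooth fibre $\Sigma^\circ_r$ with all-zero boundary labels, which lies inside $\Sbar^{\ff\circ}$, the perturbation datum $\bP$ agrees with $\ff^*\bP^0$. Thus the moduli space in question coincides with $\cM(\by, A, \ell_{\for}^{\sharp}, \tang^{\sharp}, \ff^*\bP^0)$, and since the bubble-tree collection is empty this equals its $\cM^*$-variant; being of negative virtual dimension with all-zero boundary labels, it is empty by the fourth condition in the definition of $\ff$-regularity. This contradicts the existence of $u^{\sharp}$, so indeed $u^{\sharp} \in \cM(\by, A, \bP)$. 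The assignment $u \mapsto u^{\sharp}$ is $\Sym(\ell - \ell_\for)$-equivariant in $u$ and well-defined up to the $\Sym(\ell_\for)$-action on forgettable marked points, so it descends to a map inverse to $\bar\ff$ by construction. The principal obstacle is the dimension-count argument ruling out non-canonical tangencies at the intersections with $V_1 \setminus V_0$; the equivariance bookkeeping is routine.
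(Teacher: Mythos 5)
Your proof takes essentially the same route as the paper: injectivity via the uniqueness clause of the lifting lemma, and surjectivity by showing that any lift with non-canonical tangency data would live in a moduli space of virtual dimension at most $\dim \cM(\by,A,\bP)-2 \le -1$, hence empty by regularity. (The paper quotes regularity of $\bP$ for the emptiness step; you instead quote the fourth clause of $\ff$-regularity of $\bP^0$ together with the \textbf{(Compatibility with $\bP^0$)} identification, which is the same mechanism unwound one level.) The core dimension count is correct, since $\dim\cM(\by,A,\bP)=\dim\cM(\by,A,\bP^0)$ by Lemma \ref{lem:dimdiff} applied to canonical tangency data.

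Two steps are missing. First, you never check that the map is defined at all, i.e.\ that $k+2(\ell-\ell_\for)\ge 1$, which is a hypothesis of Lemma \ref{lem:forgmod} and is needed for $\cM(\by,A,\bP^0)$ to exist; the paper rules out $(k,\ell-\ell_\for)=(0,0)$ by observing that this would force $A\cdot V_q=0$ for all $q\in Q_0$, hence, because the components of the system of divisors $V_0$ span $H^2(X,X\setminus D;\Q)$, also $A\cdot V_q=0$ for all $q\in Q_1$, giving $(k,\ell)=(0,0)$, a contradiction. Second, your case analysis of a non-canonical lift is incomplete: failure need not produce a forgettable point with $|\tang^\sharp(i)|\ge 2$; it can also happen that all forgettable points are simple but some \emph{unforgettable} marked point carries tangency to $V_q$ with $q\in Q_1\setminus Q_0$ (the disc meets $V_1\setminus V_0$ at a point of $V_0$ that is already marked). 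The conclusion $\ell_\for^\sharp\le\ell_\for-1$ still holds, via the count $\ell_\for^\sharp\le\sum_{\text{forgettable }i}\sum_{q\in Q_1\setminus Q_0}\tang^\sharp(i)_q<\ell_\for$ (this is exactly the inequality in the proof of Lemma \ref{lem:doublift}), so the estimate survives, but the sentence as written is false and should be replaced by this count. Finally, a small caveat: when $(k,\ell-\ell_\for)=(1,0)$ the smooth fibres of the lifted moduli space are $\ff$-striplike and hence not contained in $\Sbar^{\ff\circ}$, so the identification of $\bP$ with $\ff^*\bP^0$ there comes from the \textbf{(Floer data)} condition (via the last clause of Lemma \ref{lem:ffstrends}) rather than \textbf{(Compatibility with $\bP^0$)}; quoting regularity of $\bP$, as the paper does, covers all cases uniformly.
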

\begin{proof}
We first check that the map is well-defined: i.e., that  $\ff(\tang^\can)$ is a canonical choice of tangency data and $k+2(\ell-\ell_\for) \ge 1$. 
The first part is clear. 
For the second part, suppose to the contrary that $(k,\ell-\ell_\for) = (0,0)$. 
Then because $\ff(\tang^\can)$ is a choice of tangency data, we must have $A \cdot V_q = 0$ for all $q \in Q_0$; since $V_0$ is a system of divisors, its components span $H^2(X,X \setminus D;\Q)$, so it follows that $A \cdot V_q = 0$ for all $q \in Q_1$ too. 
This implies that $\ell=\ell_\for=0$, so $(k,\ell) = (0,0)$, which is a contradiction as we assumed $\cM(\by,A,\bP)$ to be well-defined.

The uniqueness statement in Lemma \ref{lem:doublift} implies that it is injective, because $\tang^\can(i) \neq 0$ for all forgettable marked points $i$. 
Lemma \ref{lem:doublift} also implies surjectivity: if $\tang'$ denotes the tangency data for the lift provided by the Lemma, and $\tang' \neq \tang^\can$, then applying Lemma \ref{lem:dimdiff} as in the proof of Lemma \ref{lem:doublift} easily gives
\begin{align*}
 \dim \cM(\by,A,\tang',\bP) & = \dim(\by,A,\tang,\bP^0) + 2\ell_\for - 2 \sum_{i=1}^\ell \sum_{q \in Q_1 \setminus Q_0} \tang'(i)_q \\
 &\le 1 - 2 < 0.
 \end{align*}
Hence the moduli space $\cM(\by,A,\tang',\bP)$ is empty unless $\tang' = \tang^\can$; it follows that $\ff$ must be surjective, and is therefore an isomorphism.
\end{proof}

The following is straightforward from the definition:

\begin{lem}\label{lem:forg_sign}
Let $u$ be a regular element of a $0$-dimensional component of $\cM(\by,A,\bP)$. Then $\sigma_u = \sigma_{\ff(u)}$, where $\ff$ is the map from Lemma \ref{lem:forgmod}, and $\sigma_u$ and $\sigma_{\ff(u)}$ denote the isomorphisms \eqref{eq:signu}. 
\end{lem}

\subsection{Construction of the doubled category: compactness}

As before, we let $\Mbar(\by,A,\bP)$ denote the closure of $\cM(\by,A,\bP)$ inside its Gromov compactification; this exists because our perturbation data extend smoothly to the Deligne--Mumford compactification.

\begin{lem}\label{lem:nounpunc}
Elements of $\Mbar(\by,A,\bP)$ contain no unpuncturable points.
\end{lem}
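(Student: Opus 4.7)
The plan is to rule out $\ff$-disclike disc components in elements of $\Mbar(\by, A, \bP)$, since Definition \ref{def:whatspunc} identifies these as the only potential source of unpuncturable points (the open moduli uses $K^0_\pun = \{0,\ldots,k\}$, so all original boundary marked points are puncturable and remain so in the limit). I would assume for contradiction that some $u \in \Mbar(\by, A, \bP)$ contains an $\ff$-disclike component $u_\beta$ with boundary on a Lagrangian brane $L_\beta$, and consider the cluster $C_\beta$ consisting of $u_\beta$ together with all sphere bubble trees attached at its interior nodes. By the \textbf{(Constant on spherical and $\ff$-disclike components)} condition on $\bP$, every component of $C_\beta$ is $J$-holomorphic for a single $J \in \EuJ(X,V_1) \subset \EuJ(X,V_0)$ with vanishing Hamiltonian perturbation; and by Definition \ref{def:fdisclike}, every interior marked point inside $C_\beta$ is forgettable, so its label satisfies $\vv(i) \in Q_1 \setminus Q_0$.

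Then I would derive a contradiction from two incompatible computations of $\omega([C_\beta])$. To see $\omega([C_\beta]) > 0$: domain stability of $u_\beta$ forces at least one interior special point. If $u_\beta$ itself carries a forgettable interior marked point, then $u_\beta$'s image meets $V \subset \{h>c\}$ while its boundary lies in $L_\beta \subset \{h<c\}$, so $u_\beta$ is non-constant. Otherwise all interior special points of $u_\beta$ are nodes, and stability of the attached sphere tree forces some leaf sphere to carry a forgettable marked point (a sphere attached by a single node has fewer than three special points unless it also carries marked points), which then makes that sphere non-constant by the same argument.

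To see $\omega([C_\beta]) = 0$: the global identity $A \cdot V_q = \sum_i \tang^\can(i)_q$, combined with positivity of intersection applied componentwise and summed over all components of the Gromov limit, forces each component $u_\alpha$ to satisfy $[u_\alpha] \cdot V_q$ equal exactly to the number of marked points on $u_\alpha$ mapping to $V_q$, with no additional contribution from nodes or other intersection points. Applied to the components of $C_\beta$ and $q \in Q_0$: since all marked points in $C_\beta$ are forgettable, $[C_\beta] \cdot V_q = 0$ for every $q \in Q_0$. Expressing $[\omega;\alpha]$ as a non-negative combination of the generators $[\EuV_w]$ of $\Nef_0$, and using $[\EuV_w] = [V_q]$ for some $q \in Q_0$ with $w(q) = w$ (which exists since $V_0$ is a system of divisors for $\Nef_0$), one concludes $\omega([C_\beta]) = [\omega;\alpha] \cdot [C_\beta] = 0$.

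I expect the main obstacle to be the constant-$u_\beta$ sub-case of the first computation: showing that a ``ghost cluster'' with no non-constant component is impossible requires observing that a constant sphere in the attached tree at $p = u_\beta(\partial\mathbb{D}) \in L_\beta$ cannot carry any marked points (which would demand $p \in V$, contradicting $L_\beta \cap V = \emptyset$), and hence a fully constant tree would consist only of spheres with node-only special points, which violates domain stability at the leaves.
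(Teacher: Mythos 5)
Your reduction to ruling out $\ff$-disclike components is exactly the paper's first step, and your positive-energy half is sound in substance (one small slip: domain stability constrains only \emph{constant} components of a stable map, so a leaf sphere with a single node and no marked points is allowed if non-constant --- but then it is non-constant, which is what you want, so the dichotomy still closes, as your final paragraph effectively notes). The genuine gap is in the zero-energy half. Your claim that the identity $A\cdot V_q=\sum_i \tang^{\can}(i)_q$ together with ``positivity of intersection applied componentwise'' forces each component of the limit to meet $V_q$ exactly at its marked points labelled $q$, with no other contributions, is not justified: positivity of intersection applies only to components \emph{not contained} in $V_q$, and a sphere component of the Gromov limit may lie inside $V_q$ (only spheres can, since disc boundaries lie on Lagrangians in $\{h<c\}$), in which case its homological contribution $[u_\alpha]\cdot V_q$ is a normal-bundle degree and can be negative. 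Such components cannot be excluded at this stage: Lemma \ref{lem:nounpunc} carries no dimension hypothesis, and sphere bubbles are ruled out only in the subsequent lemma and only when $\dim\cM(\by,A,\bP)\le 1$. With a contained sphere of negative normal degree somewhere in the limit (possibly inside your cluster $C_\beta$ itself), the global count no longer pins down $[C_\beta]\cdot V_q$, so the conclusion $\omega([C_\beta])=0$ does not follow as stated.

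The paper's own (terse) argument sidesteps this by working with the disc component alone, which can never be contained in $V$: by Lemma \ref{lem:mustintersect} --- a persistence statement inherited from the open stratum, where Lemma \ref{lem:posintCM} holds --- any intersection of the limit disc with $V$ occurs at a special point; by the $V_0$-version of Lemma \ref{lem:intersectv}, a non-constant disc with exact boundary must meet $V_0$, since $V_0$ is a system of divisors whose components span; the resulting special point is a stabilizing \emph{unforgettable} point (an unforgettable marked point of the disc, or a node whose attached tree inherits the nearby curves' unforgettable marked points), contradicting $\ff$-disclike-ness, and the constant case is handled as in Corollary \ref{cor:nounstable}. To repair your cluster-energy formulation you would need to replace the componentwise bookkeeping by this kind of persistence argument from the open stratum (or otherwise control limit components contained in $V_q$ for $q\in Q_0$), rather than appealing to positivity of intersection summed over all components.
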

\begin{proof}
Since elements of $\cM(\by,A,\bP)$ have no unpuncturable points, such a point could only develop if an $\ff$-unstable disc bubbled off in the Gromov limit. 
The proof that this cannot happen is the same as that of Corollary \ref{cor:nounstable}: $V_0$ is a system of divisors, hence would intersect such a disc in a stabilizing unforgettable marked point. 
\end{proof}

The proof of the following is the same as that of Lemma \ref{lem:nospheresdim1}:

\begin{lem}
Suppose that $\dim \cM(\by,A,\bP) \le 1$. 
Then elements of $\Mbar(\by,A,\bP)$ contain no sphere bubbles. 
\end{lem}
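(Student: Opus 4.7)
The plan is to adapt the proof of Lemma \ref{lem:nospheresdim1} to the doubled category setting. The key simplification is that Lemma \ref{lem:nounpunc} has already ruled out unpuncturable marked points (and hence $\ff$-disclike components) in the Gromov limit, so we only need to handle $\ff$-stable discs and $\ff$-striplike components, the latter being Floer strips governed by the \textbf{(Floer data)} condition.

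First, I will reduce to the case where $u \in \Mbar(\by, A, \bP)$ has no boundary nodes by cutting along them. Using the \textbf{(Consistent on $\ff$-stable discs)} and \textbf{(Floer data)} conditions, each resulting piece defines an element of some $\Mbar(\by_\alpha, A_\alpha, \bP)$, and a dimension formula analogous to Lemma \ref{lem:dimboundreg} shows that each piece lies in a moduli space of dimension at most one. Next, for a component $u_\disc$ without boundary nodes, every intersection with $V_1$ occurs at an interior marked point or node (by positivity of intersection, as in Lemma \ref{lem:mustintersect}). I will attach tangency data $\tang(i) := \iota(u_\disc, z_i)$ to each such point, and use the \textbf{(Constant on spherical and $\ff$-disclike components)} condition together with the absence of $\ff$-disclike components to forget the internal marked points on each tree of sphere bubbles; this produces combinatorial types $\{\Gamma_i\}_{i \in I}$ and exhibits $u$ as an element of some moduli space $\cM_{\{\Gamma_i\}}(\by, A, \ell_\for, \tang, \bP)$ of discs with bubble trees attached.

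By the regularity of $\bP$ and the analogue of Corollary \ref{cor:nospheres} in the doubled setting (which uses a dimension bound analogous to Lemma \ref{lem:dimboundreg}), one concludes that $\tang = \tang^\can$ and $I = \emptyset$. The conclusion then follows as in Lemma \ref{lem:nospheresdim1}: $I = \emptyset$ forces every sphere bubble to become trivial after forgetting its interior marked points, hence to be constant; but a constant stable tree of sphere bubbles requires at least two interior marked points beyond the attachment node, and then the tangency at the attachment point, computed via \cite[Lemma 7.2]{Cieliebak2007}, would strictly exceed $\tang^\can(i)$, a contradiction. The main obstacle I expect is establishing the dimension formula and the analogue of Corollary \ref{cor:nospheres} in the doubled setting; these should follow from Lemma \ref{lem:dimdiff}, the regularity conditions on $\bP$, $\bP^0$, and $\bP^1$, and careful bookkeeping of forgettable versus unforgettable marked points.
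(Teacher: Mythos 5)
Your proposal is correct and follows essentially the same route as the paper, which simply observes that the proof of this lemma is the same as that of Lemma \ref{lem:nospheresdim1}: cut along boundary nodes (with Lemma \ref{lem:nounpunc} excluding unpuncturable points and hence $\ff$-disclike bubbling), attach tangency data, forget the marked points on sphere bubbles via the \textbf{(Constant on spherical and $\ff$-disclike components)} condition, and invoke the doubled-setting analogues of Lemma \ref{lem:dimboundreg} and Corollary \ref{cor:nospheres}, which follow from Lemma \ref{lem:dimdiff}, the regularity of $\bP$, and semipositivity. The extra bookkeeping of forgettable versus unforgettable points that you flag is exactly the content the paper leaves implicit, and your treatment of it is consistent with the definitions.
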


\subsection{Construction of the doubled category}

We have defined the objects and morphism spaces of $\EuC$. 
We define the $A_\infty$ structure maps by counting elements of $\cM(\by,A,\bP)/\Sym(\vv)$, precisely as in Definition \ref{def:ainf}. 
The proof that they satisfy the $A_\infty$ equations is the same: the terms are in one-to-one correspondence with boundary points of the compact one-manifolds $\Mbar(\by,A,\bP)$ (here we use Lemma \ref{lem:nounpunc} to show that no other types of boundary points appear). 
The subcategory consisting of objects $L^1$ is strictly isomorphic to $\fuk(V_1,\bP^1)$, by the \textbf{(Compatible with $\bP^1$)} condition on $\bP$; and the subcategory consisting of objects $L^0$ is strictly isomorphic to $\fuk(V_0,\bP^0) \otimes_{R(V_0)} R(V_1)$, by Lemma \ref{lem:forgiso} together with Lemma \ref{lem:forg_sign}.

The embeddings of \eqref{eq:embeds} are quasi-equivalences modulo $\fm$, as observed in \cite[Section 10a]{Seidel:FCPLT}; hence they are curved filtered quasi-equivalences. 
The set of $\ff$-regular perturbation data $\bP^0$ is a non-empty subset of the set of regular perturbation data, which coincides with the full set of regular perturbation data in the case $V_0 = V_1$, by Lemmas \ref{lem:PmeansP0} and \ref{lem:fregex}.  
This completes the proof of Proposition \ref{prop:indep}.

\section{Dependence on symplectic form and Liouville subdomain}\label{sec:dep2}

For the purposes of this section, we fix a closed manifold $X$, almost complex structure $J_0$, and a finite set $V = \{V_q\}_{q \in Q}$ of transversely-intersecting $J_0$-almost complex closed submanifolds of codimension $2$. 
We consider the dependence of the relative Fukaya category $\fuk(W \subset X,\omega,\theta,V,J_0,\bN)$ on the data $(W \subset X, \omega, \theta)$ (where $(W \subset X,J_0)$ admits a convex collar, and $V$ is a system of divisors for $W \subset X$).

We will abbreviate the notation for the relative Fukaya category to $\fuk(W \subset X,\omega,\theta)$ for the purposes of this section. 
Our main result is:

\begin{prop}[cf. Lemma 4.12 of \cite{Sheridan2016}]\label{prop:indep2}
Suppose we have a smoothly-varying family of data $(W_t \subset X,\omega_t,\theta_t)_{t \in [0,1]}$; and suppose that we also have a smoothly-varying family of convex collars $(U_t,h_t)$ for $(W_t \subset X,J_0)$. 
Then there is a curved filtered quasi-equivalence
$$\fuk(W_0 \subset X,\omega_0,\theta_0) \simeq \fuk(W_1 \subset X,\omega_1,\theta_1).$$
\end{prop} 

The rough idea of the proof will be to apply Moser's lemma to the family of exact symplectic manifolds $(W_t,\omega_t = d\theta_t)$. 

Now, let 
$$\EuY_{\delta} := \{Y \in \EuY: \|Y\|_{C^0}<\delta\}.$$

\begin{lem}\label{lem:C1}
Let $\psi:X \to X$ be a diffeomorphism. 
For $\delta>0$ sufficiently small, and $\psi$ sufficiently $C^1$-close to the identity and preserving $V$, there exists a map $\bar{\psi}:\EuY_{\delta} \to \EuY_*$ satisfying
$$\psi_* J_Y = J_{\bar{\psi}(Y)}.$$
\end{lem}
\begin{proof}
Recalling that $J_Y := J_0 \exp(Y)$, we find that $\bar{\psi}$ should satisfy
\begin{align*}
\psi_*(J_0) \exp(\psi_*Y) &= J_0 \exp(\bar{\psi}(Y))\\
\iff \bar{\psi}(Y) &= \log \left(J_0^{-1} \psi_*(J_0) \exp(\psi_*Y)\right).
\end{align*}
It is clear that for $\|Y\|_{C^0}$ sufficiently small and $\psi$ sufficiently $C^1$-close to the identity, $J_0^{-1} \psi_*(J_0) \exp(\psi_*Y)$ can be made arbitrarily $C^0$-close to $\id$, so that its logarithm is well-defined and arbitrarily $C^0$-close to $0$. 
In particular, we may ensure that $\bar{\psi}(Y)$ is well-defined and lies in $\EuY_*$.
\end{proof}

\begin{lem}\label{lem:psi_fcq}
Suppose we have data $(W_i \subset X, \omega_i,\theta_i)_{i \in \{0,1\}}$ as above, and a diffeomorphism $\psi:X \to X$, such that:
\begin{itemize}
\item $\psi|_{X \setminus W_1} = \id$;
\item $\psi|_{W_0}$ defines an exact symplectic embedding of $W_0$ into $W_1$, which induces an isomorphism between the respective Liouville completions;
\item $\psi$ is sufficiently $C^1$-close to the identity that there exists a constant $\delta>0$ and map $\bar{\psi}:\EuY_\delta \to \EuY_*$ as in Lemma \ref{lem:C1}.
\end{itemize}
Then there exists a curved filtered quasi-equivalence
$$\fuk(W_0 \subset X,\omega_0,\theta_0) \hookrightarrow \fuk(W_1 \subset X,\omega_1,\theta_1).$$
\end{lem}
\begin{proof}
First, the fact that $\psi|_{W_0}$ defines an exact symplectic embedding into $W_1$ implies that it induces a map from the objects of $\fuk(W_0 \subset X,\omega_0,\theta_0)$ to those of $\fuk(W_1 \subset X,\omega_1,\theta_1)$. 

We choose Floer and perturbation data for $\fuk(W_0 \subset X_0,\omega_0,\theta_0)$ so that the almost complex structure part belongs to $\EuY_\delta \subset \EuY_*$. 
(Because this is an open set, we may still find regular Floer and perturbation data inside it.)

We now explain why, by pushing forward by $\psi$, we obtain valid Floer and perturbation data for $\fuk(W_1 \subset X_1,\omega_1,\theta_1)$. 
Here the pushforward acts by $\bar{\psi}$ on the almost complex structure part $Y$, and $\psi_*$ on the Hamiltonian part $K$ of the Floer and perturbation data. 
Note that $\bar{\psi}(Y)$ is well-defined, by Lemma \ref{lem:C1}, and it is clear that it satisfies the conditions of Definition \ref{defn:pert}: the only one that requires a moment's thought is the \textbf{(Maximum principle)}, for which one must check that $\bar{\psi}(\Ymax_{0,\delta}) \subset \Ymax_{1,*}$. 
For this, suppose that $Y \in \Ymax_{0,\delta}$. 
Because $Y = 0$ outside $W_0$, which is contained in $W_1$ (this follows from $\psi(W_0) \subset W_1$ and $\psi|_{X \setminus W_1} = \id$), we have $J_Y = J_0$ outside $W_1$; we also have $\psi = \id$ in this region, so $\psi_*J_Y = J_0$ and hence $\bar{\psi}(Y) = 0$ outside $W_1$ as required. 
For the Hamiltonian parts $K_0$, $K_1$ of the data, we recall that $K_i$ belongs to $\EuH_i$, the set of smooth functions supported in $W_i$. 
Again it is clear that $\psi_*(\EuH_0) \subset \EuH_1$, because $W_0 \subset W_1$ and $\psi = \id$ outside $W_1$. 

We now claim that the map $u \mapsto \psi \circ u$ defines an isomorphism between all moduli spaces of pseudoholomorphic curves involved in the definition of the relative Fukaya category. 
This is because the map respects all of the data going into the pseudoholomorphic curve equation. 
The only part which requires some thought is the term $\nu(\xi)$ in the pseudoholomorphic curve equation, which is the Hamiltonian vector field associated to the Hamiltonian $K(\xi)$: this involves the symplectic form $\omega$, which need \emph{not} be respected by $\psi$. 
However, $K(\xi)$ is supported in $W_0$, and $\psi|_{W_0}$ is a symplectic embedding by assumption, so indeed this term in the pseudoholomorphic curve equation is respected by $\psi$.

It follows that we have a strict embedding 
$$\fuk(W_0 \subset X,\omega_0,\theta_0) \hookrightarrow \fuk(W_1 \subset X,\omega_1,\theta_1).$$ 
In order to check that this embedding is a curved filtered quasi-equivalence, it remains to observe that the resulting embedding $\fuk(W_0) \hookrightarrow \fuk(W_1)$ is a quasi-equivalence by \cite[Proposition 10.5]{Seidel:FCPLT} (using our assumption that the embedding of $W_0$ into $W_1$ induces an isomorphism between their completions).
\end{proof}

\begin{proof}[Proof of Proposition \ref{prop:indep2}]
It clearly suffices to prove that for any $T \in [0,1]$, there exists $\epsilon>0$ such that the categories $\fuk(W_t \subset X,\omega_t,\theta_t)_{t \in (T-\epsilon,T+\epsilon)}$ are all curved filtered quasi-equivalent. 

To do this, let us choose a smoothly-varying family of convex collars $(U_t,h_t)$, and translate the functions $h_t$ so that they are all fibrations over the interval $[0,d_t)$ for some $d_t \in \R_{>0}$ depending smoothly on $t \in [0,1]$. 
Then we can define new and larger Liouville domains $W^+_t := W_t \cup h_t^{-1}([0,d_t/2])$, which still admit convex collars (namely, $U^+_t:=h_t^{-1}([d_t/2,d_t))$ with $h^+_t := h_t|_{U^+_t}$). 
Note that $\fuk(W^+_t \subset X,\omega_t,\theta_t) \simeq \fuk(W_t \subset X,\omega_t,\theta_t)$ by the case $\psi=\id$ of Lemma \ref{lem:psi_fcq}. 

We now aim to use Lemma \ref{lem:psi_fcq} to produce curved filtered quasi-equivalences $$\fuk(W_T \subset X,\omega_T,\theta_T) \simeq \fuk(W^+_t \subset X,\omega_t,\theta_t)$$
for all $t \in (T-\epsilon,T+\epsilon)$. 
We produce the diffeomorphisms $\psi_t$, needed to apply the Lemma, by Moser's trick.
 
Precisely, let $\tilde{v}_t$ denote the vector field on $W^+_t$ whose contraction with $\omega_t$ is equal to $d \theta_t/dt$. 
Let $\beta:X \to \R$ be a smooth function which is equal to $1$ over some region containing $W_T$ in its interior, and equal to $0$ over some region containing $X \setminus W_T^+$ in its interior. 
Shrinking $\epsilon>0$ if necessary, we may assume that $\beta=0$ over some region containing $X \setminus W_t^+$, for all $t \in (T-\epsilon,T+\epsilon)$. 
Then we may define the smooth, $t$-dependent vector field
$$
v_t = \left\{\begin{array}{ll}
				\beta \cdot \tilde{v}_t & \text{over $W_t^+$}\\
				0 & \text{elsewhere.}
\end{array}\right.
$$  
Let $\psi_t:X \to X$ denote the flow of the time-dependent vector field $v_t$, with initial condition $\psi_T = \id$. 

We now check that $\psi_t$ satisfies the hypotheses of Lemma \ref{lem:psi_fcq} for all $t \in (T-\epsilon,T+\epsilon)$, for $\epsilon>0$ sufficiently small:
\begin{itemize}
\item we have $\psi_t|_{X \setminus W_t^+} = \id$ by construction, as $v_t \equiv 0$ over this region. 
\item $\psi_t(W_T)$ remains within the region where $\beta = 1$ for all $t \in (T-\epsilon,T+\epsilon)$, for $\epsilon>0$ sufficiently small.
When this is the case, Moser's lemma implies that $\psi_t$ defines an exact symplectomorphism from $W_T$ onto its image in $W_t^+$, for all $t \in (T-\epsilon,T+\epsilon)$; and it is standard that this embedding defines an isomorphism between the respective Liouville completions, cf. \cite[Lemma 7.2]{Seidel:FCPLT}. 
\item note that $\psi_t$ converges smoothly to $\psi_T = \id$ as $t \to T$, so for $\epsilon>0$ sufficiently small we may assume that $\psi_t$ is arbitrarily $C^1$-close to $\id$ for $t \in (T-\epsilon,T+\epsilon)$. 
\end{itemize}
Thus we may apply Lemma \ref{lem:psi_fcq}, and the proof is complete.
\end{proof}

\section{Algebro-geometric relative Fukaya category}\label{sec:a_geom}

Let $(X,D,\Nef)$ be as in Section \ref{sec:alg_geom}. In this section describe how to construct, from $(X,D,\Nef)$, the data $(W \subset X,\omega,\theta,V,J_0,\bN)$ needed to construct the relative Fukaya category. 

\subsection{Relative K\"ahler form, Liouville subdomain, convex collar}\label{sec:aux_data}

We equip $X$ with the integrable complex structure $J_0$.
 
We recall the notion of a \emph{relative K\"ahler form} on $(X,D)$ from \cite[Definition 3.2]{Sheridan2016}. 
It is a K\"ahler form $\omega$ on $X$, equipped with a K\"ahler potential $h:X \setminus D \to \R$ having a prescribed form near $D$.  
The K\"ahler potential $h$ determines a Liouville one-form $\theta := -d^c h$, so that any regular sublevel set $\{h \le c\}$ is a Liouville domain. 

A relative K\"ahler form determines a cohomology class $\kappa = [\omega;\theta] \in H^2(X,X \setminus D;\R) \simeq \R^P$.  
Such a cohomology class is represented by a relative K\"ahler form if and only if the corresponding $\R$-divisor $\sum_p a_p \cdot D_p$ is ample, and has all $a_p>0$ \cite[Lemma 3.3]{Sheridan2016}. 
In particular, by the \textbf{(Ample)} condition on $\Nef$, there exists a relative K\"ahler form whose cohomology class $\kappa$ lies in the interior of $\Nef$.

We choose such a relative K\"ahler form, and equip $X$ with the corresponding K\"ahler form $\omega$. 
The prescribed form of $h$ near $D$ ensures that $h$ is proper, and also that the critical points of $h$ form a compact subset of $X \setminus D$. 
We set $W = \{h \le c\}$ for some $c$ larger than all critical values of $h$, and equip $W$ with the primitive $\theta$, making it into a Liouville subdomain.

If we choose $c' > c$, then we have a convex collar $(U,h)$ for $(W \subset X,J_0)$, where $U = h^{-1}([c,c'))$. 
(This is a convex collar because $\theta = -dh \circ J_0$ by definition, and $dh$ is a submersion because $h$ has no critical values in $[c,c')$.)

The space of relative K\"ahler forms is open and convex, hence contractible; and the space of choices of $c$, for a given relative K\"ahler form, takes the form $(C,\infty)$ where $C$ is the largest critical value of $h$. 
From this one can easily see that the space of parameters on which $W \subset X,\omega,\theta$ depend is path-connected; moreover, we can connect any two choices by a smooth path, which comes with a smoothly-varying family of convex collars. 

\begin{rmk}
The particular space of choices of $W \subset X,\omega,\theta$ described here has the advantage of being obviously path-connected, but of course more general choices exist.
\end{rmk}

\subsection{System of divisors}\label{sec:sod}

It is straightforward to show that $W \subset X \setminus D$ is a deformation retract. Thus we have an isomorphism $H^2(X,W) \simeq H^2(X,X \setminus D) \simeq \Z^P$. 

Because $\Nef$ is rational polyhedral and satisfies the \textbf{(Semiample)} condition, there exists a system of divisors $V$ for $(W \subset X,J_0)$ such that $\Nef(V) = \Nef$, by Bertini's theorem (see \cite[Lemma 3.8]{Sheridan2016}). 
Furthermore, the system of divisors can be chosen to lie in an arbitrarily small neighbourhood of $D$. 
We choose such a system of divisors $V$. 

\subsection{Definition}

After choosing the data $\bN$ consisting of cylindrical and strip-like ends, thick-thin decomposition, Floer and perturbation data, we have the complete set of data needed to define the algebro-geometric relative Fukaya category:
$$ \fuk(X,D,\Nef) := \fuk(W \subset X,\omega,\theta,V,J_0,\bN).$$
To justify the notation, we must show that this category is independent of the auxiliary choices made in its construction. 

\begin{prop}\label{prop:indep3}
The category $\fuk(W \subset X,\omega,\theta,V,J_0,\bN)$ is independent of the choices of $W,\omega,\theta,V,\bN$ made above, up to curved filtered quasi-equivalence.
\end{prop}
\begin{proof}
Independence of $\bN$ was established in Proposition \ref{prop:indep} (the case $V_0 = V_1$). 

To establish independence of $V$, we first consider the case that $V$ and $V'$ are two systems of divisors which are transverse, so that $V \cup V'$ is also a system of divisors. 
Then Proposition \ref{prop:indep} yields curved filtered quasi-equivalences
$$\fuk(V) \simeq \fuk(V \cup V') \simeq \fuk(V')$$
as required. 
If $V$ is not transverse to $V'$, then we may introduce an intermediate system of divisors $V''$ which is transverse to both $V$ and $V'$, by Bertini's theorem; then we have
$$\fuk(V) \simeq \fuk(V'') \simeq \fuk(V')$$
as required.

Now recall that any two choices of $W \subset X,\omega,\theta$ can be connected by a smooth path $(W_t \subset X,\omega_t,\theta_t)_{t \in [0,1]}$, together with a smoothly-varying family of convex collars $(U_t,h_t)_{t \in [0,1]}$.  
We may choose a system of divisors $V$ which lies in the complement of the compact set $\cup_{t \in [0,1]} W_t$, by Bertini's theorem; then Proposition \ref{prop:indep2} yields a curved filtered quasi-equivalence between the relative Fukaya categories at the endpoints. 
\end{proof}

\begin{prop}
If $\Nef_0 \subset \Nef_1$, then there is a curved filtered quasi-equivalence
$$ \fuk(X,D,\Nef_1) \otimes_{R(\Nef_1)}R(\Nef_0) \simeq \fuk(X,D,\Nef_0).$$
\end{prop}
\begin{proof}
We first choose a system of divisors $V_0$ such that $\Nef(V_0) = \Nef_0$, then enlarge it to a system of divisors $V_1$ such that $\Nef(V_1) = \Nef_1$ (which is possible by Bertini's theorem). 
The result then follows from Proposition \ref{prop:indep}.
\end{proof}

\subsection{Other properties of the relative Fukaya category}\label{sec:other_ass}

We remark that \cite[Assumptions 5.2 and 5.3]{Sheridan2016} are also satisfied by $\fuk(X,D,\Nef)$. The arguments are straightforward variations on the proof of Proposition \ref{prop:indep}, so we omit them. (For the purposes of the argument, one can treat a single component of $D$ as if it were a system of divisors, by \cite[Lemma 4.9]{Sheridan2016}.)

\bibliographystyle{amsalpha}
\bibliography{./mybib}

\end{document}